\title{Infinitary primitive positive definability over the real numbers with convex relations}
\author{Sebastian Meyer\footnote{Funding statement: Funded by the European Union (ERC, POCOCOP, 101071674). Views and opinions expressed are however those of the author(s) only and do not necessarily reflect those of the European Union or the European Research Council Executive Agency. Neither the European Union nor the granting authority can be held responsible for them. }}
\date{\today}
\newtheorem{lemma}{Lemma}[section]
\newtheorem{thm}[lemma]{Theorem}
\newtheorem{cor}[lemma]{Corollary}
\newtheorem*{lemma*}{Lemma}
\newtheorem*{lem*}{Lemma}
\newtheorem*{prop*}{Proposition}
\newtheorem*{rem*}{Remark}
\newtheorem*{thm*}{Theorem}
\theoremstyle{definition}
\newtheorem{definition}[lemma]{Definition}
\newtheorem{notation}[lemma]{Notation}
\newtheorem{rem}[lemma]{Remark}
\newtheorem{example}[lemma]{Example}
\newenvironment{proofof}[1]{\begin{proof}[Proof of #1]}{\end{proof}}
\newcommand{\IR}{\mathbb{R}}
\newcommand{\IQ}{\mathbb{Q}}
\newcommand{\IN}{\mathbb{N}}
\newcommand{\conv}{\operatorname{conv}}
\renewcommand{\epsilon}{\varepsilon}
\renewcommand{\subset}{\subseteq}
\renewcommand{\supset}{\supseteq}
\newcommand{\mitte}{\ \middle| \ }
\newcommand{\structA}{{\underline{A}}}
\newcommand{\setstructA}{A} %Grundmenge der Struktur \structA
\newcommand{\de}[1]{\textbf{#1}}
\newcommand{\closure}[1]{\overline{#1}} % topologischer Abschluss
\newcommand{\inside}[1]{{#1}^\circ}
\newcommand{\var}[1]{y_{#1}} % durchlaufende Variablen für Formeln
\newcommand{\inner}{u} %innerer Punkt, essentiell innerer Punkt
\newcommand{\vv}{v}
\newcommand{\xx}{w}
\newcommand{\closedball}{\overline{B}}
\newcommand{\ball}{B}
\newcommand{\rect}{R_{\dot{\square}}}
\newcommand{\stripe}{R_{\dot{=}}}
\newcommand{\filt}{\mathcal{F}}
\newcommand{\dent}{semident}
\begin{document}

\maketitle
\begin{abstract}
    On a finite structure, the polymorphism invariant relations are exactly the primitively positively definable relations. On infinite structures, %such as the real numbers, 
    these two sets of relations are different in general. Infinitary primitively positively definable relations are a natural intermediate concept which extends primitive positive definability by infinite conjunctions.
    
    We consider for every convex set $S\subset \IR^n$ the structure 
    of the real numbers $\IR$ with addition, scalar multiplication, constants, and additionally the relation $S$. We prove that depending on $S$, the set of all relations with an infinitary primitive positive definition in this structure equals one out of six possible sets. 
    This dependency gives a natural partition of the convex sets into six nonempty classes. 
    We also give an elementary geometric description of the classes  
    and a description in terms of linear maps.

    The classification also implies that 
    there is no locally closed clone between the clone of affine combinations and the clone of convex combinations.
\end{abstract}
\tableofcontents
\section{Introduction}
This article considers infinitary primitive positive definability over the real numbers with some convex relations. It can be read from a geometrical viewpoints or in the context of logic and definability. 
We combine these two perspectives by considering a structure on the real numbers 
on which %the closure operator of 
infinitary primitive positive definability has a geometrical interpretation. In our setting, a family of subsets of $\IR^n$ is closed under infinitary primitive positive definitions if and only if it is closed under affine maps, preimages of affine maps, and infinite intersections.

%Infinitary primitive positive definability can be understood as a closure operator. In our setting, a geometric definition is that a family of subsets of $\IR^n$ is closed under infinitary primitive positive definability if it is closed under affine maps, preimages of affine maps, and infinite intersections. 
The geometric characterisation provides a convenient way to analyse certain conditions on closed families of relations. 
For example, if the family contains a closed square, then it also has to contain a closed circle, because the square can be rotated by any angle and the intersection of all these rotations is a circle. Conversely, if the family contains a closed circle, then it also contains a closed square. Indeed, a closed circle can be projected to a compact interval, and the preimage of this projection is a stripe. By intersecting two of these stripes, we obtain a square. Thus, it follows that a closed square and a closed circle have the same closure with respect to infinitary primitive positive definitions. We therefore call them infinitary  primitively positively interdefinable. The precise definition of this equivalence relation is given in Section \ref{SectionDefinitions}. 

%We determine the equivalence classes of convex subsets of $\IR^n$ with respect to infinitary primitive positive interdefinability. 

Infinitary  primitive positive interdefinability gives a natural distinction of convex sets. 
Surprisingly, there are only finitely many convex sets which are pairwise non-interdefinable.
%the number of its equivalence classes is finite. 
More precisely, we show that every convex subset of $\IR^n$ is infinitary primitively positively interdefinable with exactly one of the following six sets:
\begin{enumerate}
    \item A point $\{0\}\subset \IR$,
    \item a nontrivial compact interval $[0,1]\subset \IR$,
    \item a bounded open interval $(0,1)\subset \IR$,
    \item a pointed rectangle $(-1,1) \times (0,1) \cup \{(0,0)\} \subset \IR^2$, %\{(x,y) \in \IR^2 \mid (x\in (-1,1)\land y \in (0,1)) \lor (x,y)=(0,0)\}$,
    \item a pointed stripe $\IR \times (0,1) \cup \{(0,0)\}\subset \IR^2$, or %$\{(x,y) \in \IR^2 \mid y \in (0,1) \lor (x,y)=(0,0)\}$, or        
    \item a halve line $[0,\infty)\subset \IR$.
\end{enumerate}

For the precise results, see Section \ref{SectionResultsShort} where we also include a geometric description of each equivalence class and an independent description in terms of linear maps.

The next three sections, Sections \ref{SectionProofConstructing} to \ref{SectionNonImplications}, contain the main part of the proof of the classification while the last section, Section \ref{SectionCorollaries}, presents the final collection of the proof and additional results and applications. 
The proof is phrased in the language of definability, but the results are also presented in geometrical language.

\section{General definitions} \label{SectionDefinitions}
This section introduces common notation used to describe the results.

\subsection{Notions from linear algebra and analytic geometry}
%There are some notions from linear algebra and analytic geometry which will be fixed here.
We recall the following basic notions from linear algebra and analytic geometry for clarity.
\begin{definition}
    A set $S\subset \IR^n$ is \de{convex} if for all $x,y\in S$ and all $\lambda \in [0,1]$ also the point $\lambda x + (1-\lambda)y$ is in $S$.
    %We call a set $S\subset \IR^n$ \de{\stronglyConvex}, if for all $a\notin S$, there is a vector $v$ such that $\{s-a,v\}>0$ for all $s\in S$.
\end{definition}
\begin{notation}
    Consider a point $v\in \IR^n$ and $r>0$.
    The expression $v_i$ refers to the $i$-th component of $v$, so $v=(v_1,\dots ,v_n)^T$.
    The scalar product $\IR^n \times \IR^n \to \IR$ will be denoted by $\langle \bullet, \bullet\rangle$.
%\end{notation}
%\begin{notation}
    %Consider a point $v\in \IR^n$ and $r>0$. 
    The set of points with distance less than $r$ from $v$ 
    %open ball with center $x$ and radius $r$ 
    is denoted by $\ball_r(v)$. 
    %The closed ball is denoted $\closure{\ball}_r(x)$.
    The set of points with distance at most $r$ from $v$ is denoted by $\closure{\ball}_r(v)$. 
    The topological closure of a set $S\subset \IR^n$ is denoted by $\closure{S}$. Thus, $\closure{\ball_r(v)}=\closedball_r(v)$.
    The powerset of any set $X$ is denoted by $\powerset(X)$.
\end{notation}
\begin{definition}
    An \de{affine map} is a map $f\colon \IR^n\to \IR^m$ which can be described as $v\mapsto Av+w$ for a fixed vector $w\in \IR^m$ and an $m\times n$-matrix $A$.
    
    For sets $S\subset \IR^m$, $T\subset \IR^n$ and an affine map $f\colon \IR^n\to \IR^m$, we define the image of $T$ under $f$ as $f(T)\coloneqq \{f(t)\in \IR^m \mid t\in T\}$ and the \de{inverse map} $f^{-1}$ applied to $S$ as $f^{-1}(S)\coloneqq \{t\in \IR^n \mid f(t)\in S\}$.
\end{definition}

\subsection{Definability and polymorphisms}
%There are some notions from universal algebra and model theory which will be fixed here.
The following notions are standard from universal algebra and model theory. 
%We repeat them for a better understanding.

\begin{definition}
    A \de{signature} $\sigma$ is a set where each element is a function symbol with an arity in $\IN\cup\{0\}$ or a relation symbol with an arity in $\IN \setminus \{0\}$.

    A \de{structure} $\structA$ over a signature $\sigma$ consists of a set $\setstructA$ and a function $f_\structA\colon \setstructA^n \to \setstructA$ for each function symbol $f$ in $\sigma$ of arity $n>0$ 
    and a relation $R_\structA \subset \setstructA^n$ for each relation symbol $R$ in $\sigma$ of arity $n$. For each function symbol of arity 0, $\structA$ contains a constant in $A$ which we view as the image of a map $A^0\to A$.
    A detailed definition is given in \cite[Section 1]{Hodges}.
    %Often, $\sigma$ will be clear from the context or we write $(\setstructA,\sigma)$ for the structure. 
\end{definition}
\begin{definition}
    We denote the real numbers with $\IR$.
    We will often use $\IR$ as base set for the structure and identify relation symbols $R$ with the corresponding relations $R_\IR\subset \IR^n$ on the real numbers and function symbols $f$ with the functions on $\IR$ as long as they are clear from the context. 
    To specify $\sigma$, we will write $(\IR; s\mid s\in \sigma)$ instead of $\sigma$-structure on base set $\IR$. 
    
    On the real numbers, we usually consider variants of the structure 
    $(\IR; +,\cdot c \mid c \in \IR, c \mid c \in \IR, S)$ consisting of the vector space structure $(\IR; +,\cdot c \mid c \in \IR)$, a constant symbol $c \in \IR$ on every real number and a relation symbol $S$ which is interpreted as a convex subset of $\IR^n$.
\end{definition}
\begin{definition}
    Let $\alpha$ be an ordinal number.
    A \de{polymorphism} of arity $\alpha$ of a $\sigma$-structure $\structA$ is a map $f\colon \setstructA^\alpha \to \setstructA$ with the following properties:
    \begin{itemize}
    \item
        For every relation symbol $R$ of arity $m$ in $\sigma$ and every $\alpha$-indexed sequence $(x^i)_{i\in \alpha}$ of elements in $R_\setstructA \subset \setstructA^m$, the image 
        $$
            f((\setstructA^i)_{i\in \alpha}) \coloneqq (f((x^i_1)_{i\in \alpha} ) ,\dots, f((x^i_m)_{i\in \alpha}) ) \in \setstructA^m
        $$
        is again in $R_\setstructA$. 
    \item 
        For every function symbol $g$ of arity $m$ in $\sigma$ and every $\alpha$-indexed sequence $(x^i)_{i\in \alpha}$ of elements in $\setstructA^m$, the equality
        $$
            g(f((x^i)_{i\in \alpha})) \coloneqq g(f((x^i_1)_{i\in \alpha} ) ,\dots, f((x^i_m)_{i\in \alpha}) ) 
            = f((g(x^i))_{i\in \alpha})
        $$
        holds. 
    \end{itemize}
    A polymorphism of a $\sigma$-structure $\structA$ has \de{finite arity}, if its arity $\alpha$ is a natural number. In this case, the numbering $(1,2,\dots,\alpha)$ instead of $(0,1,2,\dots,\alpha-1)$ will be used. Note that in the literature, the word polymorphism often refers to finite arity polymorphisms.
\end{definition}
\begin{definition} \label{DefinitionIPPDefinable}
    Let $\structA$ be a $\sigma$-structure. A subset $S$ of $\setstructA^n$ is called
    \begin{itemize}
        \item \de{pp-definable} or primitively positively definable in $\structA$
        if there is a primitive positive formula (that is a first order formula which only uses $\land, \exists$ and relations from $\sigma$ and which avoids $\lor, \forall, \lnot$) defining $S$.
        \item \de{polymorphism invariant} if for every polymorphism $p\colon \setstructA^m\to \setstructA$ of finite arity and every $s_1,\dots ,s_m\in S$ also $(p(s_{1,1},\dots ,s_{m,1}),\dots , p(s_{1,n},\dots ,s_{m,n}))$ is in $S$ where $s_{i,j}$ denotes the $j$-th component of $s_i$.
        \item \de{infinitary pp-definable} %the Bodirsky, Seite 176
        or infinitary primitively positively definable
        if there is an infinitary primitive positive formula defining $S$ \cite[Section 6.1.2]{theBodirsky}. That is, if the formula defining $S$ is inside the smallest set containing 
        \begin{itemize}
            \item true and false,
            \item atomic formulas from $\sigma$,
            \item all formulas $\phi$ from this set where the variables are substituted,
            \item all finite and infinite conjunctions of formulas from this set whenever the conjunction has a finite number of free variables,
            \item the formula $\exists x: \phi$ whenever $\phi$ is in this set.
        \end{itemize}
    \end{itemize}
\end{definition}

Those definitions are connected by the following implications:
\begin{lemma} \label{LemmaDefinablePolyStable}
    Let $\structA$ be a $\sigma$-structure and $S\subset \setstructA^n$.
    \begin{enumerate}
        \item If $S$ is pp-definable then $S$ is infinitary pp-definable.
        \item If $S$ is infinitary pp-definable then $S$ is polymorphism invariant and also invariant under all infinite arity polymorphisms.
    \end{enumerate}
\end{lemma}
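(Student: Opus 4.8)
The first statement is essentially syntactic. I would argue that every primitive positive formula already lies in the inductively defined set of infinitary primitive positive formulas from Definition \ref{DefinitionIPPDefinable}: atomic formulas over $\sigma$ are listed explicitly, a finite conjunction is a special case of the allowed finite-or-infinite conjunctions, and existential quantification is permitted. Hence a straightforward structural induction on the primitive positive formula defining $S$ shows that the very same formula witnesses infinitary pp-definability, and no new relation needs to be constructed.

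For the second statement, I would fix an arbitrary polymorphism $f\colon \setstructA^\alpha \to \setstructA$ of some ordinal arity $\alpha$ (treating the finite and infinite cases uniformly) and prove by structural induction on infinitary pp formulas that every relation defined by such a formula is preserved by $f$, in the sense that applying $f$ coordinatewise to an $\alpha$-indexed family of tuples from the relation yields a tuple in the relation. Since infinitary pp formulas are required to have finitely many free variables, each defined relation is a subset of some $\setstructA^n$ with $n$ finite, so this preservation is exactly the notion appearing in the statement, both in the finite-arity form and in its infinite-arity strengthening.

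The induction would run as follows. The formulas \emph{true} and \emph{false} define $\setstructA^n$ and $\emptyset$, which are preserved trivially, the latter vacuously. For atomic formulas, preservation of relation symbols is the first defining property of a polymorphism, and preservation of atomic formulas built from terms follows by combining this with the second property, namely that $f$ commutes with the function symbols; substitution of variables corresponds to pulling a relation back along a coordinate map and plainly preserves $f$-invariance. For a conjunction $\bigwedge_{j} \phi_j$ the defined relation is the intersection $\bigcap_j R_j$ of the relations defined by the conjuncts, and if each $R_j$ is preserved by $f$ then so is $\bigcap_j R_j$, because a family of tuples lying in every $R_j$ is mapped by $f$ into every $R_j$. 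For $\exists x\colon \phi$ the defined relation is a projection of the relation $R$ defined by $\phi$: one takes an $\alpha$-indexed family of tuples in the projection, picks for each index a witness in $R$, applies the induction hypothesis to the resulting family of witnessed tuples, and reads off a witness for the image.

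The main obstacle is the existential step in the infinite-arity case: choosing a witness for each of the $\alpha$ coordinates simultaneously requires the axiom of choice, and one must check that the chosen witnesses assemble into a genuine $\alpha$-indexed family of tuples of $R$ so that the induction hypothesis applies. Apart from this point the argument is routine, and the finite-free-variable restriction guarantees that all relations stay of finite arity throughout, so the conclusion matches both the finite-arity notion of polymorphism invariance and the stated invariance under all infinite-arity polymorphisms.
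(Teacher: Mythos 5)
Your proposal is correct and follows essentially the same route as the paper: the paper treats part 1 as trivial and, for part 2, cites the standard structural induction over pp-formulas (\cite[Proposition 6.1.3]{theBodirsky}), remarking that the stronger infinite-arity statement ``can be proven analogously''---which is precisely the induction you spell out. Your write-up merely makes explicit the details the paper delegates to the reference, including the correct observation that the axiom of choice is used to select witnesses at the existential step when the arity is infinite.
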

The first implication is trivial. The implication ``If $S$ is pp definable then $S$ is invariant under finite arity polymorphisms" is shown in \cite[Proposition 6.1.3]{theBodirsky}. The second implication of the lemma is stronger but can be proven analogously. If $\setstructA$ is finite, this three properties coincide \cite[Theorem 2]{Geiger}.

\begin{definition}
    Let $\structA$ be a $\sigma$-structure. Then, two sets $S\subset \setstructA^n, T\subset \setstructA^m$ are \de{infinitary pp-interdefinable}, if $S$ is infinitary pp-definable from $\sigma \cup \{T \}$ and $T$ is infinitary pp-definable from $\sigma \cup \{ S \}$.
\end{definition}
\begin{definition}
    A \de{clone} on a set $\setstructA$ is a collection of maps from powers $\setstructA^n$ to $\setstructA$ which contains all projections 
    and is closed under composition in the sense that if $f\colon \setstructA^m\to \setstructA$ and $g_1,\dots,g_m\colon \setstructA^n\to \setstructA$ are in the clone, then also the map 
    $$
        \setstructA^n \to \setstructA, (x_1,\dots,x_n) \mapsto f(g_1(x_1,\dots,x_n),\dots,g_m(x_1,\dots,x_n))
    $$
    is contained in the clone.
    
    A clone $C$ is called \de{locally closed}, if whenever a map $f\colon \setstructA^n \to \setstructA$ has the property that for all finite $S \subset \setstructA^n$ there is $f'\colon \setstructA^n \to \setstructA$ in $C$ such that $f'$ equals $f$ on $S$ then $f$ is already in $C$.
\end{definition}
\begin{thm}[{\cite[page 21]{Szendrei}} or {\cite[Corollary 6.1.6]{theBodirsky}}] \label{TheoremLocallyClosedClone}
    The set of all finite arity polymorphisms of a structure defines a locally closed clone. Every locally closed clone can be obtained that way.
\end{thm}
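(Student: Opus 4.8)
The plan is to prove the two assertions separately: first that the finite arity polymorphisms of a structure always form a locally closed clone, and second that every locally closed clone arises in this way.

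For the first assertion, closure under the clone operations is routine. Each projection $\setstructA^n \to \setstructA$, $(x_1,\dots,x_n)\mapsto x_i$, preserves every relation (applying it coordinatewise to tuples from $R_\structA$ just selects one of them) and commutes with every function symbol, so it is a polymorphism; and if $f$ and $g_1,\dots,g_m$ are finite arity polymorphisms then so is their composite, since the defining conditions can be verified by applying the conditions for the $g_j$ first and then for $f$. The substantive point is local closure. Suppose $f\colon \setstructA^n \to \setstructA$ agrees, on every finite subset of $\setstructA^n$, with some polymorphism. I would observe that each single instance of the polymorphism conditions involves only finitely many arguments of $f$: checking that $f$ preserves an $m$-ary relation on a fixed choice of tuples $x^1,\dots,x^n \in R_\structA$ refers only to the $m$ points $(x^1_j,\dots,x^n_j)\in \setstructA^n$ for $j\le m$, and checking the commutation identity for a $k$-ary function symbol on fixed arguments refers only to $k+1$ points of $\setstructA^n$. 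Taking $S$ to be this finite set of points and choosing a polymorphism $f'$ agreeing with $f$ on $S$, the corresponding instance holds for $f'$ and hence for $f$; as the instance was arbitrary, $f$ is a polymorphism.

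For the second assertion, given a locally closed clone $C$ on $\setstructA$ I would let $\structA$ be the structure with base set $\setstructA$ whose relations are exactly those preserved by every operation of $C$, that is, $\operatorname{Inv}(C)$. By construction every operation of $C$ is a polymorphism of $\structA$, so $C\subset \Pol(\structA)$, and it remains to prove the reverse inclusion. For that, take a finite arity polymorphism $f$, say of arity $n$, and use local closure to reduce to producing, for each finite $S=\{s^1,\dots,s^t\}\subset \setstructA^n$, an operation of $C$ that agrees with $f$ on $S$. The key construction is the $t$-ary relation
$$
    R \coloneqq \{\, (g(s^1_1,\dots,s^1_n),\dots,g(s^t_1,\dots,s^t_n)) \mid g\in C \text{ of arity } n \,\},
$$
namely the subalgebra of the $t$-th power of the algebra $(\setstructA;C)$ generated by the $n$ column tuples $c_i\coloneqq (s^1_i,\dots,s^t_i)$. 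Being a subalgebra of a power, $R$ lies in $\operatorname{Inv}(C)$ and is therefore a relation of $\structA$; the generators $c_i$ lie in $R$ (take $g$ a projection); and since $f$ preserves $R$, applying $f$ coordinatewise to $c_1,\dots,c_n$ yields a tuple of $R$. By the definition of $R$ this tuple equals $g$ applied coordinatewise to $c_1,\dots,c_n$ for some $g\in C$, which says precisely that $f(s^j)=g(s^j)$ for all $j\le t$. Hence $g\in C$ agrees with $f$ on $S$, and local closure gives $f\in C$.

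I expect the main obstacle to be the construction in the last step: identifying the correct invariant relation $R$, using the fact that the invariant relations are exactly the subalgebras of powers of $(\setstructA;C)$, and keeping the indexing between the columns $c_i$ and the points $s^j$ straight, so that membership of the image tuple in $R$ translates back into finite agreement of $f$ with a clone operation. Everything else reduces to unwinding the definitions.
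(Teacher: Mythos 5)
Your proof is correct, and it is precisely the classical Pol--Inv argument from the sources the paper cites for this theorem (Szendrei, and Bodirsky, Corollary 6.1.6) --- the paper itself gives no proof beyond those citations. Both halves match the standard treatment: local closure follows because each instance of the polymorphism conditions (including the commutation identities for function symbols, which you rightly handle separately) constrains $f$ at only finitely many points of $\setstructA^n$, and the converse inclusion $\Pol(\operatorname{Inv}(C))\subset C$ is obtained exactly as in the references via the invariant relation $R$, the subalgebra of the $t$-th power of $(\setstructA;C)$ generated by the column tuples.
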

This clone will be called \de{polymorphism clone}.

%There is also a geometric interpretation of infinitary pp definability when identifying a subset of $\IR^n$ with a relation.

%\begin{thm} \label{TheoremIPPGeometrisch1} \label{TheoremIPPGeometrisch2}
%    Let $A=\bigsqcup_{n\in \IN} \powerset(\IR^n)$ the set of all subsets of any $\IR^n$. Let $\tau \subset A$ be any subset. 
%    
%    Then, $\tau$ is closed under infinite pp-definability, if and only if
%    \begin{enumerate}
%        \item $\tau$ is closed under arbitrary intersections,
%        \item $\tau$ is closed under rearrangeing coordinates,
%        \item $\tau$ is closed under coordinate parallel projections and
%        \item $\tau$ is closed under coordinate parallel projection preimages
%    \end{enumerate}
%    
%    If $(+,\cdot c \mid c \in \IR, c\mid c \in \IR)$ are in $\tau$, then $\tau$ is closed under infinite pp-definability, if and only if is closed under affine maps (including projections), inverse affine maps and (possibly infinite) intersections.
%\end{thm}

\subsection{Newly introduced definitions}
To describe the equivalence classes of infinitary pp-interdefinability geometrically, we will need the following additional definitions. They are original to this text. 
% as the author is unaware of any reference which has introduced this concept before.

\begin{definition}
    Let $S\subset \IR^n$ be a convex set and $a\notin S$. We call $a$ a \de{\dent{}}, if there is $s\in S, t\in \closure{S}$ and $\lambda\in(0,1)$ such that $a=\lambda s + (1-\lambda) t$.
\end{definition}
\begin{definition}
    We say that a convex set $S\subset \IR^n$ has a \de{ray intersection}, if there is an affine map $\IR \to \IR^n, x \mapsto v+x w$ such that $v+xw \in S$ for all $x>0$ and $v+xw\notin S$ for all $x<0$.
\end{definition}

\begin{example}
    The set $\{(x,y) \in \IR^2 \mid (x\in (-1,1)\land y \in (0,1)) \lor (x,y)=(0,0)\}$ 
    $$
    \begin{tikzpicture}
        \fill [gray!20!white] (-1,1) -- (1,1) -- (1,0) -- (-1,0) -- (-1,1);
        \draw [very thin, dashed] (-1,1) -- (1,1);
        \draw [very thin, dashed] (-1,0) -- (1,0);
        \draw [very thin, dashed] (-1,0) -- (-1,1);
        \draw [very thin, dashed] (1,0) -- (1,1);
        \draw [fill] (0,0) circle [radius=2pt];
        \draw [-Latex] (0.5,-0.5) --(0.5,0);
    \end{tikzpicture}
    $$
    has a \dent{} for example at $(0.5,0)$. It has no ray intersection.
\end{example}
\section{Results overview}
\label{SectionResultsShort}
Our main result is the classification of convex sets up to infinitary pp-interdefinability.

%More Precisely, we have the following classifications:
\begin{thm} \label{TheoremDescriptionOfEqClasses}
    A convex set $S\subset \IR^n$ is infinitary pp-interdefinable over $(\IR;+,\cdot c \mid c \in \IR, c\mid c \in \IR)$
    \begin{enumerate}
        \item with $\{0\}\in \IR$ 
        $$
        \begin{tikzpicture}
            \draw [fill] (0,0) circle [radius=2pt];
            \draw [very thin, dashed] (-5,0) -- (5,0);
        \end{tikzpicture}
        $$
        if and only if it is empty or an affine subspace.
        \item with $\{x \in \IR \mid x \in [0,1]\}$ 
        $$
        \begin{tikzpicture}
            \draw [very thin, dashed] (-5,0) -- (-1,0);
            \draw [very thin] (-1,0) -- (1,0);
            \draw [very thin, dashed] (1,0) -- (5,0);
            \draw [fill] (-1,0) circle [radius=2pt];
            \draw [fill] (1,0) circle [radius=2pt];
        \end{tikzpicture}
        $$
        if and only if it is not an affine subspace but the sum $\{b+v\mid b \in B, v \in V\}$ of a compact convex set $B\subset \IR^n$ and a vector subspace $V\subset \IR^n$.
        \item with $\{x \in \IR \mid x \in (0,1)\}$ 
        $$
        \begin{tikzpicture}
            \draw [very thin, dashed] (-5,0) -- (-1,0);
            \draw [very thin] (-1,0) -- (1,0);
            \draw [very thin, dashed] (1,0) -- (5,0);
            \draw [fill=white] (-1,0) circle [radius=2pt]; 
            \draw [fill=white] (1,0) circle [radius=2pt];
        \end{tikzpicture}
        $$
        if and only if it is not closed, there is no \dent{} $a\notin S$, and $S$ is the sum $\{b+v\mid b \in B, v \in V\}$ of a bounded, convex set $B\subset \IR^n$ and a vector subspace $V\subset \IR^n$.
        %, it is not closed cannot be chosen to be compact and there is no \dent{} $a\notin S$.
        \item with $\{(x,y) \in \IR^2 \mid (x\in (-1,1)\land y \in (0,1)) \lor (x,y)=(0,0)\}$ 
        $$
        \begin{tikzpicture}
            \fill [gray!20!white] (-1,1) -- (1,1) -- (1,0) -- (-1,0) -- (-1,1);
            \draw [very thin, dashed] (-1,1) -- (1,1);
            \draw [very thin, dashed] (-1,0) -- (1,0);
            \draw [very thin, dashed] (-1,0) -- (-1,1);
            \draw [very thin, dashed] (1,0) -- (1,1);
            \draw [fill] (0,0) circle [radius=2pt];
        \end{tikzpicture}
        $$
        if and only if it is the sum $\{b+v\mid b \in B, v \in V\}$ of a bounded, convex set $B\subset \IR^n$ and a vector subspace $V\subset \IR^n$ and there is a \dent{} $a\notin S$.
        \item with $\{(x,y) \in \IR^2 \mid y \in (0,1) \lor (x,y)=(0,0)\}$ 
        $$
        \begin{tikzpicture}
            \fill [gray!20!white] (-5,1) -- (5,1) -- (5,0) -- (-5,0) -- (-5,1);
            \draw [very thin, dashed] (-5,1) -- (5,1);
            \draw [very thin, dashed] (-5,0) -- (5,0);
            \draw [fill] (0,0) circle [radius=2pt];
        \end{tikzpicture}
        $$
        if and only if it is not the sum of a vector space and a bounded set, and does not contain a ray intersection. 
        \item with $\{x \in \IR \mid x\ge 0\}$ 
        $$
        \begin{tikzpicture}
            \draw [very thin] (0,0) -- (5,0);
            \draw [very thin, dashed] (0,0) -- (-5,0);
            \draw [fill] (0,0) circle [radius=2pt];
        \end{tikzpicture}
        $$
        if and only if it contains a ray intersection.
    \end{enumerate}
    Every convex set is in exactly one of the above classes. Moreover, the sets which are infinitary pp-definable over the structure $(\IR;+,\cdot c \mid c \in \IR, c\mid c \in \IR, S)$ are exactly the sets in the classes with a lower or equal number than $S$. 
    No non-convex set is infinitary pp-definable from convex sets.
\end{thm}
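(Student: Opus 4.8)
The plan is to transport everything to the geometric side. By the equivalence recalled in the introduction, a family of convex subsets of the spaces $\IR^n$ is closed under infinitary primitive positive definitions over $(\IR;+,\cdot c\mid c\in\IR,c\mid c\in\IR)$ exactly when it is closed under affine images, preimages of affine maps, and arbitrary intersections; the graphs of $+$, of each $\cdot c$, and the constants are all affine subspaces and are therefore available for free. Writing $C_1,\dots,C_6$ for the six representatives listed in the theorem, I would split Theorem~\ref{TheoremDescriptionOfEqClasses} into four tasks: (a) the six geometric conditions partition the convex sets; (b) no non-convex set is definable; (c) a set in class $k$ defines $C_1,\dots,C_k$ and is interdefinable with $C_k$; and (d) a set in class $k$ defines none of $C_{k+1},\dots,C_6$. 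Task (b) is immediate: for every $\lambda\in[0,1]$ the binary map $(x,y)\mapsto\lambda x+(1-\lambda)y$ is an affine combination, hence a polymorphism of the base structure, and it preserves every convex relation; by Lemma~\ref{LemmaDefinablePolyStable} any infinitary pp-definable set is invariant under all of these maps, and invariance under all binary convex combinations is exactly convexity.

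For the partition (a) I would argue in convex geometry, organised by a single dichotomy: whether or not $S$ is of the form $B+V$ with $B$ bounded convex and $V$ a linear subspace. The sets of this form make up classes~1--4, split according to whether they are an affine subspace, closed but not an affine subspace, non-closed without a \dent{}, or non-closed with a \dent{}; here disjointness rests on the facts that a closed convex set has no \dent{} and that a set $B+V$ has no ray intersection. The convex sets not of the form $B+V$ are exactly classes~5 and~6, separated by the presence of a ray intersection. Mutual exclusivity and exhaustiveness then reduce to bookkeeping of these implications.

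Task (c) has two layers. For the descending chain it suffices to produce $C_k$ from $C_{k+1}$, since definability is transitive; the representatives are arranged so that $C_3=(0,1)$ yields $C_2=[0,1]=\bigcap_{\epsilon>0}(-\epsilon,1+\epsilon)$ as an intersection of its affine images, the pointed rectangle $C_4$ projects onto the open interval $(-1,1)\cong C_3$, the pointed rectangle $C_4$ is cut from the pointed stripe $C_5$ by intersecting with a vertical slab, and $C_2$ gives the constant $C_1=\{0\}$ trivially. The construction of the pointed stripe from the half-line is the delicate step: all closed convex sets are available as intersections of closed half-spaces (preimages of $[0,\infty)$), and projecting the closed cone $\{(x,y,z)\mid x^2\le yz,\ y,z\ge 0\}$ to the $(x,y)$-plane collapses a whole line to its apex, producing an open half-plane together with the single boundary point $(0,0)$, whose intersection with an open slab is exactly $C_5$. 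For interdefinability inside a class I would first quotient out the subspace $V$ by an affine projection, reduce to the full-dimensional bounded part, and then normalise it to the representative using slab intersections (a compact convex body is the intersection of all slabs containing it, and an open ball is the intersection of the open slabs containing it), together with a careful analysis of which boundary points the \dent{} condition forces to be present.

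Task (d) I would handle by exhibiting, for each $k$, a family $\mathcal F_k$ of convex sets that is closed under affine images, affine preimages, and arbitrary intersections, contains all affine subspaces and $C_k$ but not $C_{k+1}$; since definability is transitive and $C_{k+1}$ in turn defines every higher representative, this rules out all of $C_{k+1},\dots,C_6$ at once. The natural candidates are the affine subspaces; the closed convex sets whose recession cone is a subspace; those intersected with the \dent{}-free condition; all sets $B+V$; and finally all convex sets without a ray intersection. The main obstacle, on both sides, is the interaction of non-closedness with the operations: an affine image of a closed convex set need not be closed, so I must verify that the recession-cone and boundedness invariants nonetheless survive projection, and I must control precisely which boundary points appear when normalising an arbitrary non-closed convex set in classes~3--5 to its representative. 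Once these closure properties and normal forms are in place, assembling (a)--(d) yields the classification together with the stated ``lower-or-equal number'' description.
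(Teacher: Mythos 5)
Your overall architecture coincides with the paper's own proof: your task (b) is exactly Lemma \ref{LemmaConvexPolyStable}; your task (d), building families closed under affine images, affine preimages and arbitrary intersections, is exactly the paper's route via Lemma \ref{LemmaIPPGeometrisch} together with Lemmas \ref{LemmaAffinePolyStable}, \ref{LemmaIndependenceCompact}, \ref{LemmaNoSemidentIndependent}, \ref{LemmaBoundedPerserved} and \ref{LemmaRayIntersectionStable}; and your cone $\{(x,y,z)\mid x^2\le yz,\ y,z\ge 0\}$ is the paper's Lemma \ref{LemmaKonvexesBeispiel} and Corollary \ref{CorollaryPointedHalvePlaneDefinition}. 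The descending chain among the six representatives is also sound. The divergence, and the genuine gap, lies in task (c), interdefinability within a class, in both directions.

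For the direction ``representative defines an arbitrary member of its class'', your stated mechanism --- quotient out $V$, then normalise by slab intersections, on the grounds that a compact body is the intersection of the slabs containing it and an open ball is the intersection of the open slabs containing it --- fails for classes 3--5. A \dent{}-free non-closed set need not be an intersection of slabs of any kind: take the paper's own example $\{(x,y)\mid -1\le x\le 1,\ -1\le y\le 1,\ (x<0\lor x^2+y^2<1)\}$, which is \dent{}-free and in class 3, yet its excluded boundary point $(0,1)$ cannot be cut off by any slab. Every closed slab containing $S$ contains $\closure{S}\ni(0,1)$, and an open slab excluding $(0,1)$ would require a hyperplane through $(0,1)$ with $S$ strictly on one side, which does not exist: the unique supporting hyperplane $y=1$ at $(0,1)$ meets $S$ in the segment with $x<0$, and every other hyperplane through $(0,1)$ has $S$ on both sides. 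So intersections of affine preimages of intervals are provably insufficient here; one genuinely needs projections plus an extra idea, which in the paper is the lift $T'=\{(x,y)\mid x\in S,\ y_1=|x_1|,\dots,y_n=|x_n|\}$, its convex hull, and the use of the no-\dent{} hypothesis through Lemma \ref{LemmaNoDentAlternative} (Lemma \ref{LemmaBoundedForOpenIntervall}); classes 4 and 5 need the further bespoke inductions of Theorems \ref{TheoremBoundedDefining} and \ref{TheoremDefiningFromStripe}. Symmetrically, in the direction ``arbitrary member defines the representative'', your plan contains no mechanism for manufacturing the isolated point of the pointed rectangle or pointed stripe from an arbitrary dented set; the paper does this with the five-stage construction of Lemma \ref{LemmaDefiningPointedRectangle} and the analogous Lemma \ref{LemmaDefiningPointedStripeLimited}. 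Your phrase ``careful analysis of which boundary points the \dent{} condition forces to be present'' names precisely this problem but supplies no idea for solving it, and it is the technical core of the theorem.
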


Before giving some examples in Section \ref{SectionExamples}, we want to give another main theorem.
%We give some examples of convex sets and their equivalence class in Section \ref{SectionExamples}.
There is also a geometric interpretation of infinitary pp-definability, which allows the following description of the classification:
\begin{thm} \label{TheoremMainGeometrisch}
%\label{TheoremIPPGeometrisch2}
    Consider the structure $(\IR;+,\cdot c \mid c \in \IR, c\mid c \in \IR)$ and a set $\tau \subset \bigsqcup_{n\in \IN} \powerset(\IR^n)$ of some subsets of real vector spaces. Assume that $\tau$ contains only convex sets. Then, the following are equivalent:
    \begin{enumerate}
        \item \label{TheoremMainGeometrisch1}
        The set $\tau$ is closed under infinitary pp-definitions over the structure $(\IR;+,\cdot c \mid c \in \IR, c\mid c \in \IR)$.
        \item \label{TheoremMainGeometrisch2}
        The set $\tau$ has the following properties:
        \begin{itemize}
            \item It contains the empty set and $\IR^1$.
            \item For each $S\subset \IR^n\in \tau$ and each affine map $f\colon \IR^n \to \IR^m$, the set $f(S)$ is in $\tau$.
            \item For each $T\subset \IR^m\in \tau$ and each affine map $f\colon \IR^n \to \IR^m$, the set $f^{-1}(T)$ is in $\tau$.
            \item For each $n$ and each subset $I \subset \tau\cap {\powerset(\IR^n)}$, the intersection $\bigcap_{S\in I} S\subset \IR^n$ is in $\tau$.
        \end{itemize}
        \item The set $\tau$ is one of the following 6 sets:
        \begin{itemize}
            \item All affine sets,
            \item All sums $\{b+v\mid b\in B, v\in V\}$ of a vector space $V$ and a compact convex set $B$,
            \item All sums $\{b+v\mid b\in B, v\in V\}$ of a vector space $V$ and a bounded convex set $B$ where the sum has no \dent{},
            \item All sums $\{b+v\mid b\in B, v\in V\}$ of a vector space $V$ and a bounded convex set $B$,
            \item All convex sets without a ray intersection or
            \item All convex sets.
        \end{itemize}
        \item 
        There is a subset of the set $\{ f\colon \IR^\IN \to \IR\mid f((c)_{n\in \IN})=c \}$ of linear maps such that the set $\tau$ is the family of all convex sets that are preserved by all of those maps.
    \end{enumerate}
    The equivalence of \ref{TheoremMainGeometrisch1} and \ref{TheoremMainGeometrisch2} also holds for non convex sets.
\end{thm}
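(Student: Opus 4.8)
The plan is to prove the cycle of implications $(1)\Leftrightarrow(2)$, then $(2)\Leftrightarrow(3)$, and finally to connect $(4)$, using the classification Theorem~\ref{TheoremDescriptionOfEqClasses} as a black box throughout. Only the first equivalence uses no convexity, so the same argument will also establish the closing sentence about non-convex sets.

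First I would prove $(1)\Leftrightarrow(2)$ by unwinding the inductive definition of infinitary pp-definability (Definition~\ref{DefinitionIPPDefinable}) over the pure vector space structure $(\IR;+,\cdot c\mid c\in\IR, c\mid c\in\IR)$. The dictionary is: atomic formulas define affine subspaces (solution sets of linear equations with constants), while the formulas \emph{true} and \emph{false} give the base cases $\IR^1$ and $\emptyset$; substitution of variables by affine terms realises an affine preimage $f^{-1}(S)$; existential quantification realises the affine image (projection) $f(S)$; and a finite or infinite conjunction of formulas on a common tuple of variables realises an intersection $\bigcap_i S_i$. For $(2)\Rightarrow(1)$ one exhibits each closure operation as such a formula, e.g. $f^{-1}(S)=\{x\mid \exists y\,(y=Ax+w)\land S(y)\}$ and $f(S)=\{y\mid \exists x\,(y=Ax+w)\land S(x)\}$. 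For $(1)\Rightarrow(2)$ I would run a structural induction over the formula class, the only bookkeeping being that to conjoin relations living on different variable tuples one first pulls each back to a common tuple by an affine preimage. Neither direction refers to convexity, which yields the final remark.

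Next I would deduce $(2)\Leftrightarrow(3)$ from the classification. Let $\tau_k$ be the family of convex sets infinitary pp-definable from the $k$-th representative of Theorem~\ref{TheoremDescriptionOfEqClasses}. By that theorem a convex set is definable from the $k$-th representative exactly when its own class number is $\le k$, and no non-convex set is definable at all; hence $\tau_1\subsetneq\cdots\subsetneq\tau_6$ are precisely the initial segments of the six linearly ordered classes, and reading off the geometric descriptions in Theorem~\ref{TheoremDescriptionOfEqClasses} identifies $\tau_1,\dots,\tau_6$ with the six families listed in $(3)$. Each $\tau_k$ is the set of all sets definable from a fixed structure, hence closed under infinitary pp-definitions (definitions compose), hence satisfies $(2)$ by the equivalence just proved; this gives $(3)\Rightarrow(2)$. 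For $(2)\Rightarrow(3)$, a family $\tau$ with property $(2)$ is, by $(2)\Rightarrow(1)$, closed under infinitary pp-definitions and therefore downward closed in the class order; since it contains all affine sets (these are definable from the empty extra signature, so lie in every closed family) and there are only six classes, $\tau$ coincides with $\tau_k$ for $k$ the largest class number occurring in $\tau$.

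Finally I would connect $(4)$. The implication $(4)\Rightarrow(2)$ is easy: a linear map $\IR^\IN\to\IR$ fixing constant sequences is exactly an infinite-arity polymorphism of $(\IR;+,\cdot c\mid c\in\IR, c\mid c\in\IR)$, so (by the reasoning behind Lemma~\ref{LemmaDefinablePolyStable}) the family of all sets it preserves contains the affine sets and is closed under affine images, affine preimages and intersections; intersecting with the convex sets keeps these properties and the base cases, so the family in $(4)$ satisfies $(2)$. The substantial direction is $(3)\Rightarrow(4)$, and this is where I expect the main obstacle: over an infinite domain there is no abstract Galois theorem to invoke (pp-definability and polymorphism invariance genuinely differ here), so for each $\tau_k$ one must \emph{exhibit} a set $F_k$ of constant-fixing linear maps whose common convex invariants are exactly $\tau_k$. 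Concretely one needs that every convex $T$ of class $>k$ is moved out of itself by some $f\in F_k$ that still preserves all of $\tau_k$. These separating maps are exactly those produced in the non-implication arguments of Sections~\ref{SectionProofConstructing}--\ref{SectionNonImplications}: finite affine combinations such as $(x_i)_i\mapsto 2x_1-x_2$ already separate the non-affine sets from class $1$, while genuinely infinitary maps (for instance ultrafilter or Banach-type limits $(x_i)_i\mapsto\lim_{\mathcal U}x_i$, extended to total constant-fixing linear maps on $\IR^\IN$) are what detect the failures of closedness and boundedness and the presence of a \dent{} or a ray intersection that separate the higher classes. Assembling these witnesses class by class yields the required $F_k$ and closes the cycle of equivalences.
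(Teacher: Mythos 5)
Your proposal is correct and follows essentially the same route as the paper: the paper proves (1)$\Leftrightarrow$(2) by the same structural induction on infinitary pp-formulas (Lemma \ref{LemmaIPPGeometrisch} together with Theorem \ref{TheoremIPPGeometrisch2}, noting as you do that this part needs no convexity), obtains (3) directly from the classification Theorem \ref{TheoremDescriptionOfEqClasses}, and proves (1)$\Rightarrow$(4) exactly along the lines you anticipate, by explicitly exhibiting constant-fixing linear maps of countable arity (an affine combination, ultrafilter limits, a projection onto the first coordinate, and a weighted sum $\sum_n 2^{-(n+1)}x_n$, each defined on bounded sequences and extended linearly to all of $\IR^{\IN}$) that separate consecutive classes, in Lemma \ref{LemmaTrennerPolys} and Corollary \ref{CorollaryCountablePolysTrennen}. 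The only cosmetic discrepancy is your guess that these separating maps appear in the non-definability arguments of Section \ref{SectionNonImplications} --- there the paper argues purely geometrically via closure properties, and the separating polymorphisms are constructed afresh in Section \ref{SectionCorollaries} --- but the witnesses you describe are mathematically the ones the paper uses.
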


Since infinitary pp-definability implies polymorphism invariance, we easily get the following corollary for polymorphisms.

\begin{thm} \label{TheoremClassificationUpPolymorphisms}
    Let $C$ be a locally closed clone on $\IR$ such that $C$ is contained in the finite arity polymorphisms of $(\IR;+,\cdot c \mid c\in \IR, c\mid c \in \IR)$ and $C$ contains the finite arity polymorphisms of the structure of all convex relations. Then $C$ is one of the following two clones:
    \begin{enumerate}
        \item The polymorphism clone of $(\IR;+,\cdot c \mid c\in \IR, c\mid c \in \IR)$ which equals the polymorphism clone of all affine relations.
        In this clone, all elements $f\colon \IR^n \to \IR$ can be described as
        $$
            (a_1,a_2,\dots ,a_n) \mapsto (\lambda_1a_1+\lambda_2a_2+\dots +\lambda_n a_n)
        $$
        where $\lambda_1,\lambda_2,\dots ,\lambda_n\in \IR$ can be any constants such that $\lambda_1+\lambda_2+\dots +\lambda_n = 1$.
        \item The polymorphism clone of the structure containing all convex relations.
        In this clone, all elements $f\colon \IR^n \to \IR$ can be described as
        $$
            (a_1,a_2,\dots ,a_n) \mapsto (\lambda_1a_1+\lambda_2a_2+\dots +\lambda_n a_n)
        $$
        where $\lambda_1,\lambda_2,\dots ,\lambda_n\ge 0$ and $\lambda_1+\lambda_2+\dots +\lambda_n = 1$.
    \end{enumerate}
    In particular, if $p$ is a finite arity polymorphism of $(\IR;+,\cdot c \mid c\in \IR, c\mid c \in \IR)$ that preserves any non-affine convex set, then it preserves all non-affine convex sets.
\end{thm}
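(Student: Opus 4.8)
The plan is to reduce everything to two concrete descriptions of the endpoint clones plus one ``test'' relation. First I would pin down the elements of $\Pol(\IR;+,\cdot c\mid c\in\IR,c\mid c\in\IR)$. A finite arity map $f\colon\IR^n\to\IR$ preserves the graph of $+$ iff it is additive, and preserves the graph of every $\cdot c$ iff it is homogeneous for every real scalar; together these force $f(a_1,\dots,a_n)=\sum_i\lambda_i a_i$ to be linear, and preservation of the constant $1$ forces $\sum_i\lambda_i=1$. So these polymorphisms are exactly the affine combinations, giving description~(1); in particular every $f\in C$ is such an affine combination. For description~(2) I would use that a convex combination preserves every convex set, while conversely any $f$ preserving all convex sets preserves all affine subspaces (hence is an affine combination) and preserves $[0,1]$ (hence has nonnegative coefficients); so the polymorphisms of the structure of all convex relations are exactly the convex combinations.

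The dichotomy then comes from a single observation: an affine combination $\sum_i\lambda_i a_i$ preserves $[0,1]$ if and only if all $\lambda_i\ge 0$, since the minimum of $\sum_i\lambda_i a_i$ over $[0,1]^n$ equals $\sum_{\lambda_i<0}\lambda_i$, which is $\ge 0$ exactly when no coefficient is negative. I then split into two cases. If every element of $C$ preserves $[0,1]$, then every element of $C$ is a convex combination, so $C\subseteq\Pol(\text{all convex relations})\subseteq C$ and $C$ is the second clone. Otherwise $C$ contains an affine combination with a negative coefficient, and the task is to show this forces $C$ to contain all affine combinations.

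This last implication is the main obstacle and the only genuinely computational step. Given $f\in C$ with $\lambda_j<0$, substituting the projection to $x$ in coordinate $j$ and the projection to $y$ elsewhere yields a binary $g(x,y)=\lambda_j x+(1-\lambda_j)y\in C$ with $\lambda_j<0$. I would then analyse the set $\Lambda\subseteq\IR$ of all $\lambda$ with $\lambda x+(1-\lambda)y\in C$. It contains $[0,1]$ and the negative $\lambda_j$; it is convex, because a convex combination of two such binary operations again lies in $C$ and realises the corresponding convex combination of coefficients; it is closed under $\lambda\mapsto 1-\lambda$ by swapping arguments; and it is closed under multiplication, since composing $\lambda x+(1-\lambda)y$ with $\lambda'x+(1-\lambda')y$ in the first argument realises the coefficient $\lambda\lambda'$. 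An interval containing $[0,1]$ and a negative number, closed under $\lambda\mapsto 1-\lambda$ and under products, must be all of $\IR$: it contains some $\nu>1$, hence the values $\nu^{2^k}\to\infty$, so it is unbounded above, and then unbounded below via $\lambda\mapsto 1-\lambda$. Thus every binary affine combination lies in $C$, and since any affine combination decomposes into a finite composition of binary ones (repeatedly merge two coordinates whose coefficients have nonzero sum, which is always possible), all affine combinations lie in $C$ and $C$ is the first clone. I would remark that local closure is in fact automatic here: two affine maps agreeing on $\{0,e_1,\dots,e_n\}$ coincide, so every subclone of $\Pol(\IR;+,\cdot c\mid c\in\IR,c\mid c\in\IR)$ is already locally closed.

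Finally, for the concluding sentence I would apply the same machinery to the clone generated by a single $p$ together with all convex combinations. If $p$ has a negative coefficient, the generation step shows this clone is all affine combinations, so every convex set preserved by $p$ is preserved by all affine combinations and is therefore affine; contrapositively, if $p$ preserves some non-affine convex set then $p$ has no negative coefficient, i.e.\ $p$ is a convex combination and preserves every convex set. Alternatively, this is immediate from Theorem~\ref{TheoremMainGeometrisch}: the convex sets preserved by $p$ form one of the six families, and the only family omitting $[0,1]$ is the family of affine sets.
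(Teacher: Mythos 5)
Your proposal is correct, but it takes a genuinely different route from the paper's. Both arguments begin the same way: you identify $\Pol(\IR;+,\cdot c \mid c\in\IR, c\mid c\in\IR)$ with the affine combinations and the polymorphisms of all convex relations with the convex combinations (the paper tests nonnegativity of coefficients against $\{x\in\IR\mid x\ge 0\}$ where you use $[0,1]$; the computation is the same). The divergence is in the dichotomy. The paper invokes its Galois machinery: by Theorem \ref{TheoremLocallyClosedClone} the intermediate locally closed clone is $\Pol(\tau)$ for $\tau$ its set of invariant relations, Theorem \ref{TheoremForPolymorphisms} makes $\tau$ closed under infinitary pp-definitions and directed unions, and Lemma \ref{LemmaIPPandUnions} --- which rests on the full classification, Theorem \ref{TheoremDescriptionOfEqClasses} --- forces $\tau$ to consist of all affine or of all convex sets. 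You instead argue entirely inside the clone: one negative coefficient yields the binary operation $\lambda_j x+(1-\lambda_j)y$; your coefficient set $\Lambda$ is convex, closed under $\lambda\mapsto 1-\lambda$ and under products, hence equals $\IR$; and the merge-two-coordinates decomposition (always available, since not all pairwise coefficient sums can vanish when the total is $1$) lifts the binary case to arbitrary arities. All of these steps check out, including the final claim in both of your versions --- the generation argument, and the alternative via Theorem \ref{TheoremMainGeometrisch}, whose proof does not depend on this theorem, so there is no circularity. What each approach buys: the paper's route derives the clone statement as a short corollary of its relational classification and identifies the invariant co-clone along the way; your route is elementary and self-contained, shows the dichotomy does not need the six-class theorem at all, and your closing observation --- an affine combination is determined by its values on $e_1,\dots,e_n$, so every subclone of the affine clone is automatically locally closed --- even exposes the local-closure hypothesis of the statement as redundant, a fact invisible in the paper's proof, which needs local closure precisely to invoke Theorem \ref{TheoremLocallyClosedClone}.
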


%One should note that this proof of Theorem \ref{TheoremClassificationUpPolymorphisms} is not the shortest possible proof because five of the equivalence classes we consider collapse. 

\subsection{Examples} \label{SectionExamples}
We want to give some examples to visualise the definitions of the equivalence classes in Theorem \ref{TheoremDescriptionOfEqClasses}.

\begin{example}
    Some convex sets and their equivalence classes with respect to pp-interdefinability.
    \begin{itemize}
        %\item The sets given in Theorem \ref{TheoremDescriptionOfEqClasses} are each contained in their  equivalence class.
        \item The open ray $\{x\in \IR\mid x>0 \}$
        $$
        \begin{tikzpicture}
            \draw [very thin] (0,0) -- (5,0);
            \draw [very thin, dashed] (0,0) -- (-5,0);
            \draw [fill=white] (0,0) circle [radius=2pt];
        \end{tikzpicture}
        $$
        is infinitary pp-interdefinable with the closed ray.
        \item The half-open interval $\{x\in \IR\mid 0\le x< 1 \}$
        $$
        \begin{tikzpicture}
            \draw [very thin, dashed] (-5,0) -- (-1,0);
            \draw [very thin] (-1,0) -- (1,0);
            \draw [very thin, dashed] (1,0) -- (5,0);
            \draw [fill] (-1,0) circle [radius=2pt]; 
            \draw [fill=white] (1,0) circle [radius=2pt];
        \end{tikzpicture}
        $$
        is infinitary pp-interdefinable with the open interval.
        \item The set $\{(x,y)\in \IR^2 \mid (0<y<1) \lor (y=0\land x> 0)\}$ 
        $$
        \begin{tikzpicture}
            \fill [gray!20!white] (-5,1) -- (5,1) -- (5,0) -- (-5,0) -- (-5,1);
            \draw [very thin, dashed] (-5,1) -- (5,1);
            \draw [very thin, dashed] (-5,0) -- (0,0);
            \draw [very thin] (0,0) -- (5,0);
            \draw [fill=white] (0,0) circle [radius=2pt];
        \end{tikzpicture}
        $$
        is infinitary pp-interdefinable with the ray. Its closure $\{(x,y)\in \IR^2\mid 0\le x\le 1\}$ is infinitary pp-interdefinable with the compact interval.
        \item The set $\{(x,y,z,w)\in \IR^4\mid x^2+y^2<1 \lor (x^2+y^2=1 \land xw=yz)\}$ is infinitary pp-interdefinable with the pointed stripe. It is not a finite union of sums of bounded sets and vector spaces.
        \item Every compact convex set which consists of at least two points has infinitely many points and is infinitary pp-interdefinable with a compact interval.
        \item The set $\{(x,y)\in \IR^2 \mid 0\le x\le 1\land -1\le y\le 1 \land (x<0 \lor x^2+y^2<1)$ 
        $$
        \begin{tikzpicture}
            %\fill [gray!20!white] (-1,1) -- (1,1) -- (1,0) -- (-1,0) -- (-1,1);
            \fill [gray!20!white] (0,0) -- (1,0) arc [start angle=0, end angle=90, radius=1] -- (-1,1) -- (-1,0) -- cycle;
            \draw [very thin] (-1,1) -- (0,1);
            \draw [very thin] (-1,0) -- (1,0);
            \draw [very thin] (-1,0) -- (-1,1);
            \draw [very thin, dashed] (1,0) arc [start angle=0, end angle=90, radius=1];
            \draw [fill=white] (0,1) circle [radius=2pt];
            %\draw [fill=white] (1,0) circle [radius=2pt];
        \end{tikzpicture}
        $$
        does not contain $(0,1)$ but every line through this point intersects the set. 
        However, this is not a \dent{} (and there is no other \dent{}), so it is infinitary pp-interdefinable with the open interval.
    \end{itemize}
\end{example}

\subsection{Overview of the proof}
In the following sections, we will prove Theorem \ref{TheoremDescriptionOfEqClasses} and its variants. 
%As we prove a complete classification, we will do similar steps a couple of times. 
In Section \ref{SectionProofConstructing}, we show for each equivalence class how to infinitary pp-define the representative of that equivalence class. 
This is followed by Section \ref{SectionProofReverse} in which we show how to infinitary pp-define all elements of an equivalence class when given the representative of that class. The third part of the proof, Section \ref{SectionNonImplications}, shows that elements from different equivalence classes are not infinitary pp-interdefinable. All of this is combined in Section \ref{SectionCorollaries} to complete the proof. 
%It is in theory also possible to start reading there and backtrack all of the proofs but we recommend the order from front to back.

\section{Defining the representatives}
\label{SectionProofConstructing}
This section shows how to define the representatives of the infinitary pp-interdefinability equivalence relation from a given convex relation.

\subsection{Geometric considerations}
We start by considering and classifying closed convex sets. The results will be applicable to the closure of any convex set as:
%We recall the following lemma:
\begin{lemma} \label{LemmaClosureStaysConvex}
    The closure of a convex set %$S\subset \IR^n$ 
    is convex.
\end{lemma}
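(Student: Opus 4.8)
The plan is to verify the defining condition of convexity directly for $\closure{S}$, using the sequential characterisation of the topological closure in the metric space $\IR^n$. Let $S\subset \IR^n$ be convex; I want to show that for arbitrary $x,y\in \closure{S}$ and every $\lambda\in[0,1]$ the point $\lambda x + (1-\lambda)y$ again lies in $\closure{S}$, which is exactly what convexity of $\closure{S}$ demands.

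First I would fix sequences witnessing membership in the closure: since $x,y\in\closure{S}$, there are sequences $(x_k)_{k\in\IN}$ and $(y_k)_{k\in\IN}$ of points of $S$ with $x_k\to x$ and $y_k\to y$. Next I would use convexity of $S$ to observe that, because $\lambda\in[0,1]$, each convex combination $\lambda x_k + (1-\lambda)y_k$ lies in $S$. Finally I would invoke continuity of the affine combination map $(a,b)\mapsto \lambda a + (1-\lambda)b$ on $\IR^n\times\IR^n$ to conclude $\lambda x_k + (1-\lambda)y_k \to \lambda x + (1-\lambda)y$; the target point is then a limit of points of $S$ and hence belongs to $\closure{S}$.

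There is essentially no serious obstacle here: the only point needing a (routine) justification is that the closure in $\IR^n$ is sequentially closed, so that passing to limits of the approximating sequences keeps us inside $\closure{S}$. If one prefers to avoid sequences entirely, an equivalent argument works with open balls: given any $\epsilon>0$, pick $s,t\in S$ with $\dist(s,x)<\epsilon$ and $\dist(t,y)<\epsilon$, so that $\lambda s+(1-\lambda)t\in S$ and, by the triangle inequality together with $\lambda,1-\lambda\ge 0$, this point lies within distance $\epsilon$ of $\lambda x+(1-\lambda)y$; thus every neighbourhood of the combination meets $S$, placing it in $\closure{S}$. Either route delivers the statement with no substantive difficulty.
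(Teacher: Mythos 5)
Your proof is correct in both of its variants: the sequential argument and the $\epsilon$-ball argument each verify the convexity condition for $\closure{S}$ without any gap, and the key estimate $\dist\bigl(\lambda s+(1-\lambda)t,\ \lambda x+(1-\lambda)y\bigr)\le \lambda\,\dist(s,x)+(1-\lambda)\,\dist(t,y)<\epsilon$ is exactly right. The paper itself states this lemma without proof, treating it as a standard fact, so your write-up is precisely the routine argument the author left implicit; there is nothing to compare beyond noting that your proof fills that omission in the expected way.
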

Let us now focus on another lemma: %We use it to prove another lemma:
\begin{lemma} \label{LemmaMaximalVectorforClosed}
    Let $S\subset \IR^n$ be a closed convex subset. 
    Then, the following sets are equal:
    \begin{enumerate}
        \item Every inclusion-maximal linear subspace $V_1$ of $\IR^n$ such that for all $s \in S, v_1 \in V_1$ also $s+v_1 \in S$.
        \item The set $V_2$ of all vectors $v_2$ such that for all $s\in S$, we have $s+v_2\in S$ and $s-v_2\in S$.
        \item For all $s\in S$ the set $V_s$ of all $v$ such that for all $\lambda \in \IR$, we have $\lambda v+s\in S$.
    \end{enumerate}
\end{lemma}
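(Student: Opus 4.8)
The plan is to prove the two inclusions $V_2 \subseteq V_s$ and $V_s \subseteq V_2$ for every fixed $s \in S$. This simultaneously shows that $V_s$ does not depend on the base point $s$ and that it coincides with $V_2$. Afterwards I would verify that $V_2$ is a linear subspace and finally identify it as the unique inclusion-maximal subspace $V_1$.

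For the inclusion $V_2 \subseteq V_s$, I would fix $s \in S$ and $v \in V_2$. Iterating the defining property of $V_2$ gives $s + n v \in S$ and $s - n v \in S$ for every $n \in \IN$. By convexity the entire segment from $s - n v$ to $s + n v$ lies in $S$, so $s + \lambda v \in S$ for all $\lambda \in [-n,n]$; letting $n \to \infty$ yields $s + \lambda v \in S$ for all $\lambda \in \IR$, i.e. $v \in V_s$. This direction uses only convexity. The converse $V_s \subseteq V_2$ is where closedness enters, and this is the one genuinely delicate step. Fixing $s$ and $v \in V_s$, so that the whole line $s + \IR v$ lies in $S$, I would take an arbitrary $s' \in S$ and consider the convex combinations $\tfrac{1}{N}(s + N v) + (1 - \tfrac1N) s' = v + \tfrac1N s + (1-\tfrac1N) s'$, each of which lies in $S$ as a convex combination of the two points $s + N v \in S$ and $s' \in S$. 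As $N \to \infty$ these points converge to $s' + v$, and since $S$ is closed we obtain $s' + v \in S$. Since $v \in V_s$ is equivalent to $-v \in V_s$, the same argument gives $s' - v \in S$, hence $v \in V_2$. Performing both inclusions for every $s$ establishes $V_s = V_2$ for all $s \in S$, so in particular $V_s$ is independent of the chosen base point.

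It then remains to check that $V_2$ is a linear subspace and is maximal. For additive closure I would note that $v \in V_2$ is equivalent to $S + v = S$, since $S + v \subseteq S$ and $S - v \subseteq S$ together force equality; hence $S + (v+w) = (S+v) + w = S + w = S$ whenever $v, w \in V_2$, so $v + w \in V_2$. Closure under scalar multiplication is immediate from the description $V_2 = V_s$, because scaling a direction $v$ by $c \in \IR$ does not change the line $s + \IR v$ it spans. Finally, any linear subspace $V$ with $s + v \in S$ for all $s \in S$ and $v \in V$ automatically satisfies $s - v \in S$ as well (as $-v \in V$), so $V \subseteq V_2$; since $V_2$ is itself such a subspace, it is the unique inclusion-maximal one, giving $V_1 = V_2$. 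I expect the main obstacle to be precisely the limiting argument in $V_s \subseteq V_2$, where both convexity and closedness of $S$ are indispensable; the remaining algebraic verifications are routine once $V_s = V_2$ is in place.
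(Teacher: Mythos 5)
Your proof is correct and follows essentially the same route as the paper: the crucial step $V_s \subseteq V_2$ is in both cases a closedness argument approximating $s'+v$ by points on the segments from $s'$ to $s+Nv$, your convex combination $\tfrac{1}{N}(s+Nv)+(1-\tfrac{1}{N})s'$ being an algebraically explicit version of the paper's choice of the point at distance $\|v\|$ from $s'$ on that segment. The remaining bookkeeping differs only cosmetically: you obtain additivity of $V_2$ from the identity $S+v=S$ and scalar closure from the identification $V_2=V_s$, whereas the paper verifies the subspace axioms of $V_2$ directly from convexity and then deduces $V_1=V_2$ and $V_2\subset V_s$ from that.
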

\begin{definition} \label{DefinitionInnerVspace}
    Let $S$ be a convex set. We define the \de{inner vector space} of $S$ as the vector space from Lemma \ref{LemmaMaximalVectorforClosed} applied to the closed convex set $\closure{S}$. 
\end{definition}
\begin{proofof}{Lemma \ref{LemmaMaximalVectorforClosed}}
    It suffices to pick one $V_1$ and one $V_s$ since the second condition is independent of any choice.
    
    Clearly, $V_1\subset V_2$.
    
    The set $V_2$ is by definition closed under addition and closed under taking negations. Since $S$ is convex, it is closed under taking scalars in the interval $[0;1]$. Together, the addition, the negations and the scalar multiplication in $[0;1]$ imply that $V_2$ is closed under every scalar multiplication. Therefore, $V_2$ is a vector space, so $V_1=V_2$ and $V_2\subset V_s$.
    
    For $V_s\subset V_2$, note that clearly $0\in V_2$ and $0\in V_s$. For $v\ne 0, v\in V_s$, take any $s'\in S$ and look at the affine sequence $s, s+v, s+2v, s+3v, \dots $. Since this sequence diverges, its distance to $s'$ will be greater than $\|v\|$ for all large enough entries. Take those entries $s+mv$ and look at the point on the line segment from $s'$ to $s+mv$ which have distance $\|v\|$ from $s'$. 
    $$
    \begin{tikzpicture}
        \draw (-2,0) -- (-3,1);
        \draw (-2,0) -- (-1,1);
        \draw (-2,0) -- (1,1);
        \draw (-2,0) -- (3,1);
        \draw (-2,0) -- (5,1);
        \draw [->] (-3,1) -- (6,1);
        \draw [dotted] (-2,0) circle (2);
        \draw (-2,0) node [anchor=north]{$s'$};
        \draw (-3,1) node [anchor=south]{$s$};
        \draw (-1,1) node [anchor=south]{$s+v$};
        \draw (1,1) node [anchor=south]{$s+2v$};
        \draw (3,1) node [anchor=south]{$s+3v$};
        \draw (5,1) node [anchor=south]{$s+4v$};
        \draw [fill] (0,0) circle [radius=2pt];
        \draw (0,0) node [anchor=north west]{$s'+v$};
    \end{tikzpicture}
    $$    
    Those points are in $S$ since $S$ is convex and they converge to $s'+v$. Since $S$ is closed, $s'+v\in S$. Taking $-v$ instead of $v$ shows $s'-v\in S$. This construction works with every nonzero $v\in V_s$ and every $s'\in S$, so $V_s\subset V_2$.
\end{proofof}
\begin{lemma} \label{LemmaDichotomieClosed}
    Let $S\subset \IR^n$ be a closed convex subset. Then, exactly one of the following holds:
    \begin{enumerate}
        \item The set $S$ is the sum $\{x+y \mid x\in C, y \in V\}$ of a compact set $C$ and the inner vector space $V$ or
        \item the set $S$ has a ray intersection.
    \end{enumerate}
\end{lemma}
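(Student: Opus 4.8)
The plan is to peel off the inner vector space and reduce the statement to a boundedness dichotomy for a line-free cross-section. Let $V$ be the inner vector space of $S$ (Definition~\ref{DefinitionInnerVspace}) and set $C_0 \coloneqq S \cap V^\perp$. First I would record the decomposition $S = C_0 + V$: since $S + V = S$ by Lemma~\ref{LemmaMaximalVectorforClosed}, writing $s \in S$ as $s_{V^\perp} + s_V$ and subtracting $s_V \in V$ shows $s_{V^\perp} \in S \cap V^\perp = C_0$, and the reverse inclusion $C_0 + V \subseteq S$ is immediate. The set $C_0$ is closed and convex, and I would check that its own inner vector space is trivial: if $c + \IR w \subseteq C_0$ for some $c$ and some $w$, then $w \in V^\perp$ automatically (because $c$ and $c+\lambda w$ both lie in $V^\perp$), and feeding this through Lemma~\ref{LemmaMaximalVectorforClosed} applied to $C_0$ together with $S = C_0 + V$ shows $w$ lies in the inner vector space $V$ of $S$; hence $w \in V \cap V^\perp = \{0\}$.

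The dichotomy then becomes the dichotomy ``$C_0$ bounded versus unbounded''. If $C_0$ is bounded it is compact, and $S = C_0 + V$ is of the first form. If $C_0$ is unbounded I would manufacture a ray intersection. Fix $c \in C_0$ and a sequence $p_k \in C_0$ with $\|p_k\| \to \infty$; the unit vectors $(p_k - c)/\|p_k - c\|$ subconverge to some unit vector $w$, and the standard convexity-and-closedness argument---each $c + x\,(p_k - c)/\|p_k - c\|$ is a convex combination of $c$ and $p_k$ once $\|p_k - c\| \ge x$---gives $c + xw \in C_0$ for all $x \ge 0$. Because $C_0$ is line-free, the closed interval $\{x \mid c + xw \in C_0\}$ is not all of $\IR$, so it equals $[a,\infty)$ for a finite $a$; putting $v \coloneqq c + aw$ yields $v + xw \in C_0$ exactly when $x \ge 0$. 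Since $v,w \in V^\perp$ and $S \cap V^\perp = C_0$, membership of $v + xw$ in $S$ coincides with membership in $C_0$, so $x \mapsto v + xw$ is a genuine ray intersection of $S$.

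It remains to see the alternatives are mutually exclusive. Suppose $S = C + V$ with $C$ compact and $V$ the inner vector space, and suppose $x \mapsto v + xw$ were a ray intersection (so $w \neq 0$). The ray $\{v + xw \mid x > 0\} \subseteq C + V$ is unbounded, which forces $w \in V$: writing $v + kw = c_k + u_k$ with $c_k \in C$ and $u_k \in V$ and dividing by $k$, the term $c_k/k \to 0$ by compactness of $C$ while $(v + kw)/k \to w$, so $w = \lim u_k/k$ lies in the closed subspace $V$. But $w \in V$ means the whole line $v + \IR w$ lies in $S$, contradicting $v + xw \notin S$ for $x < 0$. Together with the previous paragraph this gives ``exactly one''.

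I expect the main obstacle to be the unbounded case, specifically extracting the recession direction $w$ and then ensuring the resulting ray is a ray intersection of $S$ itself rather than only of the cross-section $C_0$; the reduction $S = C_0 + V$ with $v, w \in V^\perp$ is precisely what makes these two coincide. The degenerate case $S = \emptyset$ falls under the first alternative with $C = \emptyset$.
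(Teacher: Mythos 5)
Your proof is correct, and it takes a cleaner route than the paper at the key step. The paper splits on whether $S$ is contained in $B+V$ for some bounded $B$: if so it takes $C=\closure{B}\cap S$; if not it picks points $x_n\in S$ whose $V$-tubes escape larger and larger balls around a fixed $s$, lifts them to representatives $y_n$ with $y_n-s$ orthogonal to $V$, extracts an accumulation direction $a-s$ on the unit sphere, and then needs a separate acute-triangle argument to show $a-s\notin V$ before invoking Lemma \ref{LemmaMaximalVectorforClosed} to find a missing point on the line and hence a ray intersection. You instead establish the decomposition $S=(S\cap V^{\perp})+V$ up front and reduce everything to the line-free cross-section $C_0$: the boundedness dichotomy is then exactly the desired one, the recession direction $w$ extracted from an unbounded sequence in $C_0$ automatically lies in $V^{\perp}$ (so the paper's trickiest step, ruling out that the limit direction falls into $V$, becomes trivial), and orthogonality makes a ray intersection of $C_0$ literally a ray intersection of $S$ — precisely the transfer issue you flag. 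Your mutual-exclusivity argument (writing $v+kw=c_k+u_k$ and dividing by $k$) also spells out what the paper dispatches with the one-line remark that an affine line meets $C+V$ in a compact set or in everything; both are sound, yours is more explicit. One phrasing nit: from $w\in V$ you conclude that ``the whole line $v+\IR w$ lies in $S$,'' which strictly requires some point of that line to lie in $S$ — the ray supplies one, e.g.\ $v+w\in S$, after which the full line is in $S$ by Lemma \ref{LemmaMaximalVectorforClosed} and the contradiction with $v+xw\notin S$ for $x<0$ goes through as you state.
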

\begin{proof}
    Clearly, it cannot be both, since the intersection of the affine line with the sum of a compact set and a vector space is compact or everything.
    Let $S$ be any closed convex set. %Define $V$ as inner vector space of $V$ and recall Lemma \ref{LemmaMaximalVectorforClosed}. 
    
    Assume that there is a bounded set $B$ such that $S$ is contained in $B+V$. Then take $C$ as the intersection of the closure $\closure{B}$ of $B$ with $S$. Now, $C$ is closed and bounded and thus compact. Moreover, $C\subset S$ and thus $C+V\subset S$. Actually, we get $S=C+V$: Take any $x\in S$.  Then, there is $v\in V$ such that $x+v\in B \subset C$. Therefore $x\in C+V$.
    
    Assume otherwise. Then, we have for each bounded set $B$ an element $x\in S$ such that $(\{x\}+V)\cap B=\emptyset$. 
    
    Take any $s \in S$. Let $B_n$ be the bounded sets $\ball_n(s)=\{x \in \IR^n \mid \|x-s\|<n\}$ and $x_n$ such that $(\{x_n\}+V)\cap B_n = \emptyset$. Moreover, choose $y_n$ to be the point in $\{x_n\}+V$ such that $y_n-s$ is orthogonal to all points in $V$. 
    Now, let $y_n'$ be the projection of $y_n$ onto the sphere in $\IR^n$ with radius 1 and centre $s$. Since the bounded sphere is compact, the sequence $(y'_n)_{n\in \IN}$ has an accumulation point $a$.
    If $a-s$ would be in $V$, then the definition of $y_n$ would imply that $y_n$, $s$ and $a$ are an acute triangle. This cannot be true for all $n$ by the accumulation property.
    
    By Lemma \ref{LemmaMaximalVectorforClosed} and $a-s\notin V$, we get that there is $\lambda  \in \IR$ such that $\lambda (a-s) + s \notin S$. On the other hand, $\lambda(a-s)+s$ is in $S$ for all $\lambda>0$, because $\lambda(a-s)+s$ can be approximated by line segments from $s$ to $x_n$ and $S$ is convex and closed. Therefore there is a maximal negative real number $y$ such that $z(a-s)+s\notin S$ for all $z<y$. Now, the affine map $\IR \to \IR^n, x \mapsto (x+y)(a-s)+s$ is a ray intersection.
\end{proof}

\begin{lemma} \label{LemmaEssentialInnerPoint}
    Let $S\subset \IR^n$ be a nonempty convex set.
    For a number $m\in \IN$, the following are equivalent:
    \begin{enumerate}
        \item 
        There is an affine space $W$, $S\subset W\subset \IR^n$ such that $m$ is the dimension of $W$ and there is no affine space of lower dimension with this property.
        \item  
        The number $m$ is maximal such that there are $m+1$ points in $S$ and their convex hull is $m$ dimensional.
    \end{enumerate}    
    Moreover, for a point $s\in S$, the following are equivalent:
    \begin{enumerate}[label=\Alph*., ref=\Alph*] 
        \item \label{ConditionLemmaEssentialInnerPoint1}
        There is an affine space $W$, $S\subset W\subset \IR^n$, and an open set $U$ containing $s$ such that $U\cap W \subset S$.
        \item \label{ConditionLemmaEssentialInnerPoint2}
        With $m$ from above, there are $m+1$ points $v_1,\dots ,v_{m+1}\in S$ with an $m$ dimensional convex hull such that $s$ can be written as $\sum_{i=1}^{m+1} \lambda_i v_i$ where $\sum_{i=1}^{m+1} \lambda_i = 1$ and for all $i$, $\lambda_i$ is positive real.
    \end{enumerate}
\end{lemma}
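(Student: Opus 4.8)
The plan is to recognise both equivalences as standard facts about the \emph{affine hull} and \emph{relative interior} of a convex set. For the first equivalence I would introduce $\operatorname{aff}(S)$, the set of all finite affine combinations $\sum_i \mu_i s_i$ with $s_i\in S$ and $\sum_i \mu_i=1$. This is an affine space containing $S$, and it is contained in every affine space containing $S$; hence it is the unique inclusion-minimal such affine space, so the number $m$ of condition~1 is exactly $\dim\operatorname{aff}(S)$ and is well defined. I would then show $\dim\operatorname{aff}(S)$ equals (the maximal number of affinely independent points of $S$) minus one: any $k+1$ affinely independent points of $S$ span a $k$-dimensional subspace of $\operatorname{aff}(S)$, giving $k\le\dim\operatorname{aff}(S)$, while a maximal affinely independent family $u_0,\dots,u_k\subset S$ (finite, since in $\IR^n$) satisfies $\operatorname{aff}(S)=\operatorname{aff}(u_0,\dots,u_k)$, because any $p\in S$ outside this hull could be added, contradicting maximality. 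Since $k+1$ points are affinely independent precisely when their convex hull is a $k$-simplex, i.e.\ is $k$-dimensional, the $m$ of condition~2 is again $\dim\operatorname{aff}(S)$, closing the first equivalence.

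For the second equivalence I would first pin down the witnessing $W$ in condition~\ref{ConditionLemmaEssentialInnerPoint1}. Since $s\in S\subset\operatorname{aff}(S)\subset W$, the set $U\cap W$ is a nonempty relatively open subset of $W$ contained in $S\subset\operatorname{aff}(S)$; if $\dim W>m$ this would force $U\cap W$ to contain points outside $\operatorname{aff}(S)$, a contradiction. Hence $W=\operatorname{aff}(S)$, and condition~\ref{ConditionLemmaEssentialInnerPoint1} says precisely that $s$ lies in the interior of $S$ relative to $\operatorname{aff}(S)$.

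For \ref{ConditionLemmaEssentialInnerPoint2}$\Rightarrow$\ref{ConditionLemmaEssentialInnerPoint1}, given affinely independent $v_1,\dots,v_{m+1}\in S$ spanning $\operatorname{aff}(S)$ and $s=\sum_{i=1}^{m+1}\lambda_i v_i$ with all $\lambda_i>0$, I would use that the barycentric map $(t_1,\dots,t_{m+1})\mapsto\sum_i t_i v_i$ restricts to an affine isomorphism from the hyperplane $\{\sum_i t_i=1\}$ onto $\operatorname{aff}(S)$; the open set $\{t_i>0\}$ maps to a relatively open subset of $\operatorname{aff}(S)$ containing $s$ and contained in $\conv(v_1,\dots,v_{m+1})\subset S$, which yields the required open $U$ with $W=\operatorname{aff}(S)$. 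For \ref{ConditionLemmaEssentialInnerPoint1}$\Rightarrow$\ref{ConditionLemmaEssentialInnerPoint2}, I have a relatively open ball around $s$ inside $S$; inside it I would inscribe a small regular $m$-simplex centred at $s$, whose $m+1$ vertices $v_1,\dots,v_{m+1}$ then lie in $S$, are affinely independent (hence span $\operatorname{aff}(S)$, so have $m$-dimensional convex hull), and have $s$ as barycentre, a convex combination with all coefficients $1/(m+1)>0$.

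The main obstacle is the careful handling of relative rather than ambient topology: identifying condition~\ref{ConditionLemmaEssentialInnerPoint1} with membership in the relative interior, and in particular the argument forcing $W$ to be $\operatorname{aff}(S)$. Once that reduction is in place, both directions become short computations in barycentric coordinates, relying on the first equivalence only to guarantee the existence of the $m+1$ affinely independent points needed to build the simplices.
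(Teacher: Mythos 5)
Your proposal is correct and takes essentially the same route as the paper: both identify $m$ with the dimension of the affine hull, reduce the witnessing $W$ in condition \ref{ConditionLemmaEssentialInnerPoint1} to $\operatorname{aff}(S)$, prove \ref{ConditionLemmaEssentialInnerPoint2}$\Rightarrow$\ref{ConditionLemmaEssentialInnerPoint1} by realising the relatively open simplex interior $\{\sum_i \lambda_i v_i \mid \lambda_i>0,\ \sum_i \lambda_i=1\}$ as $U\cap W$ for an ambient open $U$ (the paper constructs $U$ explicitly by adding the orthogonal complement of $W$, where you invoke the subspace topology), and prove \ref{ConditionLemmaEssentialInnerPoint1}$\Rightarrow$\ref{ConditionLemmaEssentialInnerPoint2} by inscribing a small $m$-simplex with barycentre $s$ in a relative $\epsilon$-ball (your regular simplex plays the role of the paper's simplex on $s+v_1,\dots,s+v_m,\ s-\sum_{i=1}^m v_i$ for orthogonal $v_i$). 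Your fuller treatment of the first equivalence, and your dimension argument forcing $W=\operatorname{aff}(S)$ in place of the paper's simple observation that one may shrink $W$ to the affine hull, are only cosmetic differences.
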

\begin{definition} \label{DefinitionEssInnerPoint}
    Let $S\subset \IR^n$ be a nonempty convex set and $m\in \IN, s\in S$ satisfy any of the equivalent conditions from Lemma \ref{LemmaEssentialInnerPoint}. Then, we call $s$ an \de{essentially inner point} of $S$ and $m$ the \de{outer dimension} of $S$.
\end{definition}
\begin{proofof}{Lemma \ref{LemmaEssentialInnerPoint}}
    The conditions on $m$ are just equivalent characterisations of the dimension of the affine space generated by the points in $S$. 
    
    For $s$, \ref{ConditionLemmaEssentialInnerPoint2} implies \ref{ConditionLemmaEssentialInnerPoint1}: Let $W$ be the affine hull of $S$. 
    Note that $W$ is the affine hull of $\{\lambda_i \mid i \in \{1,\dots ,m\}\}$. Let $U'$ be the linear subspace of $\IR^n$ which is the orthogonal complement of $W$, that is $\{u\in \IR^n\mid \forall v,w\in W: u \text{ and } v-w \text{ are orthogonal}\}$. 
    Then, $U\coloneqq U'+\{\sum_{i=1}^m \lambda_i v_i \mid \lambda_i>0, \sum_{i=1}^m \lambda_i=1\}$ is open and $U\cap A$ is contained in the convex hull of the $\lambda_i$ which is inside $S$.
    
    For $s$, \ref{ConditionLemmaEssentialInnerPoint1} implies \ref{ConditionLemmaEssentialInnerPoint2}: Since $W$ contains $S$ is the only inner condition on $W$, we can assume that $W$ is the affine hull of $S$. Now, since $U$ is open, we may choose $\epsilon>0$ such that $U$ contains a closed $\epsilon$-ball around $s$. Therefore, $W\cap U$ contains the $m$-dimensional $\epsilon$-ball around $s$. Take $m$ orthogonal vectors $v_1,\dots ,v_m$ in $W$ of length $\frac{1}{\sqrt{m}}\epsilon$. Then, we get
    $$
        s= \left(\sum_{i=1}^m \frac{1}{m+1} (s+v_i) \right) + \frac{1}{m+1} \left(s+\sum_{i=1}^m -v_i\right)
    $$
    so $s$ is in the hull of the points $\{s+v_i\mid i\in\{1,\dots ,m\}\} \cup \{s-\sum_{i=1}^m v_i\}$.
\end{proofof}
\begin{example}
    Consider the subset %$\{(x,y,z)\in \IR^3 \mid 0\le x\le 1, 0\le y\le 1, z=0\}=$
    $[0,1]\times [0,1]\times \{0\}$ of $\IR^3$.
    %$$
    %\begin{tikzpicture}
    %    \fill [gray!20!white] (2,1) -- (3,1) -- (3,0) -- (2,0) -- cycle;
    %    \draw [black, very thin] (2,1) -- (3,1) -- (3,0) -- (2,0) -- cycle;
        %\draw [dashed, ->] (0,0) -- (0.5,0.5);
        %\draw [dashed, ->] (0,0) -- (0,1);
        %\draw [dashed, ->] (0,0) -- (1,0);
        %\draw [very thin] (0,1) -- (1,1);
        %\draw [very thin] (0,0) -- (1,0);
        %\draw [very thin] (0,0) -- (-1,1);
        %\draw [very thin] (1,0) -- (1,1);
    %\end{tikzpicture}
    %\in \IR^3
    %$$
    The interior of this set is empty, but its essentially inner points are $(0,1)\times (0,1)\times \{0\}$. It has outer dimension 2.
\end{example}
\begin{cor} \label{CorExistInnerPoint}
    Let $S$ be a nonempty convex subset of $\IR^n$. Then, $s$ has an essentially inner point.
\end{cor}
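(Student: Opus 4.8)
The plan is to read off the result directly from Lemma \ref{LemmaEssentialInnerPoint}, for which I only need to produce a single point satisfying condition \ref{ConditionLemmaEssentialInnerPoint2}. First I would fix the outer dimension $m$ of $S$. This number is well defined: by the second characterisation in Lemma \ref{LemmaEssentialInnerPoint}, $m$ is the largest integer for which $S$ contains $m+1$ points whose convex hull is $m$-dimensional, and since all such dimensions are natural numbers bounded above by $n$ (as $S\subset \IR^n$) and $S$ is nonempty, a maximal $m$ is attained.

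Having fixed $m$, I would invoke that same characterisation to choose $m+1$ points $v_1,\dots ,v_{m+1}\in S$ whose convex hull is $m$-dimensional, i.e.\ which are affinely independent. Then I would take the barycentre $s\coloneqq \sum_{i=1}^{m+1}\tfrac{1}{m+1}v_i$. Since $s$ is a convex combination of points of $S$ and $S$ is convex, we have $s\in S$.

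Finally, this point $s$ witnesses condition \ref{ConditionLemmaEssentialInnerPoint2} of Lemma \ref{LemmaEssentialInnerPoint}: it is written in the form $\sum_{i=1}^{m+1}\lambda_i v_i$ with $\lambda_i=\tfrac{1}{m+1}$, using exactly $m+1$ points of $S$ with an $m$-dimensional convex hull, where all coefficients are strictly positive reals summing to $1$. Hence $s$ is an essentially inner point of $S$ in the sense of Definition \ref{DefinitionEssInnerPoint}, which proves the corollary.

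There is essentially no obstacle here: the whole statement is a one-step consequence of the equivalences already established in Lemma \ref{LemmaEssentialInnerPoint}. The only point requiring any care is the existence of the maximal $m$, which is immediate from finite-dimensionality, and the verification that the barycentre has strictly positive barycentric coordinates, which is built into the choice of equal weights $\tfrac{1}{m+1}$.
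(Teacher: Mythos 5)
Your proposal is correct and is essentially identical to the paper's own proof: the paper also fixes the outer dimension $m$, picks $m+1$ points of $S$ forming an $m$-dimensional simplex, and takes its centre (your barycentre with weights $\tfrac{1}{m+1}$) as the witness for condition \ref{ConditionLemmaEssentialInnerPoint2} of Lemma \ref{LemmaEssentialInnerPoint}. Your version merely spells out the two small verifications (existence of the maximal $m$ and strict positivity of the barycentric coordinates) that the paper leaves implicit.
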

\begin{proof}
    Let $m$ be the outer dimension of $S$. Then, there are $m+1$ points in $S$ defining an $m$-dimensional simplex. The centre of this simplex is an essentially inner point of $S$.
\end{proof}
\begin{cor} \label{CorInnerPointsDense}
    Let $S$ be a nonempty convex subset of $\IR^n$. Then, the closure of the essentially inner points of $S$ is $\closure{S}$.
\end{cor}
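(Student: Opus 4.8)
The plan is to prove the two inclusions $\closure{E}\subset \closure{S}$ and $\closure{S}\subset \closure{E}$, where $E$ denotes the set of essentially inner points of $S$. The first is immediate, since $E\subset S$ gives $\closure{E}\subset \closure{S}$. For the reverse inclusion it suffices to show $S\subset \closure{E}$, because $\closure{E}$ is closed and hence $\closure{S}\subset \closure{\closure{E}}=\closure{E}$. So the whole task reduces to showing that every point of $S$ is a limit of essentially inner points.

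The core of the argument is a line-segment principle: if $e$ is an essentially inner point of $S$ and $s\in S$ is arbitrary, then every point $p_\lambda \coloneqq (1-\lambda)e+\lambda s$ with $\lambda\in[0,1)$ is again essentially inner. Granting this, I would first invoke Corollary \ref{CorExistInnerPoint} to obtain at least one essentially inner point $e$. For a given $s\in S$ the points $p_\lambda$ are then essentially inner for all $\lambda\in[0,1)$ and converge to $s$ as $\lambda\to 1$. Hence $s\in \closure{E}$, and since $s$ was arbitrary, $S\subset \closure{E}$, which finishes the proof.

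To establish the line-segment principle I would use characterisation \ref{ConditionLemmaEssentialInnerPoint1} of Lemma \ref{LemmaEssentialInnerPoint}, taking $W$ to be the affine hull of $S$; this is legitimate because the only constraint on $W$ in that condition is $S\subset W$, so the affine hull works with the same neighbourhood. Since $e$ is essentially inner, there is $r>0$ with $\ball_r(e)\cap W\subset S$. Now fix $\lambda\in[0,1)$ and write $p\coloneqq p_\lambda$. I claim that $\ball_{(1-\lambda)r}(p)\cap W\subset S$. Indeed, any $q$ in this set can be written as $q=p+w$ with $w=q-p$ and $\|w\|<(1-\lambda)r$; since both $p$ and $q$ lie in the affine space $W$, the vector $w$ is a direction vector of $W$. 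Rearranging gives $q=(1-\lambda)\big(e+\tfrac{w}{1-\lambda}\big)+\lambda s$, and the point $e+\tfrac{w}{1-\lambda}$ lies in $\ball_r(e)\cap W\subset S$ because $\tfrac{\|w\|}{1-\lambda}<r$ and $e+\tfrac{w}{1-\lambda}\in W$. As $s\in S$ too, $q$ is a convex combination of two points of $S$ and therefore lies in $S$ by convexity. This exhibits the required open neighbourhood of $p$ inside $W$, so $p$ is essentially inner.

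I expect the main obstacle to be the book-keeping in this rescaling step: choosing the shrunk radius $(1-\lambda)r$ correctly and checking that $q-p$ really is a direction vector of the affine hull (which is exactly where the restriction to the intersection with $W$ is used). Once this neighbourhood-rescaling computation is in place, the remainder is the routine limit $p_\lambda\to s$ together with the elementary closure manipulations of the first paragraph.
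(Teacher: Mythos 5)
Your proof is correct, but it takes a different route from the paper. The paper's proof works directly from condition \ref{ConditionLemmaEssentialInnerPoint2} of Lemma \ref{LemmaEssentialInnerPoint}: given $s\in S$, it extends $s$ to an affinely independent family $s,s_1,\dots,s_m\in S$ (with $m$ the outer dimension) and observes that every strictly positive convex combination $\lambda s+\sum_{i=1}^m\lambda_i s_i$ is essentially inner; letting $\lambda\to 1$ then puts $s$ in the closure of the essentially inner points, with no rescaling of neighbourhoods needed. You instead start from a single essentially inner point $e$ (Corollary \ref{CorExistInnerPoint}) and prove, via condition \ref{ConditionLemmaEssentialInnerPoint1} with $W$ the affine hull, the segment principle that every $p_\lambda=(1-\lambda)e+\lambda s$ with $\lambda\in[0,1)$ is again essentially inner — your neighbourhood-rescaling computation $\ball_{(1-\lambda)r}(p_\lambda)\cap W\subset S$ checks out, including the point that $q-p_\lambda$ is a direction vector of $W$ and that replacing $W$ by the affine hull is harmless. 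The trade-off: the paper's argument is shorter once Lemma \ref{LemmaEssentialInnerPoint} is available (its only implicit step is that any point of $S$ extends to an affinely independent $(m+1)$-subset of $S$), whereas your intermediate statement is genuinely stronger and more reusable — it is the classical accessibility lemma for the relative interior, and it sharpens the conclusion of the paper's later Lemma \ref{LemmaInnenGegenRand} from membership in $S$ to essential innerness (indeed, your rescaling argument adapts verbatim to $s\in\closure{S}$, which would recover that lemma as a by-product).
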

\begin{proof}
    Take any $s\in S$. Let $m$ be the outer dimension of $S$. Then, we can find $m$ other points $s_1,\dots ,s_m$ in $S$ such that $s,s_1,\dots ,s_m$ are affinely independent. Now, every point $\lambda s + \sum_{i=1}^m \lambda_i s_i$ with $\lambda>0, \forall i: \lambda_i>0$ and $\lambda + \sum_{i=1}^m \lambda_i=1$ is an essentially inner point. Thus $s$ is in the closure of the essentially inner points. Therefore, the closure of the essentially inner points is a closed set containing $S$. Thus, it is $\closure{S}$.
\end{proof}
\begin{lemma} \label{LemmaInnenGegenRand}
    Let $S$ be a nonempty convex subset of $\IR^n$, $s\in \closure{S}$ and $\inner \in S$ an essentially inner point of $S$. Then, the open line segment from $s$ to $\inner$ is in $S$.
\end{lemma}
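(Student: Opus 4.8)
The plan is to pass to the affine hull of $S$ and then run a standard relative-interior argument, using the essentially-inner hypothesis precisely at the point where plain convexity is not enough. Let $W$ be the affine hull of $S$. Since $W$ is an affine, hence topologically closed, subspace containing $S$, it also contains $\closure{S}$; therefore both $\inner$ and $s$ lie in $W$, and consequently every point $\lambda s+(1-\lambda)\inner$ of the segment lies in $W$ as well. By characterisation \ref{ConditionLemmaEssentialInnerPoint1} of Lemma \ref{LemmaEssentialInnerPoint}, the essentially inner point $\inner$ has an open neighbourhood $U$ with $U\cap W\subset S$; shrinking $U$ I may assume $U=\ball_\epsilon(\inner)$ for some $\epsilon>0$, so that the relative ball $\ball_\epsilon(\inner)\cap W$ is contained in $S$.

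Now fix $\lambda\in(0,1)$ and set $p\coloneqq \lambda s+(1-\lambda)\inner$; the goal is $p\in S$. Since $s\in\closure{S}$, for every $\delta>0$ I can pick $s'\in S$ with $\|s'-s\|<\delta$. I then consider the convex combinations $q\coloneqq \lambda s'+(1-\lambda)\inner'$, where $\inner'$ ranges over $\ball_\epsilon(\inner)\cap W$. Each such $q$ is a convex combination of the two points $s'\in S$ and $\inner'\in S$, so $q\in S$ by convexity. As $\inner'$ ranges over the relative $\epsilon$-ball around $\inner$, the point $q$ ranges over exactly the relative ball of radius $(1-\lambda)\epsilon$ around the centre $\lambda s'+(1-\lambda)\inner$ inside $W$, and every point of this relative ball lies in $S$.

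It remains to place $p$ inside this relative ball. The distance from $p$ to the centre is
$$
\|p-(\lambda s'+(1-\lambda)\inner)\|=\lambda\,\|s-s'\|<\lambda\delta .
$$
Choosing $\delta<\tfrac{(1-\lambda)\epsilon}{\lambda}$, which is possible because $\lambda\in(0,1)$, makes this distance strictly smaller than $(1-\lambda)\epsilon$; since moreover $p\in W$, the difference $p-(\lambda s'+(1-\lambda)\inner)$ is a direction vector of $W$ of length less than $(1-\lambda)\epsilon$, so $p$ lies in the relative ball and hence $p\in S$.

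The only real obstacle is conceptual rather than computational: pure convexity together with $s\in\closure{S}$ only yields $p\in\closure{S}$, so the whole difficulty is upgrading membership from the closure to $S$ itself. The essentially-inner hypothesis is exactly what supplies a relative open ball around $\inner$ lying in $S$, and it is this ball, scaled by the factor $1-\lambda$, that absorbs the approximation error $\lambda\|s-s'\|$ and forces $p$ into $S$. Care is only needed to keep all auxiliary points inside the affine hull $W$, so that the ``relative'' balls genuinely consist of points of $S$.
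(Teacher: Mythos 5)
Your proof is correct and follows essentially the same route as the paper: both arguments approximate $s$ by a nearby point of $S$ and realise the segment point as a convex combination of that point with a point of the relative ball $\ball_\epsilon(\inner)\cap W\subset S$, with the identical quantitative threshold $\|s'-s\|<\frac{(1-\lambda)\epsilon}{\lambda}$ (the paper packages the construction of the auxiliary point near $\inner$ via the inverse intercept theorem, while you obtain it by observing that $x\mapsto\lambda s'+(1-\lambda)x$ maps the relative $\epsilon$-ball onto a relative $(1-\lambda)\epsilon$-ball capturing $p$). There is nothing to fix.
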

\begin{proof}
    We may assume $s\ne \inner$.

    Let $W$ be the affine hull of $S$ and $\epsilon>0$ such that the ball $\ball_\epsilon(\inner)\cap W$ is contained in $U$. Let $v=\lambda \inner + (1-\lambda) s$ for any $\lambda \in (0,1)$ be an arbitrary point on the line from $\inner$ to $s$.
    
    Now, $s \in \closure{S}$, so there is $q\in S$ with $\|q-s\|<\epsilon\cdot \frac{1-\lambda}{\lambda}$. 
    By the inverse intercept theorem, the point $p$ defined as the intersection of the line through $v$ and $q$ and the parallel line to the line through $s$ and $q$ through $\inner$, has distance to $\inner$ of at most $\epsilon$. 
    $$
        \begin{tikzpicture}
            \draw (-1.2,-1) -- (0,-1) node [anchor=north]{$u\in S$} -- (0,2) node [anchor=east]{$s\in \closure{S}$} -- (0.5,2);
            \draw (-1,-1) node [anchor=north]{$p\in S$} -- (0.5,2) node [anchor=west]{$q\in S$};
            \draw (0,1) node [anchor=east]{$v$};
            \draw [dotted] (0,-1) circle (1.2);
        \end{tikzpicture}
    $$
    It follows that $p\in W\cap \ball_\epsilon(\inner)\subset S$. By the convexity, $v=\lambda p + (1-\lambda)q \in S$. Since $\lambda$ was arbitrary, the whole line segment is in $S$.
\end{proof}

\begin{lemma} \label{LemmaStrahlenVerschieben}
    Let $S$ be a nonempty convex subset of $\IR^n$, $s\in \closure{S}$ and $v\in \IR^n$ such that for all $\lambda>0$ also $s+\lambda v \in \closure{S}$. Let $\inner$ be an essentially inner point of $S$. Then $u+\lambda v\in S$ for all $\lambda>0$.
\end{lemma}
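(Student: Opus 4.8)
The plan is to combine the previous lemma, Lemma~\ref{LemmaInnenGegenRand}, with a small perturbation of the essentially inner endpoint. First I would record the ambient geometry: let $W$ be the affine hull of $S$ and $W_0$ its direction space. Since $W$ is closed and contains $S$, we have $\closure{S}\subseteq W$; as $s$ and $s+v$ both lie in $\closure{S}$, the direction $v$ lies in $W_0$, and $s-\inner\in W_0$ as well. Using condition~\ref{ConditionLemmaEssentialInnerPoint1} of Lemma~\ref{LemmaEssentialInnerPoint} for the essentially inner point $\inner$, I fix $\epsilon>0$ with $\ball_\epsilon(\inner)\cap W\subseteq S$; note every point of this neighbourhood is again essentially inner, since it carries its own $W$-neighbourhood inside $S$.

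Now fix $\lambda>0$; I want to exhibit $\inner+\lambda v$ as a point on an \emph{open} segment joining a far-out closure point of the ray to an essentially inner point close to $\inner$. For a large parameter $\mu$, set $a\coloneqq s+\mu v\in\closure{S}$ and
$$t\coloneqq \frac{\lambda}{\mu},\qquad z\coloneqq -\frac{t}{1-t}(s-\inner),\qquad b\coloneqq \inner+z.$$
A direct computation gives the convex-combination identity $t\,a+(1-t)\,b=\inner+\lambda v$, so that $\inner+\lambda v$ lies on the segment from $a$ to $b$ with barycentric coordinate $t\in(0,1)$. Since $z\in W_0$ and $\lvert z\rvert=\tfrac{t}{1-t}\lVert s-\inner\rVert\to 0$ as $\mu\to\infty$, for $\mu$ large enough we have $b\in\ball_\epsilon(\inner)\cap W\subseteq S$, so $b$ is an essentially inner point while $t\in(0,1)$.

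With these choices, Lemma~\ref{LemmaInnenGegenRand} applied to the closure point $a$ and the essentially inner point $b$ shows that the open segment from $a$ to $b$ lies in $S$; since $\inner+\lambda v$ sits on this open segment, it is in $S$, which is the claim. The step I expect to be the main obstacle is precisely the upgrade from $\closure{S}$ to $S$: the naive route of letting $\mu\to\infty$ along the segments from $a$ to $\inner$ only places $\inner+\lambda v$ in $\closure{S}$. The trick that resolves this is to spend the available room around the essentially inner point $\inner$ by perturbing the endpoint from $\inner$ to $b=\inner+z$, chosen so that $\inner+\lambda v$ lands \emph{exactly} on an open segment rather than merely in the limit, after which Lemma~\ref{LemmaInnenGegenRand} delivers genuine membership in $S$.
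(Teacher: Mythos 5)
Your proposal is correct and matches the paper's argument in essence: the paper likewise perturbs the essentially inner point along the line through $s$ and $\inner$ to a nearby point $p=\inner+\mu(\inner-s)$ inside $\ball_\epsilon(\inner)\cap W$, takes a far-out closure point $s+\lambda v$ on the ray, and applies Lemma~\ref{LemmaInnenGegenRand} to place the target point on the open segment between them. Your explicit convex-combination identity $t\,a+(1-t)\,b=\inner+\lambda v$ replaces the paper's appeal to the inverse intercept theorem, and your choice of $z$ absorbs the paper's separate treatment of the case $\inner=s$, but these are only cosmetic differences.
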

\begin{proof}
     If the affine dimension of $S$ is zero, the claim follows trivially. For $\inner = s$, the claim follows from Lemma \ref{LemmaInnenGegenRand} applied to $\inner + 2\lambda v\in \closure{S}$ and $\inner\in S$.
    
    Otherwise, let $W$ be the affine closure of $S$ and $\epsilon>0$ such that the ball $\ball_\epsilon(\inner)\cap W$ is contained in $U$. %and $\epsilon<2\|\inner-s\|$. 
    Assume $\inner\ne s$. 
    It suffices to show $\inner+v\in S$ by replacing $\lambda v$ with $v$.

    $$
    \begin{tikzpicture}
        \draw [->] (0,1) -- (4,1);
        \draw (0,1) -- (0,-0.5);
        \draw [->] (0,0) -- (4,0);
        \draw (0,-0.5) -- (3,1);
        \draw (0,1) node [anchor=east]{$s\in \closure{S}$};
        \draw (0,0) node [anchor=east]{$u\in S$};
        \draw (0,-0.5) node [anchor=east]{$p\in S$};
        \draw (1,0) node [fill=white, anchor=south]{$u+v$};
        \draw (1,1) node [anchor=south]{$s+v$};
        \draw (3,1) node [anchor=south] {$s+\lambda v\in \closure{S}$};
        \draw [dotted] (0,0) circle (0.7);
    \end{tikzpicture}
    $$
    
    Note that $W$ is closed and therefore $s\in W$. Let $p$ be a point on the line from $s$ to $\inner$ such that $\|p-\inner\| < \epsilon\}$ and $\inner$ is between $p$ and $s$. Note that $p\in \ball_\epsilon(\inner)\cap W$ is an essentially inner point. Take $\lambda >1$ to be the quotient $\|p-s\|/\|p-\inner\|$. Note that $\inner+v$ is an element of the line segment from $p$ to $s+\lambda v$. Since $p$ is inner and $s+\lambda v\in \closure{S}$, Lemma \ref{LemmaInnenGegenRand} implies that $\inner+v\in S$.
\end{proof}
\begin{cor} \label{CorollaryMoveToInner}
    Let $S$ be a nonempty convex subset of $\IR^n$, such that the closure of $S$ has a ray intersection. Then also $S$ has this property.
\end{cor}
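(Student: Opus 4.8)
The plan is to transport the ray witnessing the ray intersection of $\closure{S}$ to an essentially inner point of $S$ by means of Lemma \ref{LemmaStrahlenVerschieben}, thereby producing a ray that lies entirely inside $S$, and then to read off where this ray exits $S$.

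First I would use Corollary \ref{CorExistInnerPoint} to fix an essentially inner point $\inner\in S$. Write the ray intersection of $\closure{S}$ as the affine map $x\mapsto v+xw$, so that $v+xw\in\closure{S}$ for all $x>0$ and $v+xw\notin\closure{S}$ for all $x<0$. Setting $s=v+w\in\closure{S}$, for every $\lambda>0$ we have $s+\lambda w=v+(1+\lambda)w\in\closure{S}$, so Lemma \ref{LemmaStrahlenVerschieben} gives $\inner+\lambda w\in S$ for all $\lambda>0$; since also $\inner\in S$, the half-line $\{\inner+tw\mid t\ge 0\}$ lies in $S$.

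Next, consider the set $T=\{t\in\IR\mid \inner+tw\in S\}$, which is convex, hence an interval, and which contains $[0,\infty)$. The crucial step is to show $T\ne\IR$. If $T$ were all of $\IR$, the entire line $\{\inner+tw\mid t\in\IR\}$ would lie in $S\subset\closure{S}$; since $\closure{S}$ is closed and convex by Lemma \ref{LemmaClosureStaysConvex}, the characterisation of the inner vector space in Lemma \ref{LemmaMaximalVectorforClosed} (condition~3 applied at $\inner$) would put $w$ into the inner vector space of $\closure{S}$. Then condition~2 of that lemma lets us translate the point $v+w\in\closure{S}$ by every multiple of $w$, forcing $v+xw\in\closure{S}$ for all $x\in\IR$ and contradicting $v+xw\notin\closure{S}$ for $x<0$. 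Hence $T$ is bounded below; put $a=\inf T$, and note $a\le 0$ because $[0,\infty)\subset T$.

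Finally, I claim the affine map $x\mapsto (\inner+aw)+xw=\inner+(a+x)w$ is a ray intersection of $S$. For $x>0$ we have $a+x>a=\inf T$, so $a+x\in(a,\infty)\subset T$ and the point lies in $S$; for $x<0$ we have $a+x<\inf T$, so $a+x\notin T$ and the point lies outside $S$ (no assumption on whether $a\in T$ is needed). This is exactly the ray intersection of $S$ we want. The main obstacle is the step showing $T\ne\IR$: it is the only place where the original ray direction $w$ must be tied back to the geometry of $\closure{S}$ through the inner vector space, whereas the transport of the ray and the final extraction of the ray intersection from the interval $T$ are routine.
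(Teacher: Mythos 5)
Your proposal is correct and follows essentially the same route as the paper's own proof: transport the ray to an essentially inner point via Lemma \ref{LemmaStrahlenVerschieben}, rule out the whole line lying in $S$ by showing $w$ is not in the inner vector space of $\closure{S}$ via Lemma \ref{LemmaMaximalVectorforClosed}, and then take the threshold (your $\inf T$ is the paper's $\mu$) to obtain the ray intersection of $S$. Your explicit verification that the endpoint case needs no case distinction is a welcome bit of extra care, but the argument is the same.
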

\begin{proof}
    Recall that the closure of a convex set is convex, Lemma \ref{LemmaClosureStaysConvex}.
    Let $v,w$ such that $x\mapsto v+xw$ is a ray intersection for $\closure{S}$. By Corollary \ref{CorExistInnerPoint}, there is an essentially inner point $\inner\in S$. 
    By Lemma \ref{LemmaStrahlenVerschieben}, $\inner+\lambda w \in S$ for all $\lambda>0$. On the other side, there is $\lambda<0$ such that $\inner+\lambda w \notin \overline{S}$, because $\overline{S}$ is a closed convex set and $w\notin V_s$ with the notation of Lemma \ref{LemmaMaximalVectorforClosed} as $w\notin V_v$ is not in the inner vector space.
    Let $v'$ be $\inner+\mu w$ where $\mu \in \IR$ is chosen such that $\inner+\lambda w \in S$ for $\lambda>\mu$ and $\inner+\lambda w \notin S$ for $\lambda<\mu$. Then $x\mapsto v'+xw$ is a ray intersection for $S$.
\end{proof}
\begin{thm} \label{TheoremClosureImpliesRay} \label{TheoremClosureIffRay}
    Let $S$ be a convex subset of $\IR^n$. Then, $S$ has a ray intersection if and only if there is an affine space $A$ such that the closure $\closure{A\cap S}$ is not the sum of a compact set and a vector space. 
\end{thm}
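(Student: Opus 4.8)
\section*{Proof proposal}

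The statement is an equivalence, so the plan is to prove each direction separately; the backward direction is the substantial one. For the forward direction, suppose $S$ has a ray intersection $x\mapsto v+xw$, so that $v+xw\in S$ for $x>0$ and $v+xw\notin S$ for $x<0$; note that $w\neq 0$, since $w=0$ would make the conditions for $x>0$ and $x<0$ contradictory. The natural witness is to take $A$ to be the affine line $\{v+xw\mid x\in\IR\}$ itself. Then $A\cap S$ is $\{v+xw\mid x>0\}$, possibly together with the point $v$, so its closure is the closed half-line $H=\{v+xw\mid x\ge 0\}$. It then remains to check that $H$ is not the sum of a compact set and a vector space: since $H$ lies on a line and is unbounded only in the direction $w$, any representation $H=C+V'$ with $C$ compact forces $V'=\{0\}$ (a nonzero $u\in V'$ would have to be parallel to $w$, and then translating a point of $H$ by arbitrarily large multiples of $u$ would leave $H$), whence $H=C$ would be compact, contradicting unboundedness. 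Thus $A$ witnesses the right-hand side.

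For the backward direction, assume an affine space $A$ with $\closure{A\cap S}$ not the sum of a compact set and a vector space. First I would record that $A\cap S$ is convex and nonempty: the empty set is the sum of the (compact) empty set and any vector space, so the hypothesis rules out $A\cap S=\emptyset$. By Lemma \ref{LemmaClosureStaysConvex}, $\closure{A\cap S}$ is a nonempty closed convex set. Since the inner vector space of $\closure{A\cap S}$ is in particular a vector space, the hypothesis implies that $\closure{A\cap S}$ is not the sum of a compact set and its inner vector space. Hence the dichotomy Lemma \ref{LemmaDichotomieClosed} applies and yields that $\closure{A\cap S}$ has a ray intersection.

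Next I would transfer this ray intersection down to $S$. Applying Corollary \ref{CorollaryMoveToInner} to the nonempty convex set $A\cap S$ (whose closure has a ray intersection) gives a ray intersection $x\mapsto v'+xw'$ for $A\cap S$ itself, that is, $v'+xw'\in A\cap S$ for $x>0$ and $v'+xw'\notin A\cap S$ for $x<0$. The positive part of this ray lies in $A$, and since $A$ is an affine space containing infinitely many points of the line $\{v'+xw'\mid x\in\IR\}$, it contains the whole line. Consequently, for $x<0$ the point $v'+xw'$ lies in $A$ but not in $A\cap S$, which forces $v'+xw'\notin S$; while for $x>0$ it lies in $A\cap S\subseteq S$. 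Therefore the same map $x\mapsto v'+xw'$ is a ray intersection for $S$, as required.

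I expect the only genuinely delicate point to be this last transfer step. Lemma \ref{LemmaDichotomieClosed} and Corollary \ref{CorollaryMoveToInner} only produce a ray intersection of $A\cap S$, whose defining negative condition is a statement about $A\cap S$ rather than about $S$. The key geometric observation that upgrades the conclusion is that the entire affine line of the ray sits inside $A$, precisely because $A$ is affine and already contains the positive half of that line; this is what lets ``not in $A\cap S$'' become ``not in $S$''. Everything else reduces to direct applications of the preceding lemmas.
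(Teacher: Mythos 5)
Your proof is correct and takes essentially the same route as the paper's: the forward direction via the affine line of the ray, and the backward direction by applying Lemma \ref{LemmaDichotomieClosed} to $\closure{A\cap S}$, transferring the ray intersection to $A\cap S$ with Corollary \ref{CorollaryMoveToInner}, and then using that the affine space $A$ contains the whole line to upgrade ``not in $A\cap S$'' to ``not in $S$''. Your extra verifications (nonemptiness of $A\cap S$, the compact half-line check in the forward direction, and the remark that the dichotomy's first alternative involves the inner vector space, which is in particular a vector space) are details the paper leaves implicit, not a different argument.
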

\begin{proof}
    If $x\mapsto v+xw$ is a ray intersection, then the affine set $\{v+\lambda w\mid \lambda \in \IR\}$ gives a closure which looks line a closed ray and is not the sum of a bounded set and a vector space.
    
    For the other direction, fix $A$ such that $\closure{A\cap S}$ is not the sum of a compact set and a vector space. Note that $S\cap A$ and $\closure{S\cap A}$ are convex by Lemma \ref{LemmaClosureStaysConvex}.
    
    By applying Lemma \ref{LemmaDichotomieClosed}, $\closure{S\cap A}$ has a ray intersection. By Corollary \ref{CorollaryMoveToInner} there are also $v,w\in \IR^n$ such that $v+xw\in {S\cap A}$ for all $x>0$ and $v+xw\notin {S\cap A}$ for all $x<0$. Since $A$ is affine, we get $v+xw\in {A}$ for all $x\in \IR$, so $v+xw\notin {S}$ for all $x<0$. Thus, $S$ has a ray intersection.
\end{proof}
\begin{rem}
    The set $A$ can often be chosen to be $\IR^n$. This works whenever $\closure{S}$ is not the sum of a bounded set and a vector space.

    However, it might happen that $\closure{S}$ is the sum of a vector space and a bounded set: The set $\{(x,y)\in \IR^2 \mid (0<y<1) \lor (y=0\land x> 0)\}$ 
    $$
        \begin{tikzpicture}
            \fill [gray!20!white] (-5,1) -- (5,1) -- (5,0) -- (-5,0) -- (-5,1);
            \draw [very thin, dashed] (-5,1) -- (5,1);
            \draw [very thin, dashed] (-5,0) -- (0,0);
            \draw [very thin] (0,0) -- (5,0);
            \draw [fill=white] (0,0) circle [radius=2pt];
        \end{tikzpicture}
    $$
    has a ray intersection $\lambda \mapsto (\lambda,0)$ but its closure is the sum $\{(x,y)\mid x=0\land 0\le y \le 1\}+ \{(x,y)\mid y=0\}$ of a bounded set and a vector space.
\end{rem}
\begin{lemma} \label{LemmaAltDescriptionInnerVspace}
    Let $S$ be the sum of a bounded set $B$ and a vector space $V$. Then, $V$ is the inner vector space of $S$.
\end{lemma}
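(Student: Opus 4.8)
The plan is to read off the inner vector space of $S$ from the three equivalent characterisations in Lemma~\ref{LemmaMaximalVectorforClosed}, which by Definition~\ref{DefinitionInnerVspace} are evaluated on $\closure S$. The first step is therefore to identify $\closure S$. Assuming $B$ (equivalently $S$) is nonempty — if $B=\emptyset$ then $S=\emptyset$ and $V$ is not determined by $S$, so this degenerate case is excluded — I claim $\closure S=\closure B+V$. The inclusion $\closure B+V\subset\closure{B+V}$ is formal. For the converse I would take a sequence $b_k+v_k\to x$ with $b_k\in B$ and $v_k\in V$; since $B$ is bounded, $\closure B$ is compact, so after passing to a subsequence $b_k\to b\in\closure B$, whence $v_k\to x-b$, and as $V$ is a (closed) subspace, $x-b\in V$ and $x\in\closure B+V$. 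This is the only place where boundedness of $B$ enters, and it is exactly what fails for unbounded $B$.

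For the inclusion $V\subset W$, where $W$ denotes the inner vector space, I would use characterisation (2) of Lemma~\ref{LemmaMaximalVectorforClosed}. Because $V$ is a subspace, $S+v=B+(V+v)=B+V=S$ for every $v\in V$, and since translation is a homeomorphism, $\closure S+v=\closure S=\closure S-v$. Hence every $v\in V$ satisfies condition (2) for $\closure S$, so $V\subset W$.

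For the reverse inclusion $W\subset V$ I would use characterisation (3): fixing any $s_0\in\closure S$, the space $W$ consists of those $w$ with $s_0+\lambda w\in\closure S$ for all $\lambda\in\IR$. Consider the quotient projection $\pi\colon\IR^n\to\IR^n/V$. From $\closure S=\closure B+V$ we get $\pi(\closure S)=\pi(\closure B)$, which is compact as a continuous image of the compact set $\closure B$. If $w\in W$, then $\pi(s_0)+\lambda\,\pi(w)=\pi(s_0+\lambda w)\in\pi(\closure B)$ for every $\lambda\in\IR$; a full affine line can lie inside a bounded set only if its direction is zero, so $\pi(w)=0$, i.e.\ $w\in V$. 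Together the two inclusions give $W=V$.

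I expect the identity $\closure{B+V}=\closure B+V$ to be the main obstacle, as it is the single step relying on boundedness and requires the compactness of $\closure B$ to extract a convergent subsequence. Once $\closure S$ is pinned down, the remaining two inclusions are routine: one is immediate from $V+V=V$, and the other reduces the maximality of $V$ to the elementary fact that a line contained in a compact set must be constant. Convexity is used only implicitly, to guarantee via Lemma~\ref{LemmaMaximalVectorforClosed} that the inner vector space is well defined; the actual computation of the three sets is unaffected.
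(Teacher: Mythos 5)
Your proposal is correct; both inclusions go through, and your explicit treatment of the degenerate case $B=\emptyset$ (which the paper silently ignores) is sound. The easy inclusion $V\subset W$ is essentially identical in both proofs. For $W\subset V$, however, you take a genuinely different route: the paper never identifies $\closure{S}$. It instead picks an essentially inner point $s\in S$, invokes Lemma \ref{LemmaStrahlenVerschieben} to pull the line $s+\IR\xx\subset\closure{S}$ down into $S$ itself, and then derives a contradiction by writing $\xx=u+\vv$ with $0\ne u$ orthogonal to $V$, taking $\lambda=2R/\|u\|$ where $B\subset\ball_R(0)$, and comparing the two $B+V$-decompositions of $s$ and $s+\lambda\xx$ via a right-angle/triangle-inequality estimate. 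You instead first prove $\closure{S}=\closure{B}+V$ (your compactness/subsequence argument is exactly right, and this is indeed the one step where boundedness of $B$ enters) and then conclude via the quotient map $\pi\colon\IR^n\to\IR^n/V$, since a full affine line cannot sit inside the compact set $\pi(\closure{B})$. The two arguments encode the same geometric idea --- projecting along $V$ turns a hypothetical direction $\xx\notin V$ into an unbounded line inside a bounded set --- but your packaging avoids essentially inner points and Lemmas \ref{LemmaInnenGegenRand} and \ref{LemmaStrahlenVerschieben} entirely, at the cost of the extra (standard) closure identity, which makes it more self-contained and elementary; the paper's version buys the ability to work inside $S$ directly without ever computing its closure. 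One small point you should make explicit: Lemma \ref{LemmaMaximalVectorforClosed} applies to $\closure{S}$ only because $S$ is convex (Definition \ref{DefinitionInnerVspace} presupposes this, and $\closure{S}$ is then convex by Lemma \ref{LemmaClosureStaysConvex}), and your use of characterisation (3) requires choosing some $s_0\in\closure{S}$, which your nonemptiness assumption provides; with that noted, your argument is complete.
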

\begin{proof}
    Recall that the definition of the inner vector space, Definition \ref{DefinitionInnerVspace} refers to $\closure{S}$ and not to $S$ directly.
    If $S=B+V$ and $v\in V$, then we get that $s+\lambda v \in S$ for all $s\in S$. 
    This implies $s+\lambda v\in \closure{S}$ for all $s\in \closure{S}$ and therefore $v$ is in the inner vector space of $S$. 
    
    On the other hand, if $\xx$ is in the inner vector space, then $s+\lambda \xx\in \closure{S}$ for all $\lambda\in \IR$ and $s\in S$. 
    By taking $s$ as essentially inner point, we get $s+\lambda \xx\in S$ by Lemma \ref{LemmaStrahlenVerschieben}. 
    Assume to the contrary that $\xx\notin V$ and write $\xx=u+\vv$ with $\vv\in V$ and $u\ne 0$ orthogonal to $V$. 
    Since $B$ is bounded, there is $R>0$ such that $B\subset \ball_R(0)$. Take $\lambda=\frac{2R}{\|u\|}$. 
    Then $s$ and $s+\lambda \xx$ are in $S$ and they can be written as sum of elements in $V$ and $B$ as $s=\vv_1+b_1$ and $s+\lambda \xx = \vv_2+b_2$ with $\vv_1,\vv_2\in V; b_1,b_2\in B$. Their difference is
    $$
        (s+\lambda \xx)-s=\lambda \xx= \frac{2R}{\|u\|}\xx = 2R\frac{1}{\|u\|}u + \frac{2R}{\|u\|}\vv
    $$
    respectively
    $$
        (s+\lambda \xx)-s= (\vv_2+b_2)-(\vv_1+b_1) = (\vv_2-\vv_1)+(b_2-b_1)
    $$
    when writing both as a sum.
    Recall that $u$ is orthogonal to all elements in $V$, including $\vv, \vv_2,\vv_1$. Therefore, the triangle $0$, $2R\frac{1}{\|u\|}u$, $(b_2-b_1)$ has a right angle at its second vertex. Thus we get the last inequality in
    $$
        2R>\|b_1\| + \|b_2\| \ge\|b_2-b_1\| \ge \|2R\frac{1}{\|u\|}u\| = 2R
    $$
    where the first inequality used that $b_1,b_2\in B\subset \ball_R(0)$. This is a contradiction. Therefore, $\xx\in V$ for all $\xx$ in the inner vector space.
\end{proof}

\subsection{Defining the ray}
This section focuses on infinitary pp-defining the ray in the cases where this is possible. There are many equivalent characterisations of this set which will be gathered together in Theorem \ref{TheoremFall1Ausgerollt}.

As this is the first time, we actually give an infinitary pp-definition, we should make a remark on our notation. The reason is that for example the sets $\{x\in \IR \mid 0<x\}$ and $\{x\in \IR \mid \lnot (x\le 0)\}$ are equal. However, they have two different definitions and only the first definition is a primitive positive definition because the second definition uses a negation.
\begin{notation}
    For any structure $\structA$ and any formula $\Phi$ over the same signature in $n$ free variables, we
    write that a set $S$ is given by
    $$
        \left\{ v \in \setstructA^n \mitte \Phi(v) \right\}
    $$
    to describe that $S=\{v \in \setstructA^n \mid \Phi(v)\}\subset \setstructA^n$ and moreover we consider the abstract formula $\Phi$ as definition of $S$. We use this notation instead of just $\Phi$ to explicitly write down the free variables and the dimension. %omit the curly brackets to stress that this is a definition of a set rather than a set.
\end{notation}

\begin{lemma} \label{LemmaInterDefinableHalveIntervalls}
    The open interval $\{x\mid x>0\}\subset \IR$ and the closed interval $\{x\mid x\ge 0\}\subset \IR$ are infinitary pp-interdefinable over $(\IR, +, c \mid c \in \IQ)$.
\end{lemma}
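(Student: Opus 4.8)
The plan is to prove the two directions of interdefinability separately. One direction is routine: over the given structure the closed ray is the infinite intersection of translated open rays. Concretely, $\{x \mid x \ge 0\} = \bigcap_{q \in \IQ,\, q > 0} \{x \mid x + q > 0\}$, since a real number $x$ satisfies $x + q > 0$ for every positive rational $q$ exactly when $x \ge 0$. Each set $\{x \mid x + q > 0\}$ is pp-definable from the open ray using $+$ and the constant $q$, and the displayed infinite conjunction has the single free variable $x$, so this is a valid infinitary pp-definition of the closed ray from the open ray.

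The reverse direction, defining the open ray from the closed ray, is the real content, and the obstacle to name up front is topological: every set obtained from the closed ray by finitely many intersections, rational-affine images (in particular projections) and preimages is a rational polyhedron and therefore closed, and passing to infinite intersections still preserves closedness; so the boundary point $0$ can never be discarded as long as we only intersect. The way around this is to first build, by an infinite conjunction, a closed convex set that is genuinely non-polyhedral, and then project it, exploiting the fact that projections of non-polyhedral closed convex sets need not be closed. I would take the planar region bounded by a hyperbola branch: for each positive rational $a = p/q$ let $H_a = \{(x,y) \mid q^2 x + p^2 y - 2pq \ge 0\}$, which is the half-plane $x + a^2 y \ge 2a$ after clearing denominators. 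Each $H_a$ is pp-definable from the closed ray by applying the ray relation to a term assembled from $+$ and rational constants, where the integer coefficients $q^2$ and $p^2$ are realised by repeated addition, so no scalar multiplication is required.

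Setting $C = \bigcap_{a \in \IQ,\, a > 0} H_a$ gives an infinitary pp-definable subset of $\IR^2$, and the claim is that $\{x \mid x > 0\} = \{x \mid \exists y\colon (x,y) \in C\}$. For $x > 0$ the witness $y = 1/x$ works, since $q^2 x + p^2 y - 2pq = (qx - p)^2 / x \ge 0$ for all positive integers $p,q$; hence every positive $x$ lies in the projection. For $x \le 0$ no witness exists, because the constraints force $y \ge (2a - x)/a^2$ for all $a > 0$, and the right-hand side is unbounded above as $a \to 0^+$ (using $-x \ge 0$). Thus the projection of $C$ is exactly the open ray, and the existential quantifier over the infinite conjunction defining $C$ is a valid infinitary pp-formula with free variable $x$. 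The main step to get right is this projection computation, together with the observation that it is precisely the non-closedness of the projection that lets the open ray be recovered from the closed one.
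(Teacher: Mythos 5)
Your proof is correct and takes essentially the same approach as the paper: the closed ray is obtained as an infinite intersection of rationally translated open rays, and the open ray is recovered by intersecting half-planes (integer coefficients realised by repeated addition) bounding a hyperbolic region and then projecting it. The only cosmetic difference is that you use the tangent half-planes $x + a^2y \ge 2a$ to $xy=1$ indexed by positive rationals, while the paper uses the family $y \ge n - xn^2$ indexed by $n \in \IN$ (tangents to $y = 1/(4x)$) together with a redundant conjunct $x \ge 0$.
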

\begin{proof}
    The closed interval can be defined as infinite intersection as
    \begin{align*}
        \left\{ x \in \IR \mitte
        \bigwedge_{n \in \IN} x+\frac{1}{n} > 0 \right\}
    \end{align*}
    where $\frac{1}{n}$ is the rational constant.
    The open interval can be defined by the formula
    \begin{align*}
        \left\{ x \in \IR \mitte x\ge 0 \land \exists \var{1} \colon \bigwedge_{n \in \IN} \var{1}+(-n)+xn^2\ge 0 \right\}
    \end{align*}
    where $-n$ is the constant in $\IQ$ and multiplication with $n^2$ is implemented as repeated addition. It is easy to verify that the formula works. The idea behind the definition is to take one component of a hyperbola and project it on the $x$-axis.
\end{proof}

The next lemma gives sufficient conditions for defining the ray. As we will see in Theorem \ref{TheoremFall1Ausgerollt}, this will be equivalent conditions.
\begin{lemma} \label{LemmaImplicationsToRay}
    Let $S\subset \IR^n$ be a convex space. Then each item in the following list implies the next one:
    \begin{enumerate}
        \item \label{NrLemmaImplicationsToRay1}
        There is an affine set $A$, such that the closure $\closure{A\cap S}$ is not the sum of a bounded set and a vector space.
        \item \label{NrLemmaImplicationsToRay2}
        The set $S$ has a ray intersection.
        \item \label{NrLemmaImplicationsToRay3}
        At least one of the sets $\{x\in \IR \mid x>0\}$ and $\{x\in \IR \mid x\ge 0\}$ is pp-definable over $(\IR, +, \cdot c\mid c \in \IR, c \mid c \in \IR, S)$.
        \item \label{NrLemmaImplicationsToRay4}
        Both sets $\{x\in \IR \mid x>0\}$ and $\{x\in \IR \mid x\ge 0\}$ are infinitary pp-definable over
        $(\IR, +, \cdot c\mid c \in \IR, c \mid c \in \IR, S)$.
    \end{enumerate}
\end{lemma}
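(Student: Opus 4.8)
The plan is to establish the three single-step implications $\ref{NrLemmaImplicationsToRay1}\Rightarrow\ref{NrLemmaImplicationsToRay2}\Rightarrow\ref{NrLemmaImplicationsToRay3}\Rightarrow\ref{NrLemmaImplicationsToRay4}$ separately, each by invoking a result already proved above. For $\ref{NrLemmaImplicationsToRay1}\Rightarrow\ref{NrLemmaImplicationsToRay2}$ I would simply quote Theorem \ref{TheoremClosureImpliesRay}. The only thing to check is the mismatch between the word ``bounded'' in item \ref{NrLemmaImplicationsToRay1} and the word ``compact'' in that theorem: since every compact set is bounded, a sum of a compact set and a vector space is in particular a sum of a bounded set and a vector space, so contrapositively a set that is not the sum of a bounded set and a vector space is a fortiori not the sum of a compact set and a vector space. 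Thus item \ref{NrLemmaImplicationsToRay1} provides exactly the affine space required by the backward direction of Theorem \ref{TheoremClosureImpliesRay}, and we conclude that $S$ has a ray intersection.

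For $\ref{NrLemmaImplicationsToRay2}\Rightarrow\ref{NrLemmaImplicationsToRay3}$, the ray intersection supplies an affine map $f\colon \IR\to\IR^n$, $x\mapsto v+xw$, with $v+xw\in S$ for all $x>0$ and $v+xw\notin S$ for all $x<0$. I would consider the preimage $f^{-1}(S)$, defined by the atomic formula $S(v_1+w_1 x,\dots,v_n+w_n x)$: each coordinate $v_i+w_i x$ is a term built from the scalar multiplication $\cdot w_i$, the constant $v_i$ and the variable $x$, so this is a primitive positive (indeed quantifier-free) formula over $(\IR;+,\cdot c\mid c\in\IR, c\mid c\in\IR, S)$. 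Since $S$ is convex and $f$ is affine, $f^{-1}(S)$ is a convex subset of $\IR$, hence an interval; by the ray intersection property it contains $(0,\infty)$ and is disjoint from $(-\infty,0)$, so it equals $(0,\infty)$ when $v\notin S$ and $[0,\infty)$ when $v\in S$. In either case one of the two rays is pp-definable.

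For $\ref{NrLemmaImplicationsToRay3}\Rightarrow\ref{NrLemmaImplicationsToRay4}$, I would first note that pp-definability implies infinitary pp-definability by Lemma \ref{LemmaDefinablePolyStable}, so one of the two rays is already infinitary pp-definable over our structure. Lemma \ref{LemmaInterDefinableHalveIntervalls} shows that $\{x\mid x>0\}$ and $\{x\mid x\ge 0\}$ are infinitary pp-interdefinable over the reduct $(\IR;+,c\mid c\in\IQ)$, which is contained in $(\IR;+,\cdot c\mid c\in\IR, c\mid c\in\IR, S)$; hence whichever ray we have, the interdefinability formula produces the other, and both rays are infinitary pp-definable.

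I do not expect a serious obstacle here, as each step reduces to a previously established result. The one genuinely new computation is the pp-definition of the preimage in $\ref{NrLemmaImplicationsToRay2}\Rightarrow\ref{NrLemmaImplicationsToRay3}$, and it becomes routine once one observes that $f^{-1}(S)$ is automatically convex and is therefore pinned down to exactly one of the two rays; the remaining care is bookkeeping, namely verifying that the constants and scalar multiplications used genuinely belong to the signature and reconciling the ``bounded''/``compact'' wording in the first implication.
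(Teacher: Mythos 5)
Your proof is correct and follows essentially the same route as the paper's: Theorem \ref{TheoremClosureImpliesRay} for \ref{NrLemmaImplicationsToRay1}$\Rightarrow$\ref{NrLemmaImplicationsToRay2}, the pp-definable preimage of the line through the ray intersection for \ref{NrLemmaImplicationsToRay2}$\Rightarrow$\ref{NrLemmaImplicationsToRay3}, and Lemma \ref{LemmaInterDefinableHalveIntervalls} for \ref{NrLemmaImplicationsToRay3}$\Rightarrow$\ref{NrLemmaImplicationsToRay4}. Your additional bookkeeping (reconciling ``bounded'' with ``compact'', and using convexity of the preimage to pin it down to exactly one of the two rays) merely makes explicit what the paper leaves implicit.
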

\begin{proof}
    \ref{NrLemmaImplicationsToRay1} implies \ref{NrLemmaImplicationsToRay2} is shown in Theorem \ref{TheoremClosureImpliesRay}.
    
    \ref{NrLemmaImplicationsToRay2} implies \ref{NrLemmaImplicationsToRay3} because there are $v,w\in \IR^n$ such that $v+\lambda w\in {S}$ for all $\lambda>0$ and $v+\lambda w\notin {S}$ for all $\lambda <0$ by \ref{NrLemmaImplicationsToRay2}. Now, the definition of $X\subset \IR$ as 
    $$
        \{x \in \IR \mid (v_1+w_1x, \dots , v_n+w_nx)\in S\}
    $$
    where $(v_1,\dots,v_n)^T=v$ and $(w_1,\dots,w_n)^T=w$ is clearly a pp definition over
    $(\IR, +, \cdot c\mid c \in \IR, c \mid c \in \IR)$. This set is the set of positive reals if $v\notin S$ and the set of non-negative reals otherwise.
    
    \ref{NrLemmaImplicationsToRay3} implies \ref{NrLemmaImplicationsToRay4} because those sets are infinitary  pp-interdefinable by Lemma \ref{LemmaInterDefinableHalveIntervalls}.
\end{proof}

%-------------
\subsection{The affine case}
Every affine space is definable from $(\IR, +, \cdot c\mid c \in \IR, c \mid c \in \IR)$. Since we will always have those operations, it is worth to prove this fact separately.
\begin{notation}
    For $u,v\in \IR^n$, $\lambda\in \IR$ , $S\subset \IR^n$ we write as usual $u+v$ for the vector sum $(u_1+v_1,\dots ,u_n+v_n)$, $\lambda u$ for the scalar product $(\lambda u_1,\dots ,\lambda u_n)$ and $u\in S$ if the vector is in the set. Note that the formulas behind this notation are primitive positive, so for example $\lambda u + v\in S$ is pp-definable.
\end{notation}
\begin{lemma} \label{LemmaSkalarProductDefinable}
    Let $v\in \IR^n$. Then, the set $\{(u,\lambda)\in \IR^n\times \IR \mid \langle u, v \rangle = \lambda\}$ is pp-definable over $(\IR;+,\cdot \lambda \mid \lambda \in \IR, \lambda \mid \lambda \in \IR)$.
\end{lemma}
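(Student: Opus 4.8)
The plan is to simply exhibit an explicit primitive positive formula — in fact a single quantifier-free atomic formula — that defines the set. The key observation is that, for the fixed vector $v=(v_1,\dots,v_n)$, the linear form $u\mapsto \langle u,v\rangle=\sum_{i=1}^n v_iu_i$ is built entirely from symbols that are already present in the signature: each coefficient multiplication $u_i\mapsto v_iu_i$ is the unary scalar-multiplication symbol $\cdot v_i\in\sigma$ applied to the variable $u_i$, and the summation is performed with the binary symbol $+$. Hence $\langle u,v\rangle$ is nothing more than a $\sigma$-term in the variables $u_1,\dots,u_n$, and the graph of this functional is cut out by equating that term with $\lambda$.

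Concretely, I would first form the $\sigma$-term
$$
    t(u_1,\dots,u_n) \coloneqq v_1u_1 + \bigl(v_2u_2 + \dots + v_nu_n\bigr),
$$
where each summand $v_iu_i$ denotes the function symbol $\cdot v_i$ applied to $u_i$ and the additions are nested in any fixed fashion; by construction its value at a point $u$ equals $\langle u,v\rangle$. Then the set in question is defined by
$$
    \left\{ (u,\lambda)\in\IR^n\times\IR \mitte t(u_1,\dots,u_n) = \lambda \right\},
$$
an equality of two $\sigma$-terms in the $n+1$ free variables $u_1,\dots,u_n,\lambda$. Since an equation between terms is an atomic formula over $\sigma$, this is a (quantifier-free) primitive positive definition, which is exactly what is claimed. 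The degenerate case $n=0$ is handled by the atomic formula $0=\lambda$ using the constant symbol $0$.

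There is essentially no hard step here; the only points to check are conventions and bookkeeping. First, one must use that the language is taken with equality, so that an equation between terms counts as an admissible atomic formula in a pp-formula — this is precisely the convention already invoked implicitly in the Notation above, where vector statements such as $\lambda u+v\in S$ are treated as primitive positive. Second, the free variables and the ambient dimension must match the convention of the $\{\cdot\mid\cdot\}$ notation: the $n$ variables $u_1,\dots,u_n$ account for the $\IR^n$ factor and the single variable $\lambda$ for the $\IR$ factor, so the formula indeed defines a subset of $\IR^{n+1}=\IR^n\times\IR$. If one prefers to avoid nested terms and mirror the incremental style of the surrounding proofs, the same set is defined by
$$
    \left\{ (u,\lambda)\in\IR^n\times\IR \mitte \exists z_1,\dots,z_n\colon \Bigl(\bigwedge_{i=1}^n z_i = v_iu_i\Bigr) \land \lambda = z_1 + \dots + z_n \right\},
$$
which introduces one auxiliary variable per coordinate and uses only single applications of the symbols $\cdot v_i$ and additions of variables, making the primitive positive shape ($\exists$ and $\land$ over atomic formulas) fully explicit.
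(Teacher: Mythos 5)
Your proposal is correct and is essentially the paper's own proof: the paper likewise just exhibits the atomic formula $v_1u_1+\dots+v_nu_n=\lambda$, reading each $v_iu_i$ as the scalar-multiplication symbol $\cdot v_i$ applied to $u_i$. Your extra remarks (the $n=0$ case and the variant with auxiliary variables) are harmless elaborations of the same one-line argument.
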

\begin{proof}
    The description
    $$
        \{(u_1,\dots ,u_n,\lambda) \in \IR^n\times \IR \mid v_1u_1+\dots +v_nu_n=\lambda\}
    $$
    pp-defines the set $\{(u,\lambda)\mid \langle u, v \rangle = \lambda\}$.
\end{proof}
\begin{thm} \label{TheoremAffineFormNothing}
    Let $A\subset \IR^n$ be an affine subspace. Then, $A$ is pp-definable over $(\IR;+,\cdot \lambda \mid \lambda \in \IR, \lambda \mid \lambda \in \IR)$.
\end{thm}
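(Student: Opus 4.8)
The plan is to realise $A$ as the solution set of a finite system of linear equations, and then to observe that each such equation, and hence their finite conjunction, is primitive positive over the given signature. The whole argument rests on the classical fact that affine subspaces are exactly finite intersections of hyperplanes, combined with the bookkeeping that $+$, the scalar multiplications $\cdot\lambda$, the real constants, and equality are all at our disposal inside primitive positive formulas.

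First I would recall the standard equational description of an affine subspace. Since $A$ is an affine subspace, hence nonempty, I can write $A = p + V$ for some $p \in \IR^n$ and a linear subspace $V \subset \IR^n$. Choosing a basis $a_1,\dots,a_k$ of the orthogonal complement $V^\perp$ and setting $b_i \coloneqq \langle a_i, p\rangle$, one obtains $$A = \{x \in \IR^n \mid \langle a_i, x\rangle = b_i \text{ for all } i = 1,\dots,k\}.$$ Indeed $x \in A$ iff $x - p \in V = (V^\perp)^\perp$ iff $\langle a_i, x - p\rangle = 0$ for every $i$ iff $\langle a_i, x\rangle = b_i$ for every $i$, using that $(V^\perp)^\perp = V$ in finite dimension.

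Next I would make each hyperplane primitive positive. By Lemma \ref{LemmaSkalarProductDefinable} the set $\{(u,\lambda) \in \IR^n \times \IR \mid \langle u, a_i\rangle = \lambda\}$ is pp-definable; substituting the constant symbol $b_i$ for the variable $\lambda$ (a legitimate operation on pp-formulas, since the signature carries a constant for every real) yields that the hyperplane $H_i \coloneqq \{x \mid \langle a_i, x\rangle = b_i\}$ is pp-definable. Equivalently, $H_i$ is cut out directly by the atomic pp-formula $a_{i,1}x_1 + \dots + a_{i,n}x_n = b_i$, built only from $+$, the scalar multiplications $\cdot a_{i,j}$, the constant $b_i$, and equality. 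Finally, since $A = \bigcap_{i=1}^k H_i$ and a finite conjunction of pp-formulas is again pp, the formula $\bigwedge_{i=1}^k \big(\langle a_i, x\rangle = b_i\big)$ pp-defines $A$.

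I do not expect any serious obstacle here: the content is entirely the reduction to hyperplanes and the observation that the linear operations and equality are available. The only points needing a word of care are the degenerate cases. If $A = \IR^n$ there are no equations and the defining formula is the empty conjunction, i.e. $\mathrm{true}$; and should one wish to regard the empty set as an affine subspace, it is pp-defined by a single contradictory equation such as $0 = 1$ using the constants $0$ and $1$.
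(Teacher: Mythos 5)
Your proof is correct, but it takes the dual route to the paper's. The paper parametrises $A$: writing $A = w + V$ and fixing a basis $b_1,\dots,b_k$ of the direction space $V$, it defines $A$ by existentially quantifying the coefficients, as $\{x \in \IR^n \mid \exists \lambda_1,\dots,\lambda_k : x = w + \lambda_1 b_1 + \dots + \lambda_k b_k\}$, a shorthand for $n$ linear equations. You instead describe $A$ implicitly, as the finite intersection of the hyperplanes $\langle a_i, x\rangle = b_i$ obtained from a basis of $V^\perp$, using $(V^\perp)^\perp = V$. Your version yields a quantifier-free conjunction of atomic formulas (so in particular a pp-definition), actually puts Lemma \ref{LemmaSkalarProductDefinable} to work --- which the paper proves immediately beforehand but, somewhat curiously, does not use in its own proof of this theorem --- and anticipates the separation-by-halfspaces style of the definitions in Section \ref{SectionProofReverse}. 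The paper's parametric argument buys slightly more elementarity: it needs no inner product, no orthogonal complements, and no finite-dimensional duality, only the coset description of an affine set. Your handling of the degenerate cases ($A = \IR^n$ via the empty conjunction, and the empty set via a contradictory equation such as $0 = 1$, should one admit it as affine) is a careful touch the paper's proof leaves implicit.
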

\begin{proof}
    Since $A$ is affine, there is a vector space $V\subset \IR^n$ and a vector $w\in \IR^n$ such that $A=\{v+w\mid v \in V\}$. Let $\{b_1,\dots ,b_k\}$ be a basis of $V$. Then, $A$ can be defined as
    $$
        \{x\in \IR^n \mid \exists \lambda_1,\dots ,\lambda_k \in \IR :
        x=v+\lambda_1b_1+\dots +\lambda_kb_k\}
    $$
    where $x=v+\lambda_1b_1+\dots +\lambda_kb_k$ is a shorthand notation for $n$ linear equations.
\end{proof}
\begin{lemma} \label{LemmaAffinemap}
    Let $f \colon \IR^n \to \IR^n$ be an invertible affine map. Let $\sigma$ be a signature on $\IR$ that includes $(\IR, +, \cdot c\mid c \in \IR, c \mid c \in \IR)$. Then $S\subset \IR^n$ is pp-definable in $\sigma$ if and only if $f(S)$ is pp-definable.
\end{lemma}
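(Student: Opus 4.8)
The plan is to reduce the whole statement to a single closure fact: the image of a pp-definable set under an affine map is again pp-definable. Since $f$ is invertible, its inverse $f^{-1}$ is again an invertible affine map, and $S=f^{-1}(f(S))$. Hence once I have the image fact, I get both directions at once. Applying it to $f$ and $S$ shows that $f(S)$ is pp-definable whenever $S$ is, and applying it to the affine map $f^{-1}$ and the set $f(S)$ shows that $S=f^{-1}(f(S))$ is pp-definable whenever $f(S)$ is.

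The core step is the following. I would write $f(v)=Av+w$ with $A$ an $n\times n$ matrix and $w\in\IR^n$, and suppose $S$ is defined by a pp-formula $\Phi(\var{1},\dots,\var{n})$ over $\sigma$. Then I claim that
$$
    f(S)=\left\{ u\in\IR^n \mitte \exists \var{1},\dots,\var{n}\colon \Phi(\var{1},\dots,\var{n}) \land \bigwedge_{i=1}^n u_i=\sum_{j=1}^n A_{ij}\var{j}+w_i \right\}
$$
is a pp-definition of $f(S)$. Indeed, $u\in f(S)$ means precisely that there is a preimage point $y=(\var{1},\dots,\var{n})\in S$ with $u=Ay+w$, which is exactly what the right-hand side asserts.

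It then remains to verify that the displayed formula really is primitive positive over $\sigma$. Its body is the conjunction of $\Phi$, which is pp by assumption, with the $n$ affine equations $u_i=\sum_{j}A_{ij}\var{j}+w_i$; each such equation is an atomic equality between terms built only from $+$, the scalar multiplications $\cdot A_{ij}$, and the constants $w_i$, all of which lie in $\sigma$ by hypothesis, exactly as used in Lemma \ref{LemmaSkalarProductDefinable} and Theorem \ref{TheoremAffineFormNothing}. Conjunction and the final block of existential quantifiers over the finitely many variables $\var{1},\dots,\var{n}$ preserve the pp format and leave only the free variables $u_1,\dots,u_n$, so the whole expression is an honest pp-definition.

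I expect no genuine obstacle here; the content is essentially bookkeeping. The one point where the hypotheses are used in full is the backward direction: invertibility of $f$ is precisely what guarantees $S=f^{-1}(f(S))$, so that $S$ can be recovered as an affine image of $f(S)$. Without invertibility one would still obtain that $f(S)$ is pp-definable whenever $S$ is, but not the converse. Accordingly, I would phrase the image fact as the engine of the proof and then record the two applications, to $f$ and to $f^{-1}$, as the two implications of the \emph{if and only if}.
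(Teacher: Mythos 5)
Your proposal is correct and takes essentially the same route as the paper: the forward direction is word-for-word the paper's formula $\exists y\,(y\in S \land Ay+w=x)$, and your reduction of the backward direction to the image fact applied to $f^{-1}$ is just a repackaging of the paper's direct substitution, which defines $S$ as $\{x\in\IR^n \mid Ax+w\in f(S)\}$ without extra quantifiers. Both versions use invertibility in exactly the same place, namely to guarantee $S=f^{-1}(f(S))$, so there is nothing to fix.
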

\begin{proof}
    Write $f(v)=Av+b$.
    The set $f(S)$ is given by
    $$
        \{x=(x_1,\dots ,x_n)\in \IR^n \mid \exists y=(y_1,\dots ,y_n) \in\IR^n : y\in S \land Ay + b = x\}
    $$
    which is a pp-formula using vector notation assuming $S$ is pp-definable. Conversely, if $f$ is invertible and $f(S)$ is pp definable, then the pp-formula in
    $$
        \{x=(x_1,\dots ,x_n)\in \IR^n \mid Ax+b\in f(S) \}
    $$
    defines $S$.
\end{proof}
\subsection{Defining the representative in the bounded cases}
    For this subsection, we focus on convex sets $S$ which can be written as a sum of a bounded set and a vector space.

\begin{lemma} \label{LemmaDefiningOpenIntervall}
    Let $S\subset \IR^n$ be a convex set which is not closed.
    Then, the open interval $(0,1)$ is pp-definable from $(\IR;+,\cdot c \mid c\in \IR, c\mid c\in \IR, S)$.
\end{lemma}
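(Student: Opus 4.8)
The plan is to extract from the failure of closedness a single boundary point that is missed by $S$, pull the whole configuration back to the real line along a carefully chosen affine map, and then isolate $(0,1)$ by an elementary reflection-and-intersect trick.

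First I would use that $S$ is convex but not closed to pick a point $p\in\closure{S}\setminus S$, and then invoke Corollary \ref{CorExistInnerPoint} to obtain an essentially inner point $\inner\in S$. Since $p\in\closure{S}$ and $\inner$ is essentially inner, Lemma \ref{LemmaInnenGegenRand} guarantees that the whole open segment from $p$ to $\inner$ lies in $S$; note also $p\neq\inner$ because $p\notin S\ni\inner$, so the affine map $m\colon\IR\to\IR^n$, $m(t)=p+t(\inner-p)$, is non-constant. By construction $m(0)=p\notin S$, $m(1)=\inner\in S$, and $m(t)\in S$ for all $t\in(0,1)$.

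Next I would analyse the pullback $J:=m^{-1}(S)=\{t\in\IR\mid m(t)\in S\}$. As a preimage of a convex set under an affine map it is convex, hence an interval, and it contains $(0,1]$ but not $0$. Convexity forces $J$ to contain no negative number, for otherwise $0$ would lie between a negative element and $1$ and hence belong to $J$; thus $\inf J=0$ with $0\notin J$, i.e.\ $J$ is open at its left endpoint $0$, and $\sup J\ge 1$ since $1\in J$. The point of the construction is that I neither know nor need to control the right end of $J$: it may be bounded or a ray, and open or closed on the right.

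Finally I would isolate $(0,1)$ by intersecting $J$ with its reflection $1-J$. A short case check shows that for any interval $J$ with $\inf J=0$, $0\notin J$ and $\sup J\ge 1$ one has $J\cap(1-J)=(0,1)$ exactly, independently of the behaviour at the right end. Both $J$ and $1-J=\{t\mid m(1-t)\in S\}$ are pp-definable over $(\IR;+,\cdot c\mid c\in\IR,\,c\mid c\in\IR,S)$, since each is obtained by applying $S$ to an affine expression in $t$, and a conjunction of pp-formulas is again a pp-formula, so I obtain
\[
(0,1)=\{t\in\IR\mid m(t)\in S\ \land\ m(1-t)\in S\}.
\]
The one genuinely subtle point, and the step I would be most careful about, is this last one: it is tempting to try to read off $(0,1)$ from a single pullback, but $J$ has an uncontrolled right end, and the reflection-and-intersect step is exactly what converts ``an interval that is open precisely at the known endpoint $0$'' into the specific set $(0,1)$ without any knowledge of $\sup J$.
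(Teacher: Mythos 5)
Your proposal is correct and follows essentially the same route as the paper's own proof: the paper likewise fixes $s\in\closure{S}\setminus S$ and an essentially inner point $\inner\in S$, applies Lemma \ref{LemmaInnenGegenRand} and convexity to see that $\lambda\inner+(1-\lambda)s\in S$ exactly for $\lambda$ in an interval that is open at $0$ and contains $(0,1]$, and then defines $(0,1)$ as $\{x\in\IR\mid x(\inner-s)+s\in S\ \land\ (1-x)(\inner-s)+s\in S\}$, which is precisely your reflection-and-intersect set $J\cap(1-J)$. Your explicit case analysis of the uncontrolled right end of $J$ just spells out in more detail why this conjunction works.
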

\begin{proof}
    Fix any essentially inner $\inner \in S$ which exists by Corollary \ref{CorExistInnerPoint} and any $s\in \closure{S}\setminus S$.
    
    By Lemma \ref{LemmaInnenGegenRand}, the points $\lambda \inner + (1-\lambda) s$ are in $S$ for $\lambda \in (0,1)$. Since $S$ is convex and $s\notin S$, the points given by $\lambda \inner + (1-\lambda) s$ for $\lambda \le 0$ are not in $S$. Therefore, the pp-definition
    $$
        \{x \in \IR \mid x (\inner -s) + s \in S \land (1-x) (\inner -s) + s \in S\}
    $$
    defines the open interval $(0,1)$. Here, $u$ and $s$ denote a vector in $\IR^n$ with the usual addition and scalar multiplication.
\end{proof}
\begin{rem}
    Recall that a set $S\subset \IR^n$ is compact if and only if it is closed and bounded by the Heine-Borel theorem \cite{BorelHeine}. Recall furthermore that the product of compact sets is compact by Tychonoff's theorem \cite{Tychonoff} and the image of a compact set under a continuous map is compact.
\end{rem}
\begin{lemma} \label{LemmaClosedEqualsCompact}
    Let $S\subset \IR^n$ be a convex set which is the sum of a bounded set $B$ and a vector space $V$.
    Then, the following are equivalent:
    \begin{enumerate}
        \item \label{ConditionLemmaClosedEqualsCompact1}
        The set $S\subset \IR^n$ is closed.
        \item \label{ConditionLemmaClosedEqualsCompact2}
        There is a compact set $C$ and $S=C+V$.
    \end{enumerate}
\end{lemma}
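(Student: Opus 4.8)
The plan is to prove the two implications separately, relying on the elementary fact that the sum of a compact set and a closed set is closed, together with the explicit construction already used in the proof of Lemma \ref{LemmaDichotomieClosed}.

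For the direction \ref{ConditionLemmaClosedEqualsCompact2} implies \ref{ConditionLemmaClosedEqualsCompact1}, I would argue that $C+V$ is closed whenever $C$ is compact and $V$ is a vector space (hence itself closed). Concretely, take a sequence $x_k=c_k+v_k$ in $C+V$ converging to some $x$; by compactness of $C$ pass to a subsequence with $c_k\to c\in C$, so that $v_k=x_k-c_k\to x-c$, and since $V$ is closed we get $x-c\in V$ and thus $x=c+(x-c)\in C+V$. This shows that $S=C+V$ is closed.

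For the direction \ref{ConditionLemmaClosedEqualsCompact1} implies \ref{ConditionLemmaClosedEqualsCompact2}, I would reuse the construction from Lemma \ref{LemmaDichotomieClosed}. Starting from $S=B+V$ with $B$ bounded, set $C\coloneqq \closure{B}\cap S$. First note that $B\subset S$ (since $0\in V$) and that $S+V=S$ (since $V+V=V$). The set $C$ is bounded, being contained in $\closure{B}$, and closed, being the intersection of the closed set $\closure{B}$ with the closed set $S$; by Heine--Borel it is therefore compact. It remains to check $S=C+V$. The inclusion $C+V\subset S$ follows from $C\subset S$ together with $S+V=S$. For the reverse inclusion, given $x\in S$ write $x=b+v$ with $b\in B$ and $v\in V$; then $b\in B\subset \closure{B}$ and $b\in B\subset S$, so $b\in C$ and hence $x=b+v\in C+V$.

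The only place where closedness of $S$ is actually used is to guarantee that $C=\closure{B}\cap S$ is closed, and thus compact; the identity $S=C+V$ itself holds for any vector space $V$ with $S=B+V$ and any bounded $B$. I do not expect a genuine obstacle here: both implications are short, and the main point is simply the observation that a compact witness can be produced by intersecting the closure of the bounding set with $S$.
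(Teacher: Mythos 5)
Your proof is correct, and one half of it genuinely differs from the paper's. The direction \ref{ConditionLemmaClosedEqualsCompact1} implies \ref{ConditionLemmaClosedEqualsCompact2} is essentially the paper's argument: the paper takes $C=\closure{B}$ (which lies in $S$ because $S$ is closed), while you take $C=\closure{B}\cap S$; both choices make the two inclusions $C+V\subset S$ and $S=B+V\subset C+V$ immediate, so this is only a cosmetic variation. The real divergence is in \ref{ConditionLemmaClosedEqualsCompact2} implies \ref{ConditionLemmaClosedEqualsCompact1}. You prove directly that the sum of a compact set and a closed set is closed, extracting a convergent subsequence $c_k\to c\in C$ and concluding $v_k=x_k-c_k\to x-c\in V$ because every linear subspace of $\IR^n$ is closed. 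The paper instead argues locally: it shows $S\cap\closedball_r(0)$ is closed for each $r$ by writing it as $\left(C+\{v\in V \mid \|v\|\le r+R\}\right)\cap \closedball_r(0)$, i.e.\ a sum of two compact sets intersected with a closed ball, relying on the facts (recorded in the remark before the lemma) that sums and continuous images of compact sets are compact. Your subsequence argument is the more standard and slightly shorter route, and it needs no bound $\|v\|<r+R$; the paper's localisation buys the ability to stay entirely within compactness statements rather than closedness of subspaces --- incidentally, the paper's displayed identity writes $B$ where $C$ is meant, a small slip your approach sidesteps. Both proofs are complete, and you correctly observe that convexity of $S$ plays no role in either direction.
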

\begin{proof}
    \ref{ConditionLemmaClosedEqualsCompact1} implies \ref{ConditionLemmaClosedEqualsCompact2}: Let $C$ be the closure of $B$. Since $S$ is closed, we get $C\subset S$ and thus $C+V\subset S+V=S$. On the other hand, we have $S=B+V\subset C+V$ which shows the other inclusion.
    
    \ref{ConditionLemmaClosedEqualsCompact2} implies \ref{ConditionLemmaClosedEqualsCompact1}: It suffices to show that $S\cap \closedball_r(0) \subset \IR^n$ is closed for all $r\in \IN$. Choose $r\in \IN$ and choose any $R>0$ such that the compact set $C$ is contained in $\ball_R(0)$. Assume that $s\in S\cap \closedball_r(0)$. Then, we can write $s=c+v$ with $c\in C$, $v\in V$. Since $\|c\|<R, \|s\|\le r$, we get $\|v\|<r+R$. So we get
    $$
        S\cap \closedball_r(0) = (B + \{v\in V \mid \|v\|\le r+R\}) \cap \closedball_r(0)
    $$
    as equality of sets. Since $B$ is compact and $\{v\in V \mid \|v\|\le r+R\}$ is closed and bounded and thus compact, also their sum is compact and thus closed. Therefore $S\cap \closedball_r(0)$ is closed for all $r$ and thus $S$ is closed.
\end{proof}

\begin{lemma} \label{LemmaDefiningCompactIntervall}
    Let $S$ be a non-affine convex set which is the sum of a bounded set $C$ and a vector space $V$. Then, the compact interval $[0,1]$ is infinitary pp-definable from $(\IR;+,\cdot c \mid c\in \IR, c\mid c\in \IR, S)$.
\end{lemma}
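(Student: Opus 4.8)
The plan is to reduce to the one‑dimensional compact interval in two stages: first give an infinitary pp‑definition of the closure $\closure S$, and then carve a nondegenerate compact interval out of $\closure S$ by restricting to a suitable affine line and rescaling. Since infinitary pp‑definability is transitive, chaining these two stages will define $[0,1]$ from $S$.

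For the first stage I would fix an essentially inner point $\inner\in S$, which exists by Corollary~\ref{CorExistInnerPoint}, and claim the identity
$$\closure S=\bigcap_{\lambda>1}\bigl(\inner+\lambda(S-\inner)\bigr).$$
Each set $\inner+\lambda(S-\inner)$ is the image of $S$ under the invertible affine map $x\mapsto \inner+\lambda(x-\inner)$ and is therefore pp‑definable by Lemma~\ref{LemmaAffinemap}; the intersection is an infinite conjunction in the $n$ free variables $x_1,\dots,x_n$, so the right‑hand side is an infinitary pp‑definition of $\closure S$ (a countable subfamily $\lambda=1+\tfrac1n$ already suffices, the sets being nested). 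The inclusion ``$\subset$'' is the substantive point: for $p\in\closure S$ and $\lambda>1$ the point $\inner+\tfrac1\lambda(p-\inner)=\tfrac1\lambda p+(1-\tfrac1\lambda)\inner$ lies on the open segment from $p$ to $\inner$, hence in $S$ by Lemma~\ref{LemmaInnenGegenRand}, so that $p\in \inner+\lambda(S-\inner)$. For ``$\supset$'', a point lying in every $\inner+\lambda(S-\inner)$ is a limit of the points $\inner+\tfrac1\lambda(p-\inner)\in S$ as $\lambda\to 1^{+}$, and hence lies in $\closure S$.

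For the second stage, note that since $S$ is not affine its inner vector space $V$ — which by Lemma~\ref{LemmaAltDescriptionInnerVspace} coincides with the summand $V$ of $S=C+V$ — has strictly smaller dimension than the affine hull $W$ of $S$, so there is a direction $w$ parallel to $W$ with $w\notin V$. Forming $\{x\in\IR\mid \inner+xw\in\closure S\}$ is an affine preimage and thus infinitary pp‑definable by Lemma~\ref{LemmaAffinemap}. This subset of $\IR$ is closed because $\closure S$ is closed; it is bounded because $\closure S\subseteq\closure C+V$ (the right‑hand side being the sum of the compact set $\closure C$ and the subspace $V$, hence closed), and a line in a direction $w\notin V$ meets $\closure C+V$ in a bounded segment, as otherwise $|x|\,\dist(w,V)$ would stay bounded along the line; and it contains an open neighbourhood of $0$ because $\inner$ is essentially inner, so by Lemma~\ref{LemmaEssentialInnerPoint} a relative neighbourhood of $\inner$ in $W$ lies in $S\subseteq\closure S$. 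Hence it is a compact interval $[a,b]$ with $a<0<b$, and composing with the rescaling $x\mapsto (x-a)/(b-a)$ (again Lemma~\ref{LemmaAffinemap}) yields exactly $[0,1]$. The main obstacle is the closure identity of the first stage, that the scaled copies of $S$ shrink precisely to $\closure S$; once that is in place, the remaining steps are routine restrictions to lines and affine changes of coordinates, and the boundedness of the line section is a short secondary computation.
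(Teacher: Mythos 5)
Your proof is correct, but it takes a genuinely different route from the paper's. The paper splits into two cases: if $S$ is not closed it invokes Lemma~\ref{LemmaDefiningOpenIntervall} to define $(0,1)$ and then shrinks it to $[0,1]$ via the conjunction $\bigwedge_{n}\frac12+(1-\frac1n)(x-\frac12)\in(0,1)$; if $S$ is closed it reduces to $C$ compact via Lemma~\ref{LemmaClosedEqualsCompact} and intersects $S$ directly with a line through an essentially inner point in a direction $v\notin V$, obtaining a nondegenerate compact interval by a plain (finite) pp-definition. You eliminate the case split by first establishing the identity $\closure{S}=\bigcap_{\lambda>1}\bigl(\inner+\lambda(S-\inner)\bigr)$ — whose two inclusions you verify correctly, the substantive one via Lemma~\ref{LemmaInnenGegenRand} — and only then performing the line intersection, on $\closure{S}$. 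Your identity is in fact the natural generalisation of the paper's shrinking trick from the interval to an arbitrary convex set with an essentially inner point, and it buys a uniform argument plus a reusable fact the paper never states explicitly: the closure of any nonempty convex set is infinitary pp-definable from it over $(\IR;+,\cdot c\mid c\in\IR, c\mid c\in\IR)$, consistent with the classification since closures always land in a lower-or-equal class. What the paper's route buys in exchange is economy (it reuses Lemmas~\ref{LemmaDefiningOpenIntervall} and~\ref{LemmaClosedEqualsCompact} already in hand, and in the closed case needs no infinite conjunction at all), whereas your route needs the infinite conjunction in every case. Two small repairs: Lemma~\ref{LemmaAffinemap} is stated only for \emph{invertible} affine self-maps of $\IR^n$, so for the preimage $\{x\in\IR\mid \inner+xw\in\closure{S}\}$ you should instead observe that substituting affine terms into a definable relation is directly a pp-definition, exactly as the paper does in Lemma~\ref{LemmaImplicationsToRay}; and your assertion that non-affineness forces $V$ to be a proper subspace of the direction space of $W$ deserves its one-line justification, namely that otherwise $S=C+V\supset c+V=W\supset S$ for any $c\in C\subset S$ (using Lemma~\ref{LemmaAltDescriptionInnerVspace} to identify $V$ with the inner vector space), making $S$ affine. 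With those two remarks inserted, your argument is complete.
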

\begin{proof}
    If $S$ is not closed, the open interval is definable by Lemma \ref{LemmaDefiningOpenIntervall} and the description
    $$
        \left\{ x \in \IR \mitte \bigwedge_{n\in \IN} \frac{1}{2} + (1-\frac{1}{n})(x-\frac{1}{2}) \in (0,1) \right\}
    $$
    defines $[0;1]$ using the constant $\frac{1}{2}$ and the scalar $1-\frac{1}{n}$.

    Now, assume that $S$ is closed. By Lemma \ref{LemmaClosedEqualsCompact}, we may assume that $C$ is compact. Note that $V$ is the inner vector space by Lemma \ref{LemmaAltDescriptionInnerVspace}.

    Take any $v\notin V$ and look at the affine set $A \coloneqq \{s+\lambda v\mid \lambda \in \IR \}$ for an essentially inner point $s\in S$. By Lemma \ref{LemmaMaximalVectorforClosed}, there is $\lambda \in \IR$ such that $s+\lambda v \notin S$. Since $S$ is not affine, we even get that $S$ is different from its affine hull and may assume that $s+\lambda v$ is in it. Moreover, the closed convex set $S\cap A$ can be understood as an interval inside $A$. Since it comes from intersecting with a bounded set plus vector space, it cannot be a ray. As it is not everything, $S\cap A$ is a compact interval. It cannot be a point, because $s$ was an essentially inner point. So there are $u,w \in S\cap A$ such that $S\cap A$ is the interval from $u$ to $w$.
    In other words,
    $$
        \{x \in \IR \mid u+x(w-u)\in S\}
    $$
    pp-defines $[0;1]$. This definition uses $u$ as a constant and $w-u$ as a scalar constant. Note that this definition is a notationally shorted $n$-dimensional formula.
\end{proof}

\begin{lemma} \label{LemmaDefiningPointedRectangle}
    Let $S\subset \IR^n$ be a convex set which is not closed.
    Assume furthermore that $S=B+V$ for a bounded set $B$ and a vector space $V$ and assume that there is a \dent{} $a\notin S$.
    
    Then, $\{(x,y) \in \IR^2 \mid (x\in (-1,1)\land y \in (0,1)) \lor (x,y)=(0,0)\}$ is infinitary pp-definable from $(\IR;+,\cdot c \mid c\in \IR, c\mid c\in \IR, S)$.
\end{lemma}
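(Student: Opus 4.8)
The plan is to reduce the hypotheses to a two‑dimensional picture with a flat boundary edge, and then manufacture the isolated point by an infinite intersection of \emph{translates} (the key being that translations, unlike shears, do not pinch the interior).

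First I would simplify $S$. Since $S=B+V$ with $V$ the inner vector space (Lemma \ref{LemmaAltDescriptionInnerVspace}), I project along $V$: the affine image $\pi(S)$ under the quotient $\pi\colon\IR^n\to\IR^n/V$ is again bounded, convex, not closed, has trivial inner vector space, and still has a \dent{} (because $\pi^{-1}(\pi(S))=S$, so $\pi(a)\notin\pi(S)$, while $\pi(a)=\lambda\pi(s)+(1-\lambda)\pi(t)$ with $\pi(t)\in\closure{\pi(S)}$). As $\pi(S)$ is definable from $S$ and definability is transitive, it suffices to define $P$ from $\pi(S)$, so I assume $V=\{0\}$. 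Because $S$ is not closed, Lemma \ref{LemmaDefiningOpenIntervall} gives the open interval $(0,1)$; because $S$ is bounded and non‑affine (a \dent{} forces $S$ to differ from its affine hull), Lemma \ref{LemmaDefiningCompactIntervall} gives $[0,1]$. Hence all open and closed intervals, the half‑open interval $[0,1)=[0,1]\cap(-1,1)$, open rectangles, and crop sets of the form $(\text{open interval})\times[0,h)$ are definable via affine preimages and intersections.

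Next I would localise the \dent{} to a flat face. Let $a=\lambda s+(1-\lambda)t$ witness the \dent{}, and pick an essentially inner point $\inner$ (Corollary \ref{CorExistInnerPoint}). Since $a\in\closure{S}$ lies on the relative boundary and is a strict convex combination of $s,t\in\closure{S}$, the supporting hyperplane of $\closure{S}$ at $a$ contains both $s$ and $t$, so the segment $[s,t]$ lies in a flat face $F$ of $\closure{S}$. Pulling back along the affine map $(x,y)\mapsto s+x(t-s)+y(\inner-s)$ produces a definable two‑dimensional convex set $T$ with $T\subseteq\{y\ge 0\}$, face along the $x$‑axis, interior in $y>0$, with $s\mapsto(0,0)\in T$ and $a\mapsto(1-\lambda,0)\notin T$. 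Writing $J=T\cap\{y=0\}$ and $\hat F=\closure{T}\cap\{y=0\}$, the excluded point $a$ lies strictly inside $\hat F$ but outside the bounded interval $J$, so $J\subsetneq\hat F$ with $|J|<|\hat F|$; the slicing direction can be chosen so that $J$ sits strictly inside $\hat F$ on both ends.

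The heart is to collapse $J$ to a single interior point while keeping the full‑width interior above it. This width‑jump (single point at $y=0$, full width at each $y\in(0,1)$) is the non‑closed feature characterising class four, and it is the main obstacle: any attempt to pinch the bottom by intersecting \emph{sheared} or \emph{rotated} copies also pinches the interior into a cone. Horizontal translations avoid this, since every translate $T+(\tau,0)$ has the same cross‑sectional widths. Fixing an interior point $c$ of $J$ and intersecting over the translation family that pins exactly $c$ — that is, $P'=\bigcap_{\tau}\bigl(T+(\tau,0)\bigr)$ with $\tau$ running over the open interval for which $\bigcap_\tau(J+\tau)=\{c\}$ — gives a set meeting $\{y=0\}$ in the single point $(c,0)\in T$, while for small $y>0$ the cross‑section is a nonempty interval whose width tends to $|\hat F|-|J|>0$. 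Thus $P'$ is, near $(c,0)$, a pointed rectangle with its point strictly inside the bottom edge.

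Finally I would crop and normalise. Intersecting $P'$ with a definable centred half‑open rectangle $(c-\delta,c+\delta)\times[0,h)$ small enough to lie in the interior region yields exactly $(c-\delta,c+\delta)\times(0,h)\cup\{(c,0)\}$, and an invertible affine map sends this to $P$. The degenerate case $J=\{s\}$ (the bottom already meets $S$ in one point) needs no translation and crops $T$ directly. The only delicate point is arranging $c$ to be strictly interior to $\hat F$ on both sides, so that the centred crop is genuinely two‑sided and the point lands at the centre of the bottom edge as in $P$; room on the side of $a$ is automatic, and room on the other side comes from the freedom in the slicing direction, with a symmetric reflection–intersection available as a fallback.
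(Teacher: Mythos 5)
Your overall route is attractive and much of it is sound: the quotient by $V$, the supporting-hyperplane normalisation giving $T\subseteq\{y\ge 0\}$ with $(0,0)\in T$ and $(1-\lambda,0)\notin T$, and the observation that intersecting horizontal translates over an \emph{open} parameter interval pinches the bottom section $J$ to a single point while the sections at height $y$ become exactly $[l(y)+\beta,\,r(y)+\alpha]$, of width tending to $|\hat F|-|J|>0$ — this translation-pinch is a legitimate alternative to the paper's ``limit from above'' step ($S_2$) and shear ($S_3$). The genuine gap is the final cropping step, which needs the pinned point $(c,0)$ to lie in the \emph{relative interior} of the limiting section $[\hat p+\beta,\hat q+\alpha]$, i.e.\ $\hat p<p$ in addition to the automatic $q<\hat q$. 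This can fail for \emph{every} admissible choice of dent, slice and translation family, and neither of your proposed fixes repairs it. Concretely, take $S=(0,1)^2\cup\{(0,0)\}\subset\IR^2$, which satisfies all hypotheses of the lemma (bounded, convex, not closed, with \dent{}s such as $(\tfrac12,0)=\tfrac12(0,0)+\tfrac12(1,0)$). Every \dent{} of this set forces $s=(0,0)$ and $t$ on the bottom or left edge of $\closure{S}$, so every slice has $J=\{s\}$ degenerate with $s$ an \emph{endpoint} of $\hat F$: there is no slicing freedom, your degenerate-case recipe of cropping $T$ directly merely reproduces a corner-pointed set, and the reflection--intersection fallback fails too — reflecting through the point gives $S\cap\{(x,y)\mid(-x,y)\in S\}=\{(0,0)\}$, while other reflection axes either lose the point or yield sections whose width shrinks to $0$ (a teardrop/cone), not the constant-width sections of $\rect$.

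The missing idea is exactly the paper's last step: from the corner-pointed square $S_4$ it passes to $S_5$ by the difference trick $\exists y\colon (x_1,y)\in S_4\land(x_1,y-x_2)\in S_4$, which for $x_1=0$ forces $x_2=0$, while for $x_1>0$ produces the full difference interval of the section, symmetric about $0$; after exchanging the roles of the coordinates and cropping, this is the pointed rectangle, with the isolated point moved from a corner to the middle of an edge for free. On your counterexample this single line already produces $\{(0,0)\}\cup\bigl((0,1)\times(-1,1)\bigr)$, which is $\rect$ up to an invertible affine map (Lemma \ref{LemmaAffinemap}). If you append this symmetrisation after your pinch — run your construction only as far as a corner- or interior-pointed configuration, then symmetrise by the difference trick rather than by reflection — your argument goes through, and the earlier parts (the reduction to $V=\{0\}$, the flat-face slice, and the computation of the translated intersection's sections) can stand as written, modulo working inside the affine hull when invoking the supporting hyperplane so that $\langle \inner-a,v\rangle>0$ is guaranteed.
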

\begin{proof}
    For $n\le 1$, there is no convex set with a \dent{}.
    
    For $n\ge 2$, write $a=\lambda t+(1-\lambda)s$ with $s\in S, t\in \closure{S}, \lambda\in (0,1)$. 
    We identify $\IR^n$ with $\IR \times \IR \times \IR^{n-2}$ and may assume that $s=(0,0,0)$ and $t=(0,1,0)$ by applying an invertible affine map. This is allowed by Lemma \ref{LemmaAffinemap}. Then, $a=(0,\lambda,0)$. Since $S$ is convex, the points $(0,\mu,0)$ are not in $S$ for $\mu\ge \lambda$. 
    But $(0,1,0)$ is an accumulation point, so there is a point $b= (b_1,b_2,b') \in S \cap \ball_{1-\lambda}(t)$. By Corollary \ref{CorInnerPointsDense} there is furthermore an essentially inner point $b$ in this set. Since, $b$ is in $S$, we get that $b_1\ne 0$ or $b'\ne 0$.
    
    As $\closure{S}$ is convex, all points in the line segment between $s$ and $t$ are in $\closure{S}$. Then, Lemma \ref{LemmaInnenGegenRand} implies that all point in the interior of the triangle $s,t,b$ are in $S$.
    For simplicity make an affine map to move $b$ to $(1,0,0)$ (Lemma \ref{LemmaAffinemap}). 
    $$
    \begin{tikzpicture}
        \fill [gray!20!white] (0,0) -- (1,0) -- (0,1) -- (0,0);
        \draw (0,0) -- (1,0) -- (0,1);
        \draw [dashed] (0,0.6) -- (0,1);
        \draw (0,0) node [anchor=east]{$s$};
        \draw (0,1) node [anchor=east]{$t$};
        \draw (1,0) node [anchor=west]{$b$};
        \draw (0,0.6) node [anchor=east]{$a$}; 
        \draw [fill=white] (0,0.6) circle [radius=2pt]; 
        \draw [fill=white] (0,1) circle [radius=2pt]; 
        \draw [fill] (0,0) circle [radius=2pt]; 
        \draw [fill] (1,0) circle [radius=2pt]; 
    \end{tikzpicture}
    $$
    Note that $[0,1]$ is infinitary pp-definable by Lemma \ref{LemmaDefiningCompactIntervall}. Now, define $S_1$ as
    $$
        \{(x_1,x_2)\in \IR^2 \mid (x_1,x_2,0)\in S \land x_1\in [0,1] \land x_2\in [0,1] \land x_1+x_2\in [0,1]\}
    $$
    Then, $S_1$ containes the triangle $\{(x_1,x_2) \mid x_1>0,x_2\ge0 , x_1+x_2\le1\}$, is contained in the closure of that triangle and containes the point $(0,0)$ but excludes the point $(0,\lambda)$. Let $c\le \lambda$ be the smallest nonnegative number such that all $(0,\mu)$ are in $S_1$ for $0\le \mu<c$ and not in $S_1$ for $\mu>c$.
    Define $S_2$ as
    $$
        \left\{ (x_1,x_2) \in \IR^2 \mitte \bigwedge_{n\in \IN} \exists y\in \IR : ny\in [0,1]\land (x_1,x_2-y)\in S_1. \right\}
    $$
    This set contains $S_1$. Note that now, $(0,c) \in S_2$. 
    $$
    \begin{tikzpicture}
        \fill [gray!20!white] (0,0) -- (1,0) -- (0,1) -- (0,0);
        \draw (0,0.4) -- (0,0) -- (1,0) -- (0,1);
        \draw [dashed] (0,0.4) -- (0,1);
        \draw [dotted] (0,0.4) -- (1,0);
        \draw (0,0) node [anchor=east]{$s$};
        \draw (0,1) node [anchor=east]{$t$};
        \draw (1,0) node [anchor=west]{$b$};
        \draw (0,0.4) node [anchor=east]{$(0,c)$}; 
        \draw [fill] (0,0.4) circle [radius=2pt]; 
        \draw [fill=white] (0,1) circle [radius=2pt]; 
        \draw [fill] (0,0) circle [radius=2pt]; 
        \draw [fill] (1,0) circle [radius=2pt]; 
        %\clip (-1,-0,2) rectangle (2,1.3);
    \end{tikzpicture}
    $$
    Define $S_3$ as
    $$
        \{(x_1,x_2) \in \IR^2 \mid (x_1, (1-c) x_2 + c -c x_1)\in S_2 \land x_2 \in [0,1].\}
    $$
    This is again a subset of the closed triangle $\{(x_1,x_2) \mid x_1,x_2\ge 0, x_1+x_2\le 1\}$ containing the interior of this triangle and this time, it contains $(0,x_2)$ if and only if $x_2=0$.
    $$
    \begin{tikzpicture}
        \fill [gray!20!white] (0,0) -- (1,0) -- (0,1) -- (0,0);
        \draw (0,0) -- (1,0) -- (0,1);
        \draw [dashed] (0,0) -- (0,1);
        \draw [dotted] (0,0) -- (0,-0.6) -- (1,0);
        \draw [dotted] (0.5,0) -- (0.5,0.5) -- (0,0.5);
        \draw (0,0) node [anchor=east]{$(0,0)$};
        \draw (0,1) node [anchor=east]{$(0,1)$};
        \draw (1,0) node [anchor=west]{$(1,0)$};
        \draw [fill=white] (0,1) circle [radius=2pt]; 
        \draw [fill] (0,0) circle [radius=2pt]; 
        %\draw [fill] (1,0) circle [radius=2pt]; 
    \end{tikzpicture}
    $$
    To make it a rectangle define $S_4$ as
    $$
        \left\{ (x_1,x_2) \in \IR^2 \mitte  (\frac{1}{2} x_1,\frac{1}{2} x_2) \in S_3 \land x_1 \in [0,1] \land x_2\in [0,1] \right\}
    $$
    using Lemma \ref{LemmaDefiningCompactIntervall} again. This is now a subset of the square $\{(x_1,x_2 \mid 0\le x_1,x_2\le 1)\}$.
    $$
    \begin{tikzpicture}
        \fill [gray!20!white] (0,0) -- (1,0) -- (1,1) -- (0,1) -- (0,0);
        \draw (0,0) -- (1,0) -- (1,1) -- (0,1);
        \draw [dashed] (0,0) -- (0,1);
        \draw [dotted] (1,0) -- (2,0) -- (0,2) -- (0,1);
        \draw (0,0) node [anchor=east]{$(0,0)$};
        \draw (0,1) node [anchor=east]{$(0,1)$};
        \draw (1,0) node [anchor= west, fill=white]{$(1,0)$};
        \draw (1,1) node [anchor= west]{$(1,1)$};
        \draw [fill=white] (0,1) circle [radius=2pt]; 
        \draw [fill] (0,0) circle [radius=2pt]; 
    \end{tikzpicture}
    $$
    Finally define $S_5$ as
    \begin{align*}
        \{(x_1,x_2)\in \IR^2 \mid \exists y&: (x_1,y) \in S_4 \land (x_1, y-x_2) \in S_4
        \\ & \land 2x_2-1\in (0,1) \land \frac{1}{2} x_1 + \frac{1}{2} \in (0,1)\}
    \end{align*}
    using Lemma \ref{LemmaDefiningOpenIntervall} to define $(0,1)$. The first line ensures that $(x_1,x_2)$ is in the rectangle $\{(x_1,x_2) \mid 0\le x_1 \le 1 \land -1 \le x_2\le 1\}$ and if $x_1=0$ then $x_2=0$. The second line excludes the other boundaries, so $S_5$ is 
    $$
        \begin{tikzpicture}
            \fill [gray!20!white] (-1,1) -- (1,1) -- (1,0) -- (-1,0) -- (-1,1);
            \draw [very thin, dashed] (-1,1) -- (1,1);
            \draw [very thin, dashed] (-1,0) -- (1,0);
            \draw [very thin, dashed] (-1,0) -- (-1,1);
            \draw [very thin, dashed] (1,0) -- (1,1);
            \draw [fill] (0,0) circle [radius=2pt];
            \draw (-1,0) node [anchor=east]{$(-1,0)$};
            \draw (-1,1) node [anchor=east]{$(-1,1)$};
            \draw (1,0) node [anchor= west]{$(1,0)$};
            \draw (1,1) node [anchor= west]{$(1,1)$};
        \end{tikzpicture}
        $$
        the desired set.
\end{proof}

\subsection{Defining the pointed stripe}
It is left to define the representative in the case where the given set is not the sum of a bounded set and a vector space but it cannot define the ray.
\begin{lemma} \label{LemmaDefiningPointedStripeLimited}
    Let $S$ be a convex subset of $\IR^n$. Assume that $S$ is not the sum of a bounded set and a vector space. Assume furthermore that for each affine space $A$, the closure $\closure{S\cap A}$ is the sum of a vector space and a compact set. Then $\{(x,y)\in \IR^2 \mid y \in (0,1) \lor (x,y)=(0,0)\}$ is infinitary pp-definable from $(\IR;+,\cdot c \mid c\in \IR, c\mid c\in \IR, S)$.
\end{lemma}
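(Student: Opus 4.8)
The plan is to pass to a two–dimensional picture in which $S$ already looks like a ``full stripe sitting over a bounded interval'', and then to carve out the pointed stripe by exactly two non‑routine operations: one infinite intersection that collapses the bounded interval to a single point, and one existential formula that opens the remaining closed edge. First I would record the coarse shape of $S$. Since no affine slice of $S$ produces a ray, Theorem~\ref{TheoremClosureIffRay} gives $\closure S=C+V$ with $C$ compact and $V$ the inner vector space, and $\dim V\ge 1$ (else $S$ were bounded). As $S$ is not a sum of a bounded set and a vector space it is not invariant under $V$: otherwise $S=\pi^{-1}(\pi(S))$ for the projection $\pi$ onto $V^\perp$, and $\pi(S)\subset\pi(C)$ would be bounded. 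So there are $p\in S$ and $v_0\in V$ with $p+v_0\notin S$, and the fibre $I\coloneqq\{s\in\IR\mid p+sv_0\in S\}$ is convex, contains $0$, and is bounded, since an unbounded fibre would be a half‑line and hence a ray intersection. Finally, $S$ is not affine, so $\closure S$ is not an affine subspace and its essentially inner points (dense by Corollary~\ref{CorInnerPointsDense}) are not all contained in $p+V$; pick such a point $u$ with $u-p\notin V$, so that $v_0$ and $u-p$ are independent.

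Next I would go to the plane via $g\colon\IR^2\to\IR^n$, $g(s,t)=p+sv_0+t(u-p)$, and set $X\coloneqq g^{-1}(S)$, which is infinitary pp‑definable as a preimage of an affine map. Because $u+V\subset S$ (Lemma~\ref{LemmaStrahlenVerschieben}) and
$$
g(s,t)=(1-t)\,p+t\bigl(u+\tfrac{s}{t}v_0\bigr)\in S \quad\text{for } t\in(0,1),
$$
while $g(s,1)=u+sv_0\in S$, the set $X$ contains every horizontal line $\{t=t_0\}$ with $t_0\in(0,1]$, and $X\cap\{t=0\}=I\times\{0\}$. To manufacture the unit intervals, pick $c>\sup I$; the fibre $\{t\mid (c,t)\in X\}$ is, by convexity and $u\notin V$, a bounded interval contained in $(0,\infty)$ whose infimum $0$ is not attained. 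Its difference set $\{t-t'\mid (c,t),(c,t')\in X\}$ is therefore a symmetric \emph{open} interval, and an affine rescaling gives $(0,1)$; the intersection argument from Lemma~\ref{LemmaDefiningCompactIntervall} upgrades this to $[0,1]$. Intersecting with the slab yields $X_1\coloneqq X\cap(\IR\times[0,1])=\{(s,t)\mid 0<t\le 1\}\cup(I\times\{0\})$.

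It remains to turn $X_1$ into the pointed stripe, which is the crux. First collapse the bottom while keeping the horizontal lines full: with $\phi_k(s,t)=(s/k,t)$ set $X_2\coloneqq\bigcap_{k\in\IN}\phi_k(X_1)$. Each $\phi_k$ fixes every full line $\{t=t_0\}$ and scales the bottom to $(I/k)\times\{0\}$, so, since $\bigcap_k I/k=\{0\}$ as $I$ is bounded and contains $0$, we get $X_2=\{(s,t)\mid 0<t\le 1\}\cup\{(0,0)\}$. Then open the closed top line without disturbing the attached point by defining
$$
X_3\coloneqq X_2\cap\bigl\{(s,t)\mid \exists\,t'\colon (s,t')\in X_2\ \wedge\ t'-t\in(0,1)\bigr\},
$$
which keeps exactly those points of $X_2$ having another point of $X_2$ strictly above them in the same fibre. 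A point with $t=1$ has nothing above it and is discarded; all points with $t\in(0,1)$ and the point $(0,0)$ survive; and no point is added since $X_3\subset X_2$. Hence $X_3=\{(s,t)\mid t\in(0,1)\}\cup\{(0,0)\}$ is the pointed stripe.

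The main obstacle is precisely this last step. Every symmetric gadget available so far — in particular the difference trick used above — produces a centrally symmetric set and would create two special points instead of one, which is fatal because the pointed stripe is genuinely asymmetric. The asymmetry has to be supplied from outside: the collapsing intersection $\bigcap_k\phi_k$ exploits that the bottom fibre $I$ is bounded and contains $0$ to produce a single point, and the one‑sided ``there is a point above'' formula opens the closed top edge while leaving the attached point untouched. Everything else — the passage to the plane, the verification of the slice, and the definition of $(0,1)$ and $[0,1]$ — is routine once the hypotheses have been translated into ``$\closure S=C+V$ with $V$ the inner vector space, $S$ not $V$‑invariant, and no unbounded $V$‑fibre''.
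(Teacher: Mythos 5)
Your proposal is correct, and it follows the same skeleton as the paper's proof: both reduce to a two-dimensional preimage of $S$ under an affine map spanned by the non-invariance witness ($p$, $v_0\in V$ with $p+v_0\notin S$) and a segment toward an essentially inner point, yielding a stripe with one full edge (the line through the essentially inner point, full by Lemma \ref{LemmaStrahlenVerschieben}) and one bounded, partially missing edge. The endgame, however, is genuinely different. The paper analyses the closure of its planar set $S'$ via two further applications of the hypothesis (to a $2$-dimensional slice and to a line), obtains the compact trace $[a,b]$ on the bounded edge, and then carves the pointed stripe in a single formula combining a shifted copy of $S'$ with the flipped-and-scaled conjunction $\bigwedge_n (nx,1-y)\in S'$; it never constructs intervals. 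You instead build a modular pipeline: extract $(0,1)$ as the difference set of a bounded vertical fibre (correct: the fibre over $c>\sup I$ is an interval with unattained infimum $0$, and bounded above because a ray in $\closure{S}=C+V$ in direction $u-p$ would force $u-p\in V$), upgrade to $[0,1]$ by the intersection trick of Lemma \ref{LemmaDefiningCompactIntervall}, collapse the bounded bottom edge by $\bigcap_k \phi_k$ using that $\bigcap_k I/k=\{0\}$, and open the closed top edge with the one-sided existential ``there is a point of $X_2$ strictly above''. Both your infinite intersection of scalings and the paper's $(nx,1-y)$ conjunction exploit the same asymmetry (a bounded fibre scaled against a full line), so the conceptual content matches; your version trades the paper's single delicate formula and its extra slice analysis for three transparent steps, at the cost of the interval gadget (which you could also have obtained directly from Lemma \ref{LemmaDefiningOpenIntervall}, since the hypotheses force $S$ to be non-closed). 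Two cosmetic repairs: in the fibre argument ``$u\notin V$'' should read ``$u-p\notin V$''; and your justification for finding such a $u$ (``$S$ not affine, so $\closure{S}$ is not an affine subspace'') is true but deserves its one-line proof via Lemma \ref{LemmaInnenGegenRand} (if $\closure{S}$ were affine, reflecting any $x\in\closure{S}$ through an essentially inner point stays in $\closure{S}$, and the open-segment lemma then puts $x\in S$, making $S$ affine) --- or, shorter, note that \emph{every} essentially inner point $u$ works automatically, since $u+V\subset S$ while $p+v_0\notin S$ rules out $u\in p+V$.
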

\begin{proof}
    By the assumption with the affine set $A=\IR^n$, we get that $\closure{S}=C+V$ for a compact set $C$ and a vector space $V$. Since $S$ cannot be written as the sum of the bounded set $C\cap S$ and the vector space $V$, there is $s\in S$ and $v \in V$ such that $v+s\notin S$. Fix $s,v$ and an essentially inner point $\inner$ and define the set $S'$ as
    $$
        \{(x,y)\in \IR^2 \mid xv + y(s-\inner) + \inner \in S\}
    $$
    where vector notation is used.
    $$
    \begin{tikzpicture}
        \draw [dotted, fill=gray!20!white] (0,0) circle (0.4);
        \draw [fill=white] (1,1) circle [radius=2pt]; 
        \draw [fill] (0,1) circle [radius=2pt]; 
        \draw [fill] (0,0) circle [radius=2pt]; 
        \draw (0,0) node [anchor=east]{$(0,0)$};
        \draw (0,1) node [anchor=east]{$(0,1)$};
        \draw (1,1) node [anchor= west]{$(1,1)$};
    \end{tikzpicture}
    $$
    Note that $(0,0)\in S', (0,1)\in S', (1,1)\notin S'$. Moreover, Lemma \ref{LemmaStrahlenVerschieben} implies that $\inner + \lambda v \in S$ for all $\lambda \in \IR$, so $(\lambda,0)\in S'$ for all $\lambda\in \IR$.
    $$
        \begin{tikzpicture}
            \fill [gray!20!white] (-5,1) -- (5,1) -- (5,0) -- (-5,0) -- (-5,1);
            %\draw [fill] (0,0) circle [radius=2pt];
            \draw [fill=white] (0.6,1) circle [radius=2pt]; 
            \draw [fill] (0,1) circle [radius=2pt]; 
            \draw [fill] (0,0.4) circle [radius=2pt]; 
            \draw (0,0.4) node [anchor=east]{$(0,0)$};
            \draw (0,1) node [anchor=east]{$(0,1)$};
            \draw (0.6,1) node [anchor= west]{$(1,1)$};
        \end{tikzpicture}
    $$
    
    If we choose $A$ in the assumption as the two dimensional affine space containing $s,\inner, s+v$, then we get that the closure of $S'$ is the sum of a vector space $V'$ and a compact set $C'$. Since $v\in V$, the point $(1,0)$ is in $V'$. Since $V'\ne \IR^2$, we get that this is already all of $V'$. Therefore, there is a minimal $t$ such that $(0,t)\in \closure{S}$. Since then also $(\lambda, t)\in \closure{S}$ for all $ \lambda$, Lemma \ref{LemmaInnenGegenRand} shows that $(\lambda, \mu)\in S'$ for all $\lambda \in \IR$ and $\mu \in (t,1)$. Since $t$ was minimal, $(1,1)\notin S'$ and the same argument would apply, we get that $(\lambda, \mu)\notin S'$ for all $\mu<t$ or $\mu>1$.
    
    Finally choose $A$ in the assumption as the affine line containing $s$ and $s+v$. Then, $S$ intersected with this line is not everything so there are $a,b\in \IR$ such that $\closure{S\cap A}$ is represented as the compact intervall from $a$ to $b$. In other words, $\lambda \in [a,b]$ implies that $(\lambda,1)\in \closure{S'\cap (\IR \times \{1\})}$.
    $$
        \begin{tikzpicture}
            \fill [gray!20!white] (-5,1) -- (5,1) -- (5,0) -- (-5,0) -- (-5,1);
            %\draw [fill] (0,0) circle [radius=2pt];
            \draw [very thin, dashed] (-5,1) -- (-1,1);
            \draw [very thin, dashed] (0.6,1) -- (5,1);
            \draw [very thin] (-1,1) -- (0.6,1);
            \draw [very thin, dashed] (0,0) -- (0,-0.4);
            \draw [very thin, dashed] (0,1) -- (0,1.4);
            \draw [fill=white] (1.5,1) circle [radius=2pt]; 
            \draw [fill] (0,1) circle [radius=2pt]; 
            \draw [fill] (0,0.4) circle [radius=2pt]; 
            \draw (0,0.4) node [anchor=east]{$(0,0)$};
            \draw (-1,1) node [anchor=south]{$(a,1)$};
            \draw (0.6,1) node [anchor=south]{$(b,1)$};
            \draw (1.5,1) node [anchor=north]{$(1,1)$};
            \draw (0,0) node [anchor=west]{$(0,t) $};
        \end{tikzpicture}
    $$
    
    Consider the set defined as
    \begin{align*}
        \left\{ (x,y) \in \IR^2 \mitte (x,y) \in S' \land (x+a-b-1,y) \in S' \land
        \bigwedge_{n \in \IN} (nx, 1-y ) \in S'.\right\}
    \end{align*}
    This set contains no points with $y>1$ by the first condition and no points with $y<0$ by any of the last conditions. For $y=1$ the first or the second condition contradicts since  $x$ would need to be in the interval $[a,b]$ and in the interval $[b+1, 2b+1-a]$. For $y=0$ and $x\ne 0$, the term $nx$ will become smaller that $a$ or greater than $b$ so also those elements are not in the new set. On the other side both $(0,0)$ and $(x,y)$ with $0<y<1$ are easily seen to be in this set. 
        $$
        \begin{tikzpicture}
            \fill [gray!20!white] (-5,1) -- (5,1) -- (5,0) -- (-5,0) -- (-5,1);
            \draw [very thin, dashed] (-5,1) -- (5,1);
            \draw [very thin, dashed] (-5,0) -- (5,0);
            \draw [fill] (0,0) circle [radius=2pt];
        \end{tikzpicture}
        $$
    Therefore, this set defines the pointed stripe.
\end{proof}
\section{Defining arbitrary convex sets}
\label{SectionProofReverse}
In this section, we show how to infinitary pp-define any convex set $S$ over $(\IR;+,\cdot c \mid c\in \IR, c\mid c\in \IR, R)$, where $R$ is the representative of the equivalence class containing $S$.

\subsection{Preliminary}
The whole section of the reverse construction will commonly use versions of the 
Hahn-Banach-Theorem. We will not use the original versions by Hahn \cite{Hahn} and Banach \cite{ BanachReprint}. Instead, we use the following geometric versions:

\begin{definition}
    For a set $S$ denote by $\inside{S}$ the interior of $S$.
\end{definition}

\begin{thm}[Hahn-Banach, {\cite{MR1921556}}] \label{Theorem Hahn-Banach} \label{Theorem Hahn-Banach3-Boundary}
    Let $S\subset \IR^n$ be a convex subset and let $a\in \IR^n\setminus \inside{S}$. Then, there is $v\in \IR^n\setminus \{0\}$ of norm $1$ such that $S$ is contained in the closed halve space $\{h\in \IR^n \mid \langle h-a,v\rangle \ge 0\}$.
\end{thm}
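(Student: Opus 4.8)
The plan is to prove this separating-hyperplane statement by splitting into two cases according to whether $a$ lies in the exterior of the closed convex set $\closure{S}$ or on its boundary, in both cases reducing the work to the metric projection onto a closed convex set. First I would reduce to $\closure{S}$: by Lemma \ref{LemmaClosureStaysConvex} the set $\closure{S}$ is convex, and for a convex set one has the identity $\inside{S}=\inside{\closure{S}}$ (the nontrivial inclusion uses the standard fact that the half-open segment from an interior point to a point of the closure stays in the interior), so the hypothesis gives $a\notin\inside{\closure{S}}$. Since any closed half space containing $\closure{S}$ also contains $S$, it suffices to separate $a$ from $\closure{S}$; the case $S=\emptyset$ is trivial, so I assume $\closure{S}\neq\emptyset$.

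\textbf{Case 1:} $a\notin\closure{S}$. Here I would use the nearest-point projection. As $\closure{S}$ is nonempty, closed and convex, there is a unique $p\in\closure{S}$ minimising $\|h-a\|$ over $h\in\closure{S}$, and it satisfies the variational inequality $\langle h-p,\,a-p\rangle\le 0$ for all $h\in\closure{S}$. Setting $v\coloneqq(p-a)/\|p-a\|$ (well defined since $p\neq a$), I would compute $\langle h-a,v\rangle=\langle h-p,v\rangle+\langle p-a,v\rangle\ge\|p-a\|>0$ for every $h\in\closure{S}$, where the first summand is nonnegative by the variational inequality. This produces the desired half space, in fact with strict inequality.

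\textbf{Case 2:} $a\in\closure{S}\setminus\inside{\closure{S}}$, i.e. $a$ is a boundary point. Since $a\notin\inside{\closure{S}}$, every neighbourhood of $a$ meets the complement of $\closure{S}$, so I can choose a sequence $a_k\to a$ with $a_k\notin\closure{S}$. Applying Case 1 to each $a_k$ yields unit vectors $v_k$ with $\langle h-a_k,v_k\rangle\ge 0$ for all $h\in\closure{S}$. As the $v_k$ lie on the compact unit sphere, I would pass to a convergent subsequence $v_k\to v$ with $\|v\|=1$; letting $k\to\infty$ in the inequality (using $a_k\to a$) gives $\langle h-a,v\rangle\ge 0$ for all $h\in\closure{S}\supseteq S$, which is exactly the claim.

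The key technical input is the metric projection onto a closed convex set together with its variational (obtuse-angle) inequality; once that is available, Case 1 is a one-line computation and Case 2 is a routine compactness argument on the sphere. The main point requiring care is the reduction in Case 2: the identity $\inside{S}=\inside{\closure{S}}$ for convex sets and the resulting approximability of a boundary point of $\closure{S}$ by points of its complement. This is precisely what lets the \emph{strict} separation obtained in Case 1 be bootstrapped into the \emph{non-strict} supporting-hyperplane statement at a boundary point, and I expect it to be the only genuinely delicate step.
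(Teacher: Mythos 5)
Your proof is correct, but it is genuinely different from what the paper does: the paper gives no proof of this theorem at all. It cites the literature (\cite{MR1921556}), where the statement is proved for locally convex topological vector spaces under the assumption $\inside{S}\neq\emptyset$, and disposes of the empty-interior case in a one-line remark (if the outer dimension of $S$ is smaller than $n$, take $v$ orthogonal to the affine hull of $S$). You instead give a self-contained, purely finite-dimensional argument: nearest-point projection onto $\closure{S}$ with its variational inequality $\langle h-p,\,a-p\rangle\le 0$ for the exterior case, then a compactness argument on the unit sphere to pass from strict separation at exterior points $a_k\to a$ to a supporting half space at a boundary point. Both the projection computation and the limiting step are correct, as is the reduction to $\closure{S}$. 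What your route buys is independence from the functional-analytic Hahn--Banach machinery, at the cost of being specific to $\IR^n$ (the projection and the compact sphere are exactly what fail in infinite dimensions, which is why the cited general version needs a nonempty interior).

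One small point deserves more care: your justification of $\inside{S}=\inside{\closure{S}}$ via the half-open-segment lemma presupposes $\inside{S}\neq\emptyset$. When $\inside{S}=\emptyset$, that lemma has no interior point to start from; instead one argues that a convex set with empty interior in $\IR^n$ spans a proper affine subspace, so $\closure{S}$ lies in that same proper affine subspace and also has empty interior (this is in effect the paper's orthogonal-to-the-affine-hull remark). With that sentence added, your proof is complete.
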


There is also a version for closed sets:
\begin{thm}[Hahn-Banach, {\cite[Theorem IV.3']{MR1336382}}] \label{Theorem Hahn-Banach2}
    Let $S\subset \IR^n$ be a closed convex subset and let $a\in \IR^n\setminus S$. Then, there is $v\in \IR^n\setminus \{0\}$ of norm $1$ such that $S$ is contained in the open halve space $\{h\in \IR^n \mid \langle h-a,v\rangle > 0\}$. Moreover, there is $\epsilon>0$, such that $S\subset \{h\in \IR^n \mid \langle h-a,v\rangle \ge \epsilon\}$.
\end{thm}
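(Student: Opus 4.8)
The plan is to prove this strict separation by the classical metric-projection argument, which also yields the explicit constant $\epsilon$. If $S=\emptyset$ the statement holds vacuously for any unit vector, so I assume $S$ nonempty. The idea is to let $p$ be the point of $S$ nearest to $a$ and to take $v$ to be the unit vector pointing from $a$ towards $p$; the hyperplane through $p$ orthogonal to $v$ then separates $a$ from all of $S$, with the entire gap $\|p-a\|$ to spare.

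First I would establish existence of the nearest point. Fix any $s_0\in S$ and set $d=\inf_{s\in S}\|a-s\|$. A minimizing sequence can be taken inside the compact ball $\closedball_{\|a-s_0\|}(a)$, so after passing to a subsequence it converges to some $p$; since $S$ is closed, $p\in S$ and $\|a-p\|=d=\dist(a,S)$. Because $a\notin S$ we have $p\ne a$, hence $d>0$, and I set $v=(p-a)/\|p-a\|$, which has norm $1$.

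Next I would record the first-order optimality condition characterising $p$. For $s\in S$ the point $(1-t)p+ts$ lies in $S$ for $t\in[0,1]$ by convexity, so the quadratic $t\mapsto \|a-p-t(s-p)\|^2$ attains its minimum over $[0,1]$ at $t=0$; its right derivative there is nonnegative, which gives $\langle a-p,s-p\rangle\le 0$, that is $\langle s-p,p-a\rangle\ge 0$. Splitting $s-a=(s-p)+(p-a)$ then yields
$$
\langle s-a,p-a\rangle=\langle s-p,p-a\rangle+\|p-a\|^2\ge \|p-a\|^2
$$
for every $s\in S$. Dividing by $\|p-a\|$ gives $\langle s-a,v\rangle\ge \|p-a\|$ for all $s\in S$, so with $\epsilon=\|p-a\|>0$ we obtain $S\subset\{h\in\IR^n\mid \langle h-a,v\rangle\ge\epsilon\}$, and in particular $S$ lies in the open half-space $\{h\mid\langle h-a,v\rangle>0\}$. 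Both assertions follow at once.

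The only genuine work lies in the two preliminary facts: attainment of the minimum (using finite-dimensionality and closedness of $S$) and the optimality inequality; I expect the attainment step to be the one most worth spelling out, since everything afterwards is a one-line computation. As an alternative one could deduce the result from the supporting-hyperplane version in Theorem \ref{Theorem Hahn-Banach} applied to the enlarged convex set $\{x\in\IR^n\mid \dist(x,S)\le d/2\}$, which still omits $a$; but the projection argument is self-contained and produces the sharp constant $\epsilon=\dist(a,S)$.
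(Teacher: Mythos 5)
Your proof is correct. Note, however, that the paper does not prove Theorem \ref{Theorem Hahn-Banach2} at all: it is quoted as a known result with a citation to the literature (where it is established in the much greater generality of locally convex topological vector spaces), and the surrounding text only comments on how to dispense with an interior-point hypothesis for the companion version, Theorem \ref{Theorem Hahn-Banach}. So your contribution is genuinely different in kind: you replace the black-box citation by the standard self-contained metric-projection argument, which is the natural proof in $\IR^n$. Your argument is sound throughout --- the empty case is dispatched vacuously, existence of the nearest point $p$ follows from Bolzano--Weierstrass inside the ball $\closedball_{\|a-s_0\|}(a)$ together with closedness of $S$, the variational inequality $\langle s-p,\,p-a\rangle\ge 0$ is the correct first-order condition obtained from the right derivative of the quadratic, and the decomposition $s-a=(s-p)+(p-a)$ yields $\langle s-a,v\rangle\ge\|p-a\|$, so the second assertion holds with the sharp constant $\epsilon=\dist(a,S)$ and the first follows since $\epsilon>0$. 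What each route buys: the citation is shorter and covers infinite-dimensional settings the paper never needs, while your proof is elementary, uses only finite-dimensional compactness, and makes $\epsilon$ explicit (the paper's later uses, e.g.\ in Theorem \ref{TheoremCompactDefinable}, only need existence of some $\epsilon_a>0$, which your construction supplies directly). Your alternative reduction --- applying Theorem \ref{Theorem Hahn-Banach} to the $d/2$-neighbourhood $\{x\in\IR^n\mid\dist(x,S)\le d/2\}$ --- also works and would give $\epsilon=d/2$, but as you say the projection argument is preferable since it is self-contained and sharp.
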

The Theorems are stated in the way in which we will use them.
In both references \cite{MR1921556, MR1336382}, the theorem is stated for the more general case of a locally convex topological real vector space. In \cite{MR1921556}, it is assumed that the interior of $S$ is nonempty. This assumption is not needed here, because if the outer dimension of $S$ is $n$, then it has an inner point. If the outer dimension of $S$ is smaller, then we can take $v$ orthogonal to the affine hull of $S$.

\begin{lemma} \label{LemmaInnerVectorOrthogonal}
    Let $S\subset \IR^n$ be a nonempty convex subset and let $v$ be in the inner vector space of $S$ (see Definition \ref{DefinitionInnerVspace}). Let $a\in \IR^n\setminus S$. Let $w\in \IR^n$ be of norm $1$ such that $S$ is contained in the closed halve space $\{h\in \IR^n \mid \langle h-a,w\rangle \ge 0\}$. Then, $\langle v,w\rangle =0$.
\end{lemma}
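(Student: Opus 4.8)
The plan is to exploit the fact that membership of $v$ in the inner vector space forces an entire line to lie inside $\closure{S}$, and that this line must respect the half-space bound supplied by the hypothesis.

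First I would record that the half-space $H \coloneqq \{h \in \IR^n \mid \langle h - a, w\rangle \ge 0\}$ is closed, being the preimage of the closed ray $[0,\infty)$ under the continuous map $h \mapsto \langle h-a, w\rangle$. Since $S \subset H$ by assumption and $H$ is closed, we also have $\closure{S} \subset H$. This upgrade to the closure is exactly what lets the argument run, because the inner vector space is defined via $\closure{S}$ rather than $S$ itself (Definition \ref{DefinitionInnerVspace}).

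Next, because $v$ lies in the inner vector space of $S$, the third characterisation of Lemma \ref{LemmaMaximalVectorforClosed} (applied to the closed convex set $\closure{S}$) says that for any fixed point $s \in \closure{S}$ — such a point exists since $S$ is nonempty — the whole line $\{s + \lambda v \mid \lambda \in \IR\}$ is contained in $\closure{S}$, hence in $H$. Substituting this line into the defining inequality of $H$, I obtain for every $\lambda \in \IR$ that
$$0 \le \langle s + \lambda v - a, w\rangle = \langle s - a, w\rangle + \lambda \langle v, w\rangle.$$
The right-hand side is an affine function of $\lambda$; were $\langle v, w\rangle \ne 0$, it would tend to $-\infty$ as $\lambda \to +\infty$ or as $\lambda \to -\infty$, contradicting the inequality for all $\lambda$. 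Therefore $\langle v, w\rangle = 0$.

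There is no genuinely hard step here. The only points requiring care are the passage from $S \subset H$ to $\closure{S} \subset H$ (so that the inner-vector-space line, which a priori lies only in $\closure{S}$, is still constrained by the half-space) and the correct invocation of the line characterisation of the inner vector space from Lemma \ref{LemmaMaximalVectorforClosed}; once these are in place the conclusion is immediate from linearity.
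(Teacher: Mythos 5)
Your proof is correct, and its core step --- forcing the entire line $\{s+\lambda v \mid \lambda \in \IR\}$ into the half-space and reading off $\langle v,w\rangle = 0$ from the unbounded linear term --- is the same as the paper's. The two arguments differ in how the line is shown to respect the half-space. The paper picks an \emph{essentially inner} point $s\in S$ (Corollary \ref{CorExistInnerPoint}) and asserts $s+\lambda v\in S$ for all $\lambda\in\IR$ (a claim that implicitly leans on Lemma \ref{LemmaStrahlenVerschieben}, since the inner vector space only controls $\closure{S}$ directly), so that the hypothesis $S\subset H$ applies as stated. You instead note that $H$ is closed, upgrade $S\subset H$ to $\closure{S}\subset H$, and then invoke Lemma \ref{LemmaMaximalVectorforClosed} on $\closure{S}$ with an \emph{arbitrary} $s\in\closure{S}$ --- no essentially inner point needed. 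Your route is slightly leaner and more self-contained: it uses only Definition \ref{DefinitionInnerVspace}, Lemma \ref{LemmaMaximalVectorforClosed}, and a one-line topological observation, where the paper's version stays inside $S$ itself at the cost of quoting heavier machinery. The endgames differ only cosmetically: the paper substitutes the explicit value $\lambda=\frac{-\langle s-a,w\rangle-1}{\langle v,w\rangle}$ to produce the contradiction $0\le -1$, while you let $\lambda\to\pm\infty$; both amount to the observation that a nonconstant affine function of $\lambda$ cannot be bounded below on all of $\IR$.
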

\begin{proof}
    Let $s\in S$ be an essentially inner point (see Definition \ref{DefinitionEssInnerPoint} and Corollary \ref{CorExistInnerPoint}). Then, $\lambda v+s\in S$ for all $\lambda \in \IR$. If $\langle v,w\rangle$ would be nonzero, we could insert $\lambda= \frac{-\langle s-a, w \rangle - 1}{\langle v, w \rangle}$ and get
    $$
        0\le \langle (\lambda v+s)-a, w \rangle 
        = \lambda \langle v, w \rangle + \langle s-a, w \rangle
        = \frac{- \langle s-a, w \rangle - 1}{\langle v, w \rangle} \langle v, w \rangle+ \langle s-a, w \rangle
        =-1
    $$
    which is a contradiction. 
\end{proof}

\subsection{Definitions from the ray}
Every convex set is definable from the ray. Closed sets are a bit easier to define, because they have no excluded boundary points. We prove both facts in this section.

\begin{notation}
    We use $<$ as the relation symbol for the relation $\{x\in \IR \mid x>0\}\subset \IR$ and $\le$ for the relation $\{x\in \IR \mid x\ge 0\}\subset \IR$. Note that the usual set
    $$
        \{(x,y)\mid x<y\} =\{(x,y) \mid \exists z : x+z=0 \land y+z>0\}
    $$
    is pp-definable over $(\IR;<,0,+)$ so multiple meanings are no problem.
\end{notation}

\begin{thm} \label{TheoremClosedDefinable}
    Let $S$ be a closed convex set. Then, $S$ is infinitary pp-definable over $(\IR;+,\cdot \lambda \mid \lambda \in \IR, \lambda \mid \lambda \in \IR, <)$.
\end{thm}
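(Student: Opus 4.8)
The plan is to write $S$ as an intersection of closed half-spaces, each of which is individually infinitary pp-definable. First I would note that by Lemma~\ref{LemmaInterDefinableHalveIntervalls} the closed ray $\{x \in \IR \mid x \ge 0\}$ is infinitary pp-definable from $<$, so $\le$ may be used freely. Next, fix $v \in \IR^n$ and $c \in \IR$ and consider the closed half-space $H_{v,c} \coloneqq \{h \in \IR^n \mid \langle h, v\rangle \ge c\}$. By Lemma~\ref{LemmaSkalarProductDefinable} the graph $\{(h,\mu) \mid \langle h, v\rangle = \mu\}$ is pp-definable over $(\IR; +, \cdot\lambda \mid \lambda \in \IR, \lambda \mid \lambda \in \IR)$, so
$$
    H_{v,c} = \{h \in \IR^n \mid \exists \mu \in \IR : \langle h, v\rangle = \mu \land \mu + (-c) \ge 0\}
$$
is infinitary pp-definable, where $-c$ is the available constant and $\ge 0$ refers to the closed ray. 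Thus each closed half-space belongs to our definable family.

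The geometric heart of the argument is the claim that every closed convex set equals the intersection of all closed half-spaces containing it. Given any $a \in \IR^n \setminus S$, Theorem~\ref{Theorem Hahn-Banach2} yields a unit vector $v$ and $\epsilon > 0$ with $\langle h - a, v\rangle \ge \epsilon$ for all $h \in S$; hence $H_{v,\, \langle a, v\rangle + \epsilon}$ contains $S$ but excludes $a$. Consequently no exterior point survives the intersection, and $S = \bigcap \{H_{v,c} \mid S \subset H_{v,c}\}$. Rendering this intersection as an infinite conjunction of the formulas above, all sharing the same $n$ free variables, gives an infinitary pp-definition of $S$. The index set may be uncountable, but Definition~\ref{DefinitionIPPDefinable} allows conjunctions of arbitrary size as long as the number of free variables stays finite, which it does here.

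Two degenerate cases deserve a separate word. If $S = \IR^n$ there is nothing to separate and the empty conjunction, i.e.\ the formula $\mathrm{true}$, defines $S$. If $S = \emptyset$, I would take any nonzero $v$ and use $\{h \mid \langle h, v\rangle \ge 1 \land -\langle h, v\rangle \ge 1\}$, or simply the formula $\mathrm{false}$, both of which are permitted in the infinitary pp-fragment. The only real obstacle is the strict separation of each exterior point from $S$, and this is supplied in full by the closed-set version of Hahn--Banach quoted as Theorem~\ref{Theorem Hahn-Banach2}; everything else is a routine translation of the resulting half-spaces into pp-formulas.
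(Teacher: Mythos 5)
Your proposal is correct and follows essentially the same route as the paper: separate each exterior point $a$ from $S$ by a half-space via the closed-set Hahn--Banach theorem (Theorem~\ref{Theorem Hahn-Banach2}), render each half-space as a pp-formula using Lemma~\ref{LemmaSkalarProductDefinable}, and take the (possibly uncountable) conjunction. The only cosmetic difference is that the paper uses open half-spaces directly via $<$ (defining $\exists \lambda: \lambda>0 \land \langle s-a,v_a\rangle=\lambda$), while you pass to closed half-spaces and invoke Lemma~\ref{LemmaInterDefinableHalveIntervalls} to obtain $\le$; both work because the separation is strict.
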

\begin{proof}
    For each $a\in \IR^n\setminus S$, there exists by Theorem \ref{Theorem Hahn-Banach2} a vector $v_a$ such that $\langle s-a,v_a\rangle > 0$ for all $s\in S$. Define $S'$ as the intersection of all those sets as
    \begin{align*}
    \left\{
        s=(s_1,\dots ,s_n) \in \IR^n \mitte \bigwedge_{a \in \IR^n\setminus S} \exists \lambda \in \IR:
        \lambda>0 \land \langle s-a,v_a\rangle=\lambda \right\}
    \end{align*}
    where $v_a$ is the constant depending on $a$ as described above and $\langle\bullet, v_a\rangle$ is definable by Lemma \ref{LemmaSkalarProductDefinable}.
    
    This set $S'$ equals $S$: By definition $S\subset S'$. On the other hand, for each $a\notin S$, we get that $a$ is not in the open halve plane defined using $a$ so $a\notin S'$ and $S'=S$.
\end{proof}
\begin{lemma} \label{LemmaKonvexesBeispiel}
    The set $\{(x,y,z) \mid xz\ge y^2, x\ge 0, z \ge 0\}$ is convex.
\end{lemma}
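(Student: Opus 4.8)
The plan is to exhibit the set $S \coloneqq \{(x,y,z)\mid xz\ge y^2,\ x\ge 0,\ z\ge 0\}$ as an intersection of closed half-spaces, which is convex as an intersection of convex sets (recall that each half-space is convex and that convexity is preserved under arbitrary intersections). Concretely, for each pair $(a,b)\in \IR^2$ consider the linear inequality $a^2 x + 2ab y + b^2 z \ge 0$, whose solution set $H_{a,b}\subset \IR^3$ is a closed half-space, or all of $\IR^3$ when $a=b=0$. Each $H_{a,b}$ is convex, so it suffices to prove the set identity $S=\bigcap_{(a,b)\in \IR^2} H_{a,b}$. This is the structural heart of the argument: it turns the single nonlinear condition $xz\ge y^2$ into a family of linear ones. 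Geometrically, $S$ is exactly the cone of positive semidefinite symmetric $2\times 2$ matrices $\vektor{x & y \\ y & z}$, and such a matrix $M$ is positive semidefinite precisely when $a^2x+2aby+b^2z = (a,b)\,M\,(a,b)^T\ge 0$ for all $(a,b)$.

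For the inclusion $S\subset \bigcap_{(a,b)} H_{a,b}$, I would take $(x,y,z)\in S$ and an arbitrary $(a,b)\in\IR^2$. If $x>0$, completing the square gives $a^2 x + 2ab y + b^2 z = x\bigl(a+\tfrac{by}{x}\bigr)^2 + \tfrac{b^2}{x}\bigl(xz-y^2\bigr)$, which is nonnegative since $x>0$ and $xz\ge y^2$. If $x=0$, then $y^2\le xz=0$ forces $y=0$, so the expression reduces to $b^2 z\ge 0$. In either case $(x,y,z)\in H_{a,b}$, and since $(a,b)$ was arbitrary the inclusion follows.

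For the reverse inclusion, suppose $(x,y,z)$ lies in every $H_{a,b}$. Taking $(a,b)=(1,0)$ yields $x\ge 0$ and taking $(a,b)=(0,1)$ yields $z\ge 0$. If $x>0$, the choice $(a,b)=(-y,x)$ gives $a^2x+2aby+b^2z = x(xz-y^2)\ge 0$, whence $xz\ge y^2$. If instead $x=0$, I would apply the inequalities $2ty+z\ge 0$ coming from $(a,b)=(t,1)$ for all $t\in\IR$; letting $t\to\pm\infty$ forces $y=0$, so that $xz-y^2=0\ge 0$. Thus $(x,y,z)\in S$ in every case, establishing $S=\bigcap_{(a,b)}H_{a,b}$ and hence the convexity of $S$.

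The computations are elementary completing-the-square manipulations, so no real obstacle is expected there. The one point requiring genuine care is the degenerate case $x=0$, where the quadratic in the free parameter degenerates to a linear function: here the square-completion identity is unavailable and one must instead argue separately, via the $(a,b)=(t,1)$ family, that $y$ necessarily vanishes. Isolating the correct finite list of test vectors $(1,0)$, $(0,1)$, $(-y,x)$, and the one-parameter family $(t,1)$ is the main thing to get right; everything else is bookkeeping.
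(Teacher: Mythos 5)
Your proof is correct, but it takes a genuinely different route from the paper's. The paper verifies convexity directly from the definition: it takes a convex combination of two points $(x_1,y_1,z_1)$ and $(x_2,y_2,z_2)$ of the set, bounds $(\lambda y_1+(1-\lambda)y_2)^2$ from above by $(\lambda \sqrt{x_1}\sqrt{z_1}+(1-\lambda)\sqrt{x_2}\sqrt{z_2})^2$ using $|y_i|\le \sqrt{x_i}\sqrt{z_i}$, and then observes that the remaining difference equals $\lambda(1-\lambda)(\sqrt{x_1}\sqrt{z_2}-\sqrt{x_2}\sqrt{z_1})^2\ge 0$ --- an elementary square-root computation with no auxiliary structure. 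You instead identify the set as the cone of positive semidefinite symmetric $2\times 2$ matrices and exhibit it as $\bigcap_{(a,b)}\{(x,y,z)\mid a^2x+2aby+b^2z\ge 0\}$, an intersection of closed half-spaces; your verification of both inclusions is complete, the completing-the-square identity for $x>0$ is correct, and you rightly isolate the degenerate case $x=0$, where the family $(a,b)=(t,1)$ with $t\to\pm\infty$ forces $y=0$. Your representation buys more than the paper's computation: it yields closedness of the set simultaneously with convexity, a fact the paper invokes separately (without proof) in Corollary \ref{CorollaryPointedHalvePlaneDefinition}, and it presents the set in exactly the form --- an intersection of half-spaces indexed by separating functionals --- that Theorem \ref{TheoremClosedDefinable} later exploits to pp-define it. What the paper's argument buys in exchange is self-containedness: it is a finite algebraic manipulation requiring no infinite family of test vectors and no limiting argument.
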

\begin{proof}
    Let $x_1z_1\ge y_1^2$ and $x_2z_2\ge y_2^2$ such that $x_1,x_2,z_1,z_2\ge 0$ and take $0<\lambda<1$. Then, we get that
    \begin{align*}
    &&
        \lambda(x_1,y_1,z_1) + (1-\lambda)(x_2,y_2,z_2) &\in \{(x,y,z) \mid xz\ge y^2, x\ge 0, z \ge 0\}
        \\
        \iff &&
        (\lambda x_1+ (1-\lambda) x_2)(\lambda z_1+ (1-\lambda) z_2) &\ge(\lambda y_1 + (1-\lambda)y_2 )^2 
        \\ \impliedby &&
        (\lambda x_1+ (1-\lambda) x_2)(\lambda z_1+ (1-\lambda) z_2) &\ge(\lambda \sqrt{x_1}\sqrt{z_1} + (1-\lambda)\sqrt{x_2}\sqrt{z_2} )^2 
        \\ \iff && 
        (\lambda x_1+ (1-\lambda) x_2)(\lambda z_1+ (1-\lambda) z_2) &- (\lambda \sqrt{x_1}\sqrt{z_1} + (1-\lambda)\sqrt{x_2}\sqrt{z_2} )^2 \ge 0
        \\ \iff &&
        \lambda(1-\lambda)(\sqrt{x_1}\sqrt{z_2}-\sqrt{x_2}\sqrt{z_1})^2&\ge 0
    \end{align*}
    where the implication uses the assumption.
\end{proof}
\begin{cor} \label{CorollaryPointedHalvePlaneDefinition}
The set $\{(x,y)\mid x=y=0 \lor x>0\}$ is infinitary pp-definable over $(\IR;+,\cdot \lambda \mid \lambda \in \IR, \lambda \mid \lambda \in \IR, <)$.
\end{cor}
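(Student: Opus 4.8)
The plan is to realise the pointed half-plane $\{(x,y)\mid x=y=0 \lor x>0\}$ as the image under the coordinate projection $(x,y,z)\mapsto (x,y)$ of the convex cone $K \coloneqq \{(x,y,z) \mid xz\ge y^2,\ x\ge 0,\ z \ge 0\}$ from Lemma \ref{LemmaKonvexesBeispiel}. Since a coordinate projection corresponds to existential quantification, and existential quantification preserves infinitary pp-definability by the last clause of Definition \ref{DefinitionIPPDefinable}, it suffices to show that $K$ is infinitary pp-definable and that its projection onto the first two coordinates is exactly the desired set.

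First I would observe that $K$ is closed, being the intersection of the closed sets $\{xz\ge y^2\}$, $\{x\ge 0\}$ and $\{z\ge 0\}$, and convex by Lemma \ref{LemmaKonvexesBeispiel}. Theorem \ref{TheoremClosedDefinable} then immediately gives that $K$ is infinitary pp-definable over $(\IR;+,\cdot \lambda \mid \lambda \in \IR, \lambda \mid \lambda \in \IR, <)$. I would then write down the definition
\[
    \{(x,y)\in \IR^2 \mid \exists z \in \IR:\ (x,y,z)\in K\},
\]
which is again an infinitary pp-definition.

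It remains to verify that this projection equals $\{(x,y)\mid x=y=0 \lor x>0\}$ by a case distinction on $x$. For $x>0$ one may take $z=\frac{y^2}{x}$ (or any larger value), so that $xz\ge y^2$ and $z\ge 0$ hold; hence every $(x,y)$ with $x>0$ lies in the projection. For $x=0$ the constraint $xz\ge y^2$ degenerates to $0\ge y^2$, forcing $y=0$, so only $(0,0)$ survives. Finally $x<0$ is excluded by the requirement $x\ge 0$.

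The main (and essentially only) conceptual point is recognising that the ``pointing'' of the half-plane arises from the degeneracy of the quadratic form $xz-y^2$ along the plane $x=0$: the cone $K$ meets $\{x=0\}$ only in the ray $\{(0,0,z)\mid z\ge 0\}$, and this single line of contact is exactly what collapses to the isolated point $(0,0)$ under the projection. Everything else is a direct application of Theorem \ref{TheoremClosedDefinable} together with closure of infinitary pp-definability under existential quantification, so I do not expect any substantial obstacle beyond this observation and the routine case check above.
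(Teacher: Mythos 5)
Your proposal is correct and follows essentially the same route as the paper: the paper also defines the pointed half-plane as the projection $\{(x,y)\mid \exists z: (x,y,z)\in K\}$ of the cone $K=\{(x,y,z)\mid xz\ge y^2,\ x\ge 0,\ z\ge 0\}$, using Lemma \ref{LemmaKonvexesBeispiel} for convexity and Theorem \ref{TheoremClosedDefinable} for definability of the closed convex set. The only difference is cosmetic: you organise the final verification by cases on $x$, while the paper argues by cases on $y$; both checks are equivalent.
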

\begin{proof}
    The set
    $$
        \{(x,y,z) \mid xz\ge y^2, x\ge 0, z \ge 0\}
    $$
    is convex by Lemma \ref{LemmaKonvexesBeispiel} and closed and thus infinitary pp-definable by Theorem \ref{TheoremClosedDefinable}.
    Therefore, we can define
    $$
        \{(x,y) \in \IR \times \IR \mid \exists z: (xz\ge y^2, x\ge 0, z \ge 0)\}
    $$
    which contains for $y=0$ all $x\ge 0$ and for $y\ne 0$ only $x>0$, because $x=0$ implies the contradiction $0=xz\ge y^2>0$. Therefore, this is an infinitary pp-definition of the set $\{(x,y)\mid x=y=0 \lor x>0\}$.
\end{proof}
\begin{lemma} \label{LemmaPunktKonvexDefiniertTrennen}
    Let $S\subset \IR^n$ be convex and $a\in \IR^n\setminus S$. Then, there is a set $S_a$ which is infinitary pp-definable over $(\IR;+,\cdot \lambda \mid \lambda \in \IR, \lambda \mid \lambda \in \IR, <, \le)$ which contains all of $S$ but not $a$.
\end{lemma}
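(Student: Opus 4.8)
The plan is to produce, for each $a\in\IR^n\setminus S$, a convex infinitary pp-definable superset $S_a$ of $S$ that omits $a$; taking $S_a$ to be the witness for the lemma suffices (and, downstream, $S=\bigcap_{a\notin S}S_a$ recovers $S$ itself). Note first that every set definable over $(\IR;+,\cdot\lambda\mid\lambda\in\IR,\lambda\mid\lambda\in\IR,<,\le)$ is convex, since the ray relations $<,\le$ are convex and affine images, affine preimages and intersections preserve convexity; so $S_a$ is automatically convex and I only have to worry about producing it.

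I would split on whether $a\in\closure S$. If $a\notin\closure S$, then $\closure S$ is a closed convex set containing $S$ but not $a$, and it is infinitary pp-definable by Theorem~\ref{TheoremClosedDefinable}; take $S_a=\closure S$ (or, more explicitly, a separating closed half-space from Theorem~\ref{Theorem Hahn-Banach2}). The substantive case is $a\in\closure S\setminus S$, which I would handle by induction on the outer dimension $m$ of $S$ (Definition~\ref{DefinitionEssInnerPoint}); here $m\ge 1$, because $m=0$ forces $\closure S=S$. As $a$ lies on the relative boundary of $S$, Theorem~\ref{Theorem Hahn-Banach} yields a supporting affine functional $\ell$ with $\ell(a)=0$ and $\ell\ge 0$ on $S$, which is moreover not identically zero on $S$ because the relative interior of $S$ lands in $\{\ell>0\}$. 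Then $S':=S\cap\{\ell=0\}$ is convex of outer dimension at most $m-1$ and still avoids $a$, so by induction there is an infinitary pp-definable $C_0\supseteq S'$ with $a\notin C_0$; intersecting with the definable affine hyperplane $\{\ell=0\}$ (Theorem~\ref{TheoremAffineFormNothing}) I may replace $C_0$ by $C:=C_0\cap\{\ell=0\}\subset\{\ell=0\}$. I would then set $S_a:=\{\ell>0\}\cup C$. A short check gives $S\subset S_a$ (points of $S$ with $\ell>0$ lie in the open half-space, the remaining points lie in $S'\subset C$), $a\notin S_a$ (since $\ell(a)=0$ and $a\notin C$), and convexity of $S_a$ (because $C$ sits inside the bounding hyperplane $\{\ell=0\}$).

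The crux is therefore the gluing principle: \emph{if $C\subset\{\ell=0\}$ is infinitary pp-definable, then so is $\{\ell>0\}\cup C$}. After an invertible affine change of coordinates (Lemma~\ref{LemmaAffinemap}) I may take $\ell(h)=h_1$ and $a=0$. I would prove it by a slack-variable relaxation built on the pointed half-plane $P=\{(x,y)\mid x>0\}\cup\{(0,0)\}$ of Corollary~\ref{CorollaryPointedHalvePlaneDefinition}. Bringing a definition of $C$ into the normal form $\exists\bar y\,\bigwedge_i\alpha_i$ with each atom $\alpha_i$ of shape ``$\tau_i\ge 0$'' or ``$\tau_i>0$'' for an affine term $\tau_i$ (equalities split into two such atoms), I replace $\alpha_i$ by the relaxed atom $\tau_i+z_i\ge 0$ (resp.\ $>0$) for a fresh slack $z_i$ and additionally require $(h_1,z_i)\in P$. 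When $h_1=0$ the relation $(0,z_i)\in P$ forces $z_i=0$, so every relaxed atom reverts to $\alpha_i$ and the $h_1=0$ slice is exactly $C$; when $h_1>0$ the variable $z_i$ ranges over all of $\IR$, so every atom becomes satisfiable and the slice over $(h_2,\dots,h_n)$ is all of $\IR^{n-1}$; when $h_1<0$ no admissible $z_i$ exists and the slice is empty. This produces precisely $\{h_1>0\}\cup C$, as required.

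I expect the main obstacle to be the syntactic bookkeeping of this relaxation: an infinitary definition of $C$ may use infinitely many atoms and shared existential witnesses $\bar y$, so one must argue that the slacks $z_i$ can be introduced atom by atom (pushing each $\exists z_i$ into its own conjunct, keeping only finitely many free variables in each infinite conjunction) and that for $h_1>0$ all relaxed atoms are \emph{simultaneously} satisfiable independently of the choice of $\bar y$. Checking that the supporting-hyperplane step is proper, so that the induction strictly decreases the outer dimension, and tracking convexity through the construction are the remaining, largely routine, points.
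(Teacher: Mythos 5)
Your proposal follows the same skeleton as the paper's proof of Lemma~\ref{LemmaPunktKonvexDefiniertTrennen} --- Hahn--Banach separation, an induction, and gluing a definable subset of the supporting hyperplane to the open half-space by means of the pointed half-plane $P$ of Corollary~\ref{CorollaryPointedHalvePlaneDefinition} --- but the implementation differs in two ways. First, the paper inducts on the ambient dimension $n$ with no case split on $a\in\closure{S}$, whereas you induct on the outer dimension and dispose of $a\notin\closure{S}$ by Theorem~\ref{TheoremClosedDefinable}. Second, and more importantly, the paper's gluing is \emph{semantic}, not syntactic: it puts the slack variables on the \emph{coordinates} rather than on the atoms of a formula, defining $S_a$ as the set of $s$ such that there exist $b\in S_a^{n-1}$ and $y$ with $P(s_n,y_i)$ and $s_i=b_i+y_i$ for $i<n$. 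When $s_n=0$ all $y_i$ are forced to $0$ and the slice is $S_a^{n-1}$; when $s_n>0$ the $y_i$ are free, so (provided $S_a^{n-1}\neq\emptyset$) the slice is all of $\IR^{n-1}$; when $s_n<0$ the slice is empty. This needs no normal form and no formula surgery, so all of the ``syntactic bookkeeping'' you flag as the main obstacle simply never arises; your atom-relaxation buys nothing over it.

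Three concrete gaps remain in your version, all fixable. (i) The prenex normal form $\exists\bar y\,\bigwedge_i\alpha_i$ does not exist for infinitary pp formulas in the paper's grammar: an infinite existential prefix shared across infinitely many conjuncts is not a formula there, since $\exists$ is introduced one variable at a time and conjunctions must have finitely many free variables. Your relaxation can instead be performed recursively in place, replacing each atom $\alpha$ by $\exists z\,(P(h_1,z)\wedge\alpha^{+z})$ inside its own conjunct --- or, better, replaced wholesale by the paper's coordinate-slack definition. (ii) Your claim that for $h_1>0$ every slice is full fails when the inductive definition of $C$ is (or contains) the nullary \emph{false}, i.e.\ when $C=\emptyset$ --- and this case genuinely occurs: for $S=\{(x,y)\mid y>0\}$ and $a=(0,0)$ one has $S\cap\{\ell=0\}=\emptyset$. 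You need the paper's separate branch $S_a=\{h\mid \ell(h)>0\}$ in that case. (iii) Theorem~\ref{Theorem Hahn-Banach} as stated may return a functional vanishing identically on $S$ (take $v$ orthogonal to the affine hull when $S$ is lower-dimensional and $a$ lies in that hull), so it does not by itself give a \emph{proper} supporting hyperplane; to make your outer-dimension induction strictly decrease you must separate inside the affine hull of $S$, which is legitimate since $a\in\closure{S}\subset\operatorname{aff}(S)$ and $a$ is not essentially inner. The paper's induction on the ambient dimension sidesteps this entirely: even a degenerate functional reduces $n$ by one.
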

\begin{proof}
    We show this theorem by induction on the dimension $n$.
    For $n=0$, this is clear and $S=S_a=\emptyset$. For $n=1$, the set $S_a$ can be defined as one of the two candidates
    \begin{align*}
        \{x \in \IR &\mid a<x\}
        \\
        \{x \in \IR &\mid x<a\}
    \end{align*}
    as a convex set $S$ cannot contain elements smaller and greater than $a$ without containing $a$.
    
    For $n\ge 2$, Theorem \ref{Theorem Hahn-Banach} gives us a vector $v$. By applying an invertible affine map, see Lemma \ref{LemmaAffinemap}, we may assume that $a=0$ and $v$ is the last basis vector $e_n$. We identify $\{h\in \IR^n \mid \langle h,v\rangle=0\}$ with $\IR^{n-1}$. By the induction hypothesis, there is a definable set $S_a^{n-1}\subset \IR^{n-1}$ such that $S\cap \IR^{n-1}\subset S_a^{n-1}$ and $a\notin S_a^{n-1}$.
    
    In the case where $S_a^{n-1}=\emptyset$, the set $S_a$ can be defined as
    $$
        \{(s_1,\dots ,s_n)\in \IR^n \mid s_n>0\}
    $$
    
    In the case where $S_a^{n-1}\ne \emptyset$ let $P(x,y)$ be the relation $x=y=0 \lor x>0$ which is infinitary pp-definable by Corollary \ref{CorollaryPointedHalvePlaneDefinition}. Define $S_a$ as
    \begin{align*}
        \{s=(s_1,\dots ,s_n) \in \IR^n \mid \ &\exists b,y\in \IR^{n-1} : b\in S^{n-1}_a
        \\&\land
        P(s_n,y_1)\land \dots  \land P(s_n,y_{n-1}) 
        \\&\land
        s_1=b_1+y_1 \land \dots  \land s_{n-1}=b_{n-1}+y_{n-1}\}
    \end{align*}
    Now, $S_a$ contains precisely all $s$ where $s_n=0$ and $(s_1,\dots ,s_{n-1})\in S_a^{n-1}$ and all $s$ with $s_n>0$.
\end{proof}

\begin{thm} \label{TheoremDefiningFromRay}
    Let $S\subset \IR^n$ be convex. Then, $S$ is infinitary pp-definable over $(\IR;+,\cdot \lambda \mid \lambda \in \IR, \lambda \mid \lambda \in \IR,\le)$.
\end{thm}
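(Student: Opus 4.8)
The plan is to write $S$ as an infinite intersection of the separating sets produced by Lemma \ref{LemmaPunktKonvexDefiniertTrennen}. First I would observe that it suffices to work over the larger structure $(\IR;+,\cdot \lambda \mid \lambda \in \IR, \lambda \mid \lambda \in \IR, <, \le)$ and then discard $<$ at the end: the strict ray $<$ is infinitary pp-definable from the non-strict ray $\le$ by Lemma \ref{LemmaInterDefinableHalveIntervalls} (whose base structure $(\IR,+,c\mid c\in\IQ)$ is a reduct of the present one), so anything infinitary pp-definable using both relations is already infinitary pp-definable using $\le$ alone.

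Next, for each point $a\in\IR^n\setminus S$, Lemma \ref{LemmaPunktKonvexDefiniertTrennen} supplies an infinitary pp-definable set $S_a$ with $S\subseteq S_a$ and $a\notin S_a$. I would then verify the equality
$$
    S=\bigcap_{a\in\IR^n\setminus S} S_a .
$$
The inclusion $S\subseteq\bigcap_a S_a$ holds because every $S_a$ contains $S$; conversely, if $a\notin S$ then $a\notin S_a$, so $a$ is excluded from the intersection, which gives $\bigcap_a S_a\subseteq S$. Hence the intersection is exactly $S$.

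Finally I would turn this set-theoretic intersection into a single infinitary pp-formula. Writing $\Phi_a(\var{1},\dots,\var{n})$ for the formula defining $S_a$, with its existentially quantified variables renamed so as to be pairwise disjoint across distinct $a$, the expression
$$
    \left\{(\var{1},\dots,\var{n})\in\IR^n \mitte \bigwedge_{a\in\IR^n\setminus S}\Phi_a(\var{1},\dots,\var{n})\right\}
$$
has only the $n$ free variables $\var{1},\dots,\var{n}$, so the infinite conjunction is admissible by Definition \ref{DefinitionIPPDefinable} and defines precisely $S$.

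There is essentially no hard step here: the real work has already been carried out in Lemma \ref{LemmaPunktKonvexDefiniertTrennen}, and the present theorem is the routine observation that a convex set equals the intersection of the halfspace-like sets separating it from each of its exterior points, combined with the closure of infinitary pp-formulas under infinite conjunction. The only points requiring a moment's care are the bookkeeping of the bound variables in the infinite conjunction, so that the total number of free variables stays equal to $n$, and the reduction from the relations $\{<,\le\}$ to $\{\le\}$ noted at the outset.
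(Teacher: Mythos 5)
Your proposal is correct and follows exactly the paper's proof: recover $<$ from $\le$ via Lemma \ref{LemmaInterDefinableHalveIntervalls}, invoke Lemma \ref{LemmaPunktKonvexDefiniertTrennen} to separate each $a\notin S$ from $S$, and express $S$ as the infinite conjunction $\bigwedge_{a\in\IR^n\setminus S} s\in S_a$. Your extra remarks on renaming bound variables and on the reduct relationship are sound bookkeeping that the paper leaves implicit.
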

\begin{proof}
    By Lemma \ref{LemmaInterDefinableHalveIntervalls}, also $<$ is definable.
    By Lemma \ref{LemmaPunktKonvexDefiniertTrennen}, there is a definable set $S_a$ for each $a\in \IR^n\setminus S$ containing $S$ and avoiding $a$. Define $S$ as
    $$
        \left\{s \in \IR^n\mitte \bigwedge_{a \in \IR^n\setminus S} s \in S_a\right\}
    $$
    This set contains $S$ and no point in $a$ outside of $S$.
\end{proof}

\subsection{Definitions from the compact interval}
It is possible to define every sum of a vector space and a compact set from a compact interval. Since compact sets include their boundary points, this case is one of the easier cases.

\begin{lemma} \label{LemmaCompactIntervallDefinable}
    Every compact interval $[a;b]$ is pp-definable over $(\IR;+,\cdot \lambda \mid \lambda \in \IR, \lambda \mid \lambda \in \IR, [0,1])$.
\end{lemma}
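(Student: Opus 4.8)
The plan is to realise every compact interval as an affine image of the relation $[0,1]$ that already lies in the signature. First I would dispose of the degenerate case: if $a=b$, then $[a;b]=\{a\}$ is a zero-dimensional affine subspace, hence pp-definable over $(\IR;+,\cdot \lambda \mid \lambda \in \IR, \lambda \mid \lambda \in \IR)$ by Theorem \ref{TheoremAffineFormNothing}, and adding the relation $[0,1]$ to the signature can only enlarge the class of pp-definable sets.

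For the main case $a<b$, consider the affine map $f\colon \IR \to \IR$, $x\mapsto a + (b-a)x$. Since $b-a\neq 0$, this map is invertible, and it sends $[0,1]$ bijectively onto $[a;b]$. The relation $[0,1]$ is a relation symbol of the signature and is therefore trivially pp-definable, and the signature contains $+$, all scalar multiplications, and all constants. By Lemma \ref{LemmaAffinemap}, applied to $f$ and the pp-definable set $S=[0,1]$, the image $f([0,1])=[a;b]$ is again pp-definable. Alternatively, and more explicitly, one can give the pp-definition
$$
    \left\{ x\in \IR \mitte \tfrac{1}{b-a}\,x + \tfrac{-a}{b-a} \in [0,1] \right\},
$$
which uses the scalar $\tfrac{1}{b-a}$, the constant $\tfrac{-a}{b-a}$, and the relation $[0,1]$; membership of $x$ in this set is equivalent to $x\in[a;b]$ by rescaling.

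I expect no genuine obstacle in this argument, as it is a direct instance of the affine-image lemma. The only point requiring care is the degenerate case $a=b$, where the scaling factor $\tfrac{1}{b-a}$ is undefined and the map $f$ fails to be well defined; this case has to be split off and handled separately as a single point via the affine machinery, exactly as done in the first paragraph.
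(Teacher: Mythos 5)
Your proposal is correct and takes essentially the same route as the paper, whose entire proof is your explicit formula: it defines $[a;b]$ as $\left\{ x\in\IR \mid \frac{1}{b-a}(x-a)\in[0,1]\right\}$, and your separate treatment of $a=b$ via Theorem \ref{TheoremAffineFormNothing} is in fact slightly more careful than the paper, which claims its formula works ``assuming $a\le b$'' even though the scalar $\frac{1}{b-a}$ is undefined when $a=b$. The one case you omit is $a>b$, where $[a;b]=\emptyset$; the paper dispatches it by a contradictory pp-formula (e.g.\ $x=0\land x=1$), a trivial addition.
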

\begin{proof}
    The pp-definition
    $$
        \left\{ x \in \IR \mitte \frac{1}{b-a}(x-a) \in [0,1]\right\}
    $$
    defines $[a;b]$ using the constant $-a$ and the scalar $\frac{1}{b-a}$ assuming $a\le b$. For $a>b$ the set $[a;b]$ is empty and thus definable by a contradiction.
\end{proof}

\begin{thm} \label{TheoremCompactDefinable}
    Every convex compact set $C\subset \IR^n$ is infinitary pp-definable over $(\IR;+,\cdot \lambda \mid \lambda \in \IR, \lambda \mid \lambda \in \IR, [0,1])$.
\end{thm}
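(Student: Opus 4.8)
The plan is to realise $C$ as an infinite intersection of \emph{slabs}, each of which is an affine preimage of a compact interval and hence pp-definable. First I would dispose of the trivial case: if $C=\emptyset$, then $C$ is definable by a contradiction, exactly as in Lemma \ref{LemmaCompactIntervallDefinable}. So from now on assume $C\ne\emptyset$.

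For each direction $v\in\IR^n$, since $C$ is compact and $h\mapsto\langle h,v\rangle$ is continuous, the image $\{\langle h,v\rangle\mid h\in C\}$ is a compact interval $[\alpha_v,\beta_v]$. I would define the slab
$$
    T_v\coloneqq\{h\in\IR^n\mid\langle h,v\rangle\in[\alpha_v,\beta_v]\}.
$$
This set is pp-definable over $(\IR;+,\cdot\lambda\mid\lambda\in\IR,\lambda\mid\lambda\in\IR,[0,1])$: by Lemma \ref{LemmaSkalarProductDefinable} the graph $\langle h,v\rangle=\mu$ is pp-definable, by Lemma \ref{LemmaCompactIntervallDefinable} the interval $[\alpha_v,\beta_v]$ is pp-definable, and combining them with an existential quantifier over $\mu$ yields the pp-definition
$$
    \{h\in\IR^n\mid\exists\mu\in\IR:\langle h,v\rangle=\mu\land\mu\in[\alpha_v,\beta_v]\}
$$
of $T_v$.

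The key step is the identity $C=\bigcap_{v\in\IR^n}T_v$. The inclusion $C\subset T_v$ holds for every $v$ directly from the definition of $\alpha_v,\beta_v$ as the minimum and maximum of $\langle\bullet,v\rangle$ on $C$. For the reverse inclusion, take any $a\notin C$. Since $C$ is closed and convex, Theorem \ref{Theorem Hahn-Banach2} provides a unit vector $v$ and $\epsilon>0$ with $\langle h-a,v\rangle\ge\epsilon$ for all $h\in C$; hence $\alpha_v\ge\langle a,v\rangle+\epsilon>\langle a,v\rangle$, so $\langle a,v\rangle\notin[\alpha_v,\beta_v]$ and thus $a\notin T_v$. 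Therefore $a\notin\bigcap_{v}T_v$, which gives the claim.

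Finally, $\bigcap_{v\in\IR^n}T_v$ is an infinite intersection of pp-definable sets, all in the same $n$ free variables, so it constitutes an infinitary pp-definition; hence $C$ is infinitary pp-definable, as required. I expect the only substantive point to be the separation argument in the key step, but this is immediate from the closed-set Hahn--Banach theorem already recorded as Theorem \ref{Theorem Hahn-Banach2}; the pp-definability of each slab is routine from the two cited lemmas, and no relation beyond $[0,1]$ and the affine operations is used.
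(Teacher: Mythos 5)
Your proof is correct and takes essentially the same approach as the paper: both express $C$ as an infinite intersection of affine preimages of compact intervals, pp-defined via Lemma \ref{LemmaSkalarProductDefinable} and Lemma \ref{LemmaCompactIntervallDefinable}, with the reverse inclusion secured by the closed-set Hahn--Banach Theorem \ref{Theorem Hahn-Banach2}. The only cosmetic difference is the indexing: you intersect one tight slab $[\alpha_v,\beta_v]$ per direction $v$, whereas the paper intersects one slab $[\epsilon_a,R_a]$ in direction $v_a$ per excluded point $a\notin C$, obtaining the upper bound from compactness ($C\subset \ball_{R_a}(a)$) rather than from the support interval.
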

\begin{proof}
    Let $a\notin C$. Since $C$ is compact, there is $R_a>0$ such that $C\subset B_{R_a}(a)$.
    By the Hahn-Banach-Theorem \ref{Theorem Hahn-Banach2}, there is $v_a\in \IR^n$ of norm 1 and $\epsilon_a>0$ such that $C\subset \{h \in \IR^n \mid \langle h-a,v_a \rangle\ge \epsilon_a\}$. 
    Since $\langle v_a,h-a\rangle\le \|h-a\|<R_a$, we get that $\langle v_a,h-a\rangle \in [\epsilon_a, R_a]$.
    So we define $C$ as
    $$
        \left\{ c \in \IR^n \mitte \bigwedge_{a \in \IR^n\setminus C} \exists \lambda \in \IR : \langle c-a, v_a \rangle = \lambda \land \lambda \in [\epsilon_a, R_a] \right\}
    $$
    using $\langle \bullet, v_a \rangle$ as defined in Lemma \ref{LemmaSkalarProductDefinable} and $[\epsilon_a, R_a]$ as defined in Lemma \ref{LemmaCompactIntervallDefinable}.
\end{proof}

\begin{thm} \label{TheoremCompactVectDefinable}
    Let $S$ be the sum $\{c+v \mid c \in C, v \in V\}$ of a convex compact set $C$ and a vector space $V$. Then, $S$ is infinitary pp-definable over $(\IR;+,\cdot \lambda \mid \lambda \in \IR, \lambda \mid \lambda \in \IR, [0,1])$.
\end{thm}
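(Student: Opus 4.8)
The plan is to exhibit $S$ directly by a formula expressing the Minkowski sum $C+V$, using that both summands are already definable. First I would assemble the two ingredients. By Theorem~\ref{TheoremCompactDefinable}, the convex compact set $C$ is infinitary pp-definable over $(\IR;+,\cdot\lambda \mid \lambda\in\IR,\lambda\mid\lambda\in\IR,[0,1])$; write $\phi_C(c)$ for such a defining formula in the $n$ free variables $c=(c_1,\dots,c_n)$. Since a vector space is in particular an affine subspace, Theorem~\ref{TheoremAffineFormNothing} supplies a pp-definition of $V$ over $(\IR;+,\cdot\lambda \mid \lambda\in\IR,\lambda\mid\lambda\in\IR)$, which by Lemma~\ref{LemmaDefinablePolyStable} is also an infinitary pp-definition; write $\psi_V(v)$ for it.

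Next I would combine them. Note that $x\in S$ if and only if there is $c\in C$ with $x-c\in V$, since $v\coloneqq x-c$ then witnesses $x=c+v$ with $c\in C$, $v\in V$, and conversely. I therefore propose to define $S$ as
$$
    \left\{ x\in\IR^n \mitte \exists c\in\IR^n : \phi_C(c)\land \psi_V(x-c) \right\},
$$
where $x-c$ abbreviates the $n$ terms $x_i+(-1)\cdot c_i$, which are admissible because $+$ and the scalar multiplications lie in the signature. Substituting the terms $x-c$ into $\psi_V$ is permitted for infinitary pp-formulas, the conjunction of $\phi_C$ with the substituted $\psi_V$ is a finite conjunction of such formulas, and the single existential block $\exists c$ quantifies finitely many variables; hence the displayed formula is again an infinitary pp-formula in the class of Definition~\ref{DefinitionIPPDefinable}, and it defines exactly $S$.

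Equivalently, and perhaps more transparently, one can phrase the same construction geometrically: the product $C\times V\subset\IR^{2n}$ is the intersection of the affine preimages of $C$ and of $V$ under the two coordinate projections $\IR^{2n}\to\IR^n$, hence infinitary pp-definable, and $S$ is the image of $C\times V$ under the linear map $\IR^{2n}\to\IR^n,\ (c,v)\mapsto c+v$. Both descriptions rely only on closure of infinitary pp-definability under substitution (affine preimages), conjunction (intersection), and existential quantification (affine images).

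I do not expect a genuine obstacle here: all the geometric work has already been carried out in Theorem~\ref{TheoremCompactDefinable} for the compact part and in Theorem~\ref{TheoremAffineFormNothing} for the linear part, and the present statement merely glues them through a Minkowski sum. The one point deserving care is bookkeeping, namely verifying that the operations used—substitution of the affine terms $x-c$ into $\psi_V$, a single finite conjunction, and one existential quantifier over finitely many variables—stay inside the class of infinitary pp-formulas and keep the number of free variables finite throughout.
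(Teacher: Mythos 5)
Your proposal is correct and matches the paper's proof: the paper likewise invokes Theorem~\ref{TheoremCompactDefinable} for $C$ and Theorem~\ref{TheoremAffineFormNothing} for $V$, and then defines $S$ by the Minkowski-sum formula $\{s\in\IR^n \mid \exists v,c\in\IR^n : v\in V\land c\in C\land \bigwedge_{i=1}^n v_i+c_i=s_i\}$, which is the same construction as yours up to the cosmetic difference of quantifying $v$ explicitly instead of substituting the terms $x-c$ into $\psi_V$.
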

\begin{proof}
    By Theorem \ref{TheoremCompactDefinable}, $C$ is definable. By Theorem \ref{TheoremAffineFormNothing}, $V$ is definable. Now, the pp formula
    $$
        \left\{ s \in \IR^n \mitte \exists v,c \in \IR^n : v\in V\land c\in C\land \bigwedge_{i=1}^n v_i+c_i=s_i\right\}
    $$
    defines $S$.
\end{proof}

\subsection{Definitions from the bounded interval}
The bounded open interval is the standard non-compact bounded set. It is able to define all bounded sets without a \dent{}.

\begin{lemma} \label{LemmaSAForOpenIntervall}
    Let $S\subset \IR^n$ be the sum of a bounded space and a linear subspace and $a\notin S$ such that there is an $n-1$ dimensional affine subspace containing $a$ and disjoint to $S$.
    Then, there is a set $S_a$ containing $S$ but not $a$ which is infinitary pp-definable over $(\IR;+,\cdot \lambda \mid \lambda \in \IR, \lambda \mid \lambda \in \IR, (0,1))$.
    
    If such an affine space exists for all $a\notin S$, then $S$ is infinitary pp-definable over $(\IR;+,\cdot \lambda \mid \lambda \in \IR, \lambda \mid \lambda \in \IR, (0,1))$.
\end{lemma}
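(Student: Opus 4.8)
The plan is to cut $a$ off from $S$ using the \emph{strip} bounded by the given hyperplane and a parallel copy of it, and then to recover $S$ as the intersection of all such strips. The whole argument hinges on the observation that the representation $S=B+V$ forces $S$ to be bounded in the direction normal to the separating hyperplane, which is exactly what allows a \emph{bounded} open interval to do the separating work that a ray would otherwise require.

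First I would fix the given $(n-1)$-dimensional affine subspace $H\ni a$ with $H\cap S=\emptyset$ and a unit normal vector $u$ of $H$. Since $S$ is convex and disjoint from the hyperplane $H$, it lies entirely in one of the two open half-spaces determined by $H$ (otherwise a segment joining two points of $S$ on opposite sides would meet $H$); orienting $u$ appropriately and using $a\in H$, I may assume $\langle s-a,u\rangle>0$ for every $s\in S$. One could instead normalise via Lemma~\ref{LemmaAffinemap} so that $H=\{x\mid x_n=0\}$ and $a=0$, but the coordinate-free formulation is cleaner.

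The key step, and the one I expect to be the main obstacle, is to bound $\langle s-a,u\rangle$ from above over $s\in S$. Writing $S=B+V$ with $B$ bounded and $V$ a linear subspace, I claim $V\perp u$: if some $v\in V$ had $\langle v,u\rangle\neq 0$, then for any fixed $s\in S$ the whole line $\{s+tv\mid t\in\IR\}$ lies in $S$, yet $\langle s+tv-a,u\rangle=\langle s-a,u\rangle+t\langle v,u\rangle$ ranges over all of $\IR$ and in particular becomes non-positive, contradicting the previous paragraph. Hence $\langle s-a,u\rangle=\langle b-a,u\rangle$ depends only on the bounded part $b\in B$, so since $B$ is bounded there is $M>0$ with $0<\langle s-a,u\rangle<M$ for all $s\in S$. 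This is precisely where the hypothesis ``sum of a bounded set and a subspace'' is indispensable: for a set containing a ray one could only separate $a$ by a half-line, and a half-line is not infinitary pp-definable over $(0,1)$.

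I would then set $S_a\coloneqq\{x\in\IR^n\mid \tfrac1M\langle x-a,u\rangle\in(0,1)\}$. By Lemma~\ref{LemmaSkalarProductDefinable} the functional $x\mapsto\langle x-a,u\rangle$ is pp-definable, and composing it with the scalar $\tfrac1M$ and with the relation $(0,1)$ gives a pp-definition of $S_a$ over $(\IR;+,\cdot \lambda \mid \lambda \in \IR, \lambda \mid \lambda \in \IR, (0,1))$. By the bound just obtained $S\subset S_a$, while $a\notin S_a$ because $\langle a-a,u\rangle=0\notin(0,1)$. For the final assertion, assuming such an $H$ exists for every $a\notin S$, the set $\{x\in\IR^n\mid \bigwedge_{a\in\IR^n\setminus S} x\in S_a\}$ is an infinite conjunction in the $n$ free variables of $x$, hence an infinitary pp-definition; it contains $S$ and, since $a\notin S_a$, excludes every $a\notin S$, so it equals $S$.
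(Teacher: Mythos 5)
Your proposal is correct and follows essentially the same route as the paper's proof: separate $S$ from $a$ using the unit normal $u$ of the given hyperplane, use the decomposition $S=B+V$ to show $V\perp u$ and hence trap $\langle s-a,u\rangle$ in an interval $(0,M)$, define $S_a$ as the preimage of $(0,1)$ under $x\mapsto \frac{1}{M}\langle x-a,u\rangle$ via Lemma~\ref{LemmaSkalarProductDefinable}, and finally intersect the $S_a$ over all $a\notin S$ by an infinite conjunction. The only cosmetic difference is that you establish the orthogonality $V\perp u$ directly from the line $\{s+tv\mid t\in\IR\}\subset S$, where the paper instead cites Lemma~\ref{LemmaInnerVectorOrthogonal} together with Lemma~\ref{LemmaAltDescriptionInnerVspace}; the underlying argument is identical.
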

\begin{proof} We may assume $S\ne\emptyset$.
    Let $B$ be a bounded set and $V$ a vector space such that $S=B+V$. Note that $V$ is the inner vector space by Lemma \ref{LemmaAltDescriptionInnerVspace}. For $a\in \IR^n\setminus S$, choose any $n-1$ dimensional affine subspace $A_a$ as described. Since $S$ is convex, all of $S$ is at the same side of $A_a$, so there is a normal vector $v_a$ of norm 1 which is orthogonal to $A_a$ such that $\langle s-a,v_a\rangle>0$ for all $s\in S$. Let $R_a>0$ be such that $B$ is contained in $\ball_{R_a}(a)$. Define $S_a$ as
    $$
        \{x \in \IR^n \mid  \exists y \in \IR : 
        y \in (0,1) \land R_ay=\langle x-a,v_a\rangle \}
    $$
    and $S'$ as
    $$
        \left\{ x \in \IR^n \mitte \bigwedge_{a \in \IR^n\setminus S} x\in S_a\right\}
    $$
    if the affine space exists for all $a\notin S$.
    
    Now for $a\notin S$, we get that $x$ cannot be in $S_a$, because $R_ay=\langle x-a,v_a\rangle$ needs to be greater than $0$. So $S'\subset S$.
    
    On the other hand, for each $x=s\in S$, we can write $s=b+w$ with $b\in B, w\in V$. Note that $\langle w,v_a\rangle =0$ by Lemma \ref{LemmaInnerVectorOrthogonal}. So 
    $$
        \langle s-a,v_a\rangle = \langle b+w-a,v_a\rangle = \langle b-a,v_a\rangle \in (0,R_a)
    $$
    and $s$ will not be excluded. Therefore, $S\subset S_a$ and $S\subset S'$.
    
    This shows that $S_a$ has the desired property and with the assumption for all $a$, we get the equality $S=S'$ and $S$ is therefore definable.
\end{proof}
Note that this lemma for example shows that the compact interval $[0,1]$ is infinitary pp-definable.
For generalising this result to all sets $S$ without \dent{}s, we will need a common Corollary of Carathéodory's theorem.
\begin{thm}[Carathéodory's theorem {\cite{MR1511425}}] \label{Theorem Caratheodory}
Let $C\subset \IR^n$ be a compact subset.
    Every Point in the convex hull of $C$ can be written as a convex combination of $n+1$ points in $C$.
\end{thm}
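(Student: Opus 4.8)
The plan is to prove the statement by a reduction argument on the number of points appearing in a convex representation; this does not actually use the compactness hypothesis. By definition of the convex hull, any point $x$ in the convex hull of $C$ is already some finite convex combination $x=\sum_{i=1}^{k}\lambda_i v_i$ with $v_i\in C$, all $\lambda_i>0$, and $\sum_{i=1}^{k}\lambda_i=1$. The goal is to show that whenever $k>n+1$ one can rewrite $x$ as a convex combination of strictly fewer points of $C$; iterating then terminates with at most $n+1$ points.

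First I would exploit affine dependence. If $k>n+1$, then the $k-1$ vectors $v_2-v_1,\dots,v_k-v_1$ lie in $\IR^n$ and there are more than $n$ of them, so they are linearly dependent. Collecting the coefficients of such a dependence and the implied coefficient of $v_1$ yields scalars $\mu_1,\dots,\mu_k$, not all zero, with $\sum_{i=1}^{k}\mu_i v_i=0$ and $\sum_{i=1}^{k}\mu_i=0$. The second equation is the crucial normalisation: it guarantees that for every real $t$ the perturbed coefficients $\lambda_i-t\mu_i$ still sum to $1$, and since $\sum_i \mu_i v_i=0$ we also retain $\sum_i(\lambda_i-t\mu_i)v_i=x$. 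Thus $x$ stays an affine combination of the same points for all $t$, and only nonnegativity is at stake.

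The key step is then to choose $t$ so that the combination remains convex while one coefficient is driven to zero. Since the $\mu_i$ sum to zero and are not all zero, at least one $\mu_i$ is positive, so $t:=\min\{\lambda_i/\mu_i : \mu_i>0\}$ is a well-defined positive number. For this $t$ every coefficient $\lambda_i-t\mu_i$ is nonnegative (for $\mu_i\le 0$ because $-t\mu_i\ge 0$, and for $\mu_i>0$ by the choice of the minimum), while the index $j$ attaining the minimum satisfies $\lambda_j-t\mu_j=0$. Dropping that index expresses $x$ as a convex combination of $k-1$ points of $C$.

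The main obstacle, and the heart of the argument, is exactly this bookkeeping of the step size $t$: one must simultaneously preserve the affine-combination identity (handled by $\sum\mu_i=0$), preserve nonnegativity of all coefficients, and force at least one coefficient to vanish. Once this single reduction step is established, the theorem follows by induction on $k$, and compactness of $C$ plays no role. The only place where one might stumble is checking that the minimum defining $t$ is taken over a nonempty set of indices, which is precisely what $\sum\mu_i=0$ together with the nontriviality of the $\mu_i$ guarantees.
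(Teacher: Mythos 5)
Your proof is correct, and it is the classical reduction argument for Carath\'eodory's theorem: exploit an affine dependence among more than $n+1$ points, perturb the coefficients by $t\mu_i$ with $\sum_i\mu_i=0$, and choose $t=\min\{\lambda_i/\mu_i : \mu_i>0\}$ to kill one coefficient while keeping the rest nonnegative. Note that the paper does not prove this statement at all --- it is quoted with a citation to Carath\'eodory's original article --- so there is no in-paper argument to compare against; your observation that compactness of $C$ is irrelevant here is also right (the paper only needs compactness in the subsequent Corollary, where the hull of a compact set is shown to be compact). One cosmetic point: your induction yields a representation with \emph{at most} $n+1$ points, whereas the statement asks for exactly $n+1$; this is harmless, since one may pad with repeated points or zero coefficients.
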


\begin{cor} \label{CorollaryConvexHullCompact}
    The convex hull of a compact set $C \subset \IR^n$ is compact.
\end{cor}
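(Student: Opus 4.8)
The plan is to realise the convex hull as the continuous image of a compact set, using Carathéodory's theorem to put a uniform bound on the number of points appearing in each convex combination. Once the number of summands is fixed, the convex hull becomes a single continuous image and compactness follows from the standard facts collected in the remark above.

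Concretely, I would first introduce the standard simplex
$$
    \Delta \coloneqq \left\{ (\lambda_1,\dots,\lambda_{n+1}) \in \IR^{n+1} \mitte \lambda_i \ge 0 \text{ for all } i \text{ and } \textstyle\sum_{i=1}^{n+1} \lambda_i = 1 \right\},
$$
which is closed and bounded, hence compact by the Heine--Borel theorem. By Tychonoff's theorem the finite product $C^{n+1} \times \Delta$ is then compact. Next I would consider the evaluation map
$$
    \phi \colon C^{n+1} \times \Delta \to \IR^n, \qquad
    (c_1,\dots,c_{n+1},\lambda_1,\dots,\lambda_{n+1}) \mapsto \sum_{i=1}^{n+1} \lambda_i c_i,
$$
which is continuous since each output coordinate is a polynomial in the inputs.

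The heart of the argument is to show that the image of $\phi$ is exactly $\conv(C)$. One inclusion is immediate: every value $\phi(c_1,\dots,c_{n+1},\lambda_1,\dots,\lambda_{n+1})$ is by construction a convex combination of points of $C$, hence lies in $\conv(C)$. For the reverse inclusion I would invoke Carathéodory's theorem (Theorem \ref{Theorem Caratheodory}): every point of $\conv(C)$ is a convex combination of $n+1$ points of $C$, and such a representation is precisely a point in the image of $\phi$. Combinations using strictly fewer than $n+1$ points cause no trouble, since we may repeat points or set the corresponding coefficients to zero and still land in $\Delta$.

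Having established $\conv(C) = \phi(C^{n+1} \times \Delta)$, I would conclude by noting that the continuous image of a compact set is compact, so $\conv(C)$ is compact. The only real subtlety is the uniform bound on the number of summands, and this is exactly what Carathéodory's theorem supplies; without it the convex hull would be an increasing union of images of simplices of unbounded dimension and the compactness argument would break down.
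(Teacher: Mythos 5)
Your proof is correct and follows essentially the same route as the paper: both realise $\conv(C)$ as the image of the compact set $C^{n+1}\times\Delta$ under the continuous evaluation map, with Carathéodory's theorem guaranteeing that $n+1$ summands suffice. Your additional remark about handling combinations of fewer points by repeating points or using zero coefficients is a detail the paper leaves implicit, but the argument is the same.
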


%\begin{comment}
        \begin{proof}
            Let 
            $$
                \Delta \coloneqq (\lambda_1,\dots ,\lambda_{n+1}\in [0,1]^{n+1} \mid \lambda_1+\dots +\lambda_{n+1}=1)
            $$
            Now, $C$, $\Delta$, and their products are compact. Look at the continuous map
            \begin{align*}
                C^{n+1} \times \Delta &\to \IR^n \\
                (b_{1},b_{2},\dots ,b_{n+1}, \lambda_1,\dots ,\lambda_{n+1}) &\mapsto \sum_{i=1}^{n+1} \lambda_i b_i
            \end{align*}
            where $b_i$ denotes a vector in $C$ and $\lambda_i$ is in $[0,1]$ for each $i$. The image of this map is the convex hull of $C$ by Carathéodory's theorem.
            Since the image of a compact set is compact, the image of this map is compact.
        \end{proof}
%\end{comment}
\begin{notation}
    Denote by $\conv (X)$ the convex hull of a set $X\subset \IR^n$.
\end{notation}

This allows us to conclude the next lemma.
\begin{lemma}\label{LemmaConvexClosureCommute}
    Let $B\subset \IR^n$ a bounded set. Then, $\conv(\closure{B})=\closure{\conv(B)}$. %the convex hull of the closure $\closure{B}$ is the closure of the convex hull of $B$.
\end{lemma}
\begin{proof}
    The closure of the convex hull is a convex set by Lemma \ref{LemmaClosureStaysConvex}. It contains $\closure{B}$ and thus also contains the convex hull of $\closure{B}$.
    
    Conversely, the convex hull of $\closure{B}$ is closed and bounded by Corollary \ref{CorollaryConvexHullCompact}. Therefore it contains the convex hull of $B$ and the closure of the convex hull of $B$.
    As both inclusions are shown, we get the equality.
\end{proof}
\begin{lemma} \label{LemmaNoDentAlternative}
    Let $S\subset \IR^n$ be a convex set. Then, the following are equivalent:
    \begin{enumerate}
        \item \label{NrLemmaNoHoleAlternative1}
        The set $S$ has no \dent{}.
        \item \label{NrLemmaNoHoleAlternative2}
        For every affine subspace $A\subset \IR^n$, the set $A\cap S$ is empty or $\closure{A\cap S}=A\cap \closure{S}$.
    \end{enumerate}
\end{lemma}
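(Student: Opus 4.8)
The easy half of the second condition is automatic: since an affine subspace $A$ is closed, $\closure{A\cap S}\subseteq \closure{A}\cap\closure{S}=A\cap\closure{S}$ for every $A$. Thus the whole lemma reduces to the reverse inclusion $A\cap\closure{S}\subseteq\closure{A\cap S}$, and the plan is to show that the absence of \dent{}s is exactly what forces this reverse inclusion to hold on every affine subspace. In particular, for \ref{NrLemmaNoHoleAlternative2} it suffices to test the one-dimensional subspaces, which is where the \dent{} condition bites.

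For \ref{NrLemmaNoHoleAlternative1}$\Rightarrow$\ref{NrLemmaNoHoleAlternative2}, assume $S$ has no \dent{}, fix an affine subspace $A$ with $A\cap S\neq\emptyset$, and pick $u\in A\cap S$ and $p\in A\cap\closure{S}$. I would look at the half-open segment $q_\mu=(1-\mu)u+\mu p$ for $\mu\in[0,1)$, which lies in $A$ because $A$ is affine and $u,p\in A$. If some $q_\mu$ with $\mu\in(0,1)$ were outside $S$, then writing it as $(1-\mu)u+\mu p$ with $u\in S$, $p\in\closure{S}$ and $1-\mu\in(0,1)$ would exhibit $q_\mu$ as a \dent{}, contradicting the hypothesis. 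Hence the entire set $\{q_\mu\mid \mu\in[0,1)\}$ lies in $A\cap S$; letting $\mu\to1$ gives $p\in\closure{A\cap S}$, which is the desired reverse inclusion.

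For \ref{NrLemmaNoHoleAlternative2}$\Rightarrow$\ref{NrLemmaNoHoleAlternative1}, I would argue contrapositively from a supposed \dent{} $a=\lambda s+(1-\lambda)t$ with $s\in S$, $t\in\closure{S}$, $\lambda\in(0,1)$ and $a\notin S$. Note $s\neq t$, since $s=t$ would force $a=s\in S$; so $s$ and $t$ span a line $A$, which I take as the affine subspace in \ref{NrLemmaNoHoleAlternative2}. Parametrising $A$ by $r\mapsto s+r(t-s)$, the points $s,a,t$ sit at parameters $0$, $1-\lambda$, $1$ respectively. Since $A\cap S$ is convex it corresponds to an interval $I\subseteq\IR$ with $0\in I$; and since $t\in A\cap\closure{S}=\closure{A\cap S}$ by \ref{NrLemmaNoHoleAlternative2}, we get $1\in\closure{I}$, so $\inf I\le 0$ and $\sup I\ge 1$. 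As the relative interior $(\inf I,\sup I)$ of the interval $I$ is contained in $I$, the parameter $1-\lambda\in(0,1)\subseteq(\inf I,\sup I)$ lies in $I$, giving $a\in A\cap S\subseteq S$, contradicting $a\notin S$.

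The arguments are short, so I do not expect a serious obstacle; the only points needing a little care are the reduction to a one-dimensional subspace $A$ in the second direction (so that the interval picture applies) and the elementary fact that a point strictly between an element of $I$ and a limit point of $I$ already lies in $I$, i.e. that a convex subset of a line contains its relative interior. Degenerate situations such as $A\cap S=\emptyset$ or an unbounded $I$ are handled automatically by the $\inf/\sup$ formulation, and neither direction actually requires convexity of $\closure{S}$.
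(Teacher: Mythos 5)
Your proof is correct and takes essentially the same route as the paper: the forward direction is the identical segment argument from a point of $A\cap S$ towards a point of $A\cap\closure{S}$ (and your version, with $\mu\to 1$, in fact states the limit more carefully than the paper's text), and for the converse you choose the same line through $s$ and $t$, with your interval parametrisation forcing $a\in S$ being just the contrapositive mirror of the paper's direct observation that $t\in A\cap\closure{S}$ but $t\notin\closure{A\cap S}$. No gaps.
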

\begin{proof}
    \ref{NrLemmaNoHoleAlternative1} implies \ref{NrLemmaNoHoleAlternative2}:
    If $A\cap S$ is empty, we are done. Otherwise, $\closure{A\cap S}\subset \closure{A}\cap \closure{S} = A\cap \closure{S}$ is clear. For the other inclusion choose $a\in A\cap\closure{S}$ and $b\in A\cap S$. Then take $b+\lambda(a-b)$ for $\lambda>0$, $\lambda\to 0$. Since these points are no \dent{}s, they are inside $S$. Thus, $b$ is in the closure of $A\cap S$.
    
    \ref{NrLemmaNoHoleAlternative2} implies \ref{NrLemmaNoHoleAlternative1}: If $a=\lambda s + (1-\lambda)t$ is a \dent{} with $s\in S, t\in \closure{S}\setminus S$, then take $A$ to be the affine line containing $s$ and $t$. Now, $t\in A\cap\closure{S}$, but $t\notin \closure{S\cap A}$ as there is $a$ in between $t$ and this set.
\end{proof}

\begin{lemma} \label{LemmaBoundedForOpenIntervall}
    Let $S\subset \IR^n$ be a bounded convex set which has no \dent{}.
    Then, $S$ is infinitary pp-definable over $(\IR;+,\cdot \lambda \mid \lambda \in \IR, \lambda \mid \lambda \in \IR, (0,1))$. %Here, $(0,1)$ denotes  the corresponding interval.
\end{lemma}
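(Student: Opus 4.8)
The natural plan is to invoke Lemma \ref{LemmaSAForOpenIntervall}: since $S$ is bounded its inner vector space is $\{0\}$, so $S$ is the sum of a bounded set and the trivial subspace, and it suffices to produce, for every $a \notin S$, an $(n-1)$-dimensional affine subspace through $a$ disjoint from $S$. I would attempt this by induction on $n$, the case $n=1$ being trivial since the singleton $\{a\}$ is itself a $0$-dimensional affine subspace disjoint from $S$. When $a \notin \closure{S}$, the closed Hahn--Banach theorem (Theorem \ref{Theorem Hahn-Banach2}) applied to the convex set $\closure{S}$ (Lemma \ref{LemmaClosureStaysConvex}) separates $a$ from $\closure{S}$ strictly, and the parallel hyperplane through $a$ then misses $\closure{S} \supseteq S$. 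When the affine hull of $S$ is a proper subspace, I would solve the problem inside that hull and extend the resulting lower-dimensional subspace by the directions orthogonal to the hull, which preserves disjointness because $S$ lies in its hull. This leaves the essential case $a \in \closure{S}\setminus S$ with $S$ full-dimensional.

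Here the open Hahn--Banach theorem (Theorem \ref{Theorem Hahn-Banach}) gives a supporting hyperplane $H$ at $a$ with $S \subseteq \{h : \langle h-a, v\rangle \ge 0\}$. If $H\cap S=\emptyset$ we are done; otherwise the no-\dent{} hypothesis enters through Lemma \ref{LemmaNoDentAlternative}, which applied to $A=H$ yields $\closure{H\cap S}=H\cap\closure{S}\ni a$. Thus $T\coloneqq H\cap S$ is a nonempty bounded convex subset of $H\cong\IR^{n-1}$ with $a\in\closure{T}\setminus T$ and, crucially, $T$ again has no \dent{} (a \dent{} of $T$ would lie in $H$ and outside $T=H\cap S$, hence would be a \dent{} of $S$). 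The induction hypothesis then produces an $(n-2)$-dimensional affine subspace $A'\subseteq H$ through $a$ with $A'\cap S=A'\cap T=\emptyset$.

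The hard part, and the step I expect to be the genuine obstacle, is to enlarge $A'$ to an honest hyperplane of $\IR^n$ through $a$ that still avoids $S$. This cannot always be done: a bounded convex set without \dent{}s can have a boundary point through which \emph{every} hyperplane meets $S$ (for example a disk made tangent to a segment at $a$, with $a$ itself deleted), so the literal hypothesis of Lemma \ref{LemmaSAForOpenIntervall} genuinely fails at such exposed ``tangential'' points. I therefore expect the correct argument to run the induction at the level of definability rather than of a single hyperplane: first define $\closure{S}$, which is compact convex and hence definable from $[0,1]$ and so from $(0,1)$ by Theorem \ref{TheoremCompactDefinable} together with the interdefinability of the intervals; then observe, again via the segment argument underlying Lemma \ref{LemmaNoDentAlternative}, that $S$ is precisely the union of the relative interiors of those faces of $\closure{S}$ which it meets; then define each restriction $S\cap H$ inductively inside the lower-dimensional $H$; and finally recombine these pieces. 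The real subtlety is that only infinite intersections, affine images and affine preimages are permitted -- neither a union over faces nor the deletion of the missing boundary points is directly expressible -- so the reconstitution of $S$ must be synthesised by an existential projection, in the spirit of the hyperbola construction used to obtain the open interval in Lemma \ref{LemmaInterDefinableHalveIntervalls}.
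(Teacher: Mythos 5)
Your reduction to Lemma \ref{LemmaSAForOpenIntervall}, the treatment of $a\notin\closure{S}$ via Theorem \ref{Theorem Hahn-Banach2}, and the reduction to a full-dimensional $S$ are all sound, and your diagnosis of the obstruction is exactly right: at a tangentially exposed point $a\in\closure{S}\setminus S$ every hyperplane through $a$ may meet $S$ (the paper records precisely such an example in Section \ref{SectionExamples}), so the hypothesis of Lemma \ref{LemmaSAForOpenIntervall} is genuinely unavailable there. But from that point on your text is a plan, not a proof. Indeed the hyperplane-producing induction you set up is unsound even before the extension step, since the same obstruction recurs for $T=H\cap S$ one dimension down; and while your structural claim that $S$ is the union of the relative interiors of the faces of $\closure{S}$ that it meets is correct (it follows from the no-\dent{} hypothesis by the segment argument), you concede yourself that neither a union over possibly infinitely many faces nor the deletion of boundary points is expressible with intersections, affine images and affine preimages, and the promised ``synthesis by existential projection'' is left entirely unconstructed. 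Since excluding exactly these tangential points is the whole content of the lemma in the hard case, this is a genuine gap at the central step.

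For comparison, the paper fills exactly this hole with a lifting trick rather than a face decomposition. It iterates Hahn--Banach at $a$ to get orthonormal vectors $v_1,\dots,v_k$ with $S_j=\{s\in S_{j-1}\mid \langle s-a,v_j\rangle=0\}$ and $S_k=\emptyset$, normalises $a=0$ and $v_j=e_j$, and lifts $S$ to $T'=\{(x,y)\in\IR^{2n}\mid x\in S,\ y_i=|x_i| \text{ for all } i\}$, whose projection onto the first $n$ coordinates is $S$. The point $a$ is then excluded witness-by-witness: for each potential witness $(0,z)$ one writes down a pp-definable $T_z\supseteq T'$ with $(0,z)\notin T_z$, namely an open stripe in the coordinate $y_i$ if some $z_i<0$, the stripe $\frac{1}{kR}(y_1+\dots+y_k)\in(0,1)$ if $z_1=\dots=z_k=0$ (using $S_k=\emptyset$), and $\closure{\conv(T')}$ otherwise; only in this last case does the no-\dent{} hypothesis enter, via Carath\'eodory's theorem (Theorem \ref{Theorem Caratheodory}), Lemma \ref{LemmaConvexClosureCommute} and Lemma \ref{LemmaNoDentAlternative}, to show $(0,z)\notin\closure{\conv(T')}$. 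Setting $S_a=\{x\mid\exists y:\bigwedge_{z}(x,y)\in T_z\}$ then separates $a$ from $S$, and $S=\bigcap_{a\notin S}S_a$. So your instinct that an existential projection over an auxiliary witness is needed was correct; what is missing from your proposal is this concrete construction (or any substitute for it).
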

\begin{proof}
    Let $a\notin S$. We want to define a set $S_a$ such that $S\subset S_a$, but $a\notin S_a$. If there is any $v\in \IR^n$ of absolute value $1$ such that $\langle s-a, v\rangle>0$ for all $s\in S$ then we define $S_a$ by Lemma \ref{LemmaSAForOpenIntervall} using the affine subspace $\{h \in \IR^n \mid \langle h-a, v\rangle=0\}$.
    
    If there is no such $v$, then we 
    construct a set $T'$ which projects surjectively onto $S$ and for each preimage of $a$, there will be a definable set $T_z$, separating the preimage from $T'$.
    So the set $S_a$ can be defined as
    $$
        \left\{ x \in \IR^n \mitte \exists y \in \IR^n : \bigwedge_{z\in \IR^n} (x,y)\in T_z \right\}
    $$
    in the end.
    
    To construct $T'$, we do the following induction. We start with $k=1$.
    Choose $v_1$ of norm 1 such that $\langle s-a, v_1\rangle\ge 0$ for all $s\in S$. 
    This is possible by Theorem \ref{Theorem Hahn-Banach}. 
    After that, we look at the subspace $S_1\coloneqq \{s\in S\mid \langle v_1,s-a\rangle=0\}$.
    This set is nonempty by assumption. Increase $k$ by 1.
    %Note that the intersection $\closure{S}\cap \{s \in \IR^n \mid \langle s-a, v_1\rangle= 0\}$ is again convex and also the closure of $S_1$ by Lemma \ref{LemmaNoDentAlternative}. 
    %The point $a$ is no \dent{} of this set, because $S_1\subset S$. 
    
    For $k>1$, continue as follows:
    Define $v_k$ such that for each $s\in S_{k-1}$,
    %with the property that the vector $s-a$ is orthogonal to all $v_{1},\dots ,v_{k-1}$, 
    we have $\langle s-a , v_k \rangle\ge 0$ and moreover $\|v_k\|=1$ and $v_k$ is orthogonal to each other $v_j$ with $j< k$. Define $S_k\subset S_{k-1}$ as the points $s$ such that $s-a$ is orthogonal to $v_k$.
    If $S_k=\emptyset$, %\langle s-a , v_k \rangle > 0$ for all $s$ such that $s-a$ is orthogonal to $v_{1},\dots ,v_{k-1}$, (including the case that there is nothing left) 
    then stop and remember $k$. Otherwise increase $k$ by $1$ and repeat.
    
    Since $a\notin S$, the affine hull of $S_k$ has always a positive dimension or is empty. As the dimension of the affine hull of $S_k$ decreases, this algorithm terminates.
    To simplify terms, identify $a$ with 0 and $v_j$ with the $j$-th unit vector by applying an invertible isometric affine map (Lemma \ref{LemmaAffinemap}).
    Define $R>0$ to be a number such that $S\subset B_R(0)$.

    Consider the sets
    $$
        T'=\{(x,y)\in \IR^n \times \IR^n \mid x \in S\land y_1 = |x_1| \land\dots \land y_n=|x_n|
        \} \subset \IR^{2n}
    $$
    and $T=\conv(T')\subset \IR^{2n}$. Note that they depend on the redefined unit vectors.
    The set $T$ is by definition convex. 
    It is also bounded as $\|(x,y)\|=\|(x,x)\|<\sqrt{2}R$ for all $(x,y)\in T'$.
    We consider the map $\IR^{2n}\to \IR^n$ which is given by the projection on the first $n$ coordinates and look at the preimages of $a=0\in \IR^n$. They are given by $\{(0,z)\mid z \in \IR^n\}$. We identify $\IR^n \times \IR^n$ with $\IR^{2n}$ and give for each $z\in \IR^n$ a set $T_z\subset \IR^{2n}$ such that $(0,z)\notin T_z$ and $T'\subset T_z$. 
    There are three cases to consider:
    \begin{enumerate}
    \item
        If $z$ has a negative entry on an $i$-th component, then 
        $$
            \left\{ (x,y) \in \IR^n\times \IR^n \mitte \frac{1}{R-z_i}(y_i-z_i) \in (0,1)\right\}
        $$
        is a possible pp-definable set for $T_z$: For $(x,y)=(0,z)$, we get $0\in (0,1)$ which is false, thus $(0,z)$ is not in this set. For $(x,y)\in T'$, we get $\frac{y_i-z_i}{r-z_i} \in (0,1)$ which is true as $R>y_i\ge 0>z_i$, thus $T'\subset T_z$. 
        %As $T_z$ is convex, we get also $T=\conv(T')\subset \conv(T_z)=T_z$.
    \item
        If the first $k$ coordinates of $z$ are equal to zero, where $k$ is the number from above, then
        $$
            \left\{ (x,y) \in \IR^n\times \IR^n \mitte \frac{1}{kR}(y_1+\dots +y_k) \in (0,1) \right\}
        $$
        is a possible pp-definable set for $T_z$. Clearly $(0,z)$ is not in this set, as the sum of zeros is zero. On the other side, 
        %each element in $T$ is a convex combination of elements in $T'$.
        %As $(0,1)$ is also convex, it suffices to show that for each $(x,y)\in T'$ the property $\frac{1}{kR}(y_1+\dots +y_k) \in (0,1)$ holds. 
        take a tuple $(x,y)\in T'$. As $y$ is given by absolute values, we get $y_i\ge 0$ for all $i$. Since $|x_i|$ is bounded by $R$, we get additionally $y_i<R$. 
        Moreover, at least one of the $y_i$ with $i\le k$ is strictly positive because $(x,y)\notin \emptyset=S_k$. Therefore, the sum $y_1+\dots +y_k$ is inside $(0,kR)$ and $(x,y)\in T_z$.
    \item
        It remains the case where all $z_i$ are non negative and there is a smallest entry $j\le k$ such that $z_j>0$. In this case, we show that we can define $T_z$ as the closure of $T$.

        Recall that $\closure{T}=\conv(\closure{T'})$ by Lemma \ref{LemmaConvexClosureCommute}. Assume that $(0,z)\in \closure{T}$. In this case, we can write 
        $$
            (0,z)
            = \sum_{i=1}^{n+1} \lambda_i (x^i,y^i)
        $$
        where the $\lambda_i$ are positive real numbers with sum $1$ and the points $(x^i,y^i)$ are in $\closure{T'}$. (Recall that $x^i$ is used to denote a vector of vectors and not an exponential function.) For all $r<j$, we get that the sum $\sum_{i=1}^{n+1} \lambda_i y^i_r=z_r$ is zero and since each of the summands is non-negative, we get that $y^i_r=0$ for all $r<j$. 
        As $(x^i,y^i)\in \closure{T'}$, we also get that $|x^i_r|=y^i_r=0$ and therefore $x^i_r=0$ for those $r$. 

        For the index $j$, we have again that $\sum_i \lambda_i x_j^i=0$. But since $y_j=\sum_i \lambda_i |x^i_j|$ is nonzero, one of the summands $\lambda_i x^i_j$ is nonzero. Thus, we get by a version of the pigeonhole principle that there is an index $p$ such that $x^p_j$ is strictly negative. So we derived a point $(x^p,y^p)\in \closure{T}$ with the property that $x^p_1=\dots=x^p_{j-1}=0>x^p_j$. Note that $x^p\in \closure{S}$.

        Let $A$ be the subspace of $\IR^n$ given by vectors such that the first $j-1$ coordinates are zero. 
        Since $j<k$, we get that $A\cap S=S_{j-1}$ is nonzero and thus $x^p\in A\cap \closure{S}=\closure{A\cap S}$ by Lemma \ref{LemmaNoDentAlternative}. 
        But $A\cap S$ contains only points where the $j$-th coordinate is at least $0$ this also holds for the closure. That is a contradiction, because $x^p_j<0$. 
        
        Therefore, we showed that $(0,z)$ is not in the closure of $T$. Let in this case $T_z=\closure{T}$. This set can be infinitarily pp-defined, because the compact interval $[0,1]$ can be infinitarily pp-defined by Lemma \ref{LemmaSAForOpenIntervall} and the closed bounded convex set $\closure{T}$ by Theorem \ref{TheoremCompactDefinable}.
    \end{enumerate}
    Now, we can define the set $S_a$ as
    $$
        \left\{ x \in \IR^n \mitte \exists y \in \IR^n : \bigwedge_{z\in \IR^n} (x,y)\in T_z \right\}
    $$
    in this case. It contains $S$ but not $a$.
    
    Since this whole procedure can be done for every $a$, we can define $S$ as  the set
    $$
        \left\{ x \in \IR^n \mitte \bigwedge_{a\in \IR^n} x \in S_a\right\}
    $$
    which is an infinitary pp-definition and contains all $s\in S$ and no $a\notin S$.
\end{proof}

\begin{thm} \label{TheoremDefinitionFromOpenIntervall}
    Let $S\subset \IR^n$ be the sum of a bounded space and a linear subspace such that there is no \dent{}.
    Then, $S$ is infinitary pp-definable over $(\IR;+,\cdot \lambda \mid \lambda \in \IR, \lambda \mid \lambda \in \IR, (0,1))$. %Here, $(0,1)$ denotes  the corresponding interval.
\end{thm}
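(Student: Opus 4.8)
The plan is to reduce to the bounded case, Lemma \ref{LemmaBoundedForOpenIntervall}, by projecting out the inner vector space. Two degenerate cases come first: if $S=\emptyset$ it is definable by a contradiction, and if the inner vector space $V$ of $S$ is all of $\IR^n$, then $S$ is either empty or $\IR^n$, hence an affine subspace, and definable by Theorem \ref{TheoremAffineFormNothing}. So I may assume $S\neq\emptyset$ and $V\neq\IR^n$. Write $S=B+V$ with $B$ bounded, where $V$ is the inner vector space by Lemma \ref{LemmaAltDescriptionInnerVspace}, let $W=V^\perp$ with $m=\dim W\geq 1$, and let $\pi\colon\IR^n\to\IR^m$ be the orthogonal projection onto $W$ expressed in an orthonormal basis. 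This is a linear, hence affine, map with $\ker\pi=V$.

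Next I would record three properties of $\pi(S)\subseteq\IR^m$. It is convex as the image of a convex set under a linear map. It is bounded, since $\pi(S)=\pi(B+V)=\pi(B)$ (because $\pi$ kills $V$) and a Lipschitz map sends the bounded set $B$ to a bounded set. And $S=\pi^{-1}(\pi(S))$: the inclusion ``$\subseteq$'' is immediate, while if $\pi(x)=\pi(s)$ with $s\in S$ then $x-s\in\ker\pi=V$, so $x\in S+V=S$.

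The crucial step is that $\pi(S)$ has no \dent{}. Here I would use that $\closure{S}=\closure{B}+V$ is closed (a compact set plus a subspace, cf.\ Lemma \ref{LemmaClosedEqualsCompact}), whence $\pi(\closure{S})=\pi(\closure{B})$ is compact and coincides with $\closure{\pi(S)}$. Suppose $\pi(S)$ had a \dent{}, say $a'=\lambda s'+(1-\lambda)t'\notin\pi(S)$ with $s'\in\pi(S)$, $t'\in\closure{\pi(S)}$ and $\lambda\in(0,1)$. Lifting $s'$ to some $s\in S$ and, using $\closure{\pi(S)}=\pi(\closure{S})$, lifting $t'$ to some $t\in\closure{S}$, the point $a=\lambda s+(1-\lambda)t$ has $\pi(a)=a'\notin\pi(S)$, so $a\notin S$ by $S=\pi^{-1}(\pi(S))$; thus $a$ would be a \dent{} of $S$, contradicting the hypothesis.

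Consequently $\pi(S)$ is a bounded convex set without a \dent{}, so it is infinitary pp-definable over $(\IR;+,\cdot\lambda\mid\lambda\in\IR,\lambda\mid\lambda\in\IR,(0,1))$ by Lemma \ref{LemmaBoundedForOpenIntervall}. Since $\pi$ is linear, the formula $\{x\in\IR^n\mid \pi(x)\in\pi(S)\}$ is an infinitary pp-definition of $S=\pi^{-1}(\pi(S))$: the components of $\pi(x)$ are built from $+$ and the scalar multiplications, and membership in $\pi(S)$ is the defined relation. I expect the main obstacle to be the preservation of the no-\dent{} property under projection, which rests on the identities $\closure{\pi(S)}=\pi(\closure{S})$ and $S=\pi^{-1}(\pi(S))$ used to lift the witnessing points; the convexity and boundedness of the projection are routine.
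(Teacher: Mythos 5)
Your proof is correct and takes essentially the same approach as the paper: the paper sets $B=S\cap V^\perp$, which is exactly your $\pi(S)$, asserts that $B$ is bounded, has no \dent{}, and satisfies $S=B+V$, and then defines $S$ as the Minkowski sum of $B$ and $V$ --- equivalent to your preimage formula $\{x\in\IR^n\mid \pi(x)\in\pi(S)\}$. The only difference is that you spell out the verifications (convexity, boundedness, $S=\pi^{-1}(\pi(S))$, and the no-\dent{} property via $\closure{\pi(S)}=\pi(\closure{S})$) which the paper states without proof.
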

\begin{proof}
    Let $V$ be the inner vector space of $S$ and let $B$ be the set of points in $S$ which are orthogonal to each vector in $V$. Then, $B$ is bounded, has no \dent{} and $S=B+V$. Since $V$ is a vector space, it is pp-definable by Theorem \ref{TheoremAffineFormNothing} and $B$ is infinitary pp-definable by Lemma \ref{LemmaBoundedForOpenIntervall}, so $V$ is definable as
    $$
        \{x\in \IR^n \mid \exists b,v\in \IR^n : b+v=x\land b\in B \land v\in V\}
    $$
    which completes the proof.
\end{proof}

\subsection{Definitions from the pointed rectangle}
We prove in this section that all bounded sets can be primitively positively defined from the pointed rectangle. %This will be proven in this section.

\begin{lemma} \label{LemmaIntersectionStaysBounded}
    Let $S, A\subset \IR^n$ be the sum of a bounded set and a vector space each. Then, $A\cap S$ is also the sum of a bounded space and a vector space.
\end{lemma}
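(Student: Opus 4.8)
The plan is to identify the vector space for $A\cap S$ as $V\coloneqq V_S\cap V_A$, where $S=B_S+V_S$ and $A=B_A+V_A$ with $B_S,B_A$ bounded and $V_S,V_A$ vector spaces, and then to show that the part of $A\cap S$ transverse to $V$ is bounded. The decisive observation is that $V$ translates $A\cap S$ into itself: for $v\in V$ and $x\in A\cap S$, writing $x=b_S+w_S$ with $b_S\in B_S$, $w_S\in V_S$ gives $x+v=b_S+(w_S+v)\in B_S+V_S=S$ since $v\in V_S$, and symmetrically $x+v\in A$ since $v\in V_A$; hence $x+v\in A\cap S$. This uses only the given representations, not convexity of $B_S$ or $B_A$.

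Next I would fix $W\coloneqq V^\perp$, the orthogonal complement of $V$, and set $B'\coloneqq (A\cap S)\cap W$. I claim $A\cap S=B'+V$. The inclusion $B'+V\subset A\cap S$ is exactly the translation property just established. For the reverse inclusion, decompose any $x\in A\cap S$ as $x=x_W+x_V$ with $x_W\in W$ and $x_V\in V$; then $x_W=x+(-x_V)\in A\cap S$ by the translation property (as $-x_V\in V$), so $x_W\in (A\cap S)\cap W=B'$ and thus $x=x_W+x_V\in B'+V$.

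It remains to prove that $B'$ is bounded, which I expect to be the main obstacle. Pick $R>0$ with $B_S,B_A\subset \closedball_R(0)$. For any $x\in A\cap S$ the representations $x=b_S+w_S=b_A+w_A$ give $\dist(x,V_S)\le \|x-w_S\|=\|b_S\|\le R$ and likewise $\dist(x,V_A)\le R$. Writing $Q_S$ and $Q_A$ for the orthogonal projections onto $V_S^\perp$ and $V_A^\perp$, one has $\dist(x,V_S)^2=\langle Q_S x,x\rangle$ and $\dist(x,V_A)^2=\langle Q_A x,x\rangle$, so the quadratic form $g(x)\coloneqq \langle (Q_S+Q_A)x,x\rangle$ satisfies $g(x)\le 2R^2$ on $A\cap S$. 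Since $g$ is a sum of two nonnegative forms, $g(x)=0$ forces $Q_S x=Q_A x=0$, i.e.\ $x\in V_S\cap V_A=V$; thus $\ker g=V$, and $g$ is positive definite on $W=V^\perp$. Hence there is $c>0$ with $g(x)\ge c\|x\|^2$ for all $x\in W$, and for $x\in B'\subset W$ this yields $\|x\|^2\le 2R^2/c$.

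Therefore $B'$ is bounded, and $A\cap S=B'+V$ exhibits $A\cap S$ as the sum of a bounded set and a vector space, completing the proof. The only step requiring genuine care is the positive-definiteness of $g$ on $W$, which hinges on identifying $\ker g$ with $V_S\cap V_A$; everything else is bookkeeping with the given representations.
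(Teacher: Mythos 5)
Your proof is correct, but it takes a genuinely different route from the paper's. The paper first normalises the bounded parts to be orthogonal to their vector spaces, passes to the orthogonal complements $V'$, $W'$ of $V\coloneqq V_S\cap V_A$ inside $V_S$ and $V_A$, and bounds the transverse part synthetically: it takes the smallest angle $\alpha>0$ between nonzero vectors of $V'$ and $W'$ (which exists by finite-dimensionality) and shows by a triangle argument with two right angles that the relevant points have norm at most $r/\sin(\alpha/2)$; it then needs a separate case when $V'$ or $W'$ is trivial, and its final decomposition is $A\cap S=(B_R(0)\cap A\cap S)+(V_S\cap V_A)$. You replace all of this by the quadratic form $g(x)=\langle (Q_S+Q_A)x,x\rangle$: the distance bounds $\dist(x,V_S),\dist(x,V_A)\le R$ on $A\cap S$ follow directly from the given representations with no orthogonality normalisation, the kernel computation $\ker g=V_S\cap V_A$ is clean, and positive definiteness of $g$ on $W=V^\perp$ together with compactness of the unit sphere gives the uniform bound $\|x\|^2\le 2R^2/c$ on $B'=(A\cap S)\cap W$ --- uniformly, with no case distinction (if $V'=\{0\}$ your argument runs unchanged). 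The two arguments are cousins: your spectral constant $c$ is essentially the square of the sine of the paper's minimal angle (a minimal principal angle between $V_S$ and $V_A$ modulo their intersection), and each invokes finite-dimensionality exactly once --- you for the spectral gap via compactness, the paper for the existence of a positive minimal angle. Your version is tidier and avoids the paper's case split and normalisation; the paper's is more elementary, giving the explicit constant $r/\sin(\alpha/2)$ by synthetic geometry. Both proofs, correctly, make no use of convexity, matching the generality of the statement.
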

%\begin{rem}
    This includes the case where $A$ is an affine set and/or $S$ is bounded. The dimension of the three involved vector spaces might differ.
%\end{rem}
\begin{proofof}{Lemma \ref{LemmaIntersectionStaysBounded}}
    Let $S=B+V$ and $A=C+W$ where $B$ and $C$ are bounded and $V$ and $W$ are vector spaces. We may assume that all vectors in $B$ are orthogonal to $V$ by choosing in each $V$ orbit of $B$ to the element with the lowest distance to $0$. We may assume similar that all vectors in $C$ are orthogonal to $W$. Let $r>0$ be a bound such that both $B$ and $C$ are contained in $\ball_r(0)$.
    
    Let $V'$ and $W'$ be the orthogonal complements of $V\cap W$ in $V$ respectively $W$. We consider the case where both of them are nonzero. Let $\alpha$ be the smallest angle between nonzero vectors in $V'$ and $W'$. Note that $\alpha<90^\circ$ and $\alpha>0$ because there is no common vector in both spaces and $\IR^n$ has a finite dimension. Denote $R\coloneqq \frac{r}{\sin(\alpha /2)}\ge r$.
    
    Now, we want to show $A\cap S$ equals $(\ball_R(0)\cap A \cap S)+(V\cap W)$: The inclusion $(\ball_R(0)\cap A \cap S)+(V\cap W)\subset A\cap S$ is clear. For the other inclusion take any element $s$ in $A\cap S$. This can be written as $s=c+u+w$ with $c\in C, u\in V\cap W, w\in W'$. Note that $c+w$ is also in $A\cap S$. It suffices to show that $c+w\in \ball_R(0)$. Write $c+w=b+v$ with $b\in B, v\in V$. We get that $v\in V'$, because $b$, $c$ and $w$ are orthogonal to $V\cap W$.

    We get the following picture:
    $$
    \begin{tikzpicture}
    \coordinate (A) at (0,0);
    \coordinate (B) at (0,1);
    \coordinate (C) at (1,-1);
    \coordinate (D) at (3,1);
        \draw (0,0)  node [anchor=east]{$0$} -- (3,1);
        \draw pic [draw=white!50!black, fill=black!10, angle radius=9mm, "$\ge \alpha$"] {angle = B--D--C};
        \draw (0,0) -- (0,1);
        \draw (0,0) -- (1,-1);
        \draw (0,1)  node [anchor=east]{$B \ni b$} -- (3,1);
        \draw (1,-1) node [anchor=east]{$C \ni c$} -- (3,1);
        \draw (3,1)  node [anchor=west]{$b+v=c+w \in A\cap S$};
        \draw pic [draw, angle radius=4mm, "$\cdot$"] {angle = A--B--D};
        \draw pic [draw, angle radius=4mm, "$\cdot$"] {angle = D--C--A};
    \end{tikzpicture}
    $$
    Note that the picture may hide a third dimension. Since the arc between $v$ and $w$ is at least $\alpha$, the sum of the (positive) angles $\angle (0, b+v, b)$ and $\angle (0, c+w, c)$ is also at least $\alpha$, so one of them is bounded by $\alpha/2$ from below.
    Since both $b$ and $c$ have distance at most $r$ from the origin and a right angle, the length of $b+v$ is at most $r\cdot (\sin(\alpha/2))^{-1}=R$. Therefore, $b+v\in B_R(0)$ and $A\cap S$ equals $(\ball_R(0)\cap A \cap S)+(V\cap W)$.

    In the other case, $V'=\{0\}$ or $W'=\{0\}$. We assume without loss of generality that $V'=\{0\}$. 
    Then, $S\cap A$ and $S$ have the property that adding vectors from $V$ is still preserving the sets. Thus, $S\cap A$ is given by $V+(B\cap A)$, where $B\cap A$ is bounded as a subset of $B$. So in both cases, $A\cap S$ is the sum of a bounded set and a vector space. 
\end{proofof}
\begin{lemma} \label{LemmaIntervallFromBox}
    Every open interval $(a,b)$ is pp-definable over $(\IR;+,\cdot \lambda \mid \lambda \in \IR, \lambda \mid \lambda \in \IR, \rect)$. Here, $\rect\subset \IR^2$ denotes  the set $\{(x,y)\in \IR^2\mid (x\in (-1,1)\land y \in (0,1)) \lor (x,y)=(0,0)\}$.
\end{lemma}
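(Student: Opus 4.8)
The plan is to first extract one concrete open interval directly from $\rect$ by a projection, and then obtain an arbitrary open interval $(a,b)$ from it by an affine change of coordinates, in exactly the style of Lemma \ref{LemmaCompactIntervallDefinable}.

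The key observation is that the projection of $\rect$ onto its first coordinate is precisely the open interval $(-1,1)$. Concretely, I would use the pp-formula
$$
    \left\{ x \in \IR \mitte \exists y \in \IR : (x,y) \in \rect \right\}.
$$
If $x \in (-1,1)$, then any $y \in (0,1)$ is a witness, since $(x,y)$ lies in the rectangular part $(-1,1)\times(0,1)$ of $\rect$. Conversely, if $x \notin (-1,1)$, then $(x,y)\notin \rect$ for every $y$: the only point of $\rect$ whose first coordinate could fail to lie in $(-1,1)$ is the isolated point $(0,0)$, whose first coordinate is $0$. Hence the formula defines exactly $(-1,1)$. Observe that the isolated point $(0,0)$ contributes nothing here, as $0$ already lies in the projection of the rectangular part.

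Having defined $(-1,1)$, for $a<b$ I would define $(a,b)$ by the affine reparametrisation that sends $(-1,1)$ to $(a,b)$, namely
$$
    \left\{ x \in \IR \mitte \frac{2}{b-a}\left(x - \frac{a+b}{2}\right) \in (-1,1) \right\},
$$
which is a pp-definition using the constant $-\frac{a+b}{2}$ and the scalar $\frac{2}{b-a}$; compare Lemma \ref{LemmaAffinemap}. For $a\ge b$ the interval $(a,b)$ is empty and hence pp-definable by a contradiction, exactly as in Lemma \ref{LemmaCompactIntervallDefinable}. I do not expect a genuine obstacle in this argument: the single point to verify is that the first-coordinate projection of $\rect$ introduces no spurious points outside $(-1,1)$ and omits none inside, and this is immediate from the explicit description of $\rect$.
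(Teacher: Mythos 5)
Your proof is correct and takes essentially the same approach as the paper: the paper defines $(a,b)$ in one quantifier-free step by slicing $\rect$ at first coordinate $0.5$, namely $\left\{ x \in \IR \mitte (0.5,\tfrac{1}{b-a}x+\tfrac{-a}{b-a})\in \rect \right\}$, thereby using the $(0,1)$-coordinate and avoiding the isolated point $(0,0)$ automatically, whereas you project onto the $(-1,1)$-coordinate (correctly noting that $(0,0)$ contributes nothing) and then rescale. Both are routine affine manipulations of $\rect$, and your handling of the degenerate case $a\ge b$ matches the paper's.
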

\begin{proof}
    If $a\ge b$, then a contradiction suffices. Otherwise, $b>a$.
    Now,
    $$
        \left\{ x \in \IR \mitte (0.5,\frac{1}{b-a} x + \frac{-a}{b-a})\in \rect \right\}
    $$
    works.
\end{proof}
\begin{thm} \label{TheoremBoundedDefining}
    Let $S\subset \IR^n$ be the sum of a convex bounded space and a linear subspace.
    Then, $S$ is infinitary pp-definable over $(\IR;+,\cdot \lambda \mid \lambda \in \IR, \lambda \mid \lambda \in \IR, \rect)$. Here, $\rect\subset \IR^2$ denotes  the set $\{(x,y)\in \IR^2\mid (x\in (-1,1)\land y \in (0,1)) \lor (x,y)=(0,0)\}$.
\end{thm}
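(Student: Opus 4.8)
The plan is to strip the statement down to the one genuinely new case and then imitate the ray construction of Lemma \ref{LemmaPunktKonvexDefiniertTrennen}, with the \emph{bounded} pointed rectangle $\rect$ playing the role that the pointed half-plane of Corollary \ref{CorollaryPointedHalvePlaneDefinition} played there. First I would record what is already free: by Lemma \ref{LemmaIntervallFromBox} the open interval $(0,1)$ is pp-definable from $\rect$, so everything defined from $(0,1)$ is definable from $\rect$. In particular Theorem \ref{TheoremDefinitionFromOpenIntervall} gives all sums of a bounded convex set and a vector space with no \dent{}, and, since $[0,1]$ is definable from $(0,1)$, Theorem \ref{TheoremCompactVectDefinable} gives all sums of a compact convex set and a vector space. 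Hence the only outstanding case is $S=B+V$ that has a \dent{}. Writing $V$ for the inner vector space (pp-definable by Theorem \ref{TheoremAffineFormNothing}) and taking $B$ to be the slice of $S$ orthogonal to $V$, exactly as in the proof of Theorem \ref{TheoremDefinitionFromOpenIntervall}, it suffices to define the bounded convex set $B$; the sum with $V$ is then added by a single existential formula.

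For $B$ I would use the per-point separation scheme $B=\bigcap_{a\notin B}B_a$, where each $B_a$ is a definable superset of $B$ avoiding $a$, and split the excluded points into two kinds. If there is a hyperplane through $a$ disjoint from $B$ — which by Theorem \ref{Theorem Hahn-Banach2} covers every $a\notin\closure{B}$ and, as I claim, every boundary point that is not a \dent{} — then $B_a$ is produced from the open interval by Lemma \ref{LemmaSAForOpenIntervall} (or directly by Lemma \ref{LemmaBoundedForOpenIntervall}), and this is available from $\rect$. The dichotomy I need is that a point $a\in\closure{B}\setminus B$ admitting no disjoint separating hyperplane must be a \dent{}; I would prove this by descent on the dimension of exposed faces: take a supporting hyperplane $H$ at $a$ (Theorem \ref{Theorem Hahn-Banach}); if $H\cap B=\emptyset$ we are already separable, and otherwise any $b\in H\cap B$, together with the fact that the ray from $b$ through $a$ leaves $\closure{B}$ at $a$, forces $a$ onto the relative boundary of the face $\closure{B}\cap H$, where the same analysis recurses in lower dimension until either a disjoint supporting hyperplane appears or the face shrinks to $\{a\}$.

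The heart of the proof is the \dent{} case, and this is exactly where $\rect$ rather than $(0,1)$ is indispensable. At a \dent{} $a$ I would pick a supporting hyperplane $H=\{g_2=0\}$ with $g_2\ge 0$ on $B$; the trace $B\cap H$ is a bounded convex set of strictly smaller dimension (Lemma \ref{LemmaIntersectionStaysBounded}), hence definable by the induction hypothesis. The set $B_a$ is obtained by \emph{reattaching} this trace to the open half-space $\{g_2>0\}$: the defining formula mirrors the lifting step of Lemma \ref{LemmaPunktKonvexDefiniertTrennen}, with the pointed half-plane relation replaced by an affine image of $\rect$, which encodes precisely the alternative ``inner side, or the prescribed boundary trace''. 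Because $\rect$ is bounded, this construction never escapes the bounded world, so each $B_a$ lands in class~4 and, unlike the pointed half-plane, never produces a ray. Iterating over all supporting hyperplanes and invoking Lemma \ref{LemmaNoDentAlternative} — which guarantees that deleting, face by face, the points of $\closure{B}\cap H$ that are not limits of $B\cap H$ deletes exactly the \dent{}s — gives $B=\bigcap_{a}B_a$.

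The step I expect to be the main obstacle is this reattachment: producing an honest infinitary pp-formula that, from $\rect$ and a definition of the lower-dimensional trace $B\cap H$, defines the union of the open half-space with the prescribed boundary set while keeping all of $B$ inside and $a$ outside. The delicate bookkeeping is choosing the affine scaling so that the relevant part of $B$ lands inside the $(-1,1)\times(0,1)$ window of $\rect$ (using Lemma \ref{LemmaAffinemap}), and verifying that the single included boundary point of $\rect$ propagates, through the induction, to precisely the trace $B\cap H$ rather than to a single point or the whole face. By comparison, the separable case and the dimension descent for the dichotomy are routine once Lemmas \ref{LemmaSAForOpenIntervall}, \ref{LemmaBoundedForOpenIntervall} and \ref{LemmaNoDentAlternative} are in hand.
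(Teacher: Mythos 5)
Your high-level architecture (per-point intersection $\bigcap_{a\notin B}B_a$, reattaching a lower-dimensional trace through an affine image of $\rect$) is the right one, but the dichotomy your case split rests on --- ``a point $a\in\closure{B}\setminus B$ with no disjoint hyperplane through it must be a \dent{}'' --- is false, and the paper itself contains the counterexample. Consider $E=\bigl(([-1,0]\times[0,1])\setminus\{(0,1)\}\bigr)\cup\{(x,y)\mid x\ge 0,\ y\ge 0,\ x^2+y^2<1\}$, a closed square glued to an open quarter-disc (this is the set pictured in Section \ref{SectionExamples}). The point $a=(0,1)$ lies in $\closure{E}\setminus E$ and is not a \dent{}: any segment of $\closure{E}$ through $a$ must lie in the top edge $[-1,0]\times\{1\}$, of which $a$ is an endpoint. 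Yet every line through $a$ meets $E$: the line $y=1$ meets the half-open top edge, the vertical line meets $\{0\}\times[0,1)$, lines of positive slope enter the square, and lines of negative slope enter the quarter-disc. Deleting the open segment $\{(x,0)\mid 0<x<1\}$ keeps the set convex and creates a \dent{} at $(1/2,0)$, so one obtains a bounded convex set squarely in your remaining case in which the excluded point $(0,1)$ is covered by neither of your two kinds of $B_a$; your intersection is then never shown to exclude it. The descent you sketch cannot repair this, because it conflates separation \emph{inside the face} with separation in the ambient space: in the example the recursion does succeed within the supporting line $H=\{y=1\}$ (inside $H$, the point $a$ is strictly separated from the trace $[-1,0)\times\{1\}$), but a hyperplane of $H$ disjoint from $B\cap H$ does not lift to a hyperplane of $\IR^n$ disjoint from $B$, since $B$ accumulates at $a$ from off $H$.

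The repair is exactly the paper's proof, and it makes both the dichotomy and your preliminary reductions unnecessary: induct on the ambient dimension for the full statement (every sum of a bounded convex set and a vector space), and for each $a\notin S$ distinguish only whether the supporting hyperplane from Theorem \ref{Theorem Hahn-Banach} misses $S$ or touches it. If it misses $S$, take the open slab $\{x\mid \langle x-a,v_a\rangle\in(0,R_a)\}$ via Lemma \ref{LemmaIntervallFromBox}. If it touches $S$, perform your reattachment for that $a$, \dent{} or not: the trace $S_a^{n-1}=\{x\in\IR^{n-1}\mid \exists s\in S:\ O_a(s-a)=(x_1,\dots,x_{n-1},0)^T\}$ is again the sum of a bounded set and a vector space (Lemma \ref{LemmaIntersectionStaysBounded} plus affine invariance), hence definable by the induction hypothesis, and the formula $\exists y,b,c,z:\ O_a(x-a)=y\ \land\ z\in S_a^{n-1}\ \land\ \bigwedge_{i=1}^{n-1}\rect(c_i,b)\ \land\ \bigwedge_{i=1}^{n-1} z_i=y_i+2R_ac_i\ \land\ y_n=R_ab$ excludes $a$ automatically: $x=a$ forces $y=0$, hence $b=0$, hence all $c_i=0$, hence $0\in S_a^{n-1}$, contradicting $a\notin S$. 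With the reattachment applied to every touching point, no \dent{}/non-\dent{} distinction and no appeal to Theorem \ref{TheoremDefinitionFromOpenIntervall} or Lemma \ref{LemmaNoDentAlternative} is needed; your reattachment formula and scaling concerns are essentially the paper's second case, and the only error is restricting that case to \dent{}s.
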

\begin{proof}
    We show the lemma by induction on $n$.
    
    If $n=0$, then $\IR^n$ is a single point. Both this point and not this point are definable.
    
    For $n>0$, choose a bounded set $B$ and a vector space $V$ such that $B+V=S$.
    Let $a\in \IR^n\setminus S$. 
    By Theorem \ref{Theorem Hahn-Banach}, there is $v_a$ of norm 1 such that $\langle s-a, v_a\rangle\ge 0$ for all $s\in S$. Recall that $v_a$ is orthogonal to all vectors in $V$. Since $B$ is bounded, there is $R_a>0$ such that $B \subset \ball_{R_a}(a)$.
    Distinguish two cases:
    
    If there is no element $s$ in $S$ with $\langle s-a, v_a\rangle=0$, define $S_a$ as
    $$
        \{x \in \IR^n \mid \langle x-a, v_a\rangle \in (0,R_a)\}
    $$
    using Lemma \ref{LemmaIntervallFromBox}. This is the first case.
    
    If there is an element $s$ in $S$ with $\langle s-a, v_a\rangle=0$, choose an orthogonal matrix $O_a$ such that $O_av_a$ is the $n$-th basis vector $e_n$. Now, the set 
    $$
        S_a^{n-1}\coloneqq \{x \in \IR^{n-1} \mid \exists s \in S : O_a(s-a)=(x_1,\dots ,x_{n-1},0)^T\}
    $$
    %\todo{Konvention zum Transponieren}
    is again the sum of a bounded space and a vector space since this is not changed by the invertible affine maps given by the multiplication with $O_a$ and the translation by $-a$. It is also not changed by the intersection by Lemma \ref{LemmaIntersectionStaysBounded}. Therefore, $S_a^{n-1}$ is infinitary pp-definable by the induction hypothesis. Define $S_a$ as
    \begin{align*}
        \{x \in \IR^n \mid \exists y \in \IR^n; b\in \IR; c,z \in \IR^{n-1} &: 
        O_a(x-a)=y \land z \in S_a^{n-1} 
        \\&\land \bigwedge_{i=1}^{n-1} \rect(c_i,b) 
        \land \bigwedge_{i=1}^{n-1} z_i=y_i+2R_ac_i 
        \\&\land y_n=R_a b\}
    \end{align*}
    in this case. We explain this shortly. This is the second case.
    
    Finally, define $S'$ as
    $$
        \left\{ x \in \IR^n \mitte \bigwedge_{a \in \IR^n \setminus S} S_a(x).\right\}
    $$
    
    Now, look at any $a\in \IR^n\setminus S$. This is not in $S_a$ which is clear in the first case. In the second case, we would need $y=0$ and especially $y_n=0$ by the first line. This implies $b=0$ by the last line. But then, also all $c_i$ become zero and $z_i=y_i=0$ by the second line. So the zero vector should be in $S_a^{n-1}$ which is false as $a\notin S$. 
    Therefore, $S'=\bigcap_{a\in \IR^n\setminus S} S_a\subset S$.
    
    On the other hand take any $s\in S$ and $a \in \IR^n\setminus S$. The inequality $\langle x-a, v_a \rangle\ge 0$ is clear. The inequality $\langle x-a, v_a \rangle < R_a$ follows because we can write $x=b'+v$ with $b'\in B$, $v\in V$ and get 
    $$
        \langle x-a, v_a \rangle=\langle b'-a, v_a \rangle+\langle v, v_a \rangle \le \|b'-a\|\cdot \|v_a\|+0=\|b'-a\|< R_a
    $$
    with the Cauchy-Schwartz inequality and Lemma \ref{LemmaInnerVectorOrthogonal}. So in the first case of defining $S_a$, $s\in S_a$ is clear.
    
    For the second case, we need to distinguish again between $\langle x-a, v_a \rangle=0$ and $\langle x-a, v_a \rangle>0$.
    In the equal-zero-case, we can choose $x=s$ and $y=O_a(x-a)$. This implies $y_n=0$. 
    Choose furthermore $b=0,c=0$ and $z$ as the restriction of $y$. Then, $z\in S_a^{n-1}$, so every condition is satisfied.
    In the positive-case, let $d$ be a point in $B$ such that $\langle d-a, v_a \rangle=0$. 
    This point exists, because there is such a point in $S$ with $\langle d-a, v_a \rangle=0$ and subtracting a vector in $V$ does not change the scalar product by Lemma \ref{LemmaInnerVectorOrthogonal}. Choose $z$ as $O_a(d-a)$ restricted to the first $n-1$ entries and $y$ as $O_a(x-a)$. Now, $y_n$ is positive and not larger than $|y_n|\le \|y\|<R_a$ as $O_a$ is an isometry. Choose $b\coloneqq y_n/R_a$ and $c_i$ as needed. Note that the $c_i$ are freely enough, since $b>0$ and $|y_i-z_i|\le\|y\|+\|z\|<2R_a$. Therefore also in this case, we get $s\in S_a$.
    
    Finally, we have $S'=\bigcap S_a = S$, so the above is an infinitary pp-definition of $S$.
\end{proof}

\subsection{Definitions from the pointed stripe}
The pointed stripe can primitively positively define all convex sets without a ray intersection. This will be shown in this section.

\begin{thm} \label{TheoremDefiningFromStripe}
    Let $S\subset \IR^n$ be convex such that for each affine set $A$, the closure of the intersection of $A\cap S$ is the sum of a vector space and a compact set.
    Then, $S$ is infinitary pp-definable over $(\IR;+,\cdot \lambda \mid \lambda \in \IR, \lambda \mid \lambda \in \IR, \stripe)$. Here, $\stripe$ denotes  the set $\{(x,y)\in \IR^2\mid y \in (0,1) \lor (x,y)=(0,0)\}$.
\end{thm}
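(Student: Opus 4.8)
The plan is to follow the pattern of the earlier reverse-definability proofs: for each point $a\in\IR^n\setminus S$ I would construct an infinitary pp-definable set $S_a$ with $S\subseteq S_a$ and $a\notin S_a$, and then define $S$ as the intersection $\left\{x\in\IR^n\mitte\bigwedge_{a\notin S}x\in S_a\right\}$. Two preliminaries set this up. First, the open interval is definable from the stripe, since $\{y\in\IR\mid(\tfrac12,y)\in\stripe\}=(0,1)$, and hence every open interval is definable by an affine change of coordinates. Second, applying the hypothesis to $A=\IR^n$ gives $\closure{S}=C+V$ with $C$ compact and $V$ a vector space, and by Lemma \ref{LemmaAltDescriptionInnerVspace} this $V$ is the inner vector space of $S$; fix a bounded $B$ with $S=B+V$.

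I would prove the existence of $S_a$ by induction on $n$, the case $n=0$ being trivial. For the inductive step, apply the Hahn--Banach theorem \ref{Theorem Hahn-Banach} to obtain a unit vector $v_a$ with $\langle s-a,v_a\rangle\ge 0$ for all $s\in S$. The decisive point is that $v_a$ is orthogonal to $V$ by Lemma \ref{LemmaInnerVectorOrthogonal}, so writing $s=b'+w$ with $b'\in B,\ w\in V$ gives $\langle s-a,v_a\rangle=\langle b'-a,v_a\rangle$; choosing $R_a$ with $B\subseteq\ball_{R_a}(a)$, this value lies in $[0,R_a)$. If no $s\in S$ satisfies $\langle s-a,v_a\rangle=0$, then $S\subseteq\{x\mid\langle x-a,v_a\rangle\in(0,R_a)\}=:S_a$, which is definable via the scalar product (Lemma \ref{LemmaSkalarProductDefinable}) and the interval $(0,R_a)$, and which excludes $a$.

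The interesting case is when some $s\in S$ lies on the separating hyperplane $H=\{h\mid\langle h-a,v_a\rangle=0\}$. Choose an orthogonal $O_a$ with $O_av_a=e_n$ and, after the affine change $x\mapsto O_a(x-a)$, identify $H$ with $\IR^{n-1}\times\{0\}$ and set $S_a^{n-1}=\{x\in\IR^{n-1}\mid\exists s\in S:O_a(s-a)=(x_1,\dots,x_{n-1},0)^T\}$. Since $S\cap H$ inherits the theorem's hypothesis (for affine $A'\subseteq H$ one has $(S\cap H)\cap A'=S\cap A'$) and $S_a^{n-1}$ is an affine-isomorphic copy of it, the set $S_a^{n-1}$ is a convex set satisfying the hypothesis, hence definable by the induction hypothesis; moreover $a\notin S$ forces $0\notin S_a^{n-1}$. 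I then lift $S_a^{n-1}$ into the half-space using $\stripe$ rather than $\rect$, defining $S_a$ as
$$\{x\in\IR^n\mid\exists y\in\IR^n,\,b\in\IR,\,c,z\in\IR^{n-1}:O_a(x-a)=y\land z\in S_a^{n-1}\land\textstyle\bigwedge_{i=1}^{n-1}\stripe(c_i,b)\land\bigwedge_{i=1}^{n-1}z_i=y_i+c_i\land y_n=R_ab\}.$$
Here $\stripe(c_i,b)$ pins $c_i=0$ when $b=0$, forcing boundary points ($y_n=0$) to lie over $S_a^{n-1}$, while for $b\in(0,1)$ it lets each $c_i$ range over all of $\IR$, so that $z_i=y_i+c_i$ can reach any point of the (nonempty) set $S_a^{n-1}$ and the entire open slab $y_n\in(0,R_a)$ is filled.

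The main obstacle, and the reason the stripe rather than the bounded pointed rectangle of Theorem \ref{TheoremBoundedDefining} is needed, is precisely this lifting step: because $S$ is no longer bounded, the slices $S\cap H$ and hence $S_a^{n-1}$ may be unbounded (they contain the directions of $V$), so a bounded shift as in the $\rect$-construction could not cover them. The unbounded free coordinate of $\stripe$ above $b>0$, together with its collapse to $\{0\}$ at $b=0$, supplies exactly the required behaviour. It then remains to verify, as in the earlier proofs, that each $S_a$ contains $S$ and omits $a$ (using $\langle s-a,v_a\rangle\in[0,R_a)$ and $0\notin S_a^{n-1}$), whence $S=\bigcap_{a\notin S}S_a$ is the desired infinitary pp-definition.
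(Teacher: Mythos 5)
Your proposal is essentially the paper's own proof: the same induction on $n$, the same Hahn--Banach separation into the two cases, the identical lifting formula using $\stripe(c_i,b)$, $z_i=y_i+c_i$, $y_n=R_ab$, and the final intersection $\bigcap_{a\notin S}S_a$. One assertion needs repair, though: you cannot in general ``fix a bounded $B$ with $S=B+V$'' --- the pointed stripe itself satisfies the theorem's hypothesis but is not the sum of a bounded set and a vector space, since it is not invariant under translation along its inner vector space $\IR\times\{0\}$ (we have $(0,0)\in\stripe$ but $(1,0)\notin\stripe$); indeed, the sets of this theorem that \emph{are} such sums are exactly those already handled by Theorem \ref{TheoremBoundedDefining}. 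The paper instead decomposes the closure, $\closure{S}=B+V$ with $B$ bounded, which is all your argument actually uses: every $s\in S\subset\closure{S}$ still writes as $b'+w$ with $b'\in B$, $w\in V$, giving $\langle s-a,v_a\rangle=\langle b'-a,v_a\rangle\in[0,R_a)$. With that one-line correction your proof goes through exactly as in the paper.
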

\begin{proof}
    We show the theorem by induction on $n$.
    
    If $n=0$, then $\IR^n$ is a single point. Both this point and not this point are definable.
    
    For $n>0$, let $a\in \IR^n\setminus S$. We may assume that $S$ is nonempty.
    By Theorem \ref{Theorem Hahn-Banach}, there is $v_a$ of norm 1 such that $\langle s-a, v_a\rangle\ge 0$ for all $s\in S$.
    When we look at the closure of $S$, this will be the sum of a bounded set $B$ and a vector space $V$. By Lemma \ref{LemmaAltDescriptionInnerVspace}, $V$ is the inner vector space and by and Lemma \ref{LemmaInnerVectorOrthogonal}, it only contains vectors which are orthogonal to $v_a$. Therefore, the set 
    $$
        \{x \in \IR \mid \exists s \in S, \langle s,v_a \rangle =x\}=\{x \in \IR \mid \exists b \in B, \langle b,v_a \rangle =x\}
    $$
    is bounded. Therefore there is $R_a>0$ such that this set is contained in $[0,R_a)$.
    Distinguish two cases:
    
    If there is no element $s$ in $S$ with $\langle s-a, v_a\rangle=0$, define $S_a$ as
    $$
        \left\{ x \in \IR^n \mitte \stripe (1,\frac{1}{R}\langle x-a, v_a\rangle)\right\}.
    $$
    
    If there is an element $s$ in $S$ with $\langle s-a, v_a\rangle=0$, choose an orthogonal matrix $O_a$ such that $O_av_a$ is the $n$-th basis vector $e_n$. Now, the set 
    $$
        S_a^{n-1}\coloneqq \{x \in \IR^{n-1} \mid \exists s \in S : O_a(s-a)=(x_1,\dots ,x_{n-1},0)^T\} \subset \IR^{n-1}
    $$
    is up to an affine map the intersection of $S$ and an affine set. Therefore, we can apply the induction hypothesis on this set.
    So $S_a^{n-1}$ is infinitary pp-definable. 
    Since $O_a$ preserves lengths, the transformed set $\{O_a(s-a)\mid s\in S\}$ is bounded in  the $n$-th component by $R_a$.
    
    Define $S_a$ as
    \begin{align*}
        \{x \in \IR^n \mid \exists y \in \IR^n, b\in \IR,c,z \in \IR^{n-1} &: 
        O_a(x-a)=y \land z \in S_a^{n-1} 
        \\&\land \bigwedge_{i=1}^{n-1} \stripe(c_i,b) 
        \land \bigwedge_{i=1}^{n-1} z_i=y_i+c_i 
        \\&\land y_n=R_a b\}
    \end{align*}
    in this case.
    
    Finally, define $S'$ as
    $$
        \left\{ x \in \IR^n \mitte \bigwedge_{a \in \IR^n \setminus S} S_a(x)\right\}.
    $$
    
    Now, look at any $a\in \IR^n\setminus S$. This is not in $S_a$ which is clear in the first case and in the second case, we would need $y=0$ and especially $y_n=0$. This implies $b=0$. But then, also all $c_i$ become zero and $z_i=y_i=0$. So the zero vector should be in $S_a^{n-1}$ which is false as $a\notin S$. Therefore, $S'=\bigcap S_a\subset S$.
    
    On the other side take any $s\in S$ and $a \in \IR^n\setminus S$. 
    The inequality $\langle x-a, v_a \rangle\ge 0$ is clear. The inequality $\langle x-a, v_a \rangle < R_a$ follows as discussed before. So in the first case of defining $S_a$, $s\in S_a$ is clear.
    
    In the second case, we need to distinguish again if $\langle x-a, v_a \rangle=0$ or if it is positive.
    In the equal-zero-case, we can choose $x=s$ and $y=O_a(x-a)$. This implies $y_n=0$. Choose furthermore $b=0,c=0$ and $z$ as the restriction of $y$. Then, $z\in S_a^{n-1}$, so every condition is satisfied.
    In the positive-case, let $d$ be a point in $S$ such that $\langle d-a, v_a \rangle=0$. This point exists by assumption. Choose $z$ as $O_a(d-a)$ restricted to the first $n-1$ entries and $y$ as $O_a(x-a)$. Now, $y_n$ is positive and bounded by $|y_n|=\langle d-a, v_a \rangle<R_a$. Choose $b\coloneqq y_n/R_a$ and $c_i=z_i-y_i$ as needed. Note that the $c_i$ are free, since $b>0$ and $b<1$. Therefore also in this case, we get $s\in S_a$.
    
    As a conclusion we have $S'=\bigcap S_a = S$, so the above is an infinitary pp-definition of $S$.
\end{proof}

\section{Non-definability}
\label{SectionNonImplications}
This section focuses on proving that it is sometimes impossible to infinitarily pp-define a given convex relation from other given convex relations. Some of these result would also follow from the results in Section \ref{SubsectionApplicationPolymorphisms}. However, we want to show the simpler proofs first.

\begin{lemma} \label{LemmaConvexPolyStable}
    Let $\tau$ be a family of convex sets. Let $S\subset \IR^n$ be non-convex. Then, $S$ is not polymorphism invariant under $(\IR;\tau)$ and thus cannot be infinitary pp-defined over $(\IR;\tau)$.
\end{lemma}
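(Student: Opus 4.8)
The plan is to produce an explicit finite-arity polymorphism of $(\IR;\tau)$ that does not preserve $S$; the non-definability then follows at once from Lemma \ref{LemmaDefinablePolyStable}, which states that every infinitary pp-definable set is polymorphism invariant (under finite arity polymorphisms in particular).

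First I would unpack non-convexity. Since $S$ is not convex, the definition of convexity fails for $S$, so there exist points $x,y\in S$ and a scalar $\lambda\in[0,1]$ (indeed $\lambda\in(0,1)$, as the endpoints lie in $S$) such that the convex combination $z\coloneqq \lambda x+(1-\lambda)y$ is not in $S$. I would fix such a witnessing triple $(x,y,\lambda)$.

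Next I would introduce the binary map $p\colon \IR^2\to\IR$, $(a,b)\mapsto \lambda a+(1-\lambda)b$, and verify that it is a polymorphism of $(\IR;\tau)$. Because $\tau$ consists only of relation symbols, each interpreted as a convex set, and contains no function symbols, the only condition to check is relation preservation: for every $R\in\tau$ of arity $m$ and all tuples $u,v\in R\subset\IR^m$, the componentwise image $p(u,v)=\lambda u+(1-\lambda)v$ must again lie in $R$. This is exactly the defining property of convexity of $R$, valid precisely because $\lambda\in[0,1]$. Hence $p$ is a finite arity (binary) polymorphism of $(\IR;\tau)$.

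Finally, applying $p$ componentwise to the two tuples $x,y\in S$ yields precisely $z=\lambda x+(1-\lambda)y\notin S$, so $S$ is not invariant under the polymorphism $p$, i.e.\ $S$ is not polymorphism invariant over $(\IR;\tau)$; the non-definability statement then follows from the second part of Lemma \ref{LemmaDefinablePolyStable}. The only delicate point — and it is mild — is matching the coefficient of the polymorphism to the coefficient of the witnessing convex combination. This causes no trouble, since every coefficient in $[0,1]$ gives an admissible binary polymorphism, so any witness to the failure of convexity is immediately usable. The main structural observation underlying the whole argument is simply that convex combinations with coefficients in $[0,1]$ are exactly the operations preserving all convex relations, which is what makes a single such operation suffice to detect non-convexity.
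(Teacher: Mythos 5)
Your proof is correct and follows essentially the same route as the paper: both arguments observe that the binary maps $(a,b)\mapsto \lambda a+(1-\lambda)b$ with $\lambda\in[0,1]$ are polymorphisms of any purely relational structure of convex sets, and that a witnessing triple for the failure of convexity of $S$ yields a $\lambda$ for which this polymorphism violates invariance of $S$, with non-definability then following from Lemma~\ref{LemmaDefinablePolyStable}. Your write-up is in fact slightly more explicit than the paper's (fixing the witness and noting the absence of function symbols), but the substance is identical.
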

\begin{proof}
    A set $S$ is convex, if and only if for all $\lambda \in [0,1]$, the map 
    \begin{align*}
        P_\lambda \colon \IR \times \IR &\to \IR \\
        (x,y)&\mapsto\lambda x + (1-\lambda)y
    \end{align*}
    preserves $S$ respectively is a polymorphism of $(\IR;S)$. So if $S$ is non-convex, there is $\lambda$ in $[0,1]$ such that $P_\lambda$ is no polymorphism of $S$ but of all convex sets in $\tau$. Thus, $S$ cannot be infinitary pp-definable by Lemma \ref{LemmaDefinablePolyStable}.
\end{proof}
\begin{lemma} \label{LemmaAffinePolyStable}
    Let $\tau$ be a family of affine sets. Let $S\subset \IR^n$ be non-affine. Then, $S$ is not polymorphism invariant under $(\IR;\tau)$ and thus cannot be infinitary pp-defined over $(\IR;\tau)$.
\end{lemma}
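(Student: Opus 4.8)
The plan is to mirror the proof of Lemma~\ref{LemmaConvexPolyStable}, replacing the convexity criterion used there by its affine analogue. The key elementary fact I would first isolate is that a nonempty set $T\subset \IR^n$ is an affine subspace if and only if it is preserved by every binary affine combination map $P_\lambda\colon \IR\times \IR\to \IR$, $(x,y)\mapsto \lambda x+(1-\lambda)y$ (acting componentwise on tuples), where now $\lambda$ ranges over all of $\IR$ rather than only over $[0,1]$ as in the convex case. This enlargement of the scalar range is precisely what distinguishes the affine from the convex situation.

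Granting this criterion, the argument is immediate. Each $P_\lambda$ preserves every affine set by the ``only if'' direction of the criterion, so $P_\lambda$ is a binary, hence finite arity, polymorphism of $(\IR;\tau)$ for every $\lambda\in\IR$, since all relations in $\tau$ are affine. If $S$ is non-affine, then by the ``if'' direction there exist $\lambda\in\IR$ and $x,y\in S$ with $\lambda x+(1-\lambda)y\notin S$; thus $P_\lambda$ is a polymorphism of $(\IR;\tau)$ that does not preserve $S$, witnessing that $S$ is not polymorphism invariant. By Lemma~\ref{LemmaDefinablePolyStable}, $S$ is therefore not infinitary pp-definable over $(\IR;\tau)$.

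The only step requiring genuine work is the nontrivial (``if'') direction of the criterion, which is where I expect the main obstacle to lie, although it remains routine. The forward direction is just the definition of an affine subspace. For the converse, assuming $T$ is nonempty and closed under all $P_\lambda$, I would fix $s_0\in T$ and show that $W\coloneqq\{t-s_0\mid t\in T\}$ is a linear subspace, whence $T=s_0+W$ is affine. Closure of $W$ under scalars follows from $c(t-s_0)=P_c(t,s_0)-s_0\in W$, and closure under addition follows by composing two such maps: writing $s_1+s_2-s_0=P_2\bigl(P_{1/2}(s_1,s_2),\,s_0\bigr)\in T$ gives $(s_1-s_0)+(s_2-s_0)=(s_1+s_2-s_0)-s_0\in W$. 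This is the exact analogue of the convexity characterisation used in Lemma~\ref{LemmaConvexPolyStable}, with the unrestricted scalar range $\IR$ supplying precisely the extra closure needed to upgrade ``convex'' to ``affine''.
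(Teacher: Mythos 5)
Your proposal is correct and follows essentially the same route as the paper: the paper's proof also characterises affine sets as exactly those preserved by the binary maps $P_\lambda(x,y)=\lambda x+(1-\lambda)y$ with $\lambda$ ranging over all of $\IR$, and then picks a violating $P_\lambda$ for the non-affine $S$ before invoking Lemma~\ref{LemmaDefinablePolyStable}. The only difference is that you spell out the verification of the nontrivial direction of this characterisation (via $W=\{t-s_0\mid t\in T\}$), which the paper leaves implicit; your verification is correct.
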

\begin{proof}
    A set $S$ is affine, if and only if for all $\lambda \in \IR$, the map 
    \begin{align*}
        P_\lambda \colon \IR \times \IR &\to \IR \\
        (x,y)&\mapsto\lambda x + (1-\lambda)y
    \end{align*}
    preserves $S$ respectively is a polymorphism of $(\IR;S)$. So if $S$ is non-affine, then there is $\lambda$ in $\IR$ such that $P_\lambda$ is no polymorphism of $S$ but for all affine sets in $\tau$. Thus, $S$ cannot be infinitary pp-definable by Lemma \ref{LemmaDefinablePolyStable}.
\end{proof}
\begin{lemma} \label{LemmaIPPGeometrisch}
    Let $\tau$ be a family of subsets of $\IR^n$ for multiple $n$ such that the following holds:
    \begin{itemize}
        \item It contains the empty set and $\IR^1$.
        \item For each $S\subset \IR^n, S\in \tau$ and each affine map $f\colon \IR^n \to \IR^m$, the set $f(S)$ is in $\tau$.
        \item For each $T\subset \IR^m, T\in \tau$ and each affine map $f\colon \IR^n \to \IR^m$, the set $f^{-1}(T)$ is in $\tau$.
        \item For each $n$ and each subset $I \subset \tau\cap {\powerset(\IR^n)}$, the intersection $\bigcap_{S\in I} S\subset \IR^n$ is in $\tau$.
    \end{itemize}
    Then, $\tau$ is closed with respect to infinitary pp-definitions.
\end{lemma}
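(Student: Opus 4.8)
The plan is to prove the statement by structural induction on the infinitary pp-formulas over the structure $(\IR; +, \cdot c \mid c \in \IR, c \mid c \in \IR, S \mid S \in \tau)$, showing that any such formula $\Phi$ whose free variables are indexed by $\{x_1, \dots, x_n\}$ defines a subset of $\IR^n$ that lies in $\tau$. The guiding principle is that the five ways of building an infinitary pp-formula in Definition \ref{DefinitionIPPDefinable} correspond to the four geometric closure conditions of the hypothesis, so the induction reduces to checking that each formula constructor preserves membership in $\tau$.

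First I would record the auxiliary sets produced by the closure conditions. Since $\IR^1 \in \tau$ and any projection $\IR^n \to \IR^1$ is affine, closure under preimages yields $\IR^n \in \tau$ for every $n$; the same argument applied to the empty set gives $\emptyset \subseteq \IR^n$ in $\tau$. Moreover, the constant map $\IR^1 \to \IR^1,\ x \mapsto 0$ is affine, so its image $\{0\}$ lies in $\tau$. These settle the base cases: \emph{true} with $n$ free variables defines $\IR^n$ and \emph{false} defines $\emptyset \subseteq \IR^n$, while an atomic formula is either an equality $t_1 = t_2$ or a relation $S(t_1, \dots, t_k)$ with $S \in \tau$, where the $t_i$ are affine terms in $x_1, \dots, x_n$ because the language offers only scalar multiplication. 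In the first case the defined set is $(t_1 - t_2)^{-1}(\{0\})$, a preimage of $\{0\} \in \tau$ under an affine map; in the second it is $g^{-1}(S)$ for the affine map $g = (t_1, \dots, t_k) \colon \IR^n \to \IR^k$. Both lie in $\tau$ by closure under preimages, and this second case is precisely where the relations of $\tau$ enter the closure.

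For the inductive steps I would match each remaining constructor to a closure property. Substituting variables (or affine terms) into a formula defining $S \in \tau$ produces $g^{-1}(S)$ for the affine map $g$ recording the substitution, so closure under preimages applies. The existential quantifier $\exists x \colon \phi$ turns the set $S \subseteq \IR^{n+1}$ defined by $\phi$ into its image under the coordinate projection that forgets $x$, so closure under affine images applies. A finite or infinite conjunction $\bigwedge_{i \in I} \phi_i$ with free variables among $x_1, \dots, x_n$ is handled by first lifting each conjunct $\phi_i$---which by induction defines a set $S_i \in \tau$ in its own coordinates---to $\IR^n$ via the preimage under the projection onto the coordinates of $\phi_i$, keeping it in $\tau$, and then observing that the conjunction defines the intersection of these lifts inside the single space $\IR^n$, which is in $\tau$ by closure under intersections.

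The main point requiring care is this last step. The closure-under-intersection hypothesis applies only to families of subsets of a single fixed $\IR^n$, whereas the conjuncts $\phi_i$ may involve different subsets of the free variables and so naturally live in spaces of different dimension; the crux is therefore to fix, once and for all, the convention that a formula with free variables $x_1, \dots, x_n$ denotes a subset of $\IR^n$---inserting dummy variables via preimages under projections and matching up variable orderings via preimages under coordinate permutations as needed---before intersecting. With this convention maintained throughout the induction, every formula constructor reduces to an affine image, an affine preimage, or an intersection, and the induction closes.
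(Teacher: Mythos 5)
Your proposal is correct and follows essentially the same route as the paper's proof: a structural induction on the formula in which atomic formulas become affine preimages, existential quantifiers become images under coordinate projections, and conjunctions become intersections. You are in fact somewhat more thorough than the paper, which glosses over the base cases (true, false), equality atoms, atomic formulas with genuine affine terms rather than bare variables, and the dimension-matching convention needed before intersecting conjuncts---all points you handle explicitly and correctly.
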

\begin{proof}
    Assume that $S$ is infinitary pp-definable by relations corresponding to sets in $\tau$. By induction on the formula, we may assume that $S$ is either an atomic formula or given by a single step, that is by Definition \ref{DefinitionIPPDefinable} either a conjunction or an existential quantifier.

    If $S$ is given by an atomic formula, it is directly given by a single relation $T\subset \IR^m$. Here, we have 
    $$(x_1,\dots,x_n)\in S \iff (x_{j_1},\dots ,x_{j_m}) \in T
    $$
    for some function $j\colon \{1,2,\dots,m\} \to \{1,2,\dots,n\}$. This transition of variables can be understood as a linear map $f\colon \IR^n \to \IR^m, (x_1,\dots,x_n) \mapsto (x_{j_1},\dots ,x_{j_m})$ in which sense $S=f^{-1}(T)$ is the inverse image of an affine map.

    If $S$ is given by a conjunction, we have 
    $$(x_1,\dots,x_n)\in S \iff \bigwedge_{i\in I} (x_1,\dots,x_n)\in S_i
    $$
    for some $S_i \in \tau$. Here, $S$ is clearly given by the intersection $\bigcap_{i\in I} S_i$ where we identify subsets of $\IR^n$ with their respective relations. 

    Assume that $S$ is given by an existential quantifier, that is 
    $$
        (x_1,\dots,x_n)\in S \iff \exists y : (x_1,\dots,x_n,y)\in T
    $$
    for a relation $T\in \tau$. Then, define $f\colon \IR^{n+1}\to \IR^n$ to be the linear map which omits the last coordinate. This results in $S=f(T)=\{f(t)\mid t \in T\}$, so $S$ is the image of $T$ under an affine map.

    In every case, $S$ is again in $\tau$, so $\tau$ is closed with respect to infinitary pp-definitions.
\end{proof}

\begin{lemma} \label{LemmaRayIntersectionStable}
    Let $\tau$ be a family of convex sets.
    Let $S$ be a convex set which is infinitary pp-definable in $(\IR;\tau)$ and has a ray intersection. Then, already a set in $\tau$ has a ray intersection.
\end{lemma}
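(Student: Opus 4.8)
The plan is to prove the contrapositive in the form of a closure statement. Let $\tau_0$ denote the family of all convex subsets of the spaces $\IR^n$ that have no ray intersection, and show that $\tau_0$ is closed under infinitary pp-definitions. Granting this, if no member of $\tau$ has a ray intersection then $\tau\subseteq\tau_0$, so every set infinitary pp-definable over $(\IR;+,\cdot c\mid c\in\IR,c\mid c\in\IR,\tau)$ lies in $\tau_0$ and in particular has no ray intersection; contraposing gives the lemma. To obtain the closure statement I would verify the four hypotheses of Lemma \ref{LemmaIPPGeometrisch} for $\tau_0$. That $\emptyset$ and $\IR^1$ belong to $\tau_0$ is immediate, since a ray intersection requires points of the set and $\IR$ has no direction leading outside it. Convexity is preserved under affine images, affine preimages and intersections by Lemma \ref{LemmaClosureStaysConvex} and the elementary behaviour of convex sets, so only the absence of ray intersections needs attention. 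The preimage case is easy: if $f(u)=Au+b$ is affine, $T\in\tau_0$, and $x\mapsto v+xw$ is a ray intersection of $f^{-1}(T)$, then $f(v+xw)=f(v)+xAw$ lies in $T$ for $x>0$ and outside $T$ for $x<0$; if $Aw\neq 0$ this is a ray intersection of $T$, and if $Aw=0$ the point $f(v)$ would lie in $T$ and outside $T$ at once, both impossible.

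The intersection case is the crux. Let $(T_i)_{i\in I}$ be convex with no ray intersection and suppose $x\mapsto b+xd$ is a ray intersection of $\bigcap_i T_i$. Each $T_i$ contains the unbounded ray $\{b+xd\mid x>0\}$, and by Theorem \ref{TheoremClosureIffRay} its closure is a sum $C_i+V_i$ of a compact set and its inner vector space; projecting the ray orthogonally to $V_i$ and using that $C_i$ is bounded forces $d\in V_i$. Since $b\in\closure{T_i}$ and $d$ lies in the inner vector space, Lemma \ref{LemmaMaximalVectorforClosed} gives $b+xd\in\closure{T_i}$ for every $x\in\IR$, so the whole line lies in $\closure{T_i}$. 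Now consider the interval $I_i=\{x\in\IR\mid b+xd\in T_i\}$, which contains $(0,\infty)$. If $I_i\neq\IR$ it has a finite infimum $a_i\le 0$, and reparametrising the ray at the apex $b+a_i d$ exhibits a ray intersection of $T_i$, a contradiction; hence $I_i=\IR$ for every $i$. But then $b+xd\in\bigcap_i T_i$ for all $x\in\IR$, contradicting that the supposed ray intersection excludes every $x<0$. So $\bigcap_i T_i$ has no ray intersection.

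For images I would first reduce to a statement about the full image. Given an affine space $A'\subseteq\IR^m$ one has $f(S)\cap A'=f\big(S\cap f^{-1}(A')\big)$, and $S\cap f^{-1}(A')$ again lies in $\tau_0$, so it suffices to show that $\closure{f(S)}$ is a sum of a compact set and a vector space for every $S\in\tau_0$. Writing $\closure S=C+V$ with $V$ the inner vector space (Theorem \ref{TheoremClosureIffRay}) and $A$ the linear part of $f$, the closed set $f(\closure S)=f(C)+AV$ is a sum of a compact set and a subspace, and $\closure{f(S)}\subseteq f(\closure S)$. A recession direction $r$ of $\closure{f(S)}$ therefore lies in $AV$, say $r=A\rho$ with $\rho\in V$. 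Taking an essentially inner point $u$ of $S$, Lemma \ref{LemmaStrahlenVerschieben} applied to $\rho$ and to $-\rho$ (both in the inner vector space) puts the whole line $\{u+t\rho\mid t\in\IR\}$ inside $S$, hence the line $\{f(u)+tr\mid t\in\IR\}$ inside $f(S)$. Thus $-r$ is also a recession direction, the recession cone of $\closure{f(S)}$ is a subspace, and by the dichotomy of Lemma \ref{LemmaDichotomieClosed} the set $\closure{f(S)}$ is a sum of a compact set and a vector space.

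With all four hypotheses verified, Lemma \ref{LemmaIPPGeometrisch} shows that $\tau_0$ is closed under infinitary pp-definitions, which finishes the proof along the lines above. The main obstacle is the intersection step: a priori an intersection might manufacture a one-sided ray out of sets that individually reach just past the apex in the forbidden direction, i.e.\ where each slice $I_i$ dips slightly below $0$ while the left endpoints have supremum exactly $0$. The resolution is the rigidity observation that, because each $T_i$ already contains the entire positive ray and has a closure of the form compact-plus-subspace, the ray direction $d$ is forced into the inner vector space of every $T_i$; this collapses the worry, since it makes each slice either all of $\IR$ or itself the source of a genuine ray intersection in $T_i$.
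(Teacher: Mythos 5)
Your proposal is correct, and it shares the paper's overall skeleton: both prove the lemma by showing that the family of convex sets without a ray intersection satisfies the four closure conditions of Lemma \ref{LemmaIPPGeometrisch}. Your preimage case coincides with the paper's, and your intersection case is at bottom the same apex-shifting argument the paper uses (the paper simply picks one $i$ with $v-w\notin S_i$ and shifts the apex of the ray to the finite infimum of the slice); your extra ``rigidity'' detour through $d\in V_i$ and the line in $\closure{T_i}$ is sound but superfluous, since convexity of the slice $I_i\supseteq(0,\infty)$ already forces $I_i=\IR$ or a finite infimum, which is exactly the worry you raise and dispose of. Where you genuinely diverge is the image case, the hardest step. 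The paper works \emph{upstairs}: it decomposes an affine map into an injective map and a projection, takes the plane $A$ lying over the critical line, shows $\closure{A\cap T}$ is not bounded-plus-subspace, and pulls a ray intersection back into $T$ via Lemma \ref{LemmaDichotomieClosed} and Corollary \ref{CorollaryMoveToInner}. You work \emph{downstairs}: using $f(S)\cap A'=f\bigl(S\cap f^{-1}(A')\bigr)$ and the already-proved intersection case, you reduce to showing $\closure{f(S)}$ is compact-plus-subspace, and prove this by a recession-direction argument --- any unbounded direction of $\closure{f(S)}$ lies in $AV$, hence comes from the inner vector space of $S$, hence (by Lemma \ref{LemmaStrahlenVerschieben} applied to $\pm\rho$ at an essentially inner point) yields a full line in $f(S)$, so by Lemma \ref{LemmaMaximalVectorforClosed} the recession cone is a subspace and Lemma \ref{LemmaDichotomieClosed} together with Theorem \ref{TheoremClosureIffRay} finishes. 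Your route buys uniformity (arbitrary affine maps at once, no injective-plus-projection decomposition) and isolates a reusable structural fact (images of no-ray sets have compact-plus-subspace closures), at the cost of invoking the heavier Theorem \ref{TheoremClosureIffRay} where the paper only needs its two ingredient lemmas; two small points to polish are the trivial case $S=\emptyset$ before taking an essentially inner point, and an explicit remark that the direction $w$ of a hypothetical ray intersection of the closed set $\closure{f(S)}$ is indeed a recession direction (its apex lies in the closed set, so this follows from Lemma \ref{LemmaStrahlenVerschieben} or the standard fact for closed convex sets).
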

\begin{proof}
    Let $\tau'$ be the set of all convex sets without a ray intersection. We show that $\tau'$ is closed with respect to infinitary pp-definitions. We do this with the help of Lemma \ref{LemmaIPPGeometrisch}.

    Clearly, $\tau'$ is nonempty. Moreover, all sets obtained from $\tau'$ by applying intersections, affine maps or inverse affine maps are again convex. So what left is, is to show that if a set $S$ with a ray intersection can be obtained from another set $T$ by an affine map, an inverse affine maps or from other sets intersections, then one of the sets which were used to obtain $S$ already had a ray intersection.
    %
    %By induction, we may assume that $S$ is directly defined by Elements in $\tau$ in one step. 
    Let $\lambda \mapsto \lambda w+v$ be the ray intersection of $S$.
    %If the formula for $S$ is atomic, $S$ is in $\tau$ and has a ray intersection.
    
    If $S$ is an intersection, we can write $S=\bigcap_{i \in I} S_i$. Then, there is $i\in I$ such that $(-1)w+v\notin S_i$, but of course $\lambda w+v \in S_i$ for all $\lambda>0$. So there is $v'$ in the line segment between $v$ and $v-w$ such that $\lambda \mapsto v' + \lambda w$ is a ray intersection for $S_i$.

    If $S$ is given by an inverse affine map or invertible affine map, we can write $S=\{x \in \IR^n \mid f(x)\in T\}$. In this case,  $\lambda \mapsto f(\lambda w+v)$ is a ray intersection for $T$. This works similar for injective affine maps.

    An affine map can be considered as the composition of injective affine maps and projections.
    If $S$ is given by a projection, take $T$ as the inverse image such that
    $$
        S=\{x \in \IR^n \mid \exists y\in \IR: (x,y)\in T\}.
    $$
    Let $A$ be the affine space which is mapped onto $\{\lambda w+v \mid \lambda \in \IR\}$. Look at the closure $\closure{A\cap T}$. This closure cannot be the sum of a bounded set and a vector space, since it contains inverse images for arbitrarily large $\lambda$ but not for negative $\lambda$. By Lemma \ref{LemmaDichotomieClosed}, $\closure{A\cap T}$ has a ray intersection. By Corollary \ref{CorollaryMoveToInner}, also $A\cap T$ has a ray intersection. Since this is an intersection and $A$ has no ray intersection, we get that $T$ has a ray intersection.
    
    So in all cases, there was a set with a ray intersection before.
\end{proof}

\begin{lemma}\label{LemmaBoundedPerserved}
    Let $\tau$ be a set of relation Symbols where each symbol is given by the sum of a bounded set and a vector space. Let $S$ be infinitary pp-definable from $(\IR;\tau)$. Then, $S$ is given by the sum of a bounded set and a vector space.
\end{lemma}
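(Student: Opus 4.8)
The plan is to mirror the proof of Lemma \ref{LemmaRayIntersectionStable}: let $\tau'$ be the family of all convex subsets of the spaces $\IR^n$ that can be written as the sum of a bounded set and a vector space, and show that $\tau'$ is closed under infinitary pp-definitions by verifying the four hypotheses of Lemma \ref{LemmaIPPGeometrisch}. Since every relation symbol in $\tau$ lies in $\tau'$ and $S$ is infinitary pp-definable from $(\IR;\tau)$, the set $S$ then lies in $\tau'$, which is exactly the assertion.

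First I would dispatch the easy closure conditions. The empty set ($=\emptyset+\{0\}$) and $\IR^1$ ($=\{0\}+\IR$) are in $\tau'$. For images, if $S=B+V$ and $f(x)=Ax+w$ is affine, then $f(S)=(AB+w)+AV$, where $AB+w$ is bounded and $AV$ is a vector space, so $f(S)\in\tau'$. For preimages of an affine map $f(x)=Ax+w$ with $T=B+V$, the set $f^{-1}(T)$ is convex and invariant under the subspace $U:=A^{-1}(V)$; writing $f^{-1}(T)=(f^{-1}(T)\cap U^\perp)+U$, the boundedness of the projection $f^{-1}(T)\cap U^\perp$ follows by a normalisation argument: a hypothetical sequence $x_k$ in it with $\|x_k\|\to\infty$ and $x_k/\|x_k\|\to u^*$ forces $Au^*\in V$ (as $V$ is closed), so $u^*\in U\cap U^\perp=\{0\}$, contradicting $\|u^*\|=1$.

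The main work, and the main obstacle, is closure under \emph{arbitrary} intersections, since Lemma \ref{LemmaIntersectionStaysBounded} only handles finitely many sets and the property is genuinely sensitive to infinite intersections (the closure of a convex set may admit recession directions that the set itself does not, as the pointed stripe illustrates). For $S=\bigcap_i T_i$ with $T_i=B_i+V_i$ and each $V_i$ the inner vector space of $T_i$ by Lemma \ref{LemmaAltDescriptionInnerVspace}, I would set $V:=\bigcap_i V_i$. One checks directly that $S$ is invariant under $V$, so $S=(S\cap V^\perp)+V$, and it remains to bound $S\cap V^\perp$. Suppose not; take $s_k\in S\cap V^\perp$ with $\|s_k\|\to\infty$ and $s_k/\|s_k\|\to u^*\in V^\perp$. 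Points on the segments from a fixed $s_0\in S$ toward $s_k$ lie in $S\subseteq\closure{T_i}=\closure{B_i}+V_i$, and passing to the limit shows $s_0+\lambda u^*\in\closure{T_i}$ for all $\lambda\ge 0$; since $\closure{T_i}$ is the sum of a compact set and $V_i$, dividing by $\lambda$ and letting $\lambda\to\infty$ forces $u^*\in V_i$. As this holds for every $i$, we obtain $u^*\in\bigcap_i V_i=V$, contradicting $u^*\in V^\perp$ with $\|u^*\|=1$. Hence $S\cap V^\perp$ is bounded and $S\in\tau'$.

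Having verified all four hypotheses, Lemma \ref{LemmaIPPGeometrisch} yields that $\tau'$ is closed under infinitary pp-definitions, completing the proof. The crux is the final bounded-projection argument for infinite intersections; the idea that makes it go through is to control the limiting direction $u^*$ simultaneously against every $V_i=\mathrm{rec}(\closure{T_i})$, rather than against the recession behaviour of $S$ itself, which can be strictly smaller.
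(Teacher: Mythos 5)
Your overall skeleton matches the paper's: both proofs reduce the lemma to the four closure properties of Lemma \ref{LemmaIPPGeometrisch}, and both dispose of affine images easily. The genuinely different part is the treatment of intersections. The paper does not attack infinite intersections head-on: it quotients out $V'=\bigcap_{i}V_i$, uses finite-dimensionality to find a \emph{finite} subfamily $J$ with $\bigcap_{j\in J}V_j=\{0\}$ (a finite subintersection of minimal dimension is absorbed by every further $V_i$), and then applies the two-set angle estimate of Lemma \ref{LemmaIntersectionStaysBounded} finitely often to produce a bounded superset of the full intersection. Your limiting-direction argument --- bounding $S\cap V^\perp$ by showing that any escape direction $u^*$ must lie in every $V_i$ --- handles arbitrary intersections in one stroke, and your explicit normalisation argument for preimages fills in a step the paper dismisses as ``clear''. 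So the obstacle you single out is real, but the paper sidesteps it by reduction to the finite case rather than solving it directly; both routes are valid.

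There is, however, one genuine mismatch with the statement: the lemma does not assume that the relations in $\tau$ are convex (a sum of a bounded set and a vector space need not be convex), while your $\tau'$ consists of convex sets only, and your intersection argument uses convexity essentially --- the points on the segments from $s_0$ toward $s_k$ lie in $S$ only because $S$ is an intersection of convex sets; moreover Lemma \ref{LemmaAltDescriptionInnerVspace} and the notion of inner vector space are only defined for convex sets. As written, your proof therefore establishes a formally weaker statement than the one the paper asserts and uses (e.g.\ inside Lemma \ref{LemmaIndependenceCompact}, whose hypotheses are also not restricted to convex relations). Fortunately the repair is a simplification: skip the detour through $\closure{T_i}$ and the recession argument entirely, and decompose $s_k$ itself, writing $s_k=b_k^i+v_k^i$ with $b_k^i\in B_i$ bounded and $v_k^i\in V_i$; dividing by $\|s_k\|$ and letting $k\to\infty$ gives $u^*\in V_i$ directly, since $V_i$ is closed and $b_k^i/\|s_k\|\to 0$. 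With $\tau'$ redefined as all (not necessarily convex) sums of a bounded set and a vector space, and an arbitrary choice of decomposition $T_i=B_i+V_i$ in place of the inner vector space, your proof then yields the lemma in its stated generality.
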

\begin{proof}
    It suffices to show that the set of sets which are a sum of a bounded set and a vector space are closed under inverse affine maps, affine maps and intersections by Lemma \ref{LemmaIPPGeometrisch}.
    
    For affine maps and inverse affine maps, this is clear.
    
    %For affine maps this is clear: The projection of bounded sets are bounded, the projection of a vector space is a vector space and sums are preserved.

    %For inverse affine maps this is clear: The inverse image of a vector space is a vector space and the inverse image of a bounded set is a bounded set plus the kernel of this map, which is a vector space. Moreover, sums are preserved.
    
    For intersections write $S=\bigcap_{i\in I} S_i$ and $S_i=B_i+V_i$ where $V_i$ is a vector space and $B_i$ is bounded. 
    Let $V'=\bigcap_{i\in I} V_i$. 
    Since the directions in $V'$ is independent from the rest, we may assume that $V'=\{0\}$ by projecting in the quotient space and at the end taking the sum with $V'$ again.
    Now, $\bigcap_{i\in I} V_i=\{0\}$. 
    Since this is an intersection of finite dimensional vector spaces, there is a finite subset $J\subset I$ such that $\bigcap_{j\in J} V_j=\{0\}$. 
    By repeatedly applying Lemma \ref{LemmaIntersectionStaysBounded}, we get that $\bigcap_{j\in J} S_j$ is the sum of an vector space and a bounded set. Since the vector space needs to be zero, it is a bounded set. Therefore,
        $$
            \bigcap_{i\in I} S_i \subset \bigcap_{j \in J} S_j
        $$
    is a subset of a bounded set and thus itself bounded.
    
    Finally, we get in any case that $S$ is a sum of a bounded set and an affine space.
\end{proof}

\begin{lemma} \label{LemmaIndependenceCompact}
    Let $\tau$ be a set of relation symbols where each symbol is given by the sum of a compact set and a vector space. Let $S$ be infinitary pp-definable from $(\IR;\tau)$. Then, $S$ is given by the sum of a compact set and a vector space.
\end{lemma}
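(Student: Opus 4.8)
The plan is to imitate the proof of Lemma~\ref{LemmaBoundedPerserved}. Let $\tau'$ denote the family of all subsets of the spaces $\IR^n$ that can be written as the sum of a compact set and a vector space, and note that $\tau\subseteq\tau'$. I would then verify the hypotheses of Lemma~\ref{LemmaIPPGeometrisch} for $\tau'$: it contains $\emptyset=\emptyset+\{0\}$ and $\IR^1=\{0\}+\IR$, so it suffices to show that $\tau'$ is closed under affine images, affine preimages, and arbitrary intersections. Lemma~\ref{LemmaIPPGeometrisch} then yields that every infinitary pp-definable set over $(\IR;\tau)$ lies in $\tau'$, which is exactly the claim.

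The key technical device is a reduction, valid without any convexity assumption and serving as a non-convex analogue of Lemma~\ref{LemmaClosedEqualsCompact}: a set $M$ lies in $\tau'$ if and only if $M$ is closed and is the sum of a bounded set and a vector space. One direction is immediate, since the sum of a compact set and a (closed) subspace is closed. For the converse, I would write $M=B+W$ with $B$ bounded and $W$ a vector space and show $\closure{M}=\closure{B}+W$. The inclusion $\closure{B}+W\subseteq\closure{M}$ follows by approximating each $b\in\closure{B}$ from within $B$: if $b_k\to b$ with $b_k\in B$ and $w\in W$, then $b_k+w\in M$ converges to $b+w$. Conversely, $M=B+W\subseteq\closure{B}+W$, and $\closure{B}+W$ is already closed as a sum of the compact set $\closure{B}$ and the closed subspace $W$, so $\closure{M}\subseteq\closure{B}+W$. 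Hence if $M$ is closed then $M=\closure{B}+W$ with $\closure{B}$ compact, so $M\in\tau'$.

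With this reduction the three closure properties are quick. For an affine map $f(x)=Ax+b$ and $T=C+V\in\tau'$ one computes $f(T)=(AC+b)+AV$, where $AC+b$ is the continuous image of the compact set $C$ and hence compact, while $AV$ is a subspace; thus $f(T)\in\tau'$ directly. For preimages, $C+V$ is closed, so $f^{-1}(C+V)$ is closed, and it is the sum of a bounded set and a vector space because affine preimages preserve this property (as recorded in the proof of Lemma~\ref{LemmaBoundedPerserved}); the reduction then places it in $\tau'$. For an arbitrary intersection $\bigcap_i (C_i+V_i)$, each summand is closed, so the intersection is closed, and by Lemma~\ref{LemmaBoundedPerserved} it is again the sum of a bounded set and a vector space, so once more the reduction applies.

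The step I expect to be the main obstacle, and the one genuinely specific to the compact case rather than a reuse of Lemma~\ref{LemmaBoundedPerserved}, is the image under an affine map. Naively, projections need not preserve closedness, so one might fear that the affine image of a set in $\tau'$ fails to be closed and thus escapes $\tau'$. The point is that the rigid structure ``compact $+$ subspace'' is preserved precisely because compactness passes through continuous maps: the image decomposes as $(AC+b)+AV$, which is automatically closed of the required form. Everything else is bookkeeping resting on the reduction and on Lemma~\ref{LemmaBoundedPerserved}.
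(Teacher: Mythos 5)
Your proposal is correct and follows essentially the same route as the paper: reduce via Lemma \ref{LemmaIPPGeometrisch} to closure under affine images, affine preimages, and arbitrary intersections; handle images directly via continuity and linearity; and handle preimages and intersections through the characterisation that compact-plus-subspace is equivalent to closed together with bounded-plus-subspace, combined with Lemma \ref{LemmaBoundedPerserved}. The only divergence is that you prove this characterisation directly (via $\closure{B+W}=\closure{B}+W$ and the standard fact that a compact set plus a closed subspace is closed), whereas the paper derives it from Lemma \ref{LemmaDichotomieClosed}, which is stated only for closed convex sets; since the present lemma makes no convexity assumption, your self-contained argument is in fact the cleaner justification of that step.
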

\begin{proof}
    It suffices to show that these sets are closed under affine maps, inverse affine maps and intersections by Lemma \ref{LemmaIPPGeometrisch}.
    
    For affine maps this follows because they are continuous so the image of a compact set is compact and by linearity, the image of a vector space is a vector space which implies that the image of the sum of them is again the sum of a vector space and a compact set.

    For inverse affine maps, recall that a subset of $\IR^n$ is the sum of a compact set and a vector space if and only if it is closed and the sum of a bounded set and a vector space. This follows directly from Lemma \ref{LemmaDichotomieClosed}. Since affine maps are continuous, the inverse image of closed sets are closed. Moreover, the inverse image of the sum of a bounded set and a vector space is again the sum of a bounded set and a vector space as proven in the proof of Lemma \ref{LemmaBoundedPerserved}.
    
    For intersections we get by Lemma \ref{LemmaBoundedPerserved} that the result is the sum of a vector space and a bounded set. Since intersections of closed sets are closed, it is moreover a closed set. Therefore, it is also the sum of a closed bounded set and a vector space.
\end{proof}

\begin{lemma} \label{LemmaClosurePreimageCommute}
    Let $T\subset \IR^n$ be a convex set which is given by the sum of a bounded set and a vector space. Let $p\colon \IR^{n+1} \to \IR^n$ be a projection. Then, $p(\closure{T})=\closure{p(T)}$.
\end{lemma}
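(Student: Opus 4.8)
The plan is to split the claimed equality into its two inclusions, where the routine half is pure continuity and the real content is that $p(\closure{T})$ is already a closed set. Since a projection $p$ is continuous, the inclusion $p(\closure{T})\subseteq \closure{p(T)}$ is automatic: the image of the closure of a set is always contained in the closure of the image. For the reverse inclusion I would first reduce it to the single claim that $p(\closure{T})$ is closed. Indeed, once that is known, from $p(T)\subseteq p(\closure{T})$ and closedness we get $\closure{p(T)}\subseteq \closure{p(\closure{T})}=p(\closure{T})$, and the two inclusions combine to $p(\closure{T})=\closure{p(T)}$. (The case $T=\emptyset$ is trivial, so I may assume $T$ nonempty.)

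To prove that $p(\closure{T})$ is closed, I would first identify $\closure{T}$ explicitly as a sum of a compact set and a vector space. Write $T=B+V$ with $B$ bounded and $V$ a vector space. Then $\closure{B}$ is compact, and by Lemma \ref{LemmaClosedEqualsCompact} the set $\closure{B}+V$ is closed, being a sum of a compact set and a vector space. As $T=B+V\subseteq \closure{B}+V$ and the right-hand side is closed, this forces $\closure{T}\subseteq \closure{B}+V$. For the opposite inclusion I would use that $V$ is the inner vector space of $T$ by Lemma \ref{LemmaAltDescriptionInnerVspace}, so that $\closure{T}$ is invariant under translation by every vector of $V$ (Lemma \ref{LemmaMaximalVectorforClosed}); combining this $V$-invariance with $\closure{B}\subseteq \closure{T}$ yields $\closure{B}+V\subseteq \closure{T}$. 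Hence $\closure{T}=\closure{B}+V$ with $\closure{B}$ compact.

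Now I would simply push this description through $p$. Because $p$ is linear, $p(\closure{T})=p(\closure{B})+p(V)$. Here $p(\closure{B})$ is compact as the continuous image of a compact set, and $p(V)$ is a vector space as the linear image of a vector space, so $p(\closure{T})$ is again the sum of a compact set and a vector space and is therefore closed by Lemma \ref{LemmaClosedEqualsCompact}. This is exactly the ingredient needed to close out the reverse inclusion above.

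I expect the main obstacle to be precisely the reverse inclusion, since projections of closed sets are in general not closed; the entire point is that the rigid ``compact-plus-vector-space'' shape of $\closure{T}$ is preserved under linear images and guarantees closedness. The one step requiring care is verifying that $\closure{T}$ genuinely has this shape, i.e.\ that passing to the closure does not enlarge $T$ beyond $\closure{B}+V$ — and this is where the $V$-invariance of $\closure{T}$ furnished by the inner vector space (Lemmas \ref{LemmaAltDescriptionInnerVspace} and \ref{LemmaMaximalVectorforClosed}) is essential. Note that convexity of $T$ plays no role in the argument beyond what is already packaged into these lemmas.
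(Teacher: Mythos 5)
Your proof is correct, but it takes a genuinely different route from the paper's. The paper argues pointwise: given $a\in\closure{p(T)}\setminus p(T)$, it intersects $T$ with the tube $\ball_\epsilon(a)+\{\lambda t\mid \lambda\in\IR\}$ over $a$ (where $t$ spans the kernel of the projection), distinguishes the cases $t\in V$ and $t\notin V$, invokes Lemma~\ref{LemmaBoundedPerserved} to see that this intersection is bounded in the second case, and extracts an accumulation point of a sequence lying over $a$, which lands in $\closure{T}$ above $a$. You instead prove the structural identity $\closure{T}=\closure{B}+V$ and observe that the class ``compact set plus vector space'' is preserved under linear images and consists of closed sets (Lemma~\ref{LemmaClosedEqualsCompact}), so $p(\closure{T})$ is closed and the hard inclusion follows formally. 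Your route is shorter, avoids the case split and the sequential compactness argument, and proves strictly more: it works verbatim for an arbitrary linear or affine map in place of the coordinate projection, essentially packaging the ``affine image'' step from the proof of Lemma~\ref{LemmaIndependenceCompact} together with your identity for $\closure{T}$. One pedantic point to tighten: Lemma~\ref{LemmaClosedEqualsCompact} is stated for \emph{convex} sets, and $\closure{B}+V$ need not be convex for an arbitrary bounded $B$ with $T=B+V$. Repair this either by first replacing $B$ with a convex bounded set that still satisfies $T=B+V$ (for instance $\conv(B)$, or $T\cap V^{\perp}$), or by noting that the relevant implication in that lemma --- compact-plus-subspace implies closed --- is proved there without ever using convexity; your final application of the lemma to $p(\closure{T})$ is unproblematic since that set is convex. (Also, the statement's $T\subset\IR^{n}$ is a typo for $T\subset\IR^{n+1}$, as the paper's own proof makes clear; your argument is indifferent to this.)
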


\begin{proof}
    Let $S$ be the image of $T$. By applying an invertible affine map, we may assume that $p$ is the projection onto the first $n$ coordinates. Let $t$ be the $(n+1)$-th unit vector. Then, $S=\{s\in \IR^n \mid \exists \lambda \in \IR, s+\lambda t \in T\}$ and $\|t\|=1$. Write $T=B+V$ for a bounded set $B$ and a vector space $V$ and choose $a\in \closure{S}\setminus S$.
    
    If $t\in V$, then $T$ is the sum of $S$ and $\{\lambda t \mid \lambda \in \IR\}$ and every inverse image of $a$ is in $\closure{T}$.
    
    If $t\notin V$, then look at $T\cap (\ball_\epsilon(a)+\{\lambda t \mid \lambda \in \IR\})$ for some $\epsilon>0$. This set is the sum of a bounded set and a vector space by Lemma \ref{LemmaBoundedPerserved}. Since $t\notin V$ it is in fact bounded. Let $R$ be a bound for $\epsilon=1$ Since $a\in \closure{S}$, we furthermore know that this set is nonempty for all $\epsilon>0$. Let $(b_n)_{n\in \IN}$ be a sequence such that 
    $$
        b_n\in T\cap (\ball_{\frac{1}{n}}(a)+\{\lambda t \mid \lambda \in \IR\})
    $$
    holds for all $n\in \IN$. Since this sequence is bounded by $R$ and $b_n\in T$, it has an accumulation point $b \in \closure{T}$. Furthermore, $b$ will be in $\{a\}+\{\lambda t \mid \lambda \in \IR\}$, so it will be an inverse image of $a$ which is in $\closure{T}$. Thus, $p(\closure{T})\supset \closure{p(T)}$.
    
    The direction $p(\closure{T})\subset \closure{p(T)}$ follows as the projection is a continuous map.% which for example preserves Cauchy sequences.
\end{proof}

\begin{lemma} \label{LemmaNoSemidentIndependent}
    Let $\tau$ be a set of convex relations where each relation is given by the sum of a bounded set and a vector space and none of them has a \dent{}. 
    Let $S$ be infinitary pp-definable from $(\IR;\tau)$. Then, $S$ has the same properties.
\end{lemma}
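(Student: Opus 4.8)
The plan is to apply the geometric criterion of Lemma \ref{LemmaIPPGeometrisch} to the family $\tau'$ consisting of all convex sets that are simultaneously the sum of a bounded set and a vector space \emph{and} have no \dent{}. Since every relation in $\tau$ lies in $\tau'$, and since $\tau'$ contains the empty set (the sum of the empty bounded set and any vector space, vacuously free of \dent{}s) and $\IR^1$ (closed, hence free of \dent{}s), it suffices to show that $\tau'$ is closed under affine images, affine preimages and arbitrary intersections. By Lemma \ref{LemmaIPPGeometrisch} this makes $\tau'$ closed under infinitary pp-definitions, so every $S$ infinitary pp-definable from $(\IR;\tau)$ lies in $\tau'$, which is exactly the assertion.

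The ``sum of a bounded set and a vector space'' half of membership in $\tau'$ is already preserved by each of the three operations, as shown in the proof of Lemma \ref{LemmaBoundedPerserved}. So the real work is to check that the ``no \dent{}'' half is preserved too. Here I would first reformulate the property directly from the definition: a convex set $S$ has no \dent{} if and only if for all $s\in S$, $t\in\closure{S}$ and $\lambda\in(0,1)$ the point $\lambda s+(1-\lambda)t$ again lies in $S$.

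With this reformulation, intersections and preimages are immediate. For $S=\bigcap_i S_i$, take $s\in S$, $t\in\closure{S}$ and $\lambda\in(0,1)$; then $s\in S_i$ and, because $S\subset S_i$ gives $\closure{S}\subset\closure{S_i}$, also $t\in\closure{S_i}$, so $\lambda s+(1-\lambda)t\in S_i$ as $S_i$ has no \dent{}; intersecting over $i$ puts the point in $S$. For a preimage $S=f^{-1}(T)$ under an affine map $f$, continuity yields $f(\closure{S})\subset\closure{f(S)}\subset\closure{T}$, so $f(s)\in T$ and $f(t)\in\closure{T}$; applying the no-\dent{} property of $T$ to $f(\lambda s+(1-\lambda)t)=\lambda f(s)+(1-\lambda)f(t)$ shows $\lambda s+(1-\lambda)t\in f^{-1}(T)=S$.

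The image case will be the main obstacle, since it needs to pull a point $t\in\closure{f(T)}$ back to a point of $\closure{T}$, that is, the identity $\closure{f(T)}=f(\closure{T})$. This fails for general convex sets but holds here because $T$ is a sum of a bounded set and a vector space: I would factor $f$ as an injective affine map composed with single-coordinate projections, observe that closure commutes with an injective affine map (a homeomorphism onto the closed affine subspace that is its image), and apply Lemma \ref{LemmaClosurePreimageCommute} at each projection, using that the partial images remain sums of a bounded set and a vector space so that the lemma is applicable. Granting $\closure{f(T)}=f(\closure{T})$, for $s\in f(T)$ and $t\in\closure{f(T)}$ one writes $s=f(s')$, $t=f(t')$ with $s'\in T$, $t'\in\closure{T}$, whence $\lambda s+(1-\lambda)t=f(\lambda s'+(1-\lambda)t')\in f(T)=S$ by the no-\dent{} property of $T$. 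This settles the image case and completes the verification of the hypotheses of Lemma \ref{LemmaIPPGeometrisch}.
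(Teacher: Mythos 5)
Your proposal is correct and follows essentially the same route as the paper's proof: reduce via Lemma \ref{LemmaIPPGeometrisch}, handle the bounded-plus-vector-space half by Lemma \ref{LemmaBoundedPerserved}, treat intersections and preimages directly, and settle images by factoring the affine map into an injective map and coordinate projections, invoking Lemma \ref{LemmaClosurePreimageCommute} for the projections. Your only departures are cosmetic: you phrase the no-\dent{} condition positively (closure of $S$ under combinations $\lambda s+(1-\lambda)t$ with $s\in S$, $t\in\closure{S}$) where the paper argues by contraposition, and you assemble the identity $\closure{f(T)}=f(\closure{T})$ once for a general affine $f$ rather than pulling back the boundary point one projection at a time.
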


\begin{proof}
    It suffices to show that the set of all sums of a bounded set and a vector space without a \dent{} is 
    closed under affine maps, inverse affine maps and intersections. For the property of being the sum of a bounded set and a vector space, this was shown in Lemma \ref{LemmaBoundedPerserved}.
    
    For intersections, let $S=\bigcap_{i \in I} S_i$. Then, assume that $a\notin S$ is a \dent{}. Then, there is $i$ with $a\notin S_i$. So there is $s\in S$ and $t\in \closure{S}$ and $\lambda \in (0,1)$ such that $a=\lambda s+(1-\lambda)t$. But since $s\in S_i$ and $t\in \closure{S_i}$, we get that $a$ was a \dent{} for $S_i$.

    We split the affine maps into injective affine maps and projections. The injective affine maps clearly preserve the non-existence of a \dent{}.
    For projections let $T\subset \IR^{n+1}$ be represented in $\tau$ and assume $S$ is a projection of $T$. Then, assume that $a\notin S$ is a \dent{} for $S$ and $a=\lambda s + (1-\lambda) t$ with $s\in S, t\in \closure{S}, \lambda \in (0,1)$. Let $s'\in T$ be an inverse image of $s$ and $t'\in \closure{T}$ an inverse image of $t$ which exists by Lemma \ref{LemmaClosurePreimageCommute}. Then, $\lambda s' + (1-\lambda)t'$ is contained in the inverse image of $a$ and thus a \dent{}.

    For inverse affine maps, let $S=\{s \mid f(s)\in T\}$. If $a=\lambda s+(1-\lambda)t$ is a \dent{} of $S$, then $f(a)=\lambda f(s) + (1-\lambda) f(t)$ is a \dent{} of $T$. Thus, if $T$ has no such point, the same holds for $S$.
    
    So in each case, the properties are preserved.
\end{proof}

%hier weiteres einfügen
\section{Detailed results and corollaries}
\label{SectionCorollaries}
In addition to the short characterisations of the equivalence classes we gave in Section \ref{SectionResultsShort}, we get the following additional descriptions and corollaries of the results.

\subsection{Characterisations of the equivalence classes}
We recall equivalent conditions of having a ray intersection and prove Theorem \ref{TheoremDescriptionOfEqClasses}.

\begin{thm} \label{TheoremFall1Ausgerollt}
    For a convex subset $S\subset \IR^n$, the following are equivalent:
    \begin{enumerate} 
        \item \label{NrTheoremFall1Ausgerollt1}
        The sets $S$ and $\{x\in \IR \mid x\ge 0\}$ are infinitary pp-interdefinable over $(\IR, +, \cdot \lambda\mid \lambda\in \IR, \lambda\mid \lambda \in \IR)$.
        \item \label{NrTheoremFall1Ausgerollt4}
        The set $\{x\in \IR \mid x\ge 0\}$ is infinitary pp-definable over $(\IR, +, \cdot \lambda\mid \lambda\in \IR, \lambda\mid \lambda \in \IR, S)$.
        \item \label{NrTheoremFall1Ausgerollt3}
        The set $\{x\in \IR \mid x>0\}$ is infinitary pp-definable over $(\IR, +, \cdot \lambda\mid \lambda\in \IR, \lambda\mid \lambda \in \IR, S)$.
        \item \label{NrTheoremFall1Ausgerollt5}
        Every convex set is infinitary pp-definable over $(\IR, +, \cdot \lambda\mid \lambda\in \IR, \lambda\mid \lambda \in \IR, S)$.
        \item \label{NrTheoremFall1Ausgerollt6}
        The set $S$ is not infinitary pp-definable over $(\IR, +, \cdot \lambda\mid \lambda\in \IR, \lambda\mid \lambda \in \IR, \stripe)$ where $\stripe$ denoted the relation symbol corresponding to the set $\{(x,y) \in \IR^2 \mid y \in (0,1) \lor (x,y)=(0,0)\}$.
        \item \label{NrTheoremFall1Ausgerollt7}
        For each affine space $A$, the closure $\closure{S\cap A}$ is not the sum of a compact set and a vector space.
        \item \label{NrTheoremFall1Ausgerollt8}
        It has a ray intersection: There are vectors $v,w\in \IR^n$ such that $\{v+\lambda w \mid \lambda >0\} \subset S$ and $\{v+\lambda w \mid \lambda <0\} \cap S=\emptyset$.
        \item \label{NrTheoremFall1Ausgerollt2}
        The set $\{x\in \IR \mid x>0\}$ or the set $\{x\in \IR \mid x\ge 0\}$ is pp definable over $(\IR, +, \cdot \lambda\mid \lambda\in \IR, \lambda\mid \lambda \in \IR, S)$.
    \end{enumerate}
\end{thm}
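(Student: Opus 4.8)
The plan is to funnel all eight conditions through the single geometric condition that $S$ has a ray intersection, condition \ref{NrTheoremFall1Ausgerollt8}, which I will abbreviate as $(\ast)$. Almost all of the work is already available from the earlier sections, so the proof is an assembly of prior results into a strongly connected cycle of implications; the one genuinely decisive ingredient is the non-definability step that closes the loop. I would first record that the affine-section condition \ref{NrTheoremFall1Ausgerollt7} is equivalent to $(\ast)$: this is exactly Theorem \ref{TheoremClosureIffRay}, noting that for the closed set $\closure{S\cap A}$ the phrases ``sum of a compact set and a vector space'' and ``sum of a bounded set and a vector space'' coincide by Lemma \ref{LemmaClosedEqualsCompact}, so that the wording matches Lemma \ref{LemmaImplicationsToRay}.

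Next I would build the ``positive'' forward chain. Lemma \ref{LemmaImplicationsToRay} gives $(\ast)\Rightarrow$ \ref{NrTheoremFall1Ausgerollt2} (a finite pp-definition of one of the two rays) $\Rightarrow$ both rays are infinitary pp-definable, in particular \ref{NrTheoremFall1Ausgerollt4}. By Lemma \ref{LemmaInterDefinableHalveIntervalls} the closed and open rays are infinitary pp-interdefinable, so \ref{NrTheoremFall1Ausgerollt4} and \ref{NrTheoremFall1Ausgerollt3} are equivalent. Theorem \ref{TheoremDefiningFromRay} shows every convex set is infinitary pp-definable over the affine structure together with $\le$; composing definitions (transitivity of infinitary pp-definability) then yields \ref{NrTheoremFall1Ausgerollt3}$\Rightarrow$\ref{NrTheoremFall1Ausgerollt5}. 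Specialising \ref{NrTheoremFall1Ausgerollt5} to the convex set $\{x\ge 0\}$, and using once more that $S$ is itself infinitary pp-definable from $\le$, gives the interdefinability \ref{NrTheoremFall1Ausgerollt1}; and \ref{NrTheoremFall1Ausgerollt1} contains \ref{NrTheoremFall1Ausgerollt4} as one of its two directions. This leaves all of \ref{NrTheoremFall1Ausgerollt1}, \ref{NrTheoremFall1Ausgerollt4}, \ref{NrTheoremFall1Ausgerollt3}, \ref{NrTheoremFall1Ausgerollt5}, \ref{NrTheoremFall1Ausgerollt2} reachable from $(\ast)$.

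The loop is then closed by the implication \ref{NrTheoremFall1Ausgerollt4}$\Rightarrow(\ast)$, which I expect to be the crux. Here I would apply Lemma \ref{LemmaRayIntersectionStable} with the one-element family $\tau=\{S\}$ and with the convex set $\{x\ge 0\}$ playing the role of the definable set: the closed ray does have a ray intersection, so some member of $\tau$ must, and since affine relations and $\emptyset,\IR^1$ never have a ray intersection, that member can only be $S$ itself. This is precisely the preservation argument that keeps the hierarchy from collapsing, and it is where the real content lies; the rest is bookkeeping.

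Finally I would attach condition \ref{NrTheoremFall1Ausgerollt6} by proving it equivalent to $(\ast)$ as well. The pointed stripe $\stripe$ has no ray intersection (its closure is a sum of a compact set and a vector space, so by Lemma \ref{LemmaDichotomieClosed}, together with a check of all its affine sections through Theorem \ref{TheoremClosureIffRay}, it cannot have one). Hence if $S$ were infinitary pp-definable from $\stripe$, Lemma \ref{LemmaRayIntersectionStable} with $\tau=\{\stripe\}$ would force $S$ to have no ray intersection; contrapositively $(\ast)\Rightarrow$\ref{NrTheoremFall1Ausgerollt6}. Conversely, if $S$ has no ray intersection then by the equivalence $(\ast)\Leftrightarrow$\ref{NrTheoremFall1Ausgerollt7} every affine section $\closure{S\cap A}$ is a sum of a compact set and a vector space, so Theorem \ref{TheoremDefiningFromStripe} makes $S$ infinitary pp-definable from $\stripe$, i.e. the negation of \ref{NrTheoremFall1Ausgerollt6}. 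Collecting these — the two biconditionals $(\ast)\Leftrightarrow$\ref{NrTheoremFall1Ausgerollt7} and $(\ast)\Leftrightarrow$\ref{NrTheoremFall1Ausgerollt6}, the forward chain of the second paragraph, and the closing implication \ref{NrTheoremFall1Ausgerollt4}$\Rightarrow(\ast)$ — makes the implication digraph on all eight conditions strongly connected, so they are all equivalent.
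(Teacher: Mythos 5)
Your proposal is correct, and it assembles exactly the same ingredients as the paper's proof (Lemma \ref{LemmaImplicationsToRay}, Lemma \ref{LemmaInterDefinableHalveIntervalls}, Theorem \ref{TheoremDefiningFromRay}, Theorem \ref{TheoremDefiningFromStripe}, Theorem \ref{TheoremClosureIffRay}, Lemma \ref{LemmaRayIntersectionStable}); the difference is in how the implication digraph is wired. The paper closes its cycle as \ref{NrTheoremFall1Ausgerollt4} $\Rightarrow$ \ref{NrTheoremFall1Ausgerollt6} $\Rightarrow$ \ref{NrTheoremFall1Ausgerollt7} $\Leftrightarrow$ \ref{NrTheoremFall1Ausgerollt8} $\Rightarrow$ \ref{NrTheoremFall1Ausgerollt2} $\Rightarrow$ \ref{NrTheoremFall1Ausgerollt4}, so the stripe condition \ref{NrTheoremFall1Ausgerollt6} is a load-bearing link: the only route from definability of the ray back to a ray intersection passes through $\stripe$. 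You instead close the loop locally, applying Lemma \ref{LemmaRayIntersectionStable} with $\tau=\{S\}$ (plus the graphs of the affine operations, which are affine relations and hence have no ray intersection) to obtain \ref{NrTheoremFall1Ausgerollt4} $\Rightarrow$ \ref{NrTheoremFall1Ausgerollt8} in one step, and then attach \ref{NrTheoremFall1Ausgerollt6} to the hub by two separate biconditionals. This buys two small things: the equivalence of \ref{NrTheoremFall1Ausgerollt1}, \ref{NrTheoremFall1Ausgerollt4}, \ref{NrTheoremFall1Ausgerollt3}, \ref{NrTheoremFall1Ausgerollt5}, \ref{NrTheoremFall1Ausgerollt2}, \ref{NrTheoremFall1Ausgerollt8} no longer depends on the stripe machinery at all, and you are forced to --- and do --- justify a point the paper merely asserts, namely that $\stripe$ has no ray intersection; your care there is warranted, since inspecting $\closure{\stripe}$ alone would not suffice (the remark after Theorem \ref{TheoremClosureIffRay} exhibits a set with a ray intersection whose closure is bounded plus a vector space), and your appeal to the affine-section criterion is the right fix. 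Two further points you handle in the same (implicit) way the paper does: you read condition \ref{NrTheoremFall1Ausgerollt7} with the intended existential quantifier matching Theorem \ref{TheoremClosureIffRay} (taken literally, the universally quantified wording in the theorem statement is vacuously false --- intersect with a single point of $S$), and you apply Lemma \ref{LemmaRayIntersectionStable}, stated for purely relational structures, to a structure with function symbols by folding the affine operations into convex graph relations, which is exactly the standard of rigor of the paper's own step \ref{NrTheoremFall1Ausgerollt4} $\Rightarrow$ \ref{NrTheoremFall1Ausgerollt6}. The cost of your arrangement is one extra invocation of Lemma \ref{LemmaRayIntersectionStable}, so the two proofs are of essentially equal weight.
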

\begin{proof}
    We prove the following implications:
    $$
    \begin{tikzcd}
        1 \ar[leftrightarrow, r]& 2 \ar[r] & 5 \ar[r] & 6 \ar[leftrightarrow, d] \\
        4 \ar[leftrightarrow, ru] & 3 \ar[leftrightarrow, u]  & 8 \ar[l] & 7 \ar[l]
    \end{tikzcd}
    $$

    \ref{NrTheoremFall1Ausgerollt1} implies \ref{NrTheoremFall1Ausgerollt4} is trivial,
    \ref{NrTheoremFall1Ausgerollt4} implies \ref{NrTheoremFall1Ausgerollt1} by Theorem \ref{TheoremDefiningFromRay}.
    
    \ref{NrTheoremFall1Ausgerollt4} and \ref{NrTheoremFall1Ausgerollt3} are equivalent by Lemma \ref{LemmaInterDefinableHalveIntervalls}.
    
    \ref{NrTheoremFall1Ausgerollt4} implies \ref{NrTheoremFall1Ausgerollt5} by Theorem \ref{TheoremDefiningFromRay}, \ref{NrTheoremFall1Ausgerollt5} implies \ref{NrTheoremFall1Ausgerollt4} is trivial. 
    
    \ref{NrTheoremFall1Ausgerollt4} implies \ref{NrTheoremFall1Ausgerollt6} by Lemma \ref{LemmaRayIntersectionStable} since the closed ray has a ray intersection and the pointed stripe has not.
    
    \ref{NrTheoremFall1Ausgerollt6} implies \ref{NrTheoremFall1Ausgerollt7} by Theorem \ref{TheoremDefiningFromStripe} and a contraposition.
    
    \ref{NrTheoremFall1Ausgerollt7} is equivalent to \ref{NrTheoremFall1Ausgerollt8} by Theorem \ref{TheoremClosureIffRay}.
    
    \ref{NrTheoremFall1Ausgerollt8} implies \ref{NrTheoremFall1Ausgerollt2}, \ref{NrTheoremFall1Ausgerollt2} implies \ref{NrTheoremFall1Ausgerollt3} and \ref{NrTheoremFall1Ausgerollt4} by Lemma \ref{LemmaImplicationsToRay}.
\end{proof}

\begin{thm}[Theorem \ref{TheoremDescriptionOfEqClasses}]
    A convex set $S\subset \IR^n$ is infinitary pp-interdefinable over $(\IR;+,\cdot c \mid c \in \IR, c\mid c \in \IR)$
    \begin{enumerate}
        \item \label{PartTheoremDescriptionOfEqClasses1}
        with $\{0\}\in \IR$ 
        if and only if it is empty or an affine subspace.
        \item \label{PartTheoremDescriptionOfEqClasses2}
        with $\{x \in \IR \mid x \in [0,1]\}$ 
        if and only if it is not an affine subspace but the sum $\{b+v\mid b \in B, v \in V\}$ of a compact convex set $B\subset \IR^n$ and a vector subspace $V\subset \IR^n$.
        \item \label{PartTheoremDescriptionOfEqClasses3}
        with $\{x \in \IR \mid x \in (0,1)\}$ 
        %if and only if it is the sum $\{b+v\mid b \in B, v \in V\}$ of a bounded, convex set $B\subset \IR^n$ and a vector space $V\subset \IR^n$, $B$ cannot be chosen to be compact and there is no \dent{} $a\notin S$.
        if and only if it is not closed, there is no \dent{} $a\notin S$, and $S$ is the sum $\{b+v\mid b \in B, v \in V\}$ of a bounded, convex set $B\subset \IR^n$ and a vector subspace $V\subset \IR^n$.
        \item \label{PartTheoremDescriptionOfEqClasses4}
        with $\{(x,y) \in \IR^2 \mid (x\in (-1,1)\land y \in (0,1)) \lor (x,y)=(0,0)\}$ 
        if and only if it is the sum $\{b+v\mid b \in B, v \in V\}$ of a bounded, convex set $B\subset \IR^n$ and a vector subspace $V\subset \IR^n$ and there is a \dent{} $a\notin S$.
        \item \label{PartTheoremDescriptionOfEqClasses5}
        with $\{(x,y) \in \IR^2 \mid y \in (0,1) \lor (x,y)=(0,0)\}$ 
        if and only if it is not the sum of a vector space and a bounded set, and does not contain a ray intersection. 
        \item \label{PartTheoremDescriptionOfEqClasses6}
        with $\{x \in \IR \mid x\ge 0\}$ 
        if and only if it contains a ray intersection.
    \end{enumerate}
    Every convex set is in exactly one of the above classes. Moreover, the sets which are infinitary pp-definable over the structure $(\IR;+,\cdot c \mid c \in \IR, c\mid c \in \IR, S)$ are exactly the sets in the classes with a lower or equal number than $S$. 
    No non-convex set is infinitary pp-definable from convex sets.
\end{thm}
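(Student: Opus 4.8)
The plan is to derive everything from a single central claim: \emph{for a convex set $S$ lying in class $k$, the convex sets that are infinitary pp-definable over $(\IR;+,\cdot c \mid c\in \IR, c\mid c\in \IR, S)$ are exactly those in the classes numbered $1$ through $k$.} Once this is established, the individual interdefinability statements \ref{PartTheoremDescriptionOfEqClasses1}--\ref{PartTheoremDescriptionOfEqClasses6}, the fact that every convex set lies in exactly one class, and the monotone definability clause all drop out by bookkeeping. The final sentence, that no non-convex set is definable from convex ones, is immediate from Lemma \ref{LemmaConvexPolyStable}. I would prove the central claim by a two-sided containment: a \emph{constructive} direction showing that each set in a class $\le k$ is definable from any $S$ in class $k$, and a \emph{preservation} direction showing nothing outside those classes can appear.

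First I would check that the six geometric conditions genuinely partition the convex sets. The coarsest split is by ray intersection: by Theorem \ref{TheoremFall1Ausgerollt} (equivalently Theorem \ref{TheoremClosureIffRay} together with Lemma \ref{LemmaDichotomieClosed}), having a ray intersection is the same as some affine slice $\closure{A\cap S}$ failing to be a sum of a compact set and a vector space, which isolates class~$6$. Among the remaining, ray-free, sets one separates those that are a sum $B+V$ of a bounded set and a vector space from those that are not (class~$5$). Inside the $B+V$ sets, closedness is equivalent to compactness of the bounded summand by Lemma \ref{LemmaClosedEqualsCompact}, giving the compact case, which in turn splits into affine (class~$1$) and non-affine (class~$2$). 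The non-closed $B+V$ sets split by the presence of a \dent{} (class~$4$) or its absence (class~$3$); here I would use that a \dent{} forces non-closedness, since a \dent{} is an excluded convex combination of a point of $S$ and a point of $\closure{S}$, which is impossible for closed $S$. These observations make the six conditions mutually exclusive and exhaustive, and they show directly that each representative $R_1,\dots,R_6$ sits in its own class.

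For the constructive direction I would, for $S$ in class $k$, first produce the representative $R_k$ and then invoke the reverse-construction theorems. The forward steps are exactly Lemma \ref{LemmaImplicationsToRay} (class~$6$), Lemma \ref{LemmaDefiningPointedStripeLimited} (class~$5$, whose hypothesis that $\closure{A\cap S}$ is always a sum of a compact set and a vector space is precisely the no-ray condition), Lemma \ref{LemmaDefiningPointedRectangle} (class~$4$), Lemma \ref{LemmaDefiningOpenIntervall} (class~$3$), Lemma \ref{LemmaDefiningCompactIntervall} (class~$2$), and Theorem \ref{TheoremAffineFormNothing} (class~$1$). From the representative I then obtain every set in the classes $\le k$: Theorem \ref{TheoremDefiningFromRay}, Theorem \ref{TheoremDefiningFromStripe}, Theorem \ref{TheoremBoundedDefining}, Theorem \ref{TheoremDefinitionFromOpenIntervall} and Theorem \ref{TheoremCompactVectDefinable} each produce exactly the geometric family that the partition identifies with classes $\le k$ (for instance the pointed stripe yields all ray-free convex sets, i.e.\ classes $1$--$5$). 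Transitivity of definability then gives the inclusion that all of classes $\le k$ are definable from $S$.

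The reverse inclusion is the preservation direction, handled class by class by the non-definability results of Section \ref{SectionNonImplications}: Lemma \ref{LemmaAffinePolyStable} for $k=1$, Lemma \ref{LemmaIndependenceCompact} for $k=2$, Lemma \ref{LemmaNoSemidentIndependent} for $k=3$, Lemma \ref{LemmaBoundedPerserved} for $k=4$, Lemma \ref{LemmaRayIntersectionStable} for $k=5$, and Lemma \ref{LemmaConvexPolyStable} for $k=6$. The key bookkeeping, and the place I expect the main care to be needed, is to verify that the invariant each lemma preserves matches the class boundary exactly: e.g.\ ``$B+V$ with no \dent{}'' is precisely classes $1$--$3$, while dropping the no-\dent{} requirement adds exactly class~$4$. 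Combining the two inclusions gives the central claim, and the interdefinability statements \ref{PartTheoremDescriptionOfEqClasses1}--\ref{PartTheoremDescriptionOfEqClasses6} follow: if $S$ is interdefinable with $R_k$ then $R_k$ is definable from $S$ (forcing the class $j$ of $S$ to satisfy $j\ge k$, since the classes are disjoint and $R_k$ lies in class $k$) and $S$ is definable from $R_k$ (forcing $j\le k$), hence $j=k$; conversely, membership in class~$k$ yields interdefinability by the two directions above. The main obstacle is thus not any single deep step but the precise alignment of the six forward constructions, the five reverse constructions, and the six preservation invariants against one common partition.
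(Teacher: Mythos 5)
Your proposal is correct and follows essentially the same route as the paper: the same forward lemmas (Lemmas \ref{LemmaDefiningCompactIntervall}, \ref{LemmaDefiningOpenIntervall}, \ref{LemmaDefiningPointedRectangle}, \ref{LemmaDefiningPointedStripeLimited}, \ref{LemmaImplicationsToRay}), the same reverse-construction theorems (Theorems \ref{TheoremCompactVectDefinable}, \ref{TheoremDefinitionFromOpenIntervall}, \ref{TheoremBoundedDefining}, \ref{TheoremDefiningFromStripe}, \ref{TheoremDefiningFromRay}), the same preservation lemmas (Lemmas \ref{LemmaAffinePolyStable}, \ref{LemmaIndependenceCompact}, \ref{LemmaNoSemidentIndependent}, \ref{LemmaBoundedPerserved}, \ref{LemmaRayIntersectionStable}, \ref{LemmaConvexPolyStable}), and the same partition check via Lemma \ref{LemmaClosedEqualsCompact} and Theorem \ref{TheoremClosureIffRay}. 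The only difference is organizational: you prove the monotone-definability clause first and derive the six interdefinability equivalences from it, whereas the paper proves the equivalences class by class and states the monotonicity afterwards; the mathematical content and the alignment of invariants with class boundaries are identical.
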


\begin{proof} %\todo{Tabellarisch}
    We start with showing the equivalences. For a better readebillity, we shorten ``infinitary pp-definability" to ``definability".
    
    Equivalence \ref{PartTheoremDescriptionOfEqClasses1}: Since $\{0\}$ is definable, being interdefinable with this set is the same as being definable over $(\IR;+,\cdot c \mid c \in \IR, c\mid c \in \IR)$. Every affine space is definable over $(\IR;+,\cdot c \mid c \in \IR, c\mid c \in \IR)$ by Theorem \ref{TheoremAffineFormNothing}. Conversely, every set which is definable over $(\IR;+,\cdot c \mid c \in \IR, c\mid c \in \IR)$ is affine by Lemma \ref{LemmaAffinePolyStable}.
    
    Equivalence \ref{PartTheoremDescriptionOfEqClasses2}: If $S$ is interdefinable, then it is definable by sets which are given as sum of compact sets and vector spaces. Thus it is itself of this shape by Lemma \ref{LemmaIndependenceCompact}. It cannot be affine, because affine sets cannot define non affine sety by Lemma \ref{LemmaAffinePolyStable}. 
    Conversely if $S$ has the described shape, it is definable from $[0,1]$ by Theorem \ref{TheoremCompactVectDefinable} and it defines $[0,1]$ by Lemma \ref{LemmaDefiningCompactIntervall}.
    
    Equivalence \ref{PartTheoremDescriptionOfEqClasses3}: If $S$ is interdefinable, it is definable and thus is given by the sum of a vector space and a bounded set without a \dent{} by Lemma \ref{LemmaNoSemidentIndependent}. Since it also defines $(0,1)$, it cannot be the sum of a compact set and a vector space by Lemma \ref{LemmaIndependenceCompact} and thus is not closed by Lemma \ref{LemmaClosedEqualsCompact}.
    Conversely, if $S$ has the describes shape, it is definable from $(0,1)$ by Theorem \ref{TheoremDefinitionFromOpenIntervall} and it cannot be a closed set and thus defines $(0,1)$ by Lemma \ref{LemmaDefiningOpenIntervall}.
    
    Equivalence \ref{PartTheoremDescriptionOfEqClasses4}: If $S$ is interdefinable, it is definable and thus is given by the sum of a vector space and a bounded set by Lemma \ref{LemmaBoundedPerserved}. Since it also defines the set with a \dent{}, it needs to have itself a \dent{} and cannot be the sum of a compact set and a vector space by Lemma \ref{LemmaNoSemidentIndependent}. 
    Conversely, if $S$ has the describes shape, it is definable from the pointed rectangle by Theorem \ref{TheoremBoundedDefining} and it defines the pointed rectangle by Lemma \ref{LemmaDefiningPointedRectangle}.
    
    Equivalence \ref{PartTheoremDescriptionOfEqClasses5}:
    By Theorem \ref{TheoremFall1Ausgerollt}, equivalence of \ref{NrTheoremFall1Ausgerollt6} and \ref{NrTheoremFall1Ausgerollt8}, $S$ has no ray intersection if and only if $S$ defines the pointed stripe.
    If $S$ is inter definable it thus has no ray intersection. Since $S$ also defines the pointed stripe which is not the sum of a bounded set and a vector space, $S$ is itself not the sum of a bounded set and a vector space by Lemma \ref{LemmaBoundedPerserved}.
    Conversely, if $S$ has the describes shape, it is definable from the pointed stripe. Moreover, it defines the pointed stripe by Lemma \ref{LemmaDefiningPointedStripeLimited}.
    
    Equivalence \ref{PartTheoremDescriptionOfEqClasses6}:
    This was already shown in Theorem \ref{TheoremFall1Ausgerollt} as equivalence of \ref{NrTheoremFall1Ausgerollt1} and \ref{NrTheoremFall1Ausgerollt8}.
    
    That each convex set is contained in exactly one of these cases follows from the geometric descriptions. To see this, note that having a ray intersection implies that the set is not the sum of a bounded set and a vector space and that the sum of a compact convex set and a vector space is closed by Lemma \ref{LemmaClosedEqualsCompact} and in particular cannot have a \dent{}.
    
    No non-convex set is infinitary pp-definable from the convex sets by Lemma \ref{LemmaConvexPolyStable}. No convex set is infinitary pp-definable from a convex set in a lower numbered equivalence class by Lemmas \ref{LemmaAffinePolyStable}, %1-2
    \ref{LemmaNoSemidentIndependent}, %2-3
    \ref{LemmaIndependenceCompact}, %3-4
    \ref{LemmaBoundedPerserved} and  %4-5
    \ref{LemmaRayIntersectionStable}. %5-6
    
    Conversely, every set infinitary pp-defines the set which describes its equivalence class and thus every element of a lower equivalence class by Theorems \ref{TheoremAffineFormNothing}, \ref{TheoremCompactVectDefinable}, \ref{TheoremDefinitionFromOpenIntervall}, \ref{TheoremBoundedDefining}, \ref{TheoremDefiningFromStripe} and \ref{TheoremDefiningFromRay}.
\end{proof}

\subsection{Geometric descriptions}
The property of infinitary primitively positively define other sets divides the convex sets into 6 classes as shown in the previous section. We now want to link this structure theoretic notion of definability with an elementary geometric description and prove the equivalence of the first three characterisations in Theorem \ref{TheoremMainGeometrisch}. %\todo{Einleitung korrekturlesen}

\begin{thm}[First part of Theorem \ref{TheoremMainGeometrisch}]
\label{TheoremIPPGeometrisch2}
    Consider the structure $(\IR;+,\cdot c \mid c \in \IR, c\mid c \in \IR)$ and a set $\tau \subset \bigsqcup_{n\in \IN} \powerset(\IR^n)$ of some subsets of real vector spaces. Assume that $\tau$ contains only convex sets. Then, the following are equivalent:
    \begin{enumerate}
        \item \label{PartTheoremMainGeometrisch1}
        The set $\tau$ is closed under infinitary pp-definitions over the structure $(\IR;+,\cdot c \mid c \in \IR, c\mid c \in \IR)$.
        \item \label{PartTheoremMainGeometrisch2}
        The set $\tau$ has the following properties:
        \begin{itemize}
            \item It contains the empty set and $\IR^1$.
            \item For each $S\subset \IR^n\in \tau$ and each affine map $f:\IR^n \to \IR^m$, the set $f(S)$ is in $\tau$.
            \item For each $T\subset \IR^m\in \tau$ and each affine map $f:\IR^n \to \IR^m$, the set $f^{-1}(T)$ is in $\tau$.
            \item For each $n$ and each subset $I \subset \tau\cap {\powerset(\IR^n)}$, the intersection $\bigcap_{S\in I} S\subset \IR^n$ is in $\tau$.
        \end{itemize}
        \item \label{PartTheoremMainGeometrisch3}
        The set $\tau$ is one of the following 6 sets:
        \begin{itemize}
            \item All affine sets,
            \item All sums $\{b+v\mid b\in B, v\in V\}$ of a vector space $V$ and a compact convex set $B$,
            \item All sums $\{b+v\mid b\in B, v\in V\}$ of a vector space $V$ and a bounded convex set $B$ where the sum has no \dent{},
            \item All sums $\{b+v\mid b\in B, v\in V\}$ of a vector space $V$ and a bounded convex set $B$,
            \item All convex sets without a ray intersection or
            \item All convex sets.
        \end{itemize}
    \end{enumerate}
    The equivalence of \ref{PartTheoremMainGeometrisch1} and \ref{PartTheoremMainGeometrisch2} also holds for non convex sets.
\end{thm}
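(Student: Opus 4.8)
The plan is to prove the equivalences $\ref{PartTheoremMainGeometrisch1}\Leftrightarrow\ref{PartTheoremMainGeometrisch2}$, $\ref{PartTheoremMainGeometrisch2}\Rightarrow\ref{PartTheoremMainGeometrisch3}$ and $\ref{PartTheoremMainGeometrisch3}\Rightarrow\ref{PartTheoremMainGeometrisch2}$, reducing essentially all of the content to the classification already established in Theorem \ref{TheoremDescriptionOfEqClasses}. The equivalence $\ref{PartTheoremMainGeometrisch1}\Leftrightarrow\ref{PartTheoremMainGeometrisch2}$ is where convexity plays no role: the direction $\ref{PartTheoremMainGeometrisch2}\Rightarrow\ref{PartTheoremMainGeometrisch1}$ is exactly Lemma \ref{LemmaIPPGeometrisch}, and for $\ref{PartTheoremMainGeometrisch1}\Rightarrow\ref{PartTheoremMainGeometrisch2}$ I would observe that each of the four listed properties is itself a special case of an infinitary pp-definition over $(\IR;+,\cdot c \mid c \in \IR, c\mid c \in \IR)$. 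Indeed $\emptyset$ is definable by \textsf{false} and $\IR^1$ by $x=x$; an intersection $\bigcap_{S\in I}S$ is the infinite conjunction $\bigwedge_{S\in I} x\in S$; and for an affine map $f(x)=Ax+b$ both $f(S)=\{y\mid \exists x\colon y=Ax+b\land x\in S\}$ and $f^{-1}(T)=\{x\mid Ax+b\in T\}$ are pp-definable, because the relation $y=Ax+b$ is pp-definable in this structure. As none of this uses convexity, $\ref{PartTheoremMainGeometrisch1}\Leftrightarrow\ref{PartTheoremMainGeometrisch2}$ holds for arbitrary sets, as asserted in the last sentence of the theorem.

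For $\ref{PartTheoremMainGeometrisch2}\Rightarrow\ref{PartTheoremMainGeometrisch3}$ I would first invoke $\ref{PartTheoremMainGeometrisch1}$ to see that $\tau$ is closed under infinitary pp-definitions, while by hypothesis every member of $\tau$ is convex and $\IR^1\in\tau$ is affine. Theorem \ref{TheoremDescriptionOfEqClasses} partitions the convex sets into the six classes $1,\dots,6$ and states that the sets definable from a single member of class $j$ are exactly those of classes $\le j$. Let $k$ be the largest index for which $\tau$ contains a member of class $k$; this is well defined since class $1$ is always met and there are only six classes. Closure under definability then forces $\tau$ to contain every convex set of class $\le k$, as each such set is definable from a fixed member of class $k$, while maximality of $k$ together with convexity of all members of $\tau$ excludes any set of class $>k$. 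Hence $\tau$ is exactly the family of convex sets of classes $1$ through $k$.

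It remains, still inside $\ref{PartTheoremMainGeometrisch2}\Rightarrow\ref{PartTheoremMainGeometrisch3}$, to check that for each $k$ the union of classes $1$ through $k$ coincides with the $k$-th family listed in \ref{PartTheoremMainGeometrisch3}. This is the only genuinely fiddly bookkeeping, and it rests on a few elementary geometric identifications: an affine set $w+V$ is the sum of $V$ with the compact convex singleton $\{w\}$, so affine sets already sit inside the second family; a sum of a compact set and a vector space is closed (Lemma \ref{LemmaClosedEqualsCompact}), and a closed convex set has no \dent{}; and a sum of a bounded set and a vector space has no \dent{} exactly when it falls in classes $1$ to $3$. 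Matching each $k$ in this way yields precisely the six descriptions, and the fact that having a ray intersection rules out being a sum of a bounded set and a vector space keeps the top two families disjoint from the lower four.

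Finally, for $\ref{PartTheoremMainGeometrisch3}\Rightarrow\ref{PartTheoremMainGeometrisch2}$ I would verify the four closure properties for each of the six families. Membership of $\emptyset$ and $\IR^1$ is immediate, since both are affine and affine sets lie in every family. For the families of all affine and of all convex sets, closure under images, preimages and intersections is elementary, as each of these operations preserves affineness, respectively convexity. For the four intermediate families the required closure is exactly the content of Lemmas \ref{LemmaBoundedPerserved}, \ref{LemmaIndependenceCompact}, \ref{LemmaNoSemidentIndependent} and \ref{LemmaRayIntersectionStable}, whose proofs pass through Lemma \ref{LemmaIPPGeometrisch} and thereby already establish stability under the three operations. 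The main obstacle is not any single implication but the fact that all the hard geometric work is packaged into Theorem \ref{TheoremDescriptionOfEqClasses} and the non-definability lemmas; given those, the remaining difficulty is only the careful translation between the hierarchy of classes and the six explicit geometric descriptions.
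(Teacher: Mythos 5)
Your proposal is correct and takes essentially the same approach as the paper: $\ref{PartTheoremMainGeometrisch2}\Rightarrow\ref{PartTheoremMainGeometrisch1}$ via Lemma \ref{LemmaIPPGeometrisch}, $\ref{PartTheoremMainGeometrisch1}\Rightarrow\ref{PartTheoremMainGeometrisch2}$ by observing that the four listed operations are special cases of infinitary pp-definitions, and the equivalence with \ref{PartTheoremMainGeometrisch3} via Theorem \ref{TheoremDescriptionOfEqClasses}. The paper compresses this last step into the remark that it is a ``direct consequence'' of the classification, whereas you spell out the bookkeeping (the maximal-class argument for $\ref{PartTheoremMainGeometrisch2}\Rightarrow\ref{PartTheoremMainGeometrisch3}$ and the verification of $\ref{PartTheoremMainGeometrisch3}\Rightarrow\ref{PartTheoremMainGeometrisch2}$ through Lemmas \ref{LemmaBoundedPerserved}, \ref{LemmaIndependenceCompact}, \ref{LemmaNoSemidentIndependent} and \ref{LemmaRayIntersectionStable}); this is an expansion, not a different method.
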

\begin{proof}
    \ref{PartTheoremMainGeometrisch2} implies \ref{PartTheoremMainGeometrisch1} by Lemma \ref{LemmaIPPGeometrisch}.
    
    \ref{PartTheoremMainGeometrisch1} implies \ref{PartTheoremMainGeometrisch2}: If $\tau$ is closed under infinitary pp-definitions then it contains every affine set by Theorem \ref{TheoremAffineFormNothing}. Moreover, if $S$ is infinitary pp-definable, then $f(S)$ is definable as
    $$
        \{y \in \IR^m \mid \exists x \in \IR^n : f(x)=y \land x\in S\}
    $$
    where $f(y)=x$ is a system of linear equations. Similar $f^{-1}(T)$ is definable as
    $$
        \{x \in \IR^n \mid \exists y \in \IR^m : f(x)=y \land y\in T\}
    $$
    where $f(y)=x$ is again a system of linear equations. Finally, the infinite intersection is definable as infinite conjunction.

    \ref{PartTheoremMainGeometrisch1} is equivalent to \ref{PartTheoremMainGeometrisch3}: This is a direct consequence of Theorem \ref{TheoremDescriptionOfEqClasses}.
\end{proof}

The equivalent characterisation with linear maps will be proven in Theorem \ref{TheoremIPPGeometrisch3}.

\subsection{Application to polymorphisms}
\label{SubsectionApplicationPolymorphisms}
Until now, we have considered convex sets and infinitary primitive positive definability. We have only slightly touched the world of polymorphisms. Hoever, they are closely related:
If $f:\setstructA^\alpha \to \setstructA$ is a (possibly infinite arity) polymorphism of any $\sigma$-structure $\structA$ and $S$ is a primitively positively definable relation on $\setstructA$, then $S$ is also invariant under the the polymorphism. Thus we get a chain by looking at all relations on $\setstructA$ that are
\begin{enumerate}
    \item infinitarily primitively positively definable,
    \item preserved by all (possibly infinite arity) polymorphisms,
    \item preserved by all polymorphisms on $\setstructA$ of at most countable arity,
    \item preserved by all polymorphisms on $\setstructA$ of finite arity
\end{enumerate}
where each set is contained in the next one. It is worth to mention that in many possible definitions, polymorphims and their invariant relations form a Galois connection. For a systematic overview to this topic, see \cite[Table 1]{PoePotsd01}.

It turns out that in our case, that is if $\sigma$ contains $(+,\cdot c\mid c\in \IR, c\mid c\in \IR)$ and is limited to convex sets, the first three sets are the same and given by the set of the six families described in Theorem \ref{TheoremMainGeometrisch}. To prove this, consider the following lemma:
\begin{lemma} \label{LemmaTrennerPolys}
    Let $S_k$ with $k\in \{1,2,\dots,6\}$ be the six sets representatives defined in Theorem \ref{TheoremDescriptionOfEqClasses}. Chose any such $k$. Then, there is a set of polymorphisms $f\colon \IR^\IN \to \IR$ of arity $\omega$ which preserve $(\IR;+,\cdot c \mid c \in \IR, c\mid c \in \IR, S_k)$ but not $S_{k+1}$ if $k<6$ and no nonconvex sets. 
\end{lemma}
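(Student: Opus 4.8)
The plan is to exhibit, for each $k$, a single linear map $f_k\colon\IR^\IN\to\IR$ that preserves $S_k$ but not $S_{k+1}$, and to add a uniform family that rules out nonconvex sets. First note that an arity-$\omega$ polymorphism of $(\IR;+,\cdot c\mid c\in\IR, c\mid c\in\IR)$ is exactly a linear map $f\colon\IR^\IN\to\IR$ sending every constant sequence $(c)_{i\in\IN}$ to $c$; by the computation in Theorem \ref{TheoremAffineFormNothing} such an $f$ automatically preserves every affine set, so being a polymorphism of $(\IR;+,\cdot c\mid c\in\IR, c\mid c\in\IR, S_k)$ amounts to $f$ being such a linear map with $f(S_k^\IN)\subset S_k$. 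I will take the required set to be $\{P_\lambda:\lambda\in[0,1]\}\cup\{f_k\}$ (only the $P_\lambda$ for $k=6$), where $P_\lambda((s^i)_{i\in\IN})=\lambda s^0+(1-\lambda)s^1$. Each $P_\lambda$ preserves all convex sets, hence $S_k$, while by Lemma \ref{LemmaConvexPolyStable} every nonconvex set is violated by some $P_\lambda$; this disposes of the ``no nonconvex set'' clause. Moreover $f_k$ automatically separates $S_k$ from every higher class at once: a convex $T$ of class larger than $k$ infinitary pp-defines $S_{k+1}$, so by Lemma \ref{LemmaDefinablePolyStable} any polymorphism preserving $T$ preserves $S_{k+1}$; since $f_k$ fails $S_{k+1}$ it fails every such $T$. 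It therefore remains to build the $f_k$.

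The main tool for preservation is a Hahn--Banach argument: if $f$ is linear, fixes constants and is monotone on bounded sequences (so $f((x^i))\ge\inf_i x^i$), then applied coordinatewise it maps any \emph{bounded} sequence from a convex set $S$ into $\overline{S}$ --- otherwise Theorem \ref{Theorem Hahn-Banach2} separates the image $p$ from $\overline{\conv}\{s^i\}\subset\overline S$ by a functional $v$ with $\langle s^i-p,v\rangle\ge\epsilon$, and applying $f$ to the scalar sequence $\langle s^i,v\rangle$ gives $\langle p,v\rangle\ge\langle p,v\rangle+\epsilon$, a contradiction. Now take a Banach (or ultrafilter) limit $L$ on $\ell^\infty$ and extend it linearly to $\tilde L\colon\IR^\IN\to\IR$ by a Hamel basis. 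For $k=1$ put $f_1((s^i))=2s^0-s^1$: it is an affine combination, so preserves $S_1=\{0\}$, and $f_1(1,0,0,\dots)=2\notin[0,1]=S_2$. For $k=2$ put $f_2=\tilde L$: being monotone on bounded sequences it preserves the compact interval $S_2=[0,1]$, while $\tilde L\big((1-\tfrac1n)_n\big)=1\notin(0,1)=S_3$. For $k=3$ put $f_3((s^i))=\tfrac12 s^0+\tfrac12\tilde L\big((s^i)_{i\ge1}\big)$; for inputs in $S_3=(0,1)$ one has $s^0\in(0,1)$ and $\tilde L\in[0,1]$, hence $f_3\in(0,1)$, while taking $s^0=(0,0)$ and $s^i\to(1,0)$ inside $\rect$ yields $f_3=(\tfrac12,0)\notin\rect=S_4$.

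The cases $k=4,5$ are where the genuinely infinitary behaviour on \emph{unbounded} inputs is needed, and they are the crux. In both, the point is that membership in $S_k$ is governed by a single coordinate that stays bounded, so one may tamper freely with the unbounded directions to break the ray-like feature of $S_{k+1}$. For $k=4$ let $f_4$ be the first-coordinate evaluation $(s^i)\mapsto s^0$ on $\ell^\infty$, extended by a Hamel basis so that $f_4(u)=1$ for the fixed unbounded sequence $u=(0,1,2,3,\dots)$ (and fixing constants). As $S_4=\rect$ is bounded, every sequence in $S_4^\IN$ is bounded and $f_4$ acts as $s^0\in\rect$ there, so $S_4$ is preserved; taking $s^0=(0,0)$ and $s^i=(i,\tfrac1i)\in\stripe$ makes the first coordinate stream equal $u$ and the second bounded with initial entry $0$, whence $f_4=(1,0)\notin\stripe=S_5$. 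For $k=5$ let $f_5$ be a \emph{strictly positive} infinite convex combination $\sum_{m}\lambda_m(\cdot)_m$ (all $\lambda_m>0$, $\sum_m\lambda_m=1$) on $\ell^\infty$, extended by a Hamel basis so that $f_5(s)=-1$ for one nonnegative unbounded sequence $s$ with $\sum_m\lambda_m s_m=+\infty$ (e.g.\ $s_m=\lambda_m^{-1}$). Strict positivity forces, for any input from $\stripe$, the second coordinate $\sum_m\lambda_m y_m$ to lie in $(0,1)$ unless every input equals $(0,0)$; in the first case the result is in the open stripe for any value of the first coordinate, and in the second case it is $(0,0)$, so $S_5=\stripe$ is preserved regardless of how $f_5$ behaves on the first coordinate. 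Yet $f_5(s)=-1<0$ with $s\ge0$ shows $f_5$ fails $S_6=\{x\ge0\}$.

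The step I expect to require the most care is $k=5$: one must check that the Hamel-basis extension can simultaneously fix the constants, take the prescribed negative value on $s$, and stay linear, and that --- because $\stripe$-membership is controlled by the bounded second coordinate --- this wild extension never disturbs preservation of $\stripe$. Once this ``dominant bounded coordinate'' phenomenon is isolated, the verifications for $k=4$ and $k=5$ run in parallel, and together with $f_1,f_2,f_3$ and the family $\{P_\lambda\}$ they give the required separating set for every $k$.
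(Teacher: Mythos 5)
Your proposal is correct and takes essentially the same route as the paper's proof: the same five types of separating functionals (an affine combination, an ultrafilter/Banach limit, an average of such a limit with a coordinate evaluation, a plain coordinate evaluation, and a strictly positive infinite convex combination), each defined on the subspace of bounded sequences and extended linearly via a Hamel basis with a prescribed value on one unbounded sequence, assembled together with the binary convex combinations $P_\lambda$ to rule out nonconvex sets --- your extra Hahn--Banach monotonicity lemma is harmless overhead, and your observation that membership in $\stripe$ is governed by the bounded second coordinate is exactly the paper's argument for $g_5$. The only blemish is the witness $s^i=(i,\tfrac{1}{i})$ in the case $k=4$, which at $i=1$ gives $(1,1)\notin\stripe$; replacing it by, say, $s^i=(i,\tfrac{1}{2})$ for $i\ge 1$ repairs this trivially (the paper's own proof contains comparable index-level typos).
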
%\todo{Lemma + Beweis korrekturlesen}
\begin{proof}
    We consider 0 to be a natural number.
    Let $V\subset \IR^\IN$ be the linear subspace of bounded sequences. Let $\filt$ be a free ultrafilter on $\IN$. Define $g_i\colon V \to \IR$ and $y_i,z_i\in \IR^\IN$ for $i\in \{1,2,3,4,5\}$ as follows:
    \begin{align*}
        g_1((x_n)_{n\in \IN})&\coloneqq x_0-x_1+x_2
        &
        y_{1,n}&\coloneqq z_{1,n}\coloneqq \begin{cases} 0 \text{ if $n$ is even} \\ 1 \text{ else}\end{cases}
        \\
        g_2((x_n)_{n\in \IN})&\coloneqq \lim_{n\in \filt} x_n
        &
        y_{2,n}&\coloneqq z_{2,n}\coloneqq n^{-1}
        \\
        g_3((x_n)_{n\in \IN})&\coloneqq \frac{1}{2}\lim_{n\in \filt} x_n + \frac{1}{2} x_0
        &
        y_{3,n}&\coloneqq \begin{cases} 0 \text{ if $n=0$} \\ \frac{1}{2} \text{ else}\end{cases}
        \\ &&
        z_{3,n}&\coloneqq \begin{cases} 0 \text{ if $n=0$} \\ \frac{1}{n} \text{ else}\end{cases}
        \\
        g_4((x_n)_{n\in \IN})&\coloneqq x_0
        &
        y_{4,n}&\coloneqq  \begin{cases} 0 \text{ if $n=0$} \\ \frac{1}{2} \text{ else}\end{cases}
        \\ &&
        z_{4,n}&\coloneqq  n
        \\
        g_5((x_n)_{n\in \IN})&\coloneqq \sum_{n=0}^\infty 2^{-(n+1)}x_n
        &
        y_{5,n}&\coloneqq z_{5,n}\coloneqq n 
    \end{align*}
    Note that $g_2$ is well-defined, because the sequence is bounded and every bounded ultrafilter converges.
    Every $g_i$ is linear and $g_i$ applied to a constant function is the constant. Let $(f_i)_{i\in \{1,2,3,4,5\}}\colon \IR^n \to \IR$ be any linear map such that $f_i|_V=g_i$ and $f_i(0,1,2,\dots)=f_i((n)_{n\in \IN})=-1$.

    Now, $f_i$ are maps $\IR^\IN \to \IR$ which preserve $(+,\cdot c \mid c \in \IR)$ as they are vector spaces and they preserve $(c\mid c \in \IR)$, because the constant (and thus bounded) sequence $(c,c,\dots)$ is mapped to $c$ for all $c\in \IR$. Thus, all of those maps preserve all affine sets. 

    Furthermore, $g_2$ preserves the compact interval $[0,1]$, because every filter in a compact set converges in that set. The map $g_3$ preserves $(0,1)$, because $\lim_{n\in \filt} x_n$ will be in the closure $[0,1]$ and $x_0$ will be inside the interior $(0,1)$. 
    Thus, their average is inside the open interval. The map $g_4$ preserves all bounded sets including the realisation of $\rect$, because if $B$ is a bounded set, then every sequence in $B$ is also bounded and thus will be projected onto their first component $x_0$ which is inside $B$. Finally, $g_5$ preserves the pointed stripe $\stripe$ by the following argument: 
    Let $(x_n,x_n')_{n\in \IN}$ be a sequence in $\stripe\subset \IR^2$. Then, all of the $x_n$ are in $[0,1)$. If at least one of them is in $(0,1)$, then the sum $\sum_{n=0}^\infty 2^{-(n+1)}x_n$ is also positive and thus the image $(\sum_{n=0}^\infty 2^{-(n+1)}x_n,\sum_{n=0}^\infty 2^{-(n+1)}x_n')$ in $\stripe$. In the other case, where all $x_n$ are zero, we get since $(x_n,x_n')\in \stripe$ that also all $x_n'$ are zero. In this case, the image under $f_5$ is $(0,0)$, which is inside $\stripe$.

    We also get some non-preservation: The map $f_1$ does not preserve $[0,1]$, because $(y_{1,n})_{n\in \IN}$ is in $[0,1]^n$, but $f_1((x_n)_{n\in \IN})=-1$ is not. The image $f_2((y_{2,n})_{n\in \IN})$ is zero, because the limit of the sequence is zero, thus every free ultrafilter will converge to zero as well. This shows that $f_2$ does not preserve $(0,1)$. The map $f_3$ does not preserve $\rect$, because for every $n$, we have $(y_{3,n},z_{3,n})\in \rect$, but $f_3((y_{3,n},z_{3,n})_{n \in \IN})=(\frac{1}{4}, 0)\notin \rect$. We get similar that $(y_{4,n},z_n)\in \stripe$, but $f_4((y_{4,n},z_n)_{n\in \IN})=(0,-1)\notin \stripe$ thus $f_4$ does not preserve $\stripe$. Finally, $f_5((y_{5,n})_{n \in \IN})=-1$ thus $f_5$ does nor preserve the set of non-negative real numbers.

    To come back to the original question, we see that the set $\{(x_n)_{n\in \IN}\mapsto \lambda x_0 + (1-\lambda)x_1 \mid \lambda \in [0,1]\}$ preserves all convex sets and no other set by the definition of a convex set. This solves $k=6$. For $k<6$, take instead the set $\{f_k\}\cup \{(x_n)_{n\in \IN}\mapsto \lambda x_0 + (1-\lambda)x_1 \mid \lambda \in [0,1]\}$. We showed that it has the desired properties of preserving $S_k$ but not $S_{k+1}$.
\end{proof}
\begin{cor} \label{CorollaryCountablePolysTrennen}
    We consider the structure $(\IR;+,\cdot c \mid c \in \IR, c\mid c \in \IR, S)$ where $S$ is any convex set. Then, the infinitary pp-definable sets are exactly the sets which are invariant under all countable arity polymorphisms.
\end{cor}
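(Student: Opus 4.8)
The plan is to prove the two inclusions separately. The easy inclusion is that every infinitary pp-definable set is preserved by all at most countable arity polymorphisms: this is immediate from Lemma \ref{LemmaDefinablePolyStable}, whose second part gives invariance under \emph{all} polymorphisms, in particular those of at most countable arity. The content is therefore the reverse inclusion, so fix a relation $T\subset \IR^m$ preserved by every at most countable arity polymorphism of $(\IR;+,\cdot c \mid c \in \IR, c\mid c \in \IR, S)$, and the goal is to show $T$ is infinitary pp-definable.

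First I would observe that $T$ is convex. For each $\lambda\in[0,1]$ the binary map $(x,y)\mapsto \lambda x+(1-\lambda)y$ preserves the vector space operations, fixes every constant since $\lambda c+(1-\lambda)c=c$, and preserves the convex set $S$; hence it is a polymorphism of the structure, of finite and thus at most countable arity. As in the proof of Lemma \ref{LemmaConvexPolyStable}, a set preserved by all these maps is convex, so $T$ is convex. Let $k$ be the number of the class of $S$ in Theorem \ref{TheoremDescriptionOfEqClasses}, with representative $S_k$. Since $S$ and $S_k$ are infinitary pp-interdefinable, the all-arities version of Lemma \ref{LemmaDefinablePolyStable} shows that a map preserving $S$ together with the vector space operations and constants also preserves $S_k$ and conversely; thus the at most countable arity polymorphisms of $(\IR;+,\cdot c \mid c \in \IR, c\mid c \in \IR, S)$ and of $(\IR;+,\cdot c \mid c \in \IR, c\mid c \in \IR, S_k)$ coincide. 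By Theorem \ref{TheoremDescriptionOfEqClasses} it now suffices to show that the class of $T$ has number at most $k$, for then $T$ is infinitary pp-definable over $(\IR;+,\cdot c \mid c \in \IR, c\mid c \in \IR, S)$. If $k=6$ this is automatic, since every convex set lies in a class of number at most $6$.

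The main obstacle is the case $k<6$, where Lemma \ref{LemmaTrennerPolys} does the decisive work. Suppose towards a contradiction that $T$ lies in a class of number $j\ge k+1$. By Theorem \ref{TheoremDescriptionOfEqClasses}, $T$ then infinitary pp-defines the representative $S_{k+1}$ of class $k+1$, so $S_{k+1}$ is infinitary pp-definable over $(\IR;+,\cdot c \mid c \in \IR, c\mid c \in \IR, T)$ and is therefore preserved by every polymorphism preserving $T$, again by Lemma \ref{LemmaDefinablePolyStable}. On the other hand, Lemma \ref{LemmaTrennerPolys} applied to $k$ provides an arity $\omega$ map $f$ that preserves $S_k$ but not $S_{k+1}$. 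Since $f$ preserves $S_k$, it is an at most countable arity polymorphism of $(\IR;+,\cdot c \mid c \in \IR, c\mid c \in \IR, S_k)$, hence of the interdefinable structure $(\IR;+,\cdot c \mid c \in \IR, c\mid c \in \IR, S)$, so by hypothesis $f$ preserves $T$ and consequently $S_{k+1}$---contradicting the choice of $f$. Thus no class number $j>k$ is possible, $T$ lies in a class of number at most $k$, and the two families of relations coincide.
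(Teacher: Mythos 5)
Your proposal is correct and follows essentially the same route as the paper's proof: the easy direction via Lemma \ref{LemmaDefinablePolyStable}, reduction to the representative $S_k$ by interdefinability, and then Theorem \ref{TheoremDescriptionOfEqClasses} together with the separating $\omega$-ary maps of Lemma \ref{LemmaTrennerPolys} to rule out any $T$ in a higher class. The only cosmetic difference is that you first establish convexity of $T$ via the binary maps $(x,y)\mapsto \lambda x+(1-\lambda)y$ and argue by contradiction, whereas the paper argues contrapositively and handles nonconvex $T$ directly through the convex-combination maps included in the set from Lemma \ref{LemmaTrennerPolys}.
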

\begin{proof}
    Every set which is infinitary pp-definable is also invarant under all polymorphisms by Lemma \ref{LemmaDefinablePolyStable}. 
    Therefore, we also get that the polymorphisms of $(\IR;+,\cdot c \mid c \in \IR, c\mid c \in \IR, S)$ are exactly the polymorphisms of $(\IR;+,\cdot c \mid c \in \IR, c\mid c \in \IR, S_k)$ where $S_k$ is the set which is infinitary pp-interdefinable with $S$ and a representative of this equivalence class in Theorem \ref{TheoremDescriptionOfEqClasses}.
    Take any set $T$ which is not infinitary pp-definable. 
    By Theorem \ref{TheoremDescriptionOfEqClasses} we get that $T$ is nonconvex or $k<6$ and $T$ infinitary pp-defines $S_{k+1}$. 
    If every polymorphism of $(\IR;+,\cdot c \mid c \in \IR, c\mid c \in \IR, S_k)$ would preserve $T$, it would also preserve sets that are infinitary pp-definable from $T$. 
    In both cases, we get by Lemma \ref{LemmaTrennerPolys} that $T$ is not preserved under a countable arity polymorphism.
\end{proof}
This allows us to prove the rest of Theorem \ref{TheoremMainGeometrisch}:

\begin{thm}[Second part of Theorem \ref{TheoremMainGeometrisch}] \label{TheoremIPPGeometrisch3}
    Consider the structure $(\IR;+,\cdot c \mid c \in \IR, c\mid c \in \IR)$ and a set $\tau \subset \bigsqcup_{n\in \IN} \powerset(\IR^n)$ of some subsets of real vector spaces. Assume that $\tau$ contains only convex sets. Then, the following are equivalent:
    \begin{enumerate}
        \item \label{PartTheoremIPPGeometrisch3-1}
        The set $\tau$ is closed under infinitary pp-definitions over the structure $(\IR;+,\cdot c \mid c \in \IR, c\mid c \in \IR)$.
        \addtocounter{enumi}{2}
        \item \label{PartTheoremIPPGeometrisch3-4}
        There is a subset of the set $\{ f\colon \IR^\IN \to \IR\mid f((c)_{n\in \IN})=c \}$ of linear maps such that the set $\tau$ is the family of all convex sets that are preserved by all of those maps.
    \end{enumerate}
\end{thm}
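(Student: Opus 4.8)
The plan is to treat the two implications $\ref{PartTheoremIPPGeometrisch3-4}\Rightarrow\ref{PartTheoremIPPGeometrisch3-1}$ and $\ref{PartTheoremIPPGeometrisch3-1}\Rightarrow\ref{PartTheoremIPPGeometrisch3-4}$ separately, with one shared observation serving as the dictionary between the two sides. That observation is that the arity-$\omega$ polymorphisms of the base structure $(\IR;+,\cdot c \mid c \in \IR, c \mid c \in \IR)$ are exactly the maps $f\colon \IR^\IN \to \IR$ which are linear and satisfy $f((c)_{n\in \IN})=c$: commuting with $+$ and with every scalar multiplication $\cdot c$ forces $f$ to be linear, compatibility with each nullary constant symbol $c$ forces $f((c)_{n\in \IN})=c$, and conversely every such linear map is visibly a polymorphism. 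Hence the set of maps quantified over in \ref{PartTheoremIPPGeometrisch3-4} is precisely $\Pol$ of the base structure, restricted to arity $\omega$.

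For $\ref{PartTheoremIPPGeometrisch3-4}\Rightarrow\ref{PartTheoremIPPGeometrisch3-1}$, I would suppose $\tau$ is the family of all convex sets preserved by some set $F$ of such maps. Every $f\in F$ preserves the vector space operations and the constants, and by hypothesis preserves every $S\in\tau$, so $f$ is an arity-$\omega$ polymorphism of the structure $(\IR;+,\cdot c \mid c \in \IR, c \mid c \in \IR, S \mid S \in \tau)$. If $T$ is infinitary pp-definable over this structure, then Lemma \ref{LemmaDefinablePolyStable} shows that $T$ is preserved by every $f\in F$, and Lemma \ref{LemmaConvexPolyStable} shows that $T$ is convex; hence $T\in\tau$. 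This is exactly the statement that $\tau$ is closed under infinitary pp-definitions.

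For $\ref{PartTheoremIPPGeometrisch3-1}\Rightarrow\ref{PartTheoremIPPGeometrisch3-4}$, I would first invoke the equivalence of \ref{PartTheoremMainGeometrisch1} and \ref{PartTheoremMainGeometrisch3} from Theorem \ref{TheoremIPPGeometrisch2} to fix $k\in\{1,\dots,6\}$ so that $\tau$ is the $k$-th listed family; by Theorem \ref{TheoremDescriptionOfEqClasses} this family is exactly the collection of convex sets infinitary pp-definable over $(\IR;+,\cdot c \mid c \in \IR, c \mid c \in \IR, S_k)$. Then I would take $F$ to be the set of all arity-$\omega$ polymorphisms of $(\IR;+,\cdot c \mid c \in \IR, c \mid c \in \IR, S_k)$; by the opening observation $F$ is a subset of the linear constant-preserving maps. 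It remains to prove $\tau=\{T \text{ convex} \mid T \text{ preserved by all } f\in F\}$. The inclusion ``$\subseteq$'' is immediate from Lemma \ref{LemmaDefinablePolyStable}. For ``$\supseteq$'' I would argue contrapositively, as in Corollary \ref{CorollaryCountablePolysTrennen}: if $T$ is convex but $T\notin\tau$ then $k<6$ and, by Theorem \ref{TheoremDescriptionOfEqClasses}, $T$ lies in a strictly higher class, so $S_{k+1}$ is infinitary pp-definable over $(\IR;+,\cdot c \mid c \in \IR, c \mid c \in \IR, T)$. Lemma \ref{LemmaTrennerPolys} then supplies an arity-$\omega$ polymorphism $f$ of $(\IR;+,\cdot c \mid c \in \IR, c \mid c \in \IR, S_k)$, hence $f\in F$, which does not preserve $S_{k+1}$; since preserving $T$ together with the base operations would force $f$ to preserve everything infinitary pp-definable from $T$, in particular $S_{k+1}$, the map $f$ cannot preserve $T$. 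Thus $T$ is excluded from the right-hand family, which finishes the equality.

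The main obstacle is the inclusion ``$\supseteq$'' in the second implication: it is the only place where genuine content enters, since it requires both the classification of the six families (Theorem \ref{TheoremDescriptionOfEqClasses}) to locate $T$ strictly above $S_k$ and the explicit separating polymorphisms of Lemma \ref{LemmaTrennerPolys} to witness non-preservation. Everything else is bookkeeping around Lemma \ref{LemmaDefinablePolyStable}. The one subtlety to keep in mind is that \ref{PartTheoremIPPGeometrisch3-4} fixes the arity to be exactly $\IN$, so I must phrase $F$ via arity-$\omega$ polymorphisms throughout rather than via the countable-arity polymorphisms appearing in Corollary \ref{CorollaryCountablePolysTrennen}; this is harmless because Lemma \ref{LemmaTrennerPolys} already produces separators of arity $\omega$.
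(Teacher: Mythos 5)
Your proof is correct and follows essentially the same route as the paper: the forward direction via Lemma \ref{LemmaDefinablePolyStable} (with convexity from Lemma \ref{LemmaConvexPolyStable}, which the paper leaves implicit), and the converse by reducing via Theorem \ref{TheoremDescriptionOfEqClasses} to a single-generator structure $(\IR;+,\cdot c \mid c\in\IR, c\mid c\in\IR, S_k)$ and separating with the arity-$\omega$ polymorphisms of Lemma \ref{LemmaTrennerPolys}. The only difference is cosmetic: you inline the contrapositive argument of Corollary \ref{CorollaryCountablePolysTrennen} rather than citing it, which lets you address explicitly (and correctly) the point that the maps in the statement have arity exactly $\IN$.
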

\begin{proof}
    \ref{PartTheoremIPPGeometrisch3-4} implies \ref{PartTheoremIPPGeometrisch3-1}: By Lemma \ref{LemmaDefinablePolyStable}, the set $\tau$ is closed under infinitary pp-definitions. It contains addition and scalars, because all of those maps are linear and it contains constants by assumption.

    \ref{PartTheoremIPPGeometrisch3-1} implies \ref{PartTheoremIPPGeometrisch3-4}: Every set which is closed under infinitary pp-definitions over this structure, and only contains convex relations, is given by the infinitary pp-definable sets over $(\IR;+,\cdot c \mid c \in \IR, c\mid c \in \IR, S)$, where $S$ is a single convex set, by Theorem \ref{TheoremDescriptionOfEqClasses}. Now, Corollary \ref{CorollaryCountablePolysTrennen} shows that there are polymorphisms $\IR^\IN \to \IR$ which only preserve these relations. As these polymorphisms preserve $(+,\cdot c \mid c \in \IR, c\mid c \in \IR)$, they preserve constants, addition and scalar multiplication and are therefore also linear.
\end{proof}

We are left with the question what happens when we only consider polymorphisms of finite arity? 
To answer this question, let us point out that the main difference between finite arity and infinite arity polymorphisms is that a finite arity polymorphism preserve not only infinitary pp-definable sets but also directed unions of them. We make this precise in Theorem \ref{TheoremForPolymorphisms}. This gives the following picture:
$$
\begin{tikzpicture} 
    \node (left A) at (0,0)  [align=center,text width=4cm, draw] {infinitary \\ pp-definable} ;
    \node (left C) at (0,-4) [align=center,text width=4cm, draw] {directed union of \\ infinitary pp-definable};
    \node (right A) at (6,0)  [align=center,text width=4cm, draw] {preserved by  all \\ polymorphisms};
    \node (right B) at (6,-2) [align=center,text width=4cm, draw] {preserved by  countable \\ arity polymorphisms};
    \node (right C) at (6,-4) [align=center,text width=4cm, draw] {preserved by  finite \\ arity polymorphisms};
    \draw
        (left A) edge[-Latex] (right A)
        (left C) edge[-Latex] (right C)
        (left A) edge[-Latex] (left C)
        (right A) edge[-Latex] (right B)
        (right B) edge[-Latex] (right C)
        ;
\end{tikzpicture}
$$

\begin{thm} \label{TheoremForPolymorphisms}
    Let $\structA$ be a $\sigma$-structure and $S\subset \setstructA^n$. The following are equivalent:
    \begin{enumerate}
        \item \label{TheoremForPolymorphismsNR1}
        There is a directed set $N\subset \powerset(S)$ ordered by inclusion of  infinitary pp-definable sets (over $\structA$) such that $S = \bigcup_{S' \in N} S'$.
        \item \label{TheoremForPolymorphismsNR2}
        For each finite subset $F\subset S$, there is a set $S'$ between $F$ and $S$ which is infinitary pp-definable.
    \end{enumerate}
    Each set of this type has the following property:
    \begin{enumerate}[label=\Alph*., ref=\Alph*] 
        \item \label{TheoremForPolymorphismsNR3}
        The set is preserved under finite arity polymorphisms.
    \end{enumerate}
\end{thm}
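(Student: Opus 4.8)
The plan is to establish the two implications $\ref{TheoremForPolymorphismsNR1}\Rightarrow\ref{TheoremForPolymorphismsNR2}$ and $\ref{TheoremForPolymorphismsNR2}\Rightarrow\ref{TheoremForPolymorphismsNR1}$ separately, and then to read off property \ref{TheoremForPolymorphismsNR3} directly from characterisation \ref{TheoremForPolymorphismsNR2}. For $\ref{TheoremForPolymorphismsNR1}\Rightarrow\ref{TheoremForPolymorphismsNR2}$ I would take a finite set $F=\{s_1,\dots,s_k\}\subset S$. Since $S=\bigcup_{S'\in N}S'$, each $s_i$ lies in some $S'_i\in N$, and because $N$ is directed, finitely many applications of the directedness condition yield a single $S'\in N$ containing all of $S'_1,\dots,S'_k$, hence containing $F$. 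As $S'\subset S$ and $S'$ is infinitary pp-definable, it is the required intermediate set (the empty $F$ being handled by the infinitary pp-definable set $\emptyset$).

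The implication $\ref{TheoremForPolymorphismsNR2}\Rightarrow\ref{TheoremForPolymorphismsNR1}$ is the heart of the argument, because the naive candidate for $N$ --- the collection of all infinitary pp-definable subsets of $S$ --- need not be directed: unions of infinitary pp-definable sets are generally not infinitary pp-definable, and \ref{TheoremForPolymorphismsNR2} supplies definable supersets only for \emph{finite} subsets. To build a genuinely directed family I would let $D$ be the set of finite subsets of $S$ and, for each $F\in D$, fix via \ref{TheoremForPolymorphismsNR2} an infinitary pp-definable $S_F$ with $F\subset S_F\subset S$. Then I would set
$$
    T_F\coloneqq\bigcap_{\substack{F'\in D\\ F'\supset F}} S_{F'}.
$$
Each $T_F$ is an intersection of sets all lying in the same power $\setstructA^n$, hence sharing the same $n$ free variables, so by closure of infinitary pp-formulas under conjunctions with finitely many free variables (Definition \ref{DefinitionIPPDefinable}) each $T_F$ is again infinitary pp-definable, and taking $F'=F$ in the intersection gives $F\subset T_F\subset S$. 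The assignment $F\mapsto T_F$ is monotone: if $F\subset G$ then the index set defining $T_G$ is contained in that defining $T_F$, whence $T_F\subset T_G$. Consequently $N\coloneqq\{T_F\mid F\in D\}$ is directed, since for any $F_1,F_2$ the set $F_1\cup F_2\in D$ yields the common upper bound $T_{F_1\cup F_2}$. Finally $\bigcup_{F\in D}T_F=S$, because each $T_F\subset S$, and conversely every $s\in S$ lies in $T_{\{s\}}$ as $s\in F'\subset S_{F'}$ for all $F'\supset\{s\}$.

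For property \ref{TheoremForPolymorphismsNR3}, let $p\colon\setstructA^m\to\setstructA$ be a polymorphism of finite arity and let $s_1,\dots,s_m\in S$. The decisive point is that $m$ is finite: applying \ref{TheoremForPolymorphismsNR2} to the finite set $F=\{s_1,\dots,s_m\}$ produces an infinitary pp-definable $S'$ with $F\subset S'\subset S$. By Lemma \ref{LemmaDefinablePolyStable}, $S'$ is invariant under all finite arity polymorphisms, so the componentwise image $(p(s_{1,1},\dots,s_{m,1}),\dots,p(s_{1,n},\dots,s_{m,n}))$ lies in $S'$, and hence in $S$. Thus $S$ is preserved under $p$.

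I expect the main obstacle to be precisely the directedness issue in $\ref{TheoremForPolymorphismsNR2}\Rightarrow\ref{TheoremForPolymorphismsNR1}$: the intersection construction $T_F$ is what converts the merely pointwise information of \ref{TheoremForPolymorphismsNR2} into a monotone, and therefore directed, family while preserving infinitary pp-definability. The remaining steps are routine, and property \ref{TheoremForPolymorphismsNR3} rests solely on the finiteness of the arity of $p$ --- the feature that distinguishes the finite-arity situation here from the infinite-arity polymorphisms treated earlier, where a single intermediate definable set can no longer capture all inputs.
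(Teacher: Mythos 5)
Your proof is correct and follows essentially the same route as the paper: the directedness argument for \ref{TheoremForPolymorphismsNR1}$\Rightarrow$\ref{TheoremForPolymorphismsNR2}, the intersection construction $T_F=\bigcap_{F'\supset F}S_{F'}$ (identical to the paper's $S_F$) to obtain a directed family for \ref{TheoremForPolymorphismsNR2}$\Rightarrow$\ref{TheoremForPolymorphismsNR1}, and invariance of the intermediate definable set via Lemma \ref{LemmaDefinablePolyStable} for property \ref{TheoremForPolymorphismsNR3}. Your write-up is in fact slightly more careful than the paper's, e.g.\ in verifying $\bigcup_F T_F=S$ and in handling the empty finite subset.
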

\begin{proof} % of {Theorem \ref{TheoremForPolymorphisms}}
    \ref{TheoremForPolymorphismsNR1} implies \ref{TheoremForPolymorphismsNR2}: For each $f\in F$ choose any $S_f \in N$ such that $f\in S_f$. Choose $S'$ as a common successor of those elements. This exists since $N$ is a net.
    
    \ref{TheoremForPolymorphismsNR2} implies \ref{TheoremForPolymorphismsNR1}: For each finite subset $F$, let $S'_F$ be an infinitary pp-definable set containing $F$. Define $S_F\coloneqq \bigcap_{G\supset F, \text{ $G$ finite}} S'_G$ and note that this is an infinitary pp-definition. Define $N$ as the net $\{S_F\mid F\subset \setstructA \text{ finite}\}$. This has all properties: A successor of $S_F$ and $S_G$ is given by $S_{F\cup G}$ for $F,G$ finite subsets of $\setstructA$.
    
    \ref{TheoremForPolymorphismsNR2} implies \ref{TheoremForPolymorphismsNR3}: Let $p\colon \setstructA^m\to \setstructA$ be a finite arity polymorphism and take $a=(a_1,\dots ,a_m)\in S^m\subset (\setstructA^n)^m$. Then, there is $S'\subset S$ infinitary pp-definable such that $\{a_1,\dots ,a_m\} \subset S'$. Since $S'$ is polymorphism invariant by Lemma \ref{LemmaDefinablePolyStable} and $a\in (S')^m$, it follows that $p^n(a)\in S'\subset S$ where $p^n$ is considered as the map from $(\setstructA^n)^m$ to $\setstructA^n$ applying $p$ column wise. Thus, $S$ is invariant under polymorphisms.
\end{proof}
\begin{example}
    The implication \ref{TheoremForPolymorphismsNR3} to \ref{TheoremForPolymorphismsNR2} of Theorem \ref{TheoremForPolymorphisms} holds for countable structures \cite[Theorem 6.1.10]{theBodirsky}. We want to notice that proof there is not by the author of the book but by Marcello Mamino as explained in \cite[Section 6.1.2]{theBodirsky}. But the implication is not true in general:
    
    As an example, take the structure $(\setstructA,<)$ with the set $\setstructA=\{x \in \IR \mid x \in \IQ \lor x>0\}$ and the inherited relation $<$ from the real numbers. Every finite arity polymorphism will preserve the subset of the positive numbers as they have uncountably many lower numbers. 
    However, this set is not a directed union of infinitary pp-definable sets: It is easy to check that the nonempty infinitary pp-definable subsets of $\setstructA^n$ are given by identifying some of the $n$ variables and defining a partial order on the quotients. So $\setstructA$ itself is the only nonempty infinitary pp-definable subset of $A$.
\end{example}

\begin{lemma} \label{LemmaIPPandUnions}
    Consider a set of relations $\tau$ on $\IR$ which contains the structure $(\IR;+,\cdot c \mid c \in \IR, c\mid c \in \IR)$ and a non-affine convex set. Suppose furthermore that $\tau$ is closed under infinitary pp-definitions and contains all sets $S$ where there is a net $N\subset \powerset(S)\cap \tau$ ordered by inclusion such that $\bigcup_{S' \in N} S' = S$. 
    Then, $\tau$ contains all convex sets.
\end{lemma}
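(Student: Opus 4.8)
The plan is to combine the classification from Theorem~\ref{TheoremDescriptionOfEqClasses} with the directed-union closure to realise every convex set as a directed union of sets that $\tau$ already contains. The guiding observation is that an arbitrary convex set is the union of its compact convex subsets, and that compact convex sets sit in the smallest non-affine class of the classification.

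First I would use the classification to determine the content of $\tau$. Since $\tau$ contains a non-affine convex set $T$, that set lies in one of the classes numbered $2$ through $6$; call its index $k$, so $k\ge 2$. Because $\tau$ contains the operations of $(\IR;+,\cdot c \mid c\in\IR, c\mid c\in\IR)$ and is closed under infinitary pp-definitions, the final statement of Theorem~\ref{TheoremDescriptionOfEqClasses} shows that $\tau$ contains every set lying in a class with index at most $k$. In particular $\tau$ contains all of class~$2$, hence every compact convex subset of every $\IR^n$: such a set is of the form $B+\{0\}$ with $B$ compact convex, and a single point is in any case affine and therefore in $\tau$ as well.

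Next I would fix an arbitrary convex set $S\subset\IR^n$ and let $N$ be the family of all compact convex subsets of $S$. By the previous paragraph $N\subset\powerset(S)\cap\tau$. The family $N$ is directed by inclusion: given $C_1,C_2\in N$, the union $C_1\cup C_2$ is compact, so $\conv(C_1\cup C_2)$ is compact by Corollary~\ref{CorollaryConvexHullCompact}, is convex, and is contained in $S$ since $S$ is convex and contains $C_1\cup C_2$; thus $\conv(C_1\cup C_2)\in N$ dominates both $C_1$ and $C_2$. Moreover $\bigcup_{C\in N}C=S$, because every singleton $\{s\}$ with $s\in S$ is a compact convex subset of $S$. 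Applying the directed-union hypothesis to the net $N$ yields $S\in\tau$, and since $S$ was an arbitrary convex set, $\tau$ contains all convex sets.

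The only genuinely load-bearing step, and thus the main obstacle to watch, is verifying that $N$ is directed: this is exactly where convexity and Corollary~\ref{CorollaryConvexHullCompact} enter, since without the fact that the convex hull of a compact set is again compact the dominating element would leave the family. The role of the non-affine hypothesis is equally essential: it is precisely what pushes $k$ up to at least $2$ and thereby places the compact convex sets inside $\tau$; the remainder of the argument is bookkeeping driven by the classification.
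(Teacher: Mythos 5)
Your proof is correct, but it takes a genuinely different route from the paper's. The paper applies the directed-union hypothesis exactly once and at a single, one-dimensional spot: since $\tau$ contains a non-affine convex set, the classification (Theorem \ref{TheoremDescriptionOfEqClasses}) puts every compact interval $[0,n]$ into $\tau$; the closed ray $\{x \in \IR \mid x \ge 0\}$ is then the countable increasing union $\bigcup_{n\in\IN}[0,n]$, hence in $\tau$; and finally Theorem \ref{TheoremDefiningFromRay} (every convex set is infinitary pp-definable from the ray) finishes the proof. You instead apply the directed-union hypothesis to each convex set $S$ separately, realising $S$ as the directed union of all its compact convex subsets, with directedness secured by $\conv(C_1\cup C_2)$ and Corollary \ref{CorollaryConvexHullCompact}, and with singletons guaranteeing that the union exhausts $S$. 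Both arguments are sound, and the trade-off is instructive: the paper's version only ever uses a countable increasing chain, so it actually establishes the lemma under the formally weaker hypothesis of closure under countable increasing unions, but it leans on the heaviest definability result in the paper (the Hahn--Banach-based Theorem \ref{TheoremDefiningFromRay}); your version uses the directed-union hypothesis in its full strength (an uncountable directed family for each $S$), but in exchange needs only the class-$2$ portion of the classification --- that compact convex sets are definable from any non-affine convex set --- plus the elementary fact that convex hulls of compact sets are compact. One trivial edge case worth a half-sentence in a final write-up: for $S=\emptyset$ your family $N$ is empty, but $\emptyset$ is affine and hence lies in $\tau$ directly, as does every singleton, so nothing breaks.
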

\begin{proof}
    Every compact interval $[0,n]$ for $n\in \IN$ is infinitary pp-definable from $\tau$ by Theorem \ref{TheoremDescriptionOfEqClasses} as $\tau$ contains a non-affine convex set. 
    Now, the closed ray $\{x\in \IR \mid x\ge 0\}$ is the directed union $\bigcup_{n\in \IN} [0,n]$, so it is also in $\tau$. Since every convex set is infinitary pp-definable from the ray by Theorem \ref{TheoremDescriptionOfEqClasses} or \ref{TheoremDefiningFromRay}, they are all in $\tau$.
\end{proof}

This lemma allows us to go the final step and prove the last remaining theorem from Section \ref{SectionResultsShort}:

\begin{thm}[Theorem \ref{TheoremClassificationUpPolymorphisms}] 
    Let $C$ be a locally closed clone on $\IR$ such that $C$ is contained in the finite arity polymorphisms of $(\IR;+,\cdot c \mid c\in \IR, c\mid c \in \IR)$ and $C$ contains the finite arity polymorphisms of the structure of all convex relations. Then $C$ is one of those two clones:
    \begin{enumerate}
        \item The polymorphism clone of $(\IR;+,\cdot c \mid c\in \IR, c\mid c \in \IR)$ respectively the polymorphism clone of all affine relations.
        In this clone, all elements $f\colon \IR^n \to \IR$ can be described as
        $$
            (a_1,a_2,\dots ,a_n) \mapsto (\lambda_1a_1+\lambda_2a_2+\dots +\lambda_n a_n)
        $$
        where $\lambda_1,\lambda_2,\dots ,\lambda_n\in \IR$ can be any constants such that $\lambda_1+\lambda_2+\dots +\lambda_n = 1$.
        \item The polymorphism clone of the structure containing all convex relations.
        In this clone, all elements $f\colon \IR^n \to \IR$ can be described as
        $$
            (a_1,a_2,\dots ,a_n) \mapsto (\lambda_1a_1+\lambda_2a_2+\dots +\lambda_n a_n)
        $$
        where $\lambda_1,\lambda_2,\dots ,\lambda_n\ge 0$ and $\lambda_1+\lambda_2+\dots +\lambda_n = 1$.
    \end{enumerate}
    In particular when $p$ is a finite arity polymorphism of $(\IR;+,\cdot c \mid c\in \IR, c\mid c \in \IR)$ that preserves any non-affine convex set, then it preserves all convex sets.
\end{thm}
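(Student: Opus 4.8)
The plan is to run the Galois correspondence between locally closed clones and their invariant relations and then bootstrap with Lemma \ref{LemmaIPPandUnions}. Write $\operatorname{Inv}(C)$ for the set of all finitary relations on $\IR$ preserved by every function in $C$. Since $C$ is locally closed, Theorem \ref{TheoremLocallyClosedClone} gives $C=\Pol(\Gamma)$ for some $\Gamma$, so $\Gamma\subset\operatorname{Inv}(C)$ and consequently $C=\Pol(\operatorname{Inv}(C))$; hence it suffices to determine $\tau\coloneqq\operatorname{Inv}(C)$. Because $C$ contains every finite arity polymorphism of all convex relations and a relation is convex exactly when preserved by all binary maps $P_\lambda$, $\lambda\in[0,1]$ (Lemma \ref{LemmaConvexPolyStable}), each relation in $\tau$ is convex. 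Because $C$ is contained in the finite arity polymorphisms of $(\IR;+,\cdot c\mid c\in\IR, c\mid c\in\IR)$, the order-reversing property of $\operatorname{Inv}$ together with Lemma \ref{LemmaAffinePolyStable} and Theorem \ref{TheoremAffineFormNothing} places every affine set in $\tau$. Thus $\{\text{affine sets}\}\subset\tau\subset\{\text{convex sets}\}$, and the two target clones are precisely $\Pol(\text{affine sets})$ and $\Pol(\text{convex sets})$, whose coefficient descriptions follow from the polymorphism equations for $+$, $\cdot c$, $c$ (forcing linearity with $\sum\lambda_i=1$) and, in the convex case, from preservation of the ray (forcing $\lambda_i\ge 0$).

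I would then split into two cases according to whether $\tau$ contains a non-affine convex set. If it does not, then $\tau$ equals the affine sets and $C=\Pol(\tau)$ is the clone of affine combinations, the first alternative.

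Otherwise $\tau$ contains a non-affine convex set, and I would verify the hypotheses of Lemma \ref{LemmaIPPandUnions} for $\tau$. Membership of $(\IR;+,\cdot c, c)$ and of a non-affine convex set is already established. Closure of $\tau$ under infinitary pp-definitions holds since any set infinitary pp-definable over relations in $\tau$ is invariant under all finite arity polymorphisms of that reduct by Lemma \ref{LemmaDefinablePolyStable}, and every $f\in C$ is such a polymorphism (it preserves all relations in $\tau$), so the defined set lies back in $\tau$. The remaining and decisive hypothesis is closure under directed unions: if $S=\bigcup_{S'\in N}S'$ with $N\subset\powerset(S)\cap\tau$ directed by inclusion, then for $f\in C$ of arity $m$ and any $a_1,\dots,a_m\in S$, directedness yields a single $S'\in N\subset\tau$ containing all $a_i$, so the componentwise image lands in $S'\subset S$; thus $f$ preserves $S$ and $S\in\tau$. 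This is exactly where finite arity does real work (compare Theorem \ref{TheoremForPolymorphisms}), and I expect this step, rather than any calculation, to be the crux of the argument. Lemma \ref{LemmaIPPandUnions} then forces $\tau$ to contain every convex set, so $\tau$ equals the convex sets and $C=\Pol(\tau)$ is the clone of convex combinations, the second alternative.

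For the final ``in particular'' statement, let $p$ be a finite arity polymorphism of $(\IR;+,\cdot c, c)$ preserving a non-affine convex set $S_0$. I would let $C$ be the local closure of the clone generated by $p$ together with all convex combinations. Then $C$ is locally closed, contains the finite arity polymorphisms of all convex relations, and is contained in the affine combinations, since those form a locally closed clone containing both $p$ and all convex combinations. As $p$ and all convex combinations preserve the convex set $S_0$, so does the generated clone and its local closure, whence $S_0\in\operatorname{Inv}(C)$. Since $S_0$ is non-affine, the dichotomy just proved rules out the affine case and forces $C$ to be the clone of convex combinations; therefore $p\in C$ is a convex combination and preserves every convex set.
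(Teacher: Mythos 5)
Your proposal is correct and follows essentially the same route as the paper: reduce via the Galois correspondence to $\tau=\operatorname{Inv}(C)$, sandwich $\tau$ between the affine and the convex sets, and conclude with Lemma \ref{LemmaIPPandUnions} after verifying closure under infinitary pp-definitions and directed unions (the paper cites Theorem \ref{TheoremForPolymorphisms} where you argue the directed-union step directly, which is a harmless sharpening). Your explicit derivation of the ``in particular'' clause, via the locally closed clone generated by $p$ and the convex combinations, is also sound, merely spelling out what the paper leaves implicit.
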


\begin{proof}
    The set of all maps 
    $f\colon \IR^n \to \IR, (a_1,a_2,\dots ,a_n) \mapsto (\lambda_1a_1+\lambda_2a_2+\dots +\lambda_n a_n)$ where the sum of the lambda is 1
    is clearly contained in the set of all finite arity polymorphisms of affine sets which is contained in the polymorphism clone of $(\IR;+,\cdot c \mid c\in \IR, c\mid c \in \IR)$. 
    Conversely assume that $f\colon \IR^n \to \IR$ preserves $(\IR;+,\cdot c \mid c\in \IR, c\mid c \in \IR)$. 
    Then, it needs to be a vector space homomorphism and is thus given by $(a_1,a_2,\dots ,a_n) \mapsto \lambda_1a_1+\lambda_2a_2+\dots +\lambda_n a_n$. As it also preserves constants, the sum of the coefficients need to be equal to 1.

    Similar, the set of all maps $f\colon \IR^n \to \IR, (a_1,a_2,\dots ,a_n) \mapsto \lambda_1a_1+\lambda_2a_2+\dots +\lambda_n a_n$, where the sum of the coefficients lambda is 1 and every coefficient is nonnegative, is contained in the polymorphism clone of all convex sets by the definition of a convex set. Conversely, the polymorphism clone of all convex sets needs to be contained in the polymorphism clone of all affine sets and thus has the above description, where the sum of the coefficients is 1. To show that every coefficient is nonnegative, note that the polymorphism has to preserve the set $\{x\in \IR \mid x\ge 0\}$ of nonnegative numbers. Thus, for all points $(a_1,\dots ,a_n)$ with nonnegative coordinates, also the sum $\lambda_1a_1+\lambda_2a_2+\dots +\lambda_n a_n$ needs to be nonnegative. That is only possible if all coefficients $\lambda_i$ are nonnegative as we see by taking $a_i=1$ and all other coordinates to be zero.

    Clearly, both of these clones are locally closed clones by Theorem \ref{TheoremLocallyClosedClone}.

    Conversely a locally closed clone between them would be the polymorphism clone of any family $\tau$ of subsets of $\IR^n$. 
    We may assume that $\tau$ is the set of all relations which are invariant under the clone. Then, $\tau$ contains only convex sets, and contains all relations from $(\IR;+,\cdot c \mid c\in \IR, c\mid c \in \IR)$. Furthermore, $\tau$ is closed under infinitary pp-definability and directed unions by Theorem \ref{TheoremForPolymorphisms}. 
    If $\tau$ contains only affine sets, then the clone is the polymorphism clone of $(\IR;+,\cdot c \mid c\in \IR, c\mid c \in \IR)$ respectively the polymorphism clone of all affine relations as shown in the first paragraph of this proof. Otherwise, $\tau$ contains any non-affine convex set. In that case, we get by Lemma \ref{LemmaIPPandUnions} that $\tau$ contains all convex sets. Thus, the clone is the polymorphism clone of the set of all convex sets which is the second option.
\end{proof}
%Danke an Jan, Manuel, Pococop?, Andrew, Florian, Marga, EU

\section*{Acknowledgements}
I thank Prof. Manuel Bodirsky for his constant, polite and productive supervision and help. I thank Jan Thom who did the amazing job in reading the whole text in very a short time. Moreover, thank you Andrew Moorhead and Florian Starke for suggesting minor improvements and thank you Margarete Ketelsen. You asked me what this paper is about. Formulating out an answer was very helpful to write an introduction.

Lastly, I am really grateful for the European Union and the state of Saxony for paying me money.

\bibliography{Quellen} 
\bibliographystyle{alpha} %alphabetic  ieeetr

\end{document}